\tikzset{
    >=stealth',
    pil/.style={
           ->,
           thick,
           shorten <=2pt,
           shorten >=2pt,}
}
\newcommand{\cC}{{\mathcal{C}}}
\newcommand{\EvCo}[1]{\mathcal{A}_{#1}^{(\mathrm{coll})}}
\newcommand{\EvSp}[1]{\mathcal{A}_{#1}^{(\mathrm{sparse})}}
\newcommand{\EvGap}[1]{\mathcal{A}_{#1}^{(\mathrm{gap})}}
\newcommand{\EvReg}[1]{\mathcal{A}_{#1}^{(\mathrm{regular})}}
\newcommand{\EvCov}[1]{\mathcal{A}_{#1}^{(\mathrm{cover})}}
\newcommand{\colT}[1]{\mathcal{T}_{\mathrm{col}}(#1)}
\newcommand{\colM}[1]{\mathcal{M}_{\mathrm{col}}(#1)}
\newcommand{\tS}[1]{\tau_{\mathrm{start}}^{(#1)}}
\newcommand{\tTr}[1]{\tau_{\mathrm{triple}}^{(#1)}}
\newcommand{\tDe}[1]{\tau_{\mathrm{decoupling}}^{(#1)}}
\newcommand{\tCo}{\tau_{\mathrm{col}}}
\newcommand{\tRe}{\tau_{\mathrm{ret}}}
\newcommand{\tNe}[1]{\tau_{\mathrm{near},#1}}
\numberwithin{equation}{section}
\def \be{\begin{equs}}
\def \ee{\end{equs}}
\def \P{\mathbb{P}}
\def \E{\mathbb{E}}
\newcommand \tcb[1]{\textcolor{blue}{(#1)}}
\newcommand \TV{\mathrm{TV}}
\def \tmix{\tau_{\mathrm{mix}}}
\def \tmixR{\tau_{\mathrm{mix}}^{\mathrm{RW}}}
\def \TV{\mathrm{TV}}
\def \O{\mathrm{Occ}}
\def \tpi{\tilde{\pi}}
\def \LL {\Lambda(L,d)}
\def \cN{\mathcal{N}}
\def \T{\mathtt{T}}
\def \S{\mathcal{S}}
\def \ck{\mathcal{K}}
\def \zint{\zeta_\mathrm{int}}
\def \ztr{\zeta_\mathrm{triple}}
\def \Th{\Theta}
\def \hTh{\widehat{\Theta}}
\def \near{\mathcal{N}^\mathrm{near}}
\newtheorem{theorem}{Theorem}[section]
\newtheorem{lemma}[theorem]{Lemma}
\newtheorem{prop}[theorem]{Proposition}
\theoremstyle{plain}
\newtheorem{thm}{Theorem}
\newtheorem*{thm-non}{Theorem}
\newtheorem{cor}[thm]{Corollary}
\newtheorem{conj}[thm]{Conjecture}
\theoremstyle{definition}
\newtheorem{defn}[theorem]{Definition}
\newtheorem{remark}[theorem]{Remark}
\begin{document}

\title[Mixing times for a Constrained Ising Process on the torus at low density]
{Mixing times for a Constrained Ising Process on the torus at low density}


\author{Natesh S. Pillai$^{\ddag}$}
\thanks{$^{\ddag}$pillai@fas.harvard.edu, 
   Department of Statistics,
    Harvard University, 1 Oxford Street, Cambridge
    MA 02138, USA}

\author{Aaron Smith$^{\sharp}$}
\thanks{$^{\sharp}$smith.aaron.matthew@gmail.com, 
   Department of Mathematics and Statistics,
University of Ottawa, 585 King Edward Avenue, Ottawa
ON K1N 7N5, Canada}

\maketitle






\begin{abstract}
We study a kinetically constrained Ising process (KCIP) associated with a graph $G$ and density parameter $p$; this process is an interacting particle system with state space $\{ 0, 1 \}^{G}$. The stationary distribution of the KCIP Markov chain is the Binomial($|G|$, $p$) distribution on the number of particles, conditioned  on having at least one particle. The `constraint' in the name of the process refers to the rule that a vertex cannot change its state unless it has at least one neighbour in state `1'. The KCIP has been proposed by statistical physicists as a model for the glass transition, and more recently as a simple algorithm for data storage in computer networks. In this note, we study the mixing time of this process on the torus $G = \mathbb{Z}_{L}^{d}$, $d \geq 3$, in the low-density regime $p = \frac{c}{|G|}$ for arbitrary $0 < c < \infty$; this regime is the subject of a conjecture of Aldous and is natural in the context of computer networks. Our results provide a counterexample to Aldous' conjecture, suggest a natural modification of the conjecture, and show that this modification is correct up to logarithmic factors. The methods developed in this paper also provide a strategy for tackling Aldous' conjecture for other graphs.
\end{abstract}
\section{Introduction} \label{SecProbDesc}
The kinetically constrained Ising process (KCIP) refers to a class of interacting particle systems introduced by physicists in \cite{FrAn84, FrAn85} to study the glass transition. Versions of this process have accrued other names since then, including the kinetically constrained spin model, the east model \cite{AlDi02} and the north-east model  \cite{Vali04}. These models have attracted a great deal of interest recently, including applications to combinatorics, computer science, and other areas; \cite{ChMa13,CFM14b} have useful surveys of places that the KCIP has appeared outside of the physics literature. Recent mathematical progress has included new bounds on the mixing time of the KCIP in various different regimes \cite{KoLa06, CMRT09, MaTo13, ChMa13, CFM14, CFM14b, CFM15}. For a more complete review of recent progress on KCIP within the physics community, see the survey \cite{GST11} and the references therein. \par

In this note, we study a simple discrete-time version of this process, though our main result applies, after suitable time scaling, to standard continuous-time analogues as well. Fix a graph $G = (V,E)$ and a density parameter $0 < p < 1$.  For a set $S$, we denote by $\mathrm{Unif}(S)$ the uniform distribution on $S$. Define a reversible Markov chain $\{ X_t \}_{t \in \mathbb{N}}$ on the set of $\{0,1\}$-labellings of $G$ as follows.  To update the chain $X_{t}$, choose 
\be [EqCiRep]
v_{t} &\sim \mathrm{Unif}(V), \\
p_{t} &\sim \mathrm{Unif}([0,1]).
\ee
If there exists $u \in V$ such that $(u,v_{t}) \in E$ and $X_{t}[u] = 1$, set $X_{t+1}[v_{t}] = 1$ if $p_{t} \leq p$ and set $X_{t+1}[v_{t}] = 0$ if $p_{t} > p$. If no such $u \in V$ exists, set $X_{t+1}[v_t] = X_{t}[v_t]$. In either case, set $X_{t+1}[w] = X_{t}[w]$ for all $w \in V \backslash \{ v_t \}$. \par 
The state space for the KCIP $\{ X_t \}_{t \in \mathbb{N}}$ on a graph $G$ is $\Omega =  \{0,1\}^G$. Set $|V| = n$; for general points $x \in \{0,1,\ldots,|V|\}^{G}$, write $\vert x \vert = \sum_{v \in G} \textbf{1}_{x[v] \neq 0}$. Let $\pi$ denote the stationary distribution of $\{ X_t \}_{t \in \mathbb{N}}$. For $y \in \Omega$, this is given by
\be \label{eqn:pistat}
\pi(y) = {1 \over \mathcal{Z}_{\mathrm{KCIP}}}\, p^{|y|}(1-p)^{n-|y|}\, \textbf{1}_{|y| > 0},
\ee
where $\mathcal{Z}_{\mathrm{KCIP}} = 1 - (1-p)^n$ is the normalizing constant (see formulas \eqref{EqStatDistCheck1} $\&$ \eqref{EqStatDistCheck2} below). Thus $\pi(y)$ is proportional to the Binomial$(n,p)$ distribution on the number of non-zero labels in $y \in \Omega$, conditional on having at least one non-zero entry. \par

In this paper we study a conjecture stated by David Aldous in  \cite{Aldo12} about the mixing time of this process. To state Aldous' conjecture, we recall some standard notation that will be used throughout the paper. For sequences $x = x(n),y = y(n)$ indexed by $\mathbb{N}$, we write $y = O(x)$ for $\sup_{n} \frac {|y(n)|}{|x(n)|} \leq C < \infty$ and $y = o(x)$ for $\limsup_{n \rightarrow \infty} \frac {|y(n)|}{|x(n)|} = 0$. Recall that for distributions $\mu, \nu$ on a common measure space $(\Theta, \mathcal{A})$, the \textit{total variation} distance between $\mu$ and $\nu$ is given by
\be 
\| \mu - \nu \|_{\TV} = \sup_{A \in \mathcal{A}} (\mu(A) - \nu(A)).
\ee 
The  \textit{mixing profile} for the KCIP Markov chain $\{ X_{t} \}_{t \in \mathbb{N}}$ on $\Omega$ with stationary distribution $\pi$ is given by
\be 
\tau(\epsilon) = \inf \Big \{ t > 0 \, : \, \sup_{X_{0}  = x \in \Omega}\| \mathcal{L}(X_{t}) - \pi \|_{\TV} < \epsilon \Big \}
\ee 
for all $0 < \epsilon < 1$. As usual, the \textit{mixing time} is defined as $\tmix = \tau \big( \frac{1}{4} \big)$. Aldous' conjecture is \cite{Aldo12}:

\begin{conj}[Aldous] \label{ConjAldous}
The mixing time $\tmix$ of the constrained Ising process with parameter $p$ on graph $G$ is $O(p^{-1} |E| \tmixR)$, where $\tmixR$ is the mixing time of the $\frac{1}{2}$-lazy simple random walk on the graph $G$.
\end{conj}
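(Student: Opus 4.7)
The plan would be to establish Aldous' bound by coupling the KCIP to the $\tfrac{1}{2}$-lazy simple random walk on $G$, making precise the intuition that at low density $p = c/n$ the KCIP behaves like a single ``wandering'' particle whose position evolves as simple random walk on $G$. First I would check that at stationarity $\pi$ the chain spends an overwhelming fraction of its time in configurations with $|x| = O(1)$ particles, so that single-particle dynamics dominate. The key observation is that from a single particle at $v$, a hop to a neighbour $u$ requires two successive events: first create a particle at $u$ (probability of order $p/n$ per KCIP step, since $u$ must be selected with probability $1/n$ and the coin $p_t$ must land at or below $p$) and then kill the original at $v$ (probability of order $1/n$). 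Thus on the correct time scale the effective random-walk clock ticks at rate $\sim p/n$, which accounts for exactly the $p^{-1}|E|$ prefactor in the conjecture.

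Concretely, I would introduce a ``labelled-particle'' process that tracks one distinguished particle and couple its position to the lazy simple random walk $W$ on $G$. The idea is to identify KCIP epochs during which $|X_t| = 1$, and argue that across consecutive such epochs the tagged particle makes the same moves as $W$, up to an overall time change of order $n/p$. Two complementary inputs are needed: (i) a quantitative mixing bound for the \emph{number of particles} process $|X_t|$, whose dynamics resemble a birth-death chain with birth rate $\sim p/n$ per step from the single-particle state and death rate $\sim 1/n$ per step from a two-adjacent-particle state, and which should equilibrate in $O(p^{-1})$ steps of its own slow clock; and (ii) a local comparison showing that excursions to $|X_t| \geq 2$ are short (duration $O(n)$ in KCIP time each) with negligible aggregate length, so that they do not destroy the random-walk coupling. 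Combining (i), (ii), and the mixing bound $\tmixR$ for $W$ would then yield the target estimate $O(p^{-1}|E|\tmixR)$.

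The hard part will be to control the interaction between the ``particle count'' and ``particle position'' components uniformly over all starting configurations $x \in \Omega$. A dense initial cluster or a rare large-$|X_t|$ excursion could substantially delay the return to the single-particle regime, and on $\Z_L^d$ the transience/recurrence structure of simple random walk affects the probability that a newborn neighbour coalesces back with its parent versus wandering off to spawn further collisions, which in turn perturbs the effective per-step transition kernel of the tagged particle. The naive coupling implicitly assumes all such multi-particle contributions are absorbed into the $p^{-1}|E|$ prefactor; if they instead contribute extra logarithmic factors---from coupon-collector-type waiting for a particle pair to split, or from rare near-coalescence events on the torus---then the argument falls short. Since the abstract announces a counterexample to Conjecture~\ref{ConjAldous} with the modified bound correct only up to logarithms, I suspect this is precisely the gap the paper exploits to disprove the conjecture as stated.
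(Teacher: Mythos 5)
The statement you were asked to address is a \emph{conjecture} that this paper \emph{disproves}, so there is no paper proof to compare against; the correct task is to locate the counterexample. You sense this (your last paragraph hedges in exactly the right direction), so the issue is not that you chased the wrong target but that you misdiagnose the mechanism and the size of the failure.

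You speculate that multi-particle excursions might introduce ``extra logarithmic factors.'' In fact the gap is polynomial, not logarithmic, and the obstruction is the reverse of what you emphasize: the bottleneck is not controlling excursions into $|X_t| \geq 2$, but \emph{escaping} the one-particle state. Starting from a single particle, to reach a configuration with two particles at graph distance $\geq 2$ the process must first visit a configuration with three mutually adjacent particles. Spawning a neighbour of the lone particle takes $\sim n^2/c$ steps, and the resulting adjacent pair is destroyed (one of the two is removed) with probability $1 - O(1/n)$ before a third touching particle appears; so reaching a three-particle configuration costs $\Theta(n^3)$ steps. This is made precise by the paper's Lemma \ref{LemmaNateshLowerBoundOrig}, which computes the hitting time $\ztr = \inf\{t : V_t \geq 3\}$ for the explicit $3\times 3$ absorbing chain \eqref{EqSmallMat} and gets $\E_1[\ztr] = n^3/(c^2 m(m-1)) + O(n^2)$. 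Comparing the law of $X_T$ at $T = \Theta(\epsilon n^3)$ with $\pi$ on the event $\{|x| \leq 2\}$ (Theorem \ref{ThmLowerBoundOnMixingTime}) yields $\tmix \geq C n^3$ on any triangle-free $m$-regular graph. On $\Lambda(L,d)$ with $p = c/n$ this beats the conjectured bound $O(p^{-1}|E|\tmixR) = O(n^{2+2/d})$ by the polynomial factor $n^{1 - 2/d}$ whenever $d \geq 3$. Your birth-death heuristic for $|X_t|$ treats ``birth'' as a single event with rate $\sim p/n$, but it conflates spawning an adjacent particle (which contributes nothing to spatial mixing and is rapidly undone) with producing a genuinely new well-separated particle (which requires the rare three-in-a-row event); the former happens at rate $p/n$ but the latter at rate $\sim p/n^2$ per step, which is exactly where your calibration of the effective clock, and hence the conjecture, breaks.
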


Although Conjecture \ref{ConjAldous} is quite general, it was made in  the context of studying the KCIP on a sequence of graphs $\{ G_{n} \}_{n\in \mathbb{N}}$ with associated density $p = p_{n} = \frac{c}{\vert G_{n} \vert}$ for some fixed $0 < c < \infty$ \tcb{\cite{Aldo12}}. This scaling regime for $p_n$ is natural for studying the low-temperature limit of the physical process and has been referred to as the natural equilibrium scale \cite{LectureFM14}; however, its motivation in \cite{Aldo12} is as a model for data storage in computer networks rather than as a model for physical processes.  \par
For a positive integer $L \in \mathbb{N}$, let $\Lambda(L,d)$ denote the $d$-dimensional torus with $n = L^{d}$ points; this is a Cayley graph with vertex set, generating set and edge set given by 
\be 
V &= \mathbb{Z}_{L}^{d}, \\
\mathrm{Gen} &= \{ (1,0,0,\ldots,0), (0,1,0,\ldots,0), \ldots, (0,0,0,\ldots,1) \}, \\
E &= \Big \{ (u,v) \in V \times V  \, : u - v \in \pm \mathrm{Gen}  \}.
\ee 
Set
\be
 n = |\Lambda(L,d)|  = L^d.
\ee 
In this paper, we study the KCIP on a sequence of graphs $\{ \Lambda(L,d) \}_{L \in \mathbb{N}}$ with density 
\be \label{eqn:pcn}
p = p_{n} = {c \over n}
\ee
 for some fixed  constant $0 < c < \infty $ and fixed dimension $d \geq 3$; see Section \ref{SecConclusion} for a brief explanation of how our method applies when $d$ and $c$ are allowed to vary with $L$. \par
 The mixing time of the simple random walk on $G = \LL$ is known to be $\tmixR \approx n^{\frac{2}{d}}$ (see, \textit{e.g.}, Theorem 5.5 of \cite{LPW09}). Thus Aldous' conjecture for $G =\Lambda(L,d)$ suggests a mixing time of $\tmix = O \big( n^{2 + \frac{2}{d}} \big)$. This is correct for $d = 1$ (as shown in \cite{AlDi02}), but we show in Theorem \ref{ThmMainResult} below that this conjecture is incorrect for $d \geq 3$. \par The following is our main result, in which we prove a modified version of Conjecture \ref{ConjAldous} for the torus: 

\begin{thm} [Mixing of the Constrained Ising Process on the Torus] \label{ThmMainResult}
Fix $0 < c < \infty$ and $d \geq 3$. For $p=p_n$ defined in \eqref{eqn:pcn}, the mixing time of the KCIP on $\LL$ satisfies
\be 
C_{1} n^{3} \leq \tmix \leq C_{2} n^{3} \log(n)
\ee 
for some constants $C_{1}, C_{2}$ that may depend on $c,d$ but are independent of $n$. 
\end{thm}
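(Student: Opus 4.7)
The key feature of the regime $p = c/n$ is that the stationary measure $\pi$ concentrates on configurations with a Poisson($c$)-distributed number of ``particles'' (vertices in state $1$), while an isolated particle at $v$ can only move after first flipping one of its neighbours to state $1$, which happens at rate $\Theta(p/n) = \Theta(n^{-2})$ per chain step. Thus a single particle performs an effective lazy random walk on $\LL$ at rate $\Theta(n^{-2})$, which combined with the random-walk mixing time $\tmixR \asymp n^{2/d}$ would predict $\tmix \asymp n^{2+2/d}$, as in Conjecture \ref{ConjAldous}. The stronger $n^{3}$ scaling must therefore come from particle interactions, and indeed from transience of random walk on $\mathbb{Z}^{d}$ for $d \geq 3$: two independent random walks on $\LL$ have expected meeting time $\Theta(n)$ random-walk steps, hence $\Theta(n^{3})$ chain steps.

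For the lower bound $\tmix \geq C_{1} n^{3}$, I would set up a bottleneck argument around the observable ``the configuration contains two non-adjacent particles''. Under $\pi$ this event has mass $\Theta(1)$, because only an $O(1/n)$ fraction of two-particle configurations are adjacent. Starting from a one-particle state, however, the only path to a non-adjacent two-particle configuration passes through a three-particle intermediate: the first excursion to the $\geq 2$-particle region takes $\Theta(n^{2})$ expected chain steps to occur, and conditional on such an excursion, the probability of reaching a non-adjacent pair before collapsing back to a single particle is only $\Theta(1/n)$, since from any adjacent pair the collapse rate $\Theta(1/n)$ dominates the new-birth rate $\Theta(1/n^{2})$. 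Composing these two estimates yields $\Omega(n \cdot n^{2}) = \Omega(n^{3})$ steps to equilibrate this observable.

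For the upper bound $\tmix \leq C_{2} n^{3} \log n$, I would build a coupling of two copies of the chain in three layers: (i) a sparsity estimate showing that over $O(n^{3} \log n)$ steps the particle count stays bounded by $O(\log n)$ with high probability; (ii) a regenerative decomposition at the set of one-particle states, showing that each excursion between consecutive visits contributes one step of an effective lazy random walk on $\LL$ in $\Theta(n^{2})$ expected chain steps; and (iii) a coupling that first synchronises the particle counts of the two copies in $O(n^{2})$ steps via the birth/death dynamics, then couples their locations by waiting for the induced effective random walks to meet, which succeeds with constant probability in $O(n^{3})$ chain steps by the meeting-time estimate, with the factor $\log n$ coming from standard amplification.

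The main obstacle, I expect, is step (ii) in the presence of rare excursions into $\geq 3$-particle configurations: although infrequent, these excursions can produce nontrivial displacements and rearrangements, and one must verify that the effective random-walk description remains valid uniformly over the full $n^{3} \log n$ time scale, and in particular that the expected return time to the one-particle set is $\Theta(n^{2})$ independent of the history. This is presumably where the detailed event decomposition suggested by the macros $\EvCo{\cdot}, \EvSp{\cdot}, \EvGap{\cdot}, \EvReg{\cdot}, \EvCov{\cdot}$ and the waiting times $\colT{\cdot}, \tTr{\cdot}, \tDe{\cdot}$ defined in the preamble come into play.
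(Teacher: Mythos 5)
Your lower bound is in the same spirit as the paper's argument: both isolate the rare event of moving from a single particle to a ``spread out'' configuration (the paper uses the first time $\ztr$ that the particle count reaches $3$; your observable of two non-adjacent particles is essentially equivalent, since the path from one particle to a non-adjacent pair must pass through a three-particle state), and both exploit $\pi(\text{spread out}) = \Theta(1)$. However, there is a real gap between ``the expected hitting time is $\Omega(n^3)$'' and ``the hitting time exceeds $\epsilon n^3$ with probability $1 - O(\epsilon)$,'' which is what you actually need for a total-variation lower bound. Markov's inequality goes the wrong way here. The paper closes this by noting that, up to the hitting time, the particle count $V_t$ is a small birth-death chain, and the Karlin--McGregor theory (as generalized in \cite{Micl10}) writes $\ztr$ as a weighted sum of geometric random variables, from which the tail estimate follows. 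Your sketch composes two expected-time estimates without addressing the fluctuation structure; the geometric-mixture representation is the concrete ingredient you would need to add.

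For the upper bound your proposal diverges sharply from the paper and, as written, has gaps that I do not see how to close within the framework you describe. The paper does not regenerate at one-particle states. Instead it partitions $\Omega$ into the sets $\Omega_k$ of $k$ well-separated particles, uses a general decomposition bound (Lemma~\ref{LemmaBasicMixing}, resting on the hitting-time-to-mixing machinery of \cite{PeSo13,Oliv12d}) to reduce the problem to (a) mixing of the trace chain on each $\Omega_k$, $k \leq k_{\max}$, and (b) lower bounds on the occupation measure of each $\Omega_k$. For (a) it compares the trace chain to the simple exclusion process via a Dirichlet-form/log-Sobolev argument (Lemma~\ref{LemmaCompLogSob}, invoking \cite{Yau97}). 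For (b) it establishes a drift condition $\E[V_{t+\epsilon n^3}\mid\mathcal{F}_t] \leq (1-\alpha)V_t + C_G$ by embedding a colored coalescence process into the KCIP (Theorem~\ref{LemmaContractionEstimate} and Lemma~\ref{LemmLbNumCol}, relying on \cite{Cox89}). None of this is present in your sketch.

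Concrete problems with your steps: in (iii), synchronizing particle counts cannot be done in $O(n^2)$ steps --- reducing the count requires a coalescence, and each coalescence takes $\Theta(n^3)$ chain steps on $\LL$ for $d\geq 3$, so count-synchronization is already $\Theta(n^3)$ and would dominate, not precede, the position coupling. In (ii), the regeneration-at-one-particle picture does give excursions with $\Theta(n^2)$ mean, but the excursions that matter for mixing (those reaching $\geq 3$ particles) have length $\Theta(n^3)$ and probability $\Theta(1/n)$, and during such an excursion the chain is exploring multi-particle configurations where, under $\pi$, a $\Theta(1)$ fraction of the mass sits. So the ``effective lazy random walk of a single particle'' is not the right reduced description of the stationary chain; you would instead need to couple the positions of $k$ particles for each $k\leq k_{\max}$, which is essentially coupling an exclusion process and is where the paper turns to the log-Sobolev comparison rather than a direct coupling. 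Finally, your sparsity bound of $O(\log n)$ is weaker than what is needed; the paper's drift condition pushes the count all the way to $O(1)$ after a burn-in, which is where the single extra $\log n$ factor in the upper bound actually originates.
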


\begin{remark}
As can seen from Theorem \ref{ThmLowerBoundOnMixingTime} below, the lower bound  for the mixing time of KCIP obtained in Theorem \ref{ThmMainResult} is valid for any $m$-regular graph ($m >1$) with no triangles.
\end{remark}
In the statement of Theorem \ref{ThmMainResult} and throughout the paper, we assume that both the quantities $0 < c < \infty$ and $3 \leq d \in \mathbb{N}$ are fixed; only $n$ grows. In particular, in Theorem \ref{ThmMainResult} and all other calculations, bounds that are `uniform' are implied to be uniform only in $n$ and other explicitly mentioned variables; they will generally not be uniform in $c$ or $d$. Throughout, we will denote by $C$ a generic constant, whose value may change from one occurrence to the next, but is independent of $n$. For $u,v \in \LL$, denote by $|u - v|$ the smallest number of edges needed to traverse from $u$ to $v$ via a connected path; this is the usual graph distance. Finally, let
\be \label{eqn:Ball}
\mathcal{B}_{\ell}(v) = \{ w \in \Lambda(L,d) \, : \, |v - w | \leq \ell \} 
\ee
be the ball of radius $\ell$ around $v$ in the graph distance.
\subsection{Relationship to previous work}
Although the KCIP we study was introduced in the physics literature \cite{FrAn84,FrAn85} and discussed in later work such as \cite{Aldo12}, most recent mathematical work on mixing bounds for KCIPs has focussed on different local constraints, which give rise to qualitatively different behaviour. Thus, even if we studied the same regime, our results would not imply (or be implied by) recent work in this area. However, our primary contribution to the literature is the fact that our work is in a new regime: we obtain good mixing bounds on a KCIP that apply in the regime of high dimension $d \geq 3$, low density $p_{n} \approx n^{-1}$, and under the strong metric $\| \cdot \|_{\TV}$. All three of these distinctions can make the problem harder than working in dimension $d=1$, at high density $p_{n} = p$, or in a weaker metric.\par
We briefly review some recent work on the mixing properties of related constrained Ising processes \cite{KoLa06, CMRT09, MaTo13, ChMa13, CFM14, CFM14b, CFM15}. However our results in the regime which interests us are novel. We contrast our work with specific papers. Many previous results, such as \cite{ChMa13, CFM15}, deal primarily with the regime in which $p$ is a constant, independently of $n$. In this regime, many KCIPs mix relatively quickly and the obstacles to mixing are quite different. Other results, such as \cite{AlDi02,GLM14}, study the small-$p$ regime, but only in one dimension. In particular, the methods employed in \cite{AlDi02} completely break down for $d > 1$ and thus are not applicable to our setting. The recent paper \cite{CFM14b} seems most similar to ours. In \cite{CFM14b} the authors study the mixing of a related KCIP on $\LL$ at density $p \approx \frac{1}{n}$ and obtain results in greater generality than ours. However, the authors of \cite{CFM14b} focus on bounding the relaxation time of the process, rather than the mixing time; the bounds obtained in \cite{CFM14b} cannot be used to obtain sharp estimates on the mixing time.
\subsection{Outline for the paper}
This paper is largely devoted to the proof of Theorem \ref{ThmMainResult}. In Section \ref{SecRoadmap}, we set up notation and give a proof sketch. In Section \ref{SecGenMix}, we give a general upper bound on the mixing time of Markov chains on finite state spaces; the rest of the proof of the upper bound in Theorem \ref{ThmMainResult} consists of estimating the constants in this general bound. In Section \ref{sec:lowerbd}, we give the lower bound in Theorem \ref{ThmMainResult}. Sections \ref{SecCompToExc} to \ref{SecExcLength} contain most of the work required to prove the upper bound in Theorem \ref{ThmMainResult}. In Section \ref{SecCompToExc}, we study the behaviour of the KCIP at low density by comparing it to the simple exclusion process. In Sections \ref{SecDriftCond} and \ref{SecExcLength}, we detail the behaviour of the KCIP at high density by comparing it to the coalescence process. In Section \ref{SecProofThm}, we combine the results obtained in earlier sections and give the proof of Theorem \ref{ThmMainResult}. Finally, in Section \ref{SecConclusion} we discuss related problems and the extent to which our methods apply to them. 

\section{A Roadmap for the Proof} \label{SecRoadmap}
We first explain the heuristic arguments that make Conjecture \ref{ConjAldous} plausible and point out the key time scales involved. For the reader's convenience, we also give a proof sketch highlighting  all the major steps involved in the proof of Theorem \ref{ThmMainResult}.
\subsection{Heuristics and key time scales} 
We discuss heuristics for the time scales on which important changes to the KCIP occur. 

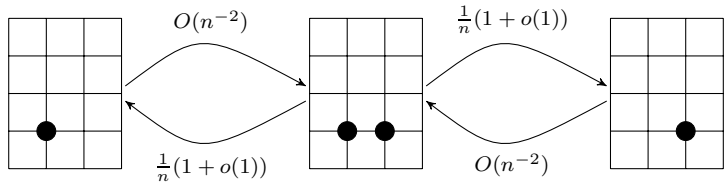
\begin{figure} 
\begin{tikzpicture}[scale=0.5]
\draw (0 , 0) grid (3, 4);
\draw  (8,0) grid (11,4);
\draw  (16,0) grid (19,4);
\draw [fill] (1,1) circle [radius=.25];
\draw [fill] (9,1) circle [radius=.25];
\draw [fill] (10,1) circle [radius=.25];
\draw [fill] (18,1) circle [radius=.25];
 \draw [->] (3.1, 2.2) .. controls (5.1, 3.7) .. (7.9,2.2)
        node[pos = 0.55, above]{\tiny{$O(n^{-2})$}};
  \draw [->] (7.9,1.8) .. controls (5.1, 0.3) .. (3.1, 1.8)
        node[pos = 0.45, below]{\tiny{$\frac{1}{n}(1+ o(1))$}};
 \draw [->] (11.1, 2.2) .. controls (13.1, 3.7) .. (15.9,2.2)
        node[pos = 0.55, above]{\tiny{$\frac{1}{n}(1+ o(1))$}};  \draw [->] (15.9,1.8) .. controls (13.1, 0.3) .. (11.1, 1.8)
        node[pos = 0.45, below]{\tiny{$O(n^{-2})$}};
 \end{tikzpicture}
\caption{Simple Random Walk Heuristic} 
\label{PicSRWHeuristic1}
\end{figure} 
\begin{figure}
\begin{tikzpicture}[scale=0.5]
\draw (0 , 0) grid (3, 4);
\draw  (8,0) grid (11,4);
\draw  (16,0) grid (19,4);
\draw  (24,0) grid (27,4);
\draw [fill] (1,1) circle [radius=.25];
\draw [fill] (9,1) circle [radius=.25];
\draw [fill] (10,1) circle [radius=.25];
\draw [fill] (17,1) circle [radius=.25];
\draw [fill] (18,1) circle [radius=.25];
\draw [fill] (18,2) circle [radius=.25];
\draw [fill] (25,1) circle [radius=.25];
\draw [fill] (26,2) circle [radius=.25];
 \draw [->] (3.1, 2.2) .. controls (5.1, 3.7) .. (7.9,2.2)
        node[pos = 0.55, above]{\tiny{$O(n^{-2})$}};
  \draw [->] (7.9,1.8) .. controls (5.1, 0.3) .. (3.1, 1.8)
        node[pos = 0.45, below]{\tiny{$\frac{1}{n}(1+ o(1))$}};
 \draw [->] (11.1, 2.2) .. controls (13.1, 3.7) .. (15.9,2.2)
        node[pos = 0.55, above]{\tiny{$O(n^{-2})$}};
  \draw [->] (15.9,1.8) .. controls (13.1, 0.3) .. (11.1, 1.8)
        node[pos = 0.45, below]{\tiny{$\frac{1}{n}(1+ o(1))$}};
 \draw [->] (19.1, 2.2) .. controls (21.1, 3.7) .. (23.9,2.2)
        node[pos = 0.55, above]{\tiny{$\frac{1}{n}(1+ o(1))$}};
  \draw [->] (23.9,1.8) .. controls (21.1, 0.3) .. (19.1, 1.8)
        node[pos = 0.45, below]{\tiny{$O(n^{-2})$}};
 \end{tikzpicture}
\caption{Particle Creation Heuristic}
\label{PicSRWHeuristic2}
\end{figure}
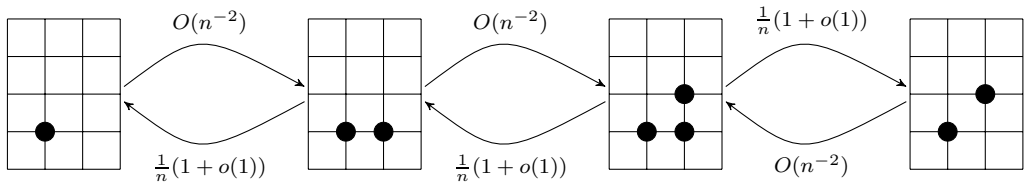 
\begin{enumerate}
\item If the initial state $X_{0}$ has a large number of particles, the number of particles remaining will generally be cut in half every $O(n^{3})$ steps. This observation is crucial to our proof. We show this bound using closely related bounds on the `coalescence time' of a collection of random walkers on a graph (see \cite{Cox89}). This bound implies that, after an initial transient period of at most $O(n^{3} \log(n))$ steps, the number of particles in the KCIP will be $O(1)$. This is the only place we obtain an extra $\log n$ factor in our proof of the upper bound of Theorem \ref{ThmMainResult}. We suspect that, in fact, this transient period is only of total length $O(n^{3})$. 
\item After the initial transient period, the KCIP generally has $O(1)$ well-separated particles. Every $O(n^{2})$ steps, a particle will spawn a neighbour; one of these neighboring particles will be removed in $O(n)$ steps. Ignoring the times at which any particle has a neighbour, the $O(1)$ well-separated particles will appear to be evolving according to an independent random walk on $\LL$, slowed down by a factor of roughly $n^{2}$; see Figure \ref{PicSRWHeuristic1}. Since all particles have no neighbours at most times under the stationary measure, ignoring the times at which any particle has a neighbour does not greatly influence one's view of the process.
\item After the initial transient period, and again ignoring the times at which any particle has a neighbour, it often takes $O(n^{3})$ steps to decrease the number of particles by 1. To see this, note that the number of particles can only decrease when two existing particles `collide.' Recall that the expected collision time for two random walkers on the torus is $O(n)$. By the above heuristic, the particles in the KCIP are undergoing simple random walk that is slowed down by a factor of $n^{2}$; thus, the expected time to a collision is $O(n^{3})$. It also turns out that collisions `often' result in the number of particles being decreased.
\item After the initial transient period, and again ignoring the times at which any particle has a neighbour, it takes $O(n^{3})$ steps to increase the number of particles by 1. Indeed, increasing the number of non-adjacent particles by 1 requires an intermediate time at which three particles are `touching' in $\LL$, where two of them share the same neighbor. It takes $O(n^{2})$ steps for a particle to spawn a neighbouring particle, and whenever two particles are adjacent, the probability of one of these two particles being removed before a third `touching' particle is added is $1 - O(n^{-1})$. Thus, it takes $O(n^{3})$ steps to obtain three `touching' particles; see Figure \ref{PicSRWHeuristic2} for a generic illustration of how this happens. It is easy to check that a configuration with three `touching' particles often degenerates into one with two non-adjacent particles after $O(n)$ steps.
\end{enumerate}
The basis of Conjecture \ref{ConjAldous} is heuristic (2) above: individual particles at distance greater than one in the KCIP on $\LL$ tend to behave like independent random walkers on $\LL$, slowed down by a factor of roughly $n^{2}$. Thus, for density $p = \frac{c}{n}$, we might expect the KCIP to have roughly $c$ particles during most times and to behave quite similarly to the simple exclusion process (SEP) with $c$ particles (see \cite{ClSu73, HoLi75} for an introduction). Heuristics (1) and (4) explain why Conjecture \ref{ConjAldous} is not telling the whole story: heuristic (1) points out that it takes a long time to go from $n$ particles to $O(1)$ particles, while heuristic (4) points out that it takes a long time to go from 1 particle to 2 well-separated particles. The lower bound in Theorem \ref{ThmMainResult} is obtained by making heuristic (4) rigorous.\par
An obvious modification to Conjecture \ref{ConjAldous} is that the mixing time of the KCIP is at most the maximum of these three time scales, and this is the approach we take in this paper. There are essentially three obstacles to making this approach rigorous. The first is to deal with the fact that we would like to compare a single KCIP to many different SEPs - the SEP with $c$ particles, but also the SEP with $c-1$ particles, $c+1$ particles, \textit{etc.} Breaking apart the KCIP in this way is the subject of ${\bf Step\, 1}$ and ${\bf Step\, 4}$ in the proof sketch below. The second is to complete the comparison of the EP to a suitably tamed version of the KCIP. This is the subject of ${\bf Step\, 2}$ below. The third is to ensure that a KCIP started with a large number ($\gg c$) particles quickly enters a state with roughly $c$ particles. This corresponds to ${\bf Step\, 3}$ below. 
\subsection{Proof sketch}\label{sec:proofsketch}
Our methods in some of the steps in the following outline are applicable for a generic graph $G$, and the others are specific to the torus $\LL$. For $1 \leq k \leq {n\over 2}$, let $\Omega_{k} \subset \Omega$ be configurations of $k$ particles for which no two particles are adjacent, \textit{i.e.},
\be \label{EqDefOmegaK}
\Omega_k = \big \{ X \in \{0,1\}^G \, : \, \sum_{v \in V} X[v] = k, \sum_{(u,v) \in E} X[u]X[v] = 0 \big \}.
\ee 
Also set $\Omega' = \Omega \backslash \cup_{k=1}^{{n\over 2}} \Omega_{k}$. For each $k \leq n$, we will denote by $\tmix^{(k)}$ the mixing time of $X_t$ `restricted to' $\Omega_k$ (our notion of `restriction' is defined more carefully in Section \ref{SecCompToExc}). Define the quantity
\be
\O_k(\epsilon, N) = \sup_{x \in \Omega} \, \inf\Big \{T \geq 1 \, : \, X_{0} = x, \, \P\big(\sum_{s=0}^T 1_{X_s \in \Omega_k} > N\big) > 1-\epsilon \Big\}.
\ee
For a fixed $N$ and small $\epsilon$, $\O_k (\epsilon, N)$ denotes the first (random) time at which the occupation measure of $X_t$ in $\Omega_k$ exceeds $N$ with high $(1-\epsilon)$ probability. For $x \in \Omega_k$, define the exit time:
\be \label{EqExitTimeDef}
L_k(x) = \inf \big\{t : X_0 = x, X_t \in \cup_{j \neq k} \Omega_j \big\}.
\ee
\noindent Our proof strategy for the upper bound in Theorem \ref{ThmMainResult} entails the following steps. 
\begin{itemize}
\item[{\bf Step 1.}] We show that for a universal constant $k_\mathrm{max} =k_\mathrm{max}(G,c)$ depending only on the local structure of the graph $G$ and the constant $c$ from \eqref{eqn:pcn},
\be
\tmix = O\Big(\sup_{1 \leq k \leq k_\mathrm{max}} \tmix^{(k)} + \sup_{1 \leq k \leq k_\mathrm{max}} \O_k \big(\frac{1}{8 k_\mathrm{max}},C \tmix^{(k)} \big) \Big).
\ee
This is an immediate consequence of Lemma \ref{LemmaBasicMixing}.

\item[{\bf Step 2.}] By a comparison argument using the simple exclusion process, we show that $ \tmix^{(k)} = O((\alpha^{\mathrm{SE}}_{n,k})^{-1} n\log n )$ uniformly in $1 \leq k \leq k_\mathrm{max}$, where $\alpha^{\mathrm{SE}}_{n,k}$ is the log-Sobolev constant of the simple exclusion process on $G$  with $k$ particles. For $G= \LL$, it is known that $(\alpha^{\mathrm{SE}}_{n,k})^{-1} = O\big( n^{1 + \frac{2}{d}} \big)$ (see \cite{Yau97}) and thus  $ \tmix^{(k)} = O(n^{2 + {2 \over d}} \log n )$ uniformly in $1 \leq k \leq k_\mathrm{max}$ for the KCIP on $\LL$. See Lemma \ref{CorMixingTimeRestWalk}. 

\item[{\bf Step 3.}] For $G = \LL$, by coupling the KCIP to a `colored' version of the coalesence process over short time periods, we show that the process
\be \label{EqDefNumVerts}
V_t = \sum_{v \in V} X_t[v]
\ee 
satisfies the `drift condition'
\be
\E[V_{t + \epsilon n^3} - V_t | X_{t}] \leq -\delta V_t + C 
\ee
for some $C< \infty$ and $\epsilon, \delta > 0$, all independent of $n$. See 
Theorem \ref{LemmaContractionEstimate}.

\item[{\bf Step 4.}] For $G = \LL$, by direct calculation and further comparison to a coalesence process, we show that  
\be 
\inf_{x \in \Omega_k} \P[L_k(x) > C_1 n^3] > \epsilon > 0
\ee 
and  
\be
\sup_{x \in \Omega_k} \P[L_k(x) < C_2 n^3] > \epsilon > 0
\ee
 for some constants $C_1, \, C_2, \, \epsilon > 0$ independent of $n$. 
See Lemmas \ref{LemSoftCoalBound1} and \ref{LemmaIneqTransProbKNearK}.
\item[{\bf Step 5.}] Conclude from ${\bf Step\, 3}$ and ${\bf Step\, 4}$ that $\sup_{1 \leq k \leq k_\mathrm{max}} \O_k (\frac{1}{8 k_\mathrm{max}}, C \tmix^{(k)}) = O(n^3 \log(n))$. See Lemma \ref{LemmaOccMeasureBound}. 
\end{itemize}

The lower bound is obtained in Theorem \ref{ThmMainResult} by a direct computation of the expected time for the KCIP $\{X_t\}$ on $\LL$ to first have two non-adjacent particles when started from a single particle. Some of the steps above that are specific to the torus $\LL$ can be extended for other graphs as well, as discussed in  
Section \ref{SecConclusion}. 
\section{General Mixing Bounds for Decomposable Markov Chains} \label{SecGenMix}
In this section, we give a general bound on the mixing time of decomposable Markov chains. This result will be later applied to the KCIP Markov chain to achieve {\bf Step 1} in the proof of Theorem \ref{ThmMainResult}. The bounds in this section apply to Markov chains other than the KCIP and thus are of independent interest; they are developed further and applied to other examples in the forthcoming note \cite{PiSm15}. \par

Consider an aperiodic, irreducible, reversible and $\frac{1}{2}$-lazy Markov chain $\{ Z_{t} \}_{t \in \mathbb{N}}$ with transition matrix $K$ and stationary distribution $\tpi$ on a finite state space $\Theta$. Our goal is to bound the mixing time of $\{ Z_{t} \}_{t \in \mathbb{N}}$ in terms of the mixing times of various \textit{restricted} and \textit{projected} chains; these $L^{1}$ mixing bounds are loosely analogous to the $L^{2}$ bounds in \cite{JSTV04}. We begin by fixing $n$ and writing the partition $\Theta = \sqcup_{k=1}^{n} \Theta_{k}$. For $1 \leq k \leq n$, set $\eta_{k}(0) = 0$ and for $s \in \mathbb{N}$, recursively define the sequences of times 
\be \label{EqDefOccupationMeasureCounters}
\eta_{k}(s) &= \inf \{ t > \eta_{k}(s-1) \, : \, Z_{t} \in \Theta_{k} \}, \\
\kappa_{k}(s) &= \sup \{u \, : \, \eta_{k}(u) \leq s \}.
\ee 
The quantity $\kappa_k$ can also be written as
\be \label{eqn:kappak}
\kappa_{k}(T) = \sum_{t=1}^{T} \textbf{1}_{Z_{t} \in \Theta_{k}}.
\ee 
Both $\eta, \kappa$ depend on the initial condition $Z_1$.
We also define the associated \textit{restricted} processes $\{ Z_{t}^{(k)} \}_{t \in \mathbb{N}}$ by 
\be \label{EqDefRestChain}
Z_{t}^{(k)} = Z_{\eta_{k}(t)}. 
\ee 
This process is called the \textit{trace} of $\{ Z_{t} \}_{t \in \mathbb{N}}$ on $\Theta_{k}$. Since $\{ Z_{t} \}_{t \in \mathbb{N}}$ is recurrent, we have for all $t \in \mathbb{N}$ that $\eta_{k}(t) < \infty$ almost surely, and so $Z_{t}^{(k)}$ is almost surely well-defined for all $t \in \mathbb{N}$. The process $\{ Z_{t}^{(k)} \}_{t \in \mathbb{N}}$ is a Markov chain on $\Theta_{k}$, and denote by $K_{k}$ the associated transition kernel on $\Theta_{k}$. The kernel $K_{k}$ inherits aperiodicity, irreducibility, reversibility and $\frac{1}{2}$-laziness from $K$ and its stationary distribution is given by $\tpi_{k}(A) = \frac{ \tpi(A)}{\tpi(\Theta_{k})}$ for all $A \subset \Theta_{k}$. We denote by $\varphi_k$ the mixing time of the kernel $K_{k}$.  \par 
For $0 < a < \frac{1}{2}$, define the universal constants $c_a$ and $c_a'$ as in Theorem 1.1 of \cite{PeSo13}. We then have the following simple bound on the mixing time of $K$: 

\begin{lemma} [Basic Mixing Bound] \label{LemmaBasicMixing}
Fix $0 < a < \frac{1}{2}$ and $1 - a < \beta < 1$. Define $\gamma = \min \big({1 \over 2}, \frac{a + \beta - 1}{\beta} \big) > 0$ and fix a collection of indices $I \subset \{1, 2, \ldots, n \}$ satisfying
\be 
\sum_{k \in I} \tpi(\Theta_{k}) > \beta.
\ee 
Then the mixing time $\tmix^Z$ of $\{ Z_{t} \}_{t \in \mathbb{N}}$ satisfies
\be 
\frac{\tmix^Z}{4 c_a} \leq \mathcal{T} \equiv \inf \Big\{ T \, : \, \inf_{0 < t < T}\sup_{k \in I }  \Big( \frac{ c_{\gamma}'\varphi_k}{t} + \sup_{z \in \Theta} \P_{z}[\kappa_{k}(T) < t] \Big) < \frac{1}{4} \Big\}.
\ee 
\end{lemma}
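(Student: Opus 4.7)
My plan is to derive Lemma~\ref{LemmaBasicMixing} from two applications of the mixing-time versus hitting-time equivalence of Peres--Sousi (Theorem~1.1 of \cite{PeSo13}): one to the full chain $\{Z_t\}$, which gives the outer factor $4c_a$, and one to each trace chain $\{Z_t^{(k)}\}$, which supplies the term $c_\gamma'\varphi_k$ inside $\mathcal{T}$. The partition $\Theta=\sqcup_k\Theta_k$ is then glued together by pigeonholing over the indices $k\in I$, using the hypotheses $\sum_{k\in I}\tpi(\Theta_k)>\beta$ and $a+\beta>1$.

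By \cite{PeSo13} applied to $\{Z_t\}$, to prove $\tmix^Z\le 4c_a\,\mathcal{T}$ it is enough to show that, for $T=\mathcal{T}$, every $z\in\Theta$, and every ``bad'' set $A\subseteq\Theta$ with $\tpi(A)\ge a$, one has $\P_z[\tau_A>T]\le\tfrac14$, where $\tau_A=\inf\{t\ge 0:Z_t\in A\}$. Fix such an $A$. The hypotheses force the overlap bound
\[
\sum_{k\in I}\tpi(A\cap\Theta_k)\ \ge\ \tpi(A)-(1-\beta)\ \ge\ a+\beta-1\ >\ 0,
\]
and a weighted averaging argument then produces some $k^*\in I$ with $\tpi_{k^*}(A\cap\Theta_{k^*})\ge\gamma$; the clip at $\tfrac12$ in the definition of $\gamma$ merely keeps us in the range most convenient for the trace-chain hitting bound below.

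I would then split on whether $Z$ has visited $\Theta_{k^*}$ at least $t$ times by step $T$:
\[
\P_z[\tau_A>T]\ \le\ \P_z[\kappa_{k^*}(T)<t]\ +\ \P_z[\tau_A>T,\,\kappa_{k^*}(T)\ge t].
\]
On the second event the trace chain $\{Z_s^{(k^*)}\}$ has made at least $t$ steps by time $T$, so a hit of $A\cap\Theta_{k^*}$ within its first $t$ steps would already entail $\tau_A\le T$; hence $\{\tau_A>T,\,\kappa_{k^*}(T)\ge t\}\subseteq\{\tau_{A\cap\Theta_{k^*}}^{\mathrm{trace}}>t\}$. Because $\{Z_s^{(k^*)}\}$ is $\tfrac12$-lazy, reversible, and irreducible with stationary distribution $\tpi_{k^*}$ under which the target has measure $\ge\gamma$, the other direction of Peres--Sousi bounds its expected hitting time by $c_\gamma'\varphi_{k^*}$, and Markov's inequality yields $\P[\tau_{A\cap\Theta_{k^*}}^{\mathrm{trace}}>t]\le c_\gamma'\varphi_{k^*}/t$. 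Taking a supremum over $k\in I$ (since $k^*$ depends on $A$ but the criterion must hold uniformly in $A$) reproduces exactly the quantity inside the definition of $\mathcal{T}$; requiring it to be below $\tfrac14$ closes the argument.

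The main obstacle is essentially bookkeeping: invoking \cite{PeSo13} in both directions with matched constants, and executing the weighted pigeonhole carefully enough to extract the factor $\gamma=(a+\beta-1)/\beta$ rather than the weaker $\gamma=a+\beta-1$ that one gets from naive averaging against the weights $\tpi(\Theta_k)$. This averaging step is the most delicate point, since the hypothesis only provides the one-sided bound $\sum_{k\in I}\tpi(\Theta_k)>\beta$ on the total weight of $I$, so the worst case for the weighted averaging is precisely when this sum is close to $\beta$.
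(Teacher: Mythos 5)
Your proposal is correct and takes exactly the approach the paper intends: the paper does not display a proof of this lemma, but the remark immediately following it explicitly attributes the result to the hitting-time/mixing-time equivalences of Peres--Sousi and Oliveira, and the appearance of the constants $c_a$ and $c_\gamma'$ in the statement fixes the two-sided application of that theorem that you carry out (outer direction on $\{Z_t\}$, inner direction on each trace chain). The decomposition $\P_z[\tau_A>T]\le\P_z[\kappa_{k^*}(T)<t]+\P_z[\tau^{\mathrm{trace}}_{A\cap\Theta_{k^*}}>t]$, the weighted pigeonhole to find $k^*\in I$ with $\tpi_{k^*}(A\cap\Theta_{k^*})\ge\gamma$, and the geometric iteration that turns $\P_z[\tau_A>T]\le\tfrac14$ into $\E_z[\tau_A]\le\tfrac{4}{3}T\le 4T$ are all exactly what is needed; in particular your worked-out pigeonhole (writing $\tpi(A)<\gamma W+(1-W)$ with $W=\sum_{k\in I}\tpi(\Theta_k)>\beta$ and $\gamma<1$) does indeed yield the stated $\gamma=(a+\beta-1)/\beta$ with the worst case at $W\downarrow\beta$, as you identify.
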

Thus Lemma \ref{LemmaBasicMixing} relates the mixing time of a Markov chain to the mixing times of its traces on subsets of the state space with reasonably large stationary measure and the corresponding occupation times on those subsets. This is possible due to the remarkable results obtained in \cite{PeSo13,Oliv12d}, where the authors obtain a bound on the mixing times of \emph{reversible} Markov chains in terms of hitting times.
\begin{remark} Since our mixing bounds in Section \ref{SecCompToExc} below are obtained by a bound on the log-Sobolev constant of various restricted chains, and our bound on occupation measure can easily be translated into a bound on the spectral gap of an associated projected chain, the reader may ask why we do not use the well developed theory in \cite{JSTV04} for decomposable Markov chains to bound the mixing of a Markov chain in terms of restricted and projected chains. One reason is convenience: unlike the bounds in \cite{JSTV04}, Lemma \ref{LemmaBasicMixing} requires only a bound on $\varphi_{k}$ for \textit{some} $k$, not \textit{all} $k$. Since the bounds in Section \ref{SecCompToExc} apply only for $k$ small, Lemma \ref{LemmaBasicMixing} allows us to avoid doing the substantial extra work of finding bounds on $\varphi_{k}$ for $k$ large. The second, and more important, reason is that it is impossible to get an upper bound on the mixing time that is smaller than $O(n^{4} \log(n))$ using bounds from \cite{JSTV04} and any partition of $\Omega$ similar to the partition that we use. The extra factor of $n$ comes primarily from the fact that the probability of moving from $\Omega_{k+1}$ to $\Omega_{k}$ within $O(n^{2})$ steps is very far from uniform over starting points $x \in \Omega_{k+1}$.
\end{remark}
\section{Lower bound for the mixing time of KCIP on $\LL$}\label{sec:lowerbd}
%
In this section, we give a direct computation that leads to a lower bound on the mixing time of the KCIP on any $m$-regular graph $G = (V,E)$ that contains no triangles. The torus $\LL$ is $2d$-regular and triangle-free, and this bound will immediately give the lower bound in Theorem \ref{ThmMainResult}. Before proceeding, we verify that the stationary distribution $\pi$ is given by formula \eqref{eqn:pistat}. If $\P[X_{t+1} = y \vert X_{t} = x] > 0$, then either
\be \label{EqStatDistCheck1}
\P[X_{t+1} = y \vert X_{t} = x] &= \frac{p}{1-p} \P[X_{t+1} = x \vert X_{t} = y]\\ 
&= \frac{\pi(y)}{\pi(x)} \P[X_{t+1} = x \vert X_{t} = y]
\ee 
{or}
\be \label{EqStatDistCheck2}
\P[X_{t+1} = y \vert X_{t} = x] &= \frac{1-p}{p} \P[X_{t+1} = x \vert X_{t} = x]\\
&= \frac{\pi(y)}{\pi(x)} \P[X_{t+1} = x \vert X_{t} = y],
\ee 
depending on whether $\sum_{v \in G} x[v] < \sum_{v \in G} y[v]$ or not. Thus the Markov chain $\{X_t\}_{{t \in \mathbb{N}}}$ satisfies the detailed balance equation with respect to $\pi$ and thus has $\pi$ as its stationary distribution. \par
We continue by setting notation that will be used throughout the remainder of the paper. 
Let $G_{t}$ be the subgraph of $G$ induced by the vertices $\{v \in G \, : \, X_{t}[v] = 1 \}$, with vertices $V(G_{t})$ and edges $E(G_{t})$. For a vertex $u \in \LL$, define $\mathrm{Comp}_{t}(u)$ to be the collection of vertices contained in the same connected component as $u$ in $G_{t}$, define $\mathrm{Comp}_{t}$ to be the collection of distinct connected components in $G_{t}$, and let 
\be
Y_{t} = \vert \mathrm{Comp}_{t} \vert
\ee
 be the number of connected components in $G_{t}$. Recall that $V_{t} = \sum_{v \in G} X_{t}[v] = \vert G_{t} \vert$ is the number of vertices having state 1. For a KCIP started at time 0 with $V_{0} = 1$, define the associated \textit{triple} time by 
\be \label{EqTripleTime} 
\ztr &= \inf \{ t \, : \, V_{t} \geq 3 \}.
\ee  

\begin{lemma} [Component Growth] \label{LemmaNateshLowerBoundOrig}
Let $G$ be an $m$-regular graph ($m>1$) with no triangles. Fix $ \epsilon > 0$ and assume that $V_{0} = 1$. Then
\be \label{IneqCompGrowthLowerBound}
\P\big[\ztr  < \epsilon \frac{n^{3}}{3 c^{2} m (m-1)} \big] = O \big(\epsilon \big),
\ee 
where the implied constant is uniform over $0 < \epsilon < \epsilon_{0}$ sufficiently small and does not depend on $G$ or $c$.
\end{lemma}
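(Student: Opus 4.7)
The plan is to exploit the fact that the particle count $V_t$ changes by at most $\pm 1$ per step, so starting from $V_0 = 1$ we have $\ztr = \inf\{t : V_t = 3\}$ and the chain must visit $V = 2$ before reaching $V = 3$. Moreover, every $V = 2$ configuration visited by the chain is an adjacent pair (since a transition from $V = 1$ to $V = 2$ must spawn a neighbor of the unique existing particle), and by the triangle-free hypothesis the two particles share no common neighbor, so their ``free'' neighborhoods are disjoint and contain exactly $m - 1$ vertices each. These combinatorial observations will drive the whole argument.

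First I would write down the per-step transition probabilities in the two relevant regimes. From $V_t = 1$, an up-jump to $V_{t+1} = 2$ occurs with probability exactly $mc/n^2$ (choose one of the $m$ neighbors of the unique particle, then flip with probability $p = c/n$). From any reachable $V_t = 2$ configuration, the chain moves to $V_{t+1} = 3$ with probability $2(m-1)c/n^2$ and to $V_{t+1} = 1$ with probability $2(1-p)/n$. The underlying configuration cannot change while $V_t$ remains at $2$, so the first departure from $V = 2$ is a single Bernoulli trial with ``success'' (up) probability
\[
q_n \;=\; \frac{(m-1)c}{(m-1)c + n(1-p)} \;\leq\; \frac{(m-1)c}{n}\bigl(1 + o_n(1)\bigr).
\]

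Next I would set up an excursion decomposition. Let $\tau_1 < \tau_2 < \cdots$ be the successive up-jump times from $V = 1$ to $V = 2$, let $N_T = \#\{k : \tau_k < T\}$ denote the number of such up-jumps up to time $T$, and let $W_k$ be the indicator that the $k$-th excursion to $V = 2$ reaches $V = 3$ before returning to $V = 1$. By the strong Markov property applied at $\tau_k$ and the fact that every adjacent pair has identical local statistics on an $m$-regular triangle-free graph, each $W_k$ is Bernoulli$(q_n)$ conditionally on $\mathcal{F}_{\tau_k}$. Since $\{\ztr < T\} \subseteq \bigcup_k \{\tau_k < T, \; W_k = 1\}$, a union bound combined with $\sum_k \mathbf{1}_{\tau_k < T} = N_T$ yields
\[
\P[\ztr < T] \;\leq\; q_n\, \E[N_T] \;\leq\; q_n \cdot T \cdot \frac{mc}{n^2},
\]
where the last inequality bounds $\E[N_T] = \sum_{t=0}^{T-1} \P[V_t = 1,\, V_{t+1} = 2]$ term by term using the per-step up-jump probability $mc/n^2$.

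Finally I would substitute $T = \epsilon n^3 / (3 c^2 m(m-1))$; the product on the right reduces to $\epsilon/3 + o_n(\epsilon)$, with constants entirely independent of $m$ and $c$, which gives the claimed bound $O(\epsilon)$ in \eqref{IneqCompGrowthLowerBound}. The factor $3$ in the denominator of $T$ is chosen precisely to absorb the $(1 + o_n(1))$ correction in $q_n$ for $n$ large enough (so the $\epsilon_0$ in the statement is just what is needed to make $\epsilon/3 \cdot (1 + o_n(1)) \leq \epsilon$). The main obstacle is really the bookkeeping around the strong Markov property together with verifying the two combinatorial claims — that reachable $V = 2$ configurations are adjacent pairs, and that triangle-freeness makes their neighborhoods disjoint — after which the rest is a direct calculation.
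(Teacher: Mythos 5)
Your proof is correct, and it takes a genuinely different route from the paper's. The paper reduces $\{V_s\}_{s < \ztr}$ to a two-state birth--death chain absorbed at $3$ exactly as you do, but then it solves for $\E_1[\ztr]$ in closed form and invokes the Karlin--McGregor/Miclo representation of birth--death hitting times as sums of independent geometric random variables, extracting the tail bound from the geometric component with dominant mean. You instead use a first-moment argument: decompose $[0,T)$ into excursions from $V=1$ into $V=2$, observe that by regularity and triangle-freeness every reachable $V=2$ configuration has identical local statistics so each excursion escapes upward with the same probability $q_n$, and conclude $\P[\ztr < T] \leq q_n\,\E[N_T]$ by the strong Markov property and a union bound; bounding $\E[N_T]$ termwise finishes the job. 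This is more elementary (no spectral/hitting-time structure theorem needed), is self-contained, and in fact the resulting bound $q_n \cdot T \cdot mc/n^2 = \epsilon n / \bigl(3(n + (m-2)c)\bigr) \leq \epsilon/3$ holds without any asymptotics, so the uniformity in $G$ and $c$ required by the statement is immediate rather than hidden in an $O(n^2)$ error term. Two minor remarks: your aside tying the constant $3$ to the role of $\epsilon_0$ is slightly off --- the $1+o_n(1)$ slack you mention is in $n$, not $\epsilon$, and as noted the bound actually holds exactly --- and you use the correct boundary count $2(m-1)$ for a triangle-free adjacent pair, whereas the paper's matrix $K$ writes $2m-1$ (its downstream asymptotic $\E_1[\ztr] \sim n^3/(c^2 m(m-1))$ is nonetheless consistent with $2(m-1)$); neither point affects the validity of your argument.
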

\begin{proof}
Define the matrix 
\be \label{EqSmallMat}
K = 
\begin{pmatrix}
1 - \frac{cm}{n^{2}} & \frac{cm}{n^{2}} & 0\\
\frac{2}{n}(1 - \frac{c}{n}) & 1 - \frac{c(2m-1)}{n^{2}} - \frac{2}{n}(1 - \frac{c}{n}) & \frac{c(2m-1)}{n^{2}} \\
0 & 0 & 1
\end{pmatrix}.
\ee 
It is straightforward to check that, for $0 \leq s < \ztr$ and $a \in \{1,2,3\}$,
\be  \label{EqSmallMatRel}
\P[V_{s+1} = a | X_{s}] = K[V_{s},a].
\ee 
Most significantly, $X_{s}$ appears on the right-hand side only through $V_{s}$.
We justify this by considering the various cases. If $V_{s} = 1$, the transition probabilities $\P[V_{s+1} = i|X_s]$ depend only on the number of vertices labelled 1 at time $s$ (this is 1) and the number of vertices adjacent to this vertex (since our graph is $m$-regular, this is $m$). If $V_{s} = 2$, the transition probabilities $\P[V_{s+1} = i|X_s]$ depend only on the number of vertices labelled 1 at time $s$ (this is 2), the number of vertices labelled 1 adjacent to other vertices labelled 1 (this is also 2) and the number of vertices labelled 0 adjacent to vertices labelled 1 (since our graph is both $m$-regular and triangle-free, this is $2m-1$).\footnote{The transition probability $\P[V_{s+1} = i|V_s = 3]$ is independent of $X_{s}$ only if $G$ contains no 4-cycles; this is of course not the case for our graphs of interest.}\par 
By direct computation, 
\be 
\E_{1}[\ztr] &= \big( \frac{n^{2}}{cm} - 1 \big) + \E_{2}[\ztr] \\
\big( \frac{2}{n} \big( 1 - \frac{c}{n} \big) + \frac{c (2m-1)}{n^{2}} \big) \E_{2}[\ztr] &= 1 + \frac{2}{n} \big( 1 - \frac{c}{n} \big) \E_{1}[\ztr],
\ee 
and so we obtain
\be \label{AsympHittingTime}
\E_{1}[\ztr] = \frac{n^{3}}{c^{2} m (m-1)} + O \big( n^{2} \big).
\ee 
We recall from \cite{KaMc59} (and in particular the rewriting and generalization of that theory in \cite{Micl10}) that the distribution of $\ztr$ is given by a weighted sum of two geometric distributions. Thus, denoting $\mathrm{Geom}(q)$ for the geometric distribution with mean $q$, we have
\be \label{EqMiclRep1}
\ztr \stackrel{D}{=} a_{1} \mathrm{Geom}(\theta_{1}) + a_{2} \mathrm{Geom}(\theta_{2}),
\ee 
where by inequality \eqref{AsympHittingTime}, 
\be 
a_{1} \theta_{1} + a_{2} \theta_{2} = \frac{n^{3}}{c^{2} m (m-1)} + O \big( n^{2} \big).
\ee 
This implies that, for some $j \in \{1,2\}$ and all $n$ sufficiently large, $a_{j} \theta_{j} \geq \frac{n^{3}}{3 c^{2} m (m - 1)}$. Assume without loss of generality that $j = 1$. We then have
\be 
\P \big[a_{1} \theta_{1} < \epsilon \frac{n^{3}}{3 c^{2} m (m-1)} \big] = O \big( \epsilon \big).
\ee 
Combining this with formula \eqref{EqMiclRep1} completes the proof.
\end{proof}

We now conclude with the lower bound in Theorem \ref{ThmMainResult}:

\begin{thm} [Lower Bound on Mixing Time] \label{ThmLowerBoundOnMixingTime}
Fix an $m$-regular $(m>1)$ graph $G$ with no triangles. Then the KCIP on $G$ with success probability $p = \frac{c}{n}$ has mixing time satisfying
\be 
\tmix \geq C {1 \over \mathcal{Z}_{c}} \frac{n^3}{m(m-1)}
\ee 
where $C$ does not depend on $c$ or $G$ and
\be 
\mathcal{Z}_{c} = \frac{c^{2}  \max(1, -\log_{2}(\big( c + \frac{c^{2}}{2} \big) \frac{e^{-c}}{1 - e^{-c}}))}{ \big( c + \frac{c^{2}}{2} \big) \frac{e^{-c}}{1 - e^{-c}} } > 0.
\ee 
\end{thm}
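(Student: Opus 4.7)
The plan is to prove the lower bound via a classical distinguishing-set argument, using Lemma \ref{LemmaNateshLowerBoundOrig} as the main engine. I would start the chain at any configuration $x_0 \in \Omega$ with $|x_0| = 1$ and choose the distinguishing set $A = \{x \in \Omega : |x| \geq 3\}$. The goal is to verify that $\|\mathcal{L}(X_T) - \pi\|_{TV} \geq 1/4$ whenever $T$ is smaller than the claimed bound, by comparing $\pi(A)$ to $\P_{x_0}[X_T \in A]$.

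For the stationary side, a direct computation from \eqref{eqn:pistat} shows that under $\pi$ the number of particles $V$ is Binomial$(n,c/n)$ conditioned on $V > 0$, so the classical Poisson approximation gives
\begin{equation*}
\pi(V=k) \;=\; \frac{c^{k}e^{-c}}{k!\,(1-e^{-c})} + o(1)\qquad (n\to\infty),
\end{equation*}
and hence $\pi(A) = 1 - q_c + o(1)$, where $q_c = (c+c^2/2)e^{-c}/(1-e^{-c})$. For the transient side, any realization with $X_T \in A$ must satisfy $\ztr \leq T$, so $\P_{x_0}[X_T \in A] \leq \P_{x_0}[\ztr \leq T]$; by Lemma \ref{LemmaNateshLowerBoundOrig} this is $O(\epsilon)$ when one takes $T = \epsilon \cdot n^3/(3c^2 m(m-1))$ with $\epsilon$ sufficiently small. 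Combining the two bounds,
\begin{equation*}
\|\mathcal{L}(X_T) - \pi\|_{TV} \;\geq\; \pi(A) - \P_{x_0}[X_T \in A] \;\geq\; (1-q_c) - O(\epsilon) - o(1).
\end{equation*}
When $q_c$ is bounded away from $1$, selecting $\epsilon$ proportional to $1-q_c$ keeps the right-hand side above $1/4$ and produces $\tmix \geq C(1-q_c)\,n^3/(c^2 m(m-1))$, which matches $C/\mathcal{Z}_c \cdot n^3/(m(m-1))$ up to constants in this regime.

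\paragraph{Main obstacle.} The principal difficulty is the regime $q_c \to 1$ (small $c$), in which $\pi(A)$ is itself small and the naive TV comparison above cannot force the distance above $1/4$. Recovering the precise constant $\mathcal{Z}_c$ with its $\max(1,-\log_2 q_c)$ factor appears to require either a more refined distinguishing statistic -- for example, one that counts excursions of $V_t$ into $\{\geq 3\}$ over $[0,T]$, rather than asking only about membership at the single time $T$ -- or a conductance/spectral-gap analysis of the bottleneck between $\{|x|\leq 2\}$ and $\{|x|\geq 3\}$. The logarithmic factor would then arise naturally as the effective number of approximately independent excursions one can fit into the window $[0,T]$ before the TV distance falls below the threshold, tying the constant $1/\mathcal{Z}_c$ to the rate at which the process can accumulate the "rare-but-necessary" visits to $\{V\geq 3\}$ mandated by stationarity.
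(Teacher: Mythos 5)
Your proof plan follows the same route the paper takes: start from a state with $V_0=1$, distinguish on the number of particles using $A = \{x \in \Omega : |x| \geq 3\}$, bound $\P_{x_0}[X_T \in A]$ by $\P_{x_0}[\ztr \leq T]$ via Lemma~\ref{LemmaNateshLowerBoundOrig}, and compute $\pi(A)$ by Poisson approximation. Your calculation that $\|\mathcal{L}(X_T) - \pi\|_{\TV} \geq (1-q_c) - O(\epsilon) - o(1)$ with $q_c = (c + c^2/2)\,e^{-c}/(1-e^{-c})$ is correct.

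The gap is in your ``main obstacle'' paragraph. You assume the argument must force the total-variation distance above $1/4$; since $\pi(A) = 1-q_c + o(1)$ can be small as $c \to 0$, you conclude that a more refined statistic such as excursion counts, or a conductance estimate, is needed. This is a misconception about what a mixing-time lower bound requires. Submultiplicativity of the total-variation distance (Lemmas 4.11 and 4.12 of \cite{LPW09}) gives $\tau(\delta) \leq \max(1, \lceil\log_2(1/(2\delta))\rceil)\,\tmix$ for $\delta \in (0,1/2)$. Therefore \emph{any} $n$-independent total-variation lower bound $\delta > 0$, even one that decays with $c$, already yields $T \leq \tau(\delta)$ and hence $\tmix \geq T/\max(1, -\log_2(2\delta))$. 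Taking $\epsilon$ to be a small constant multiple of $1-q_c$ gives $\delta = \tfrac{1}{2}(1-q_c)$ and produces the $\max(1, -\log_2(\cdot))$ factor inside $\mathcal{Z}_c$ automatically. This is precisely how the paper finishes; no excursion counting or conductance analysis is involved. (A side remark: your computation yields $1-q_c$ where the stated $\mathcal{Z}_c$ has $q_c$; the paper's own derivation is not internally consistent on this point, and your form appears to be the correct one, though this does not affect the structure of the argument.)
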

\begin{proof}
Fix $\epsilon > 0$, let $T = \lfloor \epsilon \frac{n^{3}}{c^{2} m(m-1)} \rfloor$ and define the set $A = \{x \in \Omega \, : \, \sum_{v \in G} x[v] \leq 2 \}$. 
Let $X_0$ be such that $V_0 =1$. From Lemma \ref{LemmaNateshLowerBoundOrig} and Equation \ref{eqn:pistat} for the stationary
distribution $\pi$, we calculate:
\be 
\| \mathcal{L}(X_{T}) - \pi \|_{\TV} &\geq \pi(A^{c}) - \P[X_{T} \in A^{c}] \\
&\geq \pi(A^{c}) -  \P \big[\tau_{\mathrm{triple}} < T \big] \\
&=  1 - n \frac{c}{n} (1 - \frac{c}{n})^{n-1} \frac{1}{1 - (1-\frac{c}{n})^{n}}- \frac{n(n-1)}{2} \frac{c^{2}}{n^{2}} (1 - \frac{c}{n})^{n-2}\frac{1}{1 - (1-\frac{c}{n})^{n}} + O(\epsilon) \\
&= \big( c + \frac{c^{2}}{2} \big) \frac{e^{-c}}{1 - e^{-c}}(1 + o(1)) + O(\epsilon).
\ee 
Thus, for $\epsilon \ll  \frac{1}{4} \big( c + \frac{c^{2}}{2} \big) \frac{e^{-c}}{1 - e^{-c}} $ sufficiently small, 
\be
\| \mathcal{L}(X_{T}) - \pi \|_{\TV} \geq   \frac{1}{2} \big( c + \frac{c^{2}}{2} \big) \frac{e^{-c}}{1 - e^{-c}} > 0
\ee uniformly in $n > N(\epsilon)$ sufficiently large. Since the mixing profile satisfies 
\be 
\tau \Big(\frac{1}{2} \big( c + \frac{c^{2}}{2} \big) \frac{e^{-c}}{1 - e^{-c}}\Big) \leq \max(1, -\log_{2}(2\big( c + \frac{c^{2}}{2} \big) \frac{e^{-c}}{1 - e^{-c}})) \tau_{\mathrm{mix}},
\ee 
by Lemmas 4.11 and 4.12 of \cite{LPW09}, this implies
\be 
\tau_{\mathrm{mix}} \geq \frac{T}{ \max(1, -\log_{2}(2\big( c + \frac{c^{2}}{2} \big) \frac{e^{-c}}{1 - e^{-c}}))}
\ee 
for $\epsilon$ sufficiently small. This completes the proof.
\end{proof}

\section{Mixing times of the trace of KCIP on $\Omega_k$} \label{SecCompToExc}

In this section, we obtain bounds on the mixing times of the trace of the KCIP on $\Omega_{k}$. As these trace walks mix substantially more quickly than the KCIP Markov chain $X_t$ on $\Omega$, these mixing time bounds do not need to be tight for arguments to go through. \par
Fix $1 \leq k \leq \frac{n}{2}$, and let $Q_{n,k}$ be the kernel of the trace of $\{X_{t}\}_{t \in \mathbb{N}}$ on $\Omega_{k}$ (recall that the trace of a Markov chain on a subset of its state space is defined in formula \eqref{EqDefRestChain}). Denote by $\tau_{n,k}$ the mixing time of $Q_{n,k}$. The key result of this section is:

\begin{lemma} [Mixing Time of Restricted Walks] \label{CorMixingTimeRestWalk}
With the notation as above, 
\be 
\tau_{n,k} \leq  C n^{2 + \frac{2}{d} } \log(n)
\ee 
for some constant $C = C(c,k,d)$ that does not depend on $n$.
\end{lemma}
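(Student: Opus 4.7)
Following Step 2 of the outline in Section \ref{sec:proofsketch}, the plan is to bound $\tau_{n,k}$ via a Dirichlet-form comparison between $Q_{n,k}$ and the $\frac{1}{2}$-lazy discrete-time simple exclusion process ($\mathrm{SEP}_{n,k}$) with $k$ particles on $\LL$, then import Yau's log-Sobolev bound $(\alpha^{\mathrm{SE}}_{n,k})^{-1} = O(n^{1+\frac{2}{d}})$ from \cite{Yau97}. Since $|x|=k$ is constant on $\Omega_{k}$, the stationary distribution of $Q_{n,k}$ is simply the uniform measure $\pi_{k}$ on $\Omega_{k}$, which matches, up to the non-adjacency constraint, the stationary distribution of $\mathrm{SEP}_{n,k}$.

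First, I would identify the effective moves of $Q_{n,k}$. Starting from $x \in \Omega_{k}$, the KCIP leaves $\Omega_{k}$ only when some occupied vertex $v$ spawns an empty neighbor $w$, an event with per-step probability $\Theta(n^{-2})$. Once in the adjacent-pair state $\{v,w\}$, with probability $1 - O(n^{-1})$ the chain re-enters $\Omega_{k}$ after $O(n)$ further steps by clearing one of $v,w$ before a third particle appears on the boundary of $\{v,w\}$ (here I would use $k$ bounded and $d \geq 3$, so such extra spawns are suitably rare). This gives
\be
Q_{n,k}(x,x') = \Theta(n^{-1})
\ee
whenever $x'$ is obtained from $x$ by sliding a single particle across one edge into a position non-adjacent to the other $k-1$ particles, and is of smaller order otherwise.

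Next, I would apply a Diaconis--Saloff-Coste canonical-paths comparison to conclude $\mathcal E_{Q_{n,k}}(f,f) \geq C n^{-1} \mathcal E_{\mathrm{SEP}_{n,k}}(f,f)$. The bulk of SEP transitions $(x,x')$ have both endpoints in $\Omega_{k}$ with no adjacency obstruction and are matched by single $Q_{n,k}$-moves of comparable magnitude (up to the slowdown factor $n^{-1}$). The remaining $O(n^{-1})$ fraction of SEP moves that exit $\Omega_{k}$ would be routed through short auxiliary $Q_{n,k}$-paths that first displace an offending particle; bounding the congestion induced by these rerouted paths is the central technical point. Combining this comparison with Yau's bound yields $\alpha_{n,k}^{-1} = O(n^{2+\frac{2}{d}})$, and since $|\Omega_{k}| \leq n^{k}$ with $k$ bounded we have $\log(1/\min_x \pi_k(x)) = O(\log n)$, so the standard conversion from the log-Sobolev (or spectral gap) constant to total variation mixing time yields $\tau_{n,k} = O(n^{2+\frac{2}{d}} \log n)$.

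The main obstacle is the canonical-paths step itself. The SEP freely swaps adjacent occupied/empty sites, while $Q_{n,k}$ is forbidden from ever occupying non-$\Omega_{k}$ configurations and hence cannot directly simulate SEP moves that would transiently create an adjacent pair. One must verify that for $d \geq 3$ and $k$ bounded this "bad" set carries only an $O(n^{-1})$ share of the SEP Dirichlet form and that the rerouted paths induce only $O(1)$ congestion; this is precisely the point where the higher-dimensional geometry ($d \geq 3$) is essential, paralleling the well-known breakdown of the comparison in $d = 1$.
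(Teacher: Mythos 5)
Your high-level plan matches Step~2 of the paper's outline and the structure of Section~\ref{SecCompToExc}: compare the Dirichlet form of the trace chain $Q_{n,k}$ to that of the simple exclusion process with a factor-of-$n$ loss, import Yau's bound $\alpha^{\mathrm{SE}}_{n,k}\geq C n^{-1-2/d}$, and convert the resulting log-Sobolev estimate to a mixing-time bound. The final steps (inequalities \eqref{IneqCompLogSobConcMain} and the conclusion of the proof of Lemma~\ref{CorMixingTimeRestWalk}) are exactly as you describe.

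The genuine gap is in the comparison mechanism. You propose ``a Diaconis--Saloff-Coste canonical-paths comparison'' and, for SEP moves that leave $\Omega_k$, paths that ``first displace an offending particle.'' But the Diaconis--Saloff-Coste framework (\cite{DiSa93b,DGJM06}) requires both chains to live on the \emph{same} state space, while $Q_{n,k}$ lives on $\Omega_{n,k}$ and $Q^{\mathrm{SE}}_{n,k}$ on the strictly larger $\Omega^{\mathrm{SE}}_{n,k}$. There is no path inside $\Omega_{n,k}$ that terminates at a SEP state with two adjacent particles, so ``rerouting'' inside $\Omega_{n,k}$ does not by itself make sense of an SEP edge with one or both endpoints outside $\Omega_{n,k}$. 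The paper stresses this exact point and instead invokes a generalization (Theorem~\ref{ThmDirGenChain} and Lemma~\ref{LemmaVarLogSobComp}, from \cite{Smit14a}): one must specify, for each $x\in\Omega^{\mathrm{SE}}_{n,k}\setminus\Omega_{n,k}$, a probability measure $\P_x$ on $\Omega_{n,k}$ that projects $x$ to nearby allowed configurations, build the linear extension $\widehat f(x)=\sum_y\P_x[y]f(y)$, and then control the Dirichlet form $\mathcal{E}_{Q^{\mathrm{SE}}}(\widehat f,\widehat f)$ via flows and couplings $\P_{x,y}$; in particular, SEP edges with \emph{both} endpoints outside $\Omega_{n,k}$ must also be accounted for, which your sketch omits. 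Secondary points: your claim $Q_{n,k}(x,x')=\Theta(n^{-1})$ is off by a factor of $n$ --- the trace chain moves only when the KCIP leaves $\Omega_k$, an event of per-step probability $\Theta(n^{-2})$, so $Q_{n,k}(x,x')=\Theta(n^{-2})$ (your downstream claim $\mathcal{E}_{Q_{n,k}}\geq Cn^{-1}\mathcal{E}_{\mathrm{SEP}}$ survives only because SEP single-edge probabilities are themselves $\Theta(n^{-1})$). Also, the comparison in Lemma~\ref{LemmaCompLogSob} is not where $d\geq3$ enters; that hypothesis is used elsewhere (coalescence-time bounds and the lower bound), not in bounding the congestion constant $\mathcal{A}$.
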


We proceed by using comparison theory developed for proving useful mixing bounds for a Markov chain by comparing its transition rates to a similar and better-understood chain (see, \textit{e.g.}, \cite{DiSa93b} or \cite{DGJM06} for an introduction to this method). We will use the simple exclusion process (SE) on  $\LL$ as the basis of our comparison argument (see \cite{ClSu73, HoLi75} for an introduction to the simple exclusion process).
\begin{defn} [Simple Exclusion Process on $\LL$] \label{DefSimpleExclusion}
The simple exclusion process $\{ Z_{t} \}_{t \in \mathbb{N}}$ is a Markov chain on the finite state space
\be \label{EqDefOmegaTildeK}
\Omega^{\mathrm{SE}}_{n,k} \equiv \{ Z \in \{ 0, 1 \}^{n} \, : \, \sum_{i} Z[i] = k \}.
\ee
To update $Z_{t}$, choose two adjacent vertices $u_{t}, v_{t} \in \Lambda(L,d)$ uniformly at random and set
\be 
Z_{t+1}[u_{t}] &= Z_{t}[v_{t}], \\
Z_{t+1}[v_{t}] &= Z_{t}[u_{t}] 
\ee 
and $Z_{t+1}[w] = Z_{t}[w]$ for $w \notin \{ u_{t}, v_{t} \}$. 
\end{defn}

The approach in this section is to first note that the simple exclusion process with $k$ particles has good mixing properties (we use the results in \cite{Yau97}, though others would suffice for our purposes) and then use a comparison argument to show that the mixing properties of the trace of the KCIP on $\Omega_{k}$ cannot be much worse. We use the simple exclusion process because it makes both parts of this argument easy: it has already been carefully analyzed, and it is similar enough to the trace of the KCIP that the comparison argument is short and involves only soft arguments. It is likely that the conclusions we need can be achieved by comparison to other processes on the torus.
\subsection{Comparison of Markov chains using Dirichlet forms}
Before stating the main result of this section carefully, for the reader's convenience, we recall some relevant results for comparing Dirichlet forms.  We use the bounds in \cite{Smit14a}, rather than the similar and simpler results from \cite{DiSa93b,DGJM06}, because we will compare chains (KCIP and SE) with different state spaces; the bounds in \cite{DiSa93b,DGJM06} cannot be used in this situation.

\begin{defn} [Norms, Forms and Related Functions]
For a general Markov chain on a finite state space $X$ with kernel $P$ and unique stationary distribution $\pi$, and any function $f \, : \, X \rightarrow \mathbb{R}$ that is not identically 0, we respectively define  the $L_2$ norm, variance, Dirichlet form and entropy form as:
\be \label{FunctionalDefs}
\| f \|_{2, \pi}^{2} &= \sum_{x \in X} \vert f(x) \vert^{2} \pi(x),\\
V_{\pi}(f) &= \frac{1}{2} \sum_{x,y \in X} \vert f(x) - f(y) \vert^{2} \pi(x) \pi(y), \\
\mathcal{E}_{P}(f,f) &= \frac{1}{2} \sum_{x,y \in X} \vert f(x) - f(y) \vert^{2} P(x,y) \pi(x), \\
L_{\pi}(f) &= \sum_{x \in X} \vert f(x) \vert^{2} \log \big( \frac{f(x)^{2}}{\| f \|_{2,\pi}^{2}} \big) \pi(x). 
\ee
Recall that the \textit{log-Sobolev constant} of a Markov transition matrix $P$ is given by
\be \label{EqVarCharAlpha}
\alpha(P) = \inf_{f \neq 0} \frac{\mathcal{E}_{P}(f,f)}{L_{\pi}(f)}.
\ee
\end{defn} 

\begin{defn}[Extensions] \label{DefFlowDistPath}
Let $K,Q$ be the kernels of two $\frac{1}{2}$-lazy, aperiodic, irreducible, reversible Markov chains. Assume that $K$ has stationary measure $\mu$ on a state space $\hTh$ while $Q$ has stationary measure $\nu$ on a state space $\Th \subset \hTh$. Denote by $f$ a function on $\Th$, and call a function $\widehat{f}$ on $\hTh$ an \emph{extension} of $f$ if  $\widehat{f}(x) = f(x)$ for all $x \in \Th$. 
\end{defn}
Next, fix a family of probability measures $\{ \P_{x}[y] \}_{x \in \hTh}$ on $\Th$ that satisfy $\P_{x}(\cdot) = \delta_{x}(\cdot)$ for $x \in \Th$. We will use only extensions of the form
\be \label{EqLinearFamilyExtensions}
\widehat{f}(x) = \sum_{y \in {\Th}} \P_{x}[y] f(y).
\ee
We call extensions of the form \eqref{EqLinearFamilyExtensions} \textit{linear extensions}. \par
Fix a linear extension. For each pair $(x,y) \in \hTh$ with $K(x,y) > 0$, fix a joint probability distribution $\P_{x,y}[a,b]$ on $\Th \times \Th$ satisfying $\sum_{a} \P_{x,y}[a,b] = \P_{y}[b]$ for all $b \in \Th$ and $\sum_{b} \P_{x,y}[a,b] = \P_{x}[a]$ for all $a \in \Th$. This is a coupling of the distributions $\P_{x}, \P_{y}$. \par 
\begin{defn}[Paths, Flows]
Finally, for each $a,b \in \Th$ with $\sum_{x,y \in \hTh} \P_{x,y}[a,b] > 0$, we define a \emph{flow} in $\Th$ from $a$ to $b$. To do so, call a sequence of vertices $\gamma = [ a = v_{0,a,b}, v_{1,a,b}, \ldots, v_{k[\gamma], a,b} = b ]$ a \emph{path} from $a$ to $b$ if $Q(v_{i,a,b}, v_{i+1,a,b}) > 0$ for all $0 \leq i < k[\gamma]$. Then let $\Gamma_{a,b}$ be the collection of all paths from $a$ to $b$. Call a function $F$ from paths to $[0,1]$ a \emph{flow} if $\sum_{\gamma \in \Gamma_{a,b}} F[\gamma] = 1$ for all $a,b$. For a path $\gamma \in \Gamma_{a,b}$, we will label its initial and final vertices by $i(\gamma) = a$, $o(\gamma) = b$. 
\end{defn}

The purpose of these definitions is to provide a way to compare the functionals described in formula \eqref{FunctionalDefs}. If there exists a family of measures $\{ \P_{x} \}_{x \in \hTh}$ so that the associated linear extensions given by formula \eqref{EqLinearFamilyExtensions} satisfy
\be
L_{\nu}(f) &\leq C_{L}\, L_{\mu}(\widehat{f}), \\
\mathcal{E}_{K}(\widehat{f}, \widehat{f}) &\leq C_{\mathcal{E}}\, \mathcal{E}_{Q}(f,f),
\ee
\noindent then the variational characterization of $\alpha$ given in formula \eqref{EqVarCharAlpha} implies
\be \label{IneqContentlessLogSobCompBound} 
\alpha(Q) &\geq \frac{1}{C_{L} C_{\mathcal{E}}} \alpha(K).
\ee

This is the motivation for Theorem 4 and Lemma 2 of \cite{Smit14a}. 
Theorem 4  of \cite{Smit14a} may be restated as:

\begin{thm}[Comparison of Dirichlet Forms for General Chains] \label{ThmDirGenChain}
Let $K,Q$ be the kernels of two  reversible Markov chains. Assume that $K$ has stationary measure $\mu$ on state space $\hTh$ while $Q$ has stationary measure $\nu$ on state space $\Th \subset \hTh$. Fix flow $F$, distributions $\P_{x}$ and couplings $\P_{x,y}$ as in the notation in Definition \ref{DefFlowDistPath} above. Then for any function $f$ on $\Th$ and the linear extension $\hat{f}$ of $f$ on $\hTh$ given by formula \eqref{EqLinearFamilyExtensions},
\begin{equation*}
\mathcal{E}_{K}(\widehat{f}, \widehat{f}) \leq \mathcal{A} \mathcal{E}_{Q}(f,f),
\end{equation*}
where 
\begin{align*}
\mathcal{A} = \sup_{Q(q,r) >0}  \frac{1}{Q(q,r) \nu(q)} & \Big( \sum_{\gamma \ni (q,r)} F[\gamma] k[\gamma] K(i(\gamma),o(\gamma)) \mu(i(\gamma)) \\
&\hspace{1cm}+ 2  \sum_{\gamma \ni (q,r)} k[\gamma] F[\gamma] \sum_{y \in \hTh \backslash \Th} \P_{y}[o(\gamma)] K(i(\gamma),y) \mu(i(\gamma)) \\
&\hspace{1cm}+ \sum_{\gamma \ni (q,r)}  k[\gamma] F[\gamma] \sum_{x,y \in \hTh \backslash \Th \, : \, K(x,y) > 0} \P_{x,y}[i(\gamma),o(\gamma)] K(x,y) \mu(x) \Big).
\end{align*}
\end{thm}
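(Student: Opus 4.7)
The plan is to follow the standard path/flow comparison method of Diaconis--Saloff-Coste, but adapted to handle the fact that the two chains live on different state spaces, which forces us to use the auxiliary couplings $\P_x$ and $\P_{x,y}$ to transport differences in $\widehat{f}$ to differences in $f$. First I would unfold the definition
\be
\mathcal{E}_{K}(\widehat{f},\widehat{f}) = \frac{1}{2}\sum_{x,y\in\hTh} (\widehat{f}(x)-\widehat{f}(y))^{2}\, K(x,y)\mu(x),
\ee
and use the linear extension formula \eqref{EqLinearFamilyExtensions} together with the fact that $\P_{x,y}$ is a coupling of $\P_x$ and $\P_y$ to rewrite
\be
\widehat{f}(x)-\widehat{f}(y) = \sum_{a,b\in\Theta}\P_{x,y}[a,b]\,(f(a)-f(b)).
\ee

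Next I would invoke the flow $F$ to decompose each difference $f(a)-f(b)$ as a telescoping sum along paths in $Q$, namely
\be
f(a)-f(b) = \sum_{\gamma\in\Gamma_{a,b}} F[\gamma]\sum_{(q,r)\in\gamma}(f(q)-f(r)).
\ee
Combining the two identities writes $\widehat{f}(x)-\widehat{f}(y)$ as a weighted sum of $Q$-edge differences with total weight $\sum_{a,b,\gamma}\P_{x,y}[a,b]F[\gamma]k[\gamma]$. A weighted Cauchy--Schwarz inequality then gives
\be
(\widehat{f}(x)-\widehat{f}(y))^{2} \leq \Bigl(\sum_{a,b,\gamma}\P_{x,y}[a,b]F[\gamma]k[\gamma]\Bigr)\sum_{a,b,\gamma}\P_{x,y}[a,b]F[\gamma]\sum_{(q,r)\in\gamma}(f(q)-f(r))^{2}.
\ee
Plugging back into $\mathcal{E}_{K}$ and swapping the order of summation so that the innermost index is the edge $(q,r)$ of $Q$, I obtain a bound of the form $\mathcal{E}_K(\widehat{f},\widehat{f})\leq \sum_{Q(q,r)>0}\mathcal{A}_{q,r}(f(q)-f(r))^{2}Q(q,r)\nu(q)$, which is exactly $\mathcal{E}_Q(f,f)$ weighted by the edge-wise constants $\mathcal{A}_{q,r}$; taking the sup recovers the quantity $\mathcal{A}$ in the theorem.

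The remaining task is bookkeeping: splitting the outer sum $\sum_{x,y\in\hTh}$ into the three regimes $(x,y)\in \Theta\times\Theta$, $(x,y)\in(\hTh\setminus\Theta)\times\Theta\cup\Theta\times(\hTh\setminus\Theta)$, and $(x,y)\in(\hTh\setminus\Theta)\times(\hTh\setminus\Theta)$. In the first regime, $\P_x=\delta_x$ and $\P_y=\delta_y$ force $\P_{x,y}[a,b]=\mathbf{1}_{a=x,b=y}$, producing the pure $K(i(\gamma),o(\gamma))\mu(i(\gamma))$ contribution (the first term of $\mathcal{A}$). In the mixed regime, one of the two marginals collapses to a Dirac mass while the other retains a nontrivial $\P_y$; symmetry between the $(x,y)$ and $(y,x)$ contributions accounts for the factor of $2$ and the term involving $\P_y[o(\gamma)]$. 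In the third regime, both marginals are nontrivial and the full coupling $\P_{x,y}[i(\gamma),o(\gamma)]$ appears.

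I expect the main obstacle to be this last piece of bookkeeping: tracking which marginal of $\P_{x,y}$ plays the role of the source and which plays the sink as one collapses over the edge $(q,r)$, and verifying that reversibility of $K$ (namely $K(x,y)\mu(x)=K(y,x)\mu(y)$) lets one combine the $(x,y)$ and $(y,x)$ contributions cleanly into the stated form. Once this is done, dividing by $Q(q,r)\nu(q)$ and taking the supremum over edges $(q,r)$ with $Q(q,r)>0$ gives the constant $\mathcal{A}$ exactly as stated, completing the proof.
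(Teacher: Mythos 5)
The paper does not prove this theorem; it simply restates Theorem 4 of \cite{Smit14a}, so there is no in-paper proof to compare against. Your overall route---rewrite $\widehat{f}(x)-\widehat{f}(y)$ via the coupling, expand $f(a)-f(b)$ via the flow, bound by a path-based quadratic form, then split the outer sum over $(x,y)$ into the three regimes depending on which endpoints lie in $\widehat{\Theta}\setminus\Theta$---is the right one.

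However, your single ``weighted Cauchy--Schwarz'' step is not correct for the constant $\mathcal{A}$ as stated, and this is a genuine gap, not just bookkeeping. The inequality you write,
\be
(\widehat{f}(x)-\widehat{f}(y))^{2} \leq \Bigl(\sum_{a,b,\gamma}\P_{x,y}[a,b]\,F[\gamma]\,k[\gamma]\Bigr)\sum_{a,b,\gamma}\P_{x,y}[a,b]\,F[\gamma]\sum_{(q,r)\in\gamma}(f(q)-f(r))^{2},
\ee
leaves a prefactor $N(x,y):=\sum_{a,b,\gamma}\P_{x,y}[a,b]F[\gamma]k[\gamma]$ that depends on $(x,y)$ \emph{globally}, averaged over all paths. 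After you reorganize by the edge $(q,r)$, this $N(x,y)$ sits outside the sum over $\gamma\ni(q,r)$ and cannot be absorbed into the per-path factor $k[\gamma]$ that appears in the stated $\mathcal{A}$; you would get terms like $\sum_{\gamma\ni(q,r)}F[\gamma]\,N(i(\gamma),o(\gamma))\,K(i(\gamma),o(\gamma))\mu(i(\gamma))$ rather than $\sum_{\gamma\ni(q,r)}F[\gamma]\,k[\gamma]\,K(i(\gamma),o(\gamma))\mu(i(\gamma))$, and these differ in general. The fix is to apply the inequalities in a nested fashion: Jensen's inequality (convexity of $z\mapsto z^{2}$) over the probability average $\P_{x,y}$, then again over the probability average $F$ on $\Gamma_{a,b}$, and only then Cauchy--Schwarz on the telescoping sum \emph{within a single path}, which is what produces the factor $k[\gamma]$ inside the path sum:
\be
(\widehat{f}(x)-\widehat{f}(y))^{2} \leq \sum_{a,b}\P_{x,y}[a,b]\sum_{\gamma}F[\gamma]\,k[\gamma]\sum_{(q,r)\in\gamma}(f(q)-f(r))^{2}.
\ee
Your remaining case analysis is otherwise on target; the factor of $2$ in the mixed regime does require (as you anticipate) using reversibility $K(x,y)\mu(x)=K(y,x)\mu(y)$ together with a symmetry of the flow under path reversal to fold the $x\in\widehat{\Theta}\setminus\Theta,\,y\in\Theta$ contribution into the displayed $y\in\widehat{\Theta}\setminus\Theta$ term, and that point should be spelled out explicitly rather than left as an expectation.
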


Lemma 2 of \cite{Smit14a} may be restated as:

\begin{lemma} [Comparison of Variance and Log-Sobolev Constants] \label{LemmaVarLogSobComp}
Let $\mu$ be a measure on $\hTh$ and $\nu$ be a measure on $\Th \subset \hTh$. Let $\tilde{C} = \sup_{y \in \Omega} \frac{\nu(y)}{\mu(y)}$. Then for any function $f$ on $\Th$ and linear extension $\hat{f}$ of $f$ on $\hTh$,
\be
V_{\nu}(f) &\leq \tilde{C} V_{\mu}(\widehat{f}), \\
L_{\nu}(f) &\leq \tilde{C} L_{\mu}(\widehat{f}). 
\ee
\end{lemma}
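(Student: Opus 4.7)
My plan is to prove both inequalities by exploiting a common variational structure. Both $V_\pi(g)$ and $L_\pi(f) = \mathrm{Ent}_\pi(f^2)$ can be written as infima over a scalar centering parameter $c$ of an expression whose integrand is pointwise nonnegative. Given this, I would (i) choose $c$ to be the value that is optimal for the extension $\widehat{f}$ under the larger measure $\mu$, (ii) substitute this $c$ as a test value in the infimum representation for the $\nu$-side, (iii) use the defining identity $\widehat{f}(x) = f(x)$ on $\Theta$ together with the pointwise bound $\nu(x) \leq \tilde{C}\mu(x)$ to replace $\nu$ by $\tilde{C}\mu$ on $\Theta$, and finally (iv) enlarge the sum from $\Theta$ back to $\widehat{\Theta}$ using nonnegativity of the integrand, recognizing the final expression as $\tilde{C}$ times the $\mu$-functional evaluated at the chosen optimum.

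Concretely, for the variance I would use $V_\pi(g) = \inf_{c \in \mathbb{R}} \sum_{x}(g(x)-c)^2 \pi(x)$, with optimum at $c = \sum_x g(x)\pi(x)$. Setting $\bar{c} := \sum_{y \in \widehat{\Theta}} \widehat{f}(y)\mu(y)$, the chain
\[
V_\nu(f) \;\leq\; \sum_{x \in \Theta}(f(x)-\bar{c})^2 \nu(x) \;\leq\; \tilde{C}\sum_{x \in \Theta}(\widehat{f}(x)-\bar{c})^2 \mu(x) \;\leq\; \tilde{C}\sum_{x \in \widehat{\Theta}}(\widehat{f}(x)-\bar{c})^2 \mu(x) \;=\; \tilde{C}\,V_\mu(\widehat{f})
\]
realises steps (ii)--(iv) above. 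For the log-Sobolev bound the corresponding representation is $\mathrm{Ent}_\pi(g) = \inf_{c>0} \sum_x \phi_c(g(x)) \pi(x)$ with $\phi_c(t) := t\log(t/c) - t + c$, applied to $g = f^2$; the optimum is again at $c = \sum_x g(x) \pi(x)$, so I would take $\bar c = \|\widehat{f}\|_{2,\mu}^{\,2}$. The essential ancillary fact needed to repeat the four-step chain is the pointwise nonnegativity $\phi_c(t) \geq 0$ on $[0,\infty)$, which follows from $\phi_c(c) = 0$ and $\phi_c''(t) = 1/t > 0$ (equivalently, from the elementary inequality $t\log(t/c) \geq t - c$). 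This nonnegativity replaces the obvious $(\,\cdot\,)^2 \geq 0$ used in the variance argument, and the same four steps then produce $L_\nu(f) \leq \tilde{C}\,L_\mu(\widehat{f})$.

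I do not anticipate a genuine obstacle: once both functionals are written as infima over a centering parameter, the whole argument is a short soft manipulation. The single nonobvious ingredient is the nonnegativity of $\phi_c$, which is a one-line calculus fact. Notice that nothing beyond set inclusion $\Theta \subset \widehat{\Theta}$ and the pointwise ratio bound $\nu(y)/\mu(y) \leq \tilde{C}$ is required; in particular, the linear structure of the extension and the couplings $\P_{x,y}$ that played a central role in Theorem \ref{ThmDirGenChain} are not used here, precisely because Lemma \ref{LemmaVarLogSobComp} bounds functionals that depend on the values of $\widehat{f}$ alone rather than on pairwise differences weighted by the kernel $K$.
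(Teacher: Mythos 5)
Your proof is correct, and the chain of inequalities is exactly the right shape: write the functional as an infimum over a centering parameter, plug in the $\mu$-optimal center on the $\nu$ side, use $\nu \leq \tilde{C}\mu$ on $\Theta$ together with $\widehat{f}|_\Theta = f$, then enlarge to $\widehat{\Theta}$ using pointwise nonnegativity, and recognize the result as $\tilde{C}$ times the $\mu$-functional at its optimum. The nonnegativity of $\phi_c(t) = t\log(t/c) - t + c$ (from convexity plus $\phi_c(c) = \phi_c'(c) = 0$, and $\phi_c(0) = c \geq 0$) is indeed the only nonobvious ingredient. The paper itself does not prove this lemma---it is quoted verbatim from \cite{Smit14a}---but the variational-center argument you give is the standard proof of such $L^1$-type comparison bounds, and it is the natural route. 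One remark worth retaining: you correctly observe that linearity of the extension plays no role here, only the set inclusion $\Theta \subset \widehat{\Theta}$ and the pointwise density ratio bound, so the hypothesis ``linear extension'' in the statement is vacuous for this lemma (though it is essential for Theorem~\ref{ThmDirGenChain}, where pairwise differences and the couplings $\P_{x,y}$ appear). Two small points you should make explicit to be airtight: (i) the variational identity $V_\pi(g) = \inf_c \sum_x (g(x)-c)^2\pi(x)$ and the corresponding identity for entropy require $\pi$ to be a probability measure, which holds here since $\mu$ and $\nu$ are stationary distributions; and (ii) the direct approach of bounding $\nu(x)\nu(y) \leq \tilde{C}^2\mu(x)\mu(y)$ in the bilinear form $\frac{1}{2}\sum_{x,y}|f(x)-f(y)|^2\pi(x)\pi(y)$ would only give the weaker constant $\tilde{C}^2$, which is precisely why the variational representation is needed to obtain the sharp linear dependence on $\tilde{C}$.
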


\subsection{Bounds on KCIP}

Next we prove our results. Fix $1 \leq k \leq {n \over 2}$ and denote by $\alpha_{n,k}$ the log-Sobolev constant of $Q_{n,k}$. Denote by $Q^{\mathrm{SE}}_{n,k}$ and $\alpha^{\mathrm{SE}}_{n,k}$ the kernel and log-Sobolev constant associated with the simple exclusion process with $k$ particles on $\Lambda(L,d)$. We then have:

\begin{lemma} [Comparison of Log-Sobolev Constants] \label{LemmaCompLogSob}
There exist $N = N(c,k,d) < \infty$ and $C = C(c,k,d) < \infty$ so that $n > N$ implies
\be \label{IneqCompLogSobConcMain}
\alpha_{n,k} \geq  C \frac{1}{n} \alpha^{\mathrm{SE}}_{n,k}.
\ee 
\end{lemma}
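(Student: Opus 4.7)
The plan is to apply Theorem \ref{ThmDirGenChain} together with Lemma \ref{LemmaVarLogSobComp} with $K = Q^{\mathrm{SE}}_{n,k}$ on $\hTh = \Omega^{\mathrm{SE}}_{n,k}$ and $Q = Q_{n,k}$ on $\Th = \Omega_k$, and then invoke \eqref{IneqContentlessLogSobCompBound}. The stationary distribution $\nu$ of $Q_{n,k}$ equals $\pi$ conditioned on $\Omega_k$, which is uniform on $\Omega_k$ since every state in $\Omega_k$ has exactly $k$ particles and hence the same $\pi$-weight; similarly, $\mu$ is uniform on $\Omega^{\mathrm{SE}}_{n,k}$. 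A union bound shows that only an $O(k^2/n)$ fraction of the $\binom{n}{k}$ configurations in $\Omega^{\mathrm{SE}}_{n,k}$ contain an adjacent particle-pair, so $\tilde C = |\Omega^{\mathrm{SE}}_{n,k}|/|\Omega_k| = 1 + O(1/n) = O(1)$ for fixed $k, d$. It therefore suffices to exhibit a choice of extensions, couplings, and paths for which the comparison constant $\mathcal{A}$ in Theorem \ref{ThmDirGenChain} is $O(n)$.

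For the linear extensions, set $\P_x = \delta_x$ for $x \in \Omega_k$; for $x \in \Omega^{\mathrm{SE}}_{n,k} \setminus \Omega_k$ let $\P_x$ be the distribution of the first state in $\Omega_k$ visited by a KCIP started from $x$, and define $\P_{x,y}$ via a natural joint coupling of two such KCIPs started from $x$ and $y$. For each SE-edge $(x,y)$ and each $(a,b)$ in the support of $\P_{x,y}$, we build a path $\gamma$ in $Q_{n,k}$ from $a$ to $b$ of length $O(1)$. The generic case, which accounts for all but an $O(1/n)$ fraction of the SE-edges, is $x, y \in \Omega_k$: then $a = x$ and $b = y$ differ by a single particle-hole swap across an edge, so the path has length $1$ and is the direct trace transition $x \to y$. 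The remaining SE-edges that touch $\Omega^{\mathrm{SE}}_{n,k} \setminus \Omega_k$ carry total $\mu$-mass $O(1/n)$, and their contribution to $\mathcal{A}$ can be absorbed into the generic term by taking paths of length $O(1)$.

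The key estimate is the pair of trace-transition-rate bounds $Q_{n,k}(q,r) = \Theta(1/n^2)$ for adjacent-swap neighbors $q, r \in \Omega_k$, while $Q^{\mathrm{SE}}_{n,k}(x,y) = \Theta(1/n)$ for its adjacent-swap neighbors. Indeed, the KCIP leaves $q \in \Omega_k$ only by creating a new particle at one of the $\Theta(k)$ empty vertices adjacent to a particle of $q$, at rate $\Theta(1/n^2)$ per step; conditional on reaching the intermediate state $q^+_{v,v'}$ with particles at adjacent $v, v'$, the excursion resolves in $\Theta(n)$ further steps, typically by deletion of either $v$ or $v'$, each with probability bounded away from $0$ and $1$ uniformly in $n$. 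Combined with $\tilde C = O(1)$ and length-$1$ paths in the generic direct term of $\mathcal{A}$, the ratio $K(x,y)\mu(x)/(Q_{n,k}(q,r)\nu(q)) = O(n)$ yields $\mathcal{A} = O(n)$. The main technical obstacle is the lower bound $Q_{n,k}(q,r) \gtrsim 1/n^2$: we must control the return distribution of KCIP excursions from $q$ into $\Omega_k^c$ and show that a uniformly positive fraction terminate at the specific target $r$, rather than being absorbed back at $q$ itself or wandering into a longer excursion involving three or more mutually adjacent particles. This will follow from a direct comparison showing that while in the $(k+1)$-particle state $q^+_{v,v'}$ the rate of further particle-creation events is $O(1/n)$ smaller than the rate of the two relevant particle-deletion events, so that multi-particle excursions contribute only an $O(1/n)$ correction to the return distribution.
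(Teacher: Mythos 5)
Your overall strategy — apply Theorem \ref{ThmDirGenChain} with $K = Q^{\mathrm{SE}}_{n,k}$ on $\Omega^{\mathrm{SE}}_{n,k}$ and $Q = Q_{n,k}$ on $\Omega_k$, show $\tilde C = O(1)$ and $\mathcal{A} = O(n)$, and invoke \eqref{IneqContentlessLogSobCompBound} — is the right one, and your rate estimates $Q_{n,k}(q,r) = \Theta(n^{-2})$ and $Q^{\mathrm{SE}}_{n,k}(x,y) = \Theta(n^{-1})$ are both correct. The paper achieves the same $O(n)$ factor by peeling off the laziness $c/n$ into an intermediate chain $Q^{\mathrm{NL}}_{n,k}$, then comparing a Metropolis--Hastings intermediary $Q^{\mathrm{MH\text{-}SE}}_{n,k}$ (with transition rate $\Theta(n^{-1})$, matching $Q^{\mathrm{SE}}_{n,k}$) to $Q^{\mathrm{SE}}_{n,k}$ with $\mathcal{A} = O(1)$; your single-shot comparison with $\mathcal{A} = O(n)$ is equivalent and arguably cleaner.

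However, there is a genuine gap in your choice of linear extension. You define $\P_x$ for $x \in \Omega^{\mathrm{SE}}_{n,k} \setminus \Omega_k$ to be the hitting distribution of $\Omega_k$ for a KCIP started from $x$, and you need this to be localized for your ``$O(1)$ paths'' claim to make the second and third terms in $\mathcal{A}$ manageable. But it is not localized. From a state $x$ with $k$ particles including one adjacent pair $\{v,v'\}$, the KCIP overwhelmingly (probability $1 - O(1/n)$) first deletes $v$ or $v'$ and falls into $\Omega_{k-1}$, not $\Omega_k$ --- adding a particle always creates an adjacent pair, so there is no one-step route back to $\Omega_k$. Returning to $\Omega_k$ from $\Omega_{k-1}$ then requires a ``triple'' event on a time scale of order $n^3$ (compare Lemma~\ref{LemmaNateshLowerBoundOrig}), over which the surviving particles each make $\Theta(n)$ random-walk moves and thus diffuse $\Theta(\sqrt{n})$ lattice sites. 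Consequently $\P_x$ spreads its mass over $\Omega_k$-configurations at $\mathrm{SE}$-graph distance $\Theta(\sqrt{n})$ from $x$; the paths $\gamma$ associated to the couplings $\P_{x,y}$ in the second and third sums of $\mathcal{A}$ have unbounded length and pile up on individual edges, destroying the $O(n)$ bound. The paper avoids this by taking $\P_x$ to be uniform on the set of $\Omega_k$-configurations at minimum $d^{\mathrm{SE}}$-distance from $x$, which is localized by construction; substituting that choice (and the independent product coupling, together with uniform flow on minimum-length paths in $G^{\mathrm{MH\text{-}SE}}_{n,k}$) repairs your argument without changing its architecture.
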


\begin{proof}
Our proof consists of comparing a sequence of very similar Markov chains, beginning with the trace of the KCIP and ending with the simple exclusion process. The bulk of our argument goes through Theorem \ref{ThmDirGenChain}, Lemma \ref{LemmaVarLogSobComp} and inequality \eqref{IneqContentlessLogSobCompBound}. \par 
Recall that the state space of $Q_{n,k}$ is $\Omega_{n,k} = \Omega_{k}$ as defined in formula \eqref{EqDefOmegaK}, while the state space of $Q^{\mathrm{SE}}_{n,k}$ is $\Omega^{\mathrm{SE}}_{n,k}$ as defined in formula \eqref{EqDefOmegaTildeK}. 
By the standard `birthday problem' computation, 
\be 
\prod_{i=1}^{k} (1 - (i-1) \frac{(2d+1)}{n}) \leq \frac{\vert \Omega_{n,k} \vert}{\vert \Omega^{\mathrm{SE}}_{n,k} \vert }  \leq 1.
\ee 
Thus, for any fixed $k \in \mathbb{N}$,
\be \label{IneqStatMeas}
1 - o(1) \leq \frac{\vert \Omega_{n,k} \vert}{\vert \Omega^{\mathrm{SE}}_{n,k} \vert } \leq 1
\ee 
as $n$ goes to infinity. Since the stationary distributions $\pi_{n,k}$ and $\pi^{\mathrm{SE}}_{n,k}$ of $Q_{n,k}$ and $Q^{\mathrm{SE}}_{n,k}$ respectively are uniform on $\Omega_{n,k}$ and $\Omega^{\mathrm{SE}}_{n,k}$ respectively, inequality \eqref{IneqStatMeas} implies that
\be \label{IneqBasicStationaryMeasureBound} 
1 < \frac{\pi_{n,k}(x)}{\pi^{\mathrm{SE}}_{n,k}(x)} \leq 1 + o(1)
\ee 
uniformly in $x \in \Omega_{n,k} \subset \Omega^{\mathrm{SE}}_{n,k}$. \par
Next, we define a less-lazy version of $Q_{n,k}$. Note that
\be
Q_{n,k}(x,x) \geq 1 - \frac{2cdk}{n^{2}}, 
\ee
and so for $n$ sufficiently large we can define a less-lazy version $Q^{\mathrm{NL}}_{n,k}$ of $Q_{n,k}$ by
\be 
Q_{n,k} = \big( 1 - \frac{c}{n} \big) \mathrm{Id} + \frac{c}{n} Q^{\mathrm{NL}}_{n,k},
\ee 
where $\mathrm{Id}$ is the identity kernel and $ Q^{\mathrm{NL}}_{n,k}$ is at least $\frac{1}{2}$-lazy itself.  Since $ Q^{\mathrm{NL}}_{n,k}$ is simply a less lazy version of $Q_{n,k}$, it is immediate that the associated Dirichlet forms $\mathcal{E}_{Q^{\mathrm{NL}}_{n,k}}$ and  $\mathcal{E}_{Q_{n,k}}$ satisfy
\be 
\mathcal{E}_{Q_{n,k}}(f,f) \geq \frac{c}{n} \mathcal{E}_{Q^{\mathrm{NL}}_{n,k}}(f,f) 
\ee 
for all $f \, : \, \Omega_{n,k} \rightarrow \mathbb{R}$  with $f \neq 0$. Applying Lemma \ref{LemmaVarLogSobComp} to inequality \eqref{IneqContentlessLogSobCompBound}, this bound implies the following inequality for the log-Sobolev constant $\alpha^{\mathrm{NL}}_{n,k}$ of $Q^{\mathrm{NL}}_{n,k}$:
\be \label{IneqLogSobSimp1}
\alpha_{n,k} \geq \frac{c}{n} \alpha^{\mathrm{NL}}_{n,k}.
\ee 
Thus, to prove inequality \eqref{IneqCompLogSobConcMain} it is sufficient to relate $Q^{\mathrm{NL}}_{n,k}$ and $Q^{\mathrm{SE}}_{n,k}$. \par
 Next, we define $Q^{\mathrm{MH-SE}}_{n,k}$ to be the Metropolis-Hasting chain with proposal chain equal to $Q^{\mathrm{SE}}_{n,k}$ and target measure $\pi_{n,k} = \mathrm{Unif}(\Omega_{n,k})$. If $x \neq y \in \Omega_{n,k}$ satisfy $Q^{\mathrm{MH-SE}}_{n,k}(x,y) > 0$, then $x$ and $y$ differ at exactly two vertices $u,v$, with $x[v] = y[u] = 0$, $x[u] = y[v] = 1$. Let $\phi_{0} = 0$ and inductively let $\phi_{i+1} = \inf\{t > \phi_{i} \, : \, X_{t} \neq X_{\phi_{i}} \}$ be the successive times at which the KCIP changes; we calculate 
\be \label{IneqLazyBasicComp}
Q^{\mathrm{NL}}_{n,k}(x,y) &\geq \frac{n}{c} Q_{n,k}(x,y) \\
&\geq \frac{n}{c} \P[X_{\phi_{1}}[v] = 1 | X_{0} = x] \P[X_{\phi_{2}}[u] = 0 | X_{0} = x, X_{\phi_{1}}[v] = 1] \\
&\geq \frac{n}{c} \frac{c}{n^{2}} \frac{\frac{1}{n}(1 - \frac{c}{n})}{\frac{1}{n}(1 - \frac{c}{n}) + \frac{2cd(k+1)}{n^{2}}} \\
&= \frac{1}{n} \big( \frac{n-c}{n - c + 2cd(k+1)} \big) \\
&\geq \frac{1}{2} \big( \frac{n-c}{n - c + 2cd(k+1)} \big) Q^{\mathrm{MH-SE}}_{n,k}(x,y).
\ee 
To see the second and third lines, consider starting at KCIP at $X_{0}$ and calculating the probability that $X_{\phi_{1}}$ is obtained from $X_{0}$ by changing the label of $v$ from 0 to 1, and that $X_{\phi_{2}}$ is obtained from $X_{\phi_{1}}$ by changing the label of $u$ from 1 to 0; these probabilities are at least $\frac{c}{n^{2}}$ and $\frac{\frac{1}{n}(1 - \frac{c}{n})}{\frac{1}{n}(1 - \frac{c}{n}) + \frac{2cd(k+1)}{n^{2}}}$ respectively. By the same short argument immediately preceding inequality \eqref{IneqLogSobSimp1}, inequality \eqref{IneqLazyBasicComp} implies that for all $n$ sufficiently large the log-Sobolev constant $\alpha^{\mathrm{MH-SE}}_{n,k}$ of $Q^{\mathrm{MH-SE}}_{n,k}$ satisfies
\be \label{IneqLogSobSimp2}
\alpha^{\mathrm{NL}}_{n,k} \geq \frac{1}{4} \alpha^{\mathrm{MH-SE}}_{n,k}.
\ee 
In light of this inequality and inequality \eqref{IneqLogSobSimp1}, to prove inequality  \eqref{IneqCompLogSobConcMain} it is enough to relate $Q^{\mathrm{MH-SE}}_{n,k}$ and $Q^{\mathrm{SE}}_{n,k}$. \par

We now give the main comparison argument, using Theorem \ref{ThmDirGenChain}. Using the notation of Theorem \ref{ThmDirGenChain}, we will compare kernels $Q = Q^{\mathrm{MH-SE}}_{n,k}$ and $K = Q^{\mathrm{SE}}_{n,k}$ on state spaces $\Theta = \Omega_{n,k}$ and $\widehat{\Theta} = \Omega^{\mathrm{SE}}_{n,k}$. Both of these kernels have stationary distributions that are uniform on their respective state spaces. To define the flows, distributions and couplings required by Theorem \ref{ThmDirGenChain}, we need slightly more notation. Define the graphs $G^{\mathrm{MH-SE}}_{n,k}$  and $G^{\mathrm{SE}}_{n,k}$ to have vertices 
\be 
V(G^{\mathrm{MH-SE}}_{n,k}) &= \Omega_{n,k}, \\
V(G^{\mathrm{SE}}_{n,k}) &= \Omega^{\mathrm{SE}}_{n,k} 
\ee 
and edges 
\be 
E(G^{\mathrm{MH-SE}}_{n,k}) &= \{ (x,y) \in G^{\mathrm{MH-SE}}_{n,k} \, : \, Q^{\mathrm{MH-SE}}_{n,k}(x,y) > 0 \}, \\
E(G^{\mathrm{SE}}_{n,k}) &= \{ (x,y) \in G^{\mathrm{SE}}_{n,k} \, : \, Q^{\mathrm{SE}}_{n,k}(x,y) > 0 \}. \\
\ee 
We denote by $d^{\mathrm{MH-SE}}$ and $d^{\mathrm{SE}}$ the usual graph distances on $G^{\mathrm{MH-SE}}_{n,k}$ and $G^{\mathrm{SE}}_{n,k}$ respectively. Next, define the distributions $\{ \P_{x} \}_{x \in \Omega^{\mathrm{SE}}_{n,k}}$ on $\Omega_{n,k}$ by $\P_{x}[\cdot] = \delta_{x}(\cdot)$ if $x \in \Omega_{n,k}$, and
\be 
\P_{x}[\cdot] = \mathrm{Unif} ( \{ y \in \Omega_{n,k} \, : \, d^{\mathrm{SE}}(x,y) = \min_{z \in \Omega_{n,k}} d^{\mathrm{SE}}(x,z) \} )
\ee 
otherwise. We define the couplings $\{\P_{x,y} \}_{x,y \in \Omega^{\mathrm{SE}}_{n,k}, \, Q_{n,k}^{\mathrm{SE}}(x,y) > 0}$ to be the independent couplings $\P_{x,y}[a,b] = \P_{x}[a] \P_{y}[b]$. Finally, for pairs $(a,b)$ satisfying $\sum_{x,y \in \Omega^{\mathrm{SE}}_{n,k}, \, Q_{n,k}^{\mathrm{SE}}(x,y) > 0} \P_{x,y}[a,b] > 0$, we define the flow $F$ on $\Gamma_{a,b}$ to be uniform on all minimum-length paths in $G^{\mathrm{MH-SE}}$ from $a$ to $b$. \par

We now show that the constant $\mathcal{A}$ that the above choices yield is uniformly bounded in $n$. Recall that $\frac{| \Theta |}{| \widehat{\Theta} |} = 1 + o(1)$ and $\frac{Q^{\mathrm{MH-SE}}(x_{1},y_{1})}{Q^{\mathrm{SE}}(x_{2},y_{2})}$ is either $0$ or $1$ when it is defined, and so the constant $\mathcal{A}$ can be bounded by 
\be
\mathcal{A} = (1 + o(1)) \sup_{Q(q,r) >0}   & \Big( \sum_{\gamma \ni (q,r)} F[\gamma] k[\gamma] + 2  \sum_{\gamma \ni (q,r)} k[\gamma] F[\gamma] \sum_{y \in \hTh \backslash \Th} \P_{y}[o(\gamma)]  \\
&\hspace{1cm}+ \sum_{\gamma \ni (q,r)}  k[\gamma] F[\gamma] \sum_{x,y \in \hTh \backslash \Th \, : \, K(x,y) > 0} \P_{x,y}[i(\gamma),o(\gamma)]  \Big).
\ee

To show that $\mathcal{A}$ is uniformly bounded in $n$, it is enough to check that all of the probabilities, couplings and paths that we have defined are \textit{local} in the sense that any particular probability, coupling or path involve only points that are a bounded distance from each other, uniformly in $n$. Since the details of the bounds are not important to us, we give very loose bounds. \par 
$\P_{x}$ is concentrated on points $y$ with $d^{\mathrm{SE}}(x,y) \leq 2k^{2}$. $\P_{x,y}$ is defined only on pairs $(x,y)$ satisfying $d^{\mathrm{SE}}(x,y) = 1$ and thus is supported on pairs $(a,b)$ satisfying  
\be \label{SimpleBoundOnSupportDist}
d^{\mathrm{SE}}(a,b) \leq 4 k^{2} + 1.
\ee 
For $x,y \in \Omega_{n,k}$, let 
\be
C_{x,y} = \max \{ | u - v | \, : u,v \in \LL, \, x[u] + y[u] \geq 1, \, x[v] + y[v] \geq 1\}
\ee
 be the maximum distance between any particles in $x$ or $y$. Set
 \be
 R_{C,n} = \max \{ d^{\mathrm{MH-SE}}(x,y) \, : \, C_{x,y} \leq C\}.
 \ee
 Since $G^{\mathrm{MH-SE}}$ is vertex-transitive and is connected for $n$ sufficiently large, we have for all $n > N(C,k,d)$ sufficiently large that
\be 
R_{C,n} \leq \mathcal{R}_C
\ee 
for some constant $\mathcal{R}_C$ that does not depend on $n$. Thus, for all $n > N(C,k,d)$ sufficiently large and all $x,y$ with $C_{x,y} \leq C$,
\be \label{InequalityPathLength1}
d^{\mathrm{MH-SE}}(x,y) \leq \mathcal{R}_C.
\ee 
If $d^{\mathrm{SE}}(x,y) \leq C$, then there exist $k$ vertices $v_{1},\ldots, v_{k} \in \LL$ that cover the particles of $x$ and $y$:
\be 
\{ v \in \LL \, : \, x[v] + y[v] \geq 1 \} \subset \cup_{i=1}^{k} \mathcal{B}_{2C}(v_{i}).
\ee
 By taking larger balls, this can be turned into a disjoint cover: there exist $1 \leq m \leq k$ vertices $u_{1},\ldots,u_{m}$ so that 
 \be
 \{ v \in \LL \, : \, x[v] + y[v] \geq 1 \} \subset \cup_{i=1}^{m} \mathcal{B}_{3k (C+ \mathcal{R}_C)}(u_{i})
 \ee
 with $\mathcal{B}_{3k (C+ \mathcal{R}_C)}(u_{i}) \cap  \mathcal{B}_{3k (C+ \mathcal{R}_C)}(u_{j}) = \emptyset$ for all $i \neq j$.  By the definition of $\mathcal{R}_C$, no minimal path from $x$ to $y$ can have any particles outside of the cover $\cup_{i=1}^{m} \mathcal{B}_{3k (C+ \mathcal{R}_C)}(u_{i})$; thus, by inequality \eqref{InequalityPathLength1}, for all $x,y \in \Omega_{n,k}$ with $d^{\mathrm{SE}}(x,y) \leq C$ we have that
\be 
d^{\mathrm{MH-SE}}(x,y) \leq R_{3k (C+ \mathcal{R}_C)}.
\ee 
Combining this bound with inequality \eqref{SimpleBoundOnSupportDist}, we conclude that all paths $\gamma$ with $F(\gamma) > 0$ have at most $\mathcal{R}_{3k (4k^{2} + 1 + \mathcal{R}_{4 k^{2} + 1})}$ points. Since balls of radius $\ell$ in the graph $\Lambda(L,d)$ have at most $(2d)^{\ell} +1$ vertices in them, balls of radius $\ell$ in $G^{\mathrm{MH-SE}}_{n,k}$ have at most $(2dk)^{\ell} + k$ vertices in them. Thus, at most $(2dk)^{\mathcal{R}_{3k (4k^{2} + 1 + \mathcal{R}_{4 k^{2} + 1})}} + k$ paths in $G^{\mathrm{MH-SE}}_{n,k}$ with positive support can pass through any given edge. Most importantly, all of these bounds depend only on $k$ and $d$; they are uniform in $n$ sufficiently large. Combining this observation with inequality \eqref{IneqBasicStationaryMeasureBound}  and applying Theorem \ref{ThmDirGenChain}, this implies  
\be \label{IneqCompMainConcProxy}
\mathcal{E}_{Q^{\mathrm{SE}}_{n,k}}(\widehat{f}, \widehat{f}) \leq \mathcal{A}_{k,d} \,\mathcal{E}_{Q^{\mathrm{MH-SE}}_{n,k}}(f,f)
\ee
for some constant $\mathcal{A}_{k,d}$ that depends only on $k$ and $d$. Thus, applying inequality \eqref{IneqStatMeas} and Lemma \ref{LemmaVarLogSobComp} to inequality \eqref{IneqContentlessLogSobCompBound}, we conclude that there exists some constant $C_{k,d}'$ so that
\be 
\alpha^{\mathrm{MH-SE}}_{n,k} \geq C_{k,d}' \alpha^{\mathrm{SE}}_{n,k}.
\ee  
Combining this bound with inequalities \eqref{IneqLogSobSimp1} and \eqref{IneqLogSobSimp2} completes the proof of inequality \eqref{IneqCompLogSobConcMain}.
\end{proof}

Finally, we prove Lemma \ref{CorMixingTimeRestWalk}:

\begin{proof}[Proof of Lemma \ref{CorMixingTimeRestWalk}] 
Translating the main result of \cite{Yau97} into our discrete-time setting, we have
\be 
\alpha^{\mathrm{SE}}_{n,k} \geq C_{d} n^{-1 - \frac{2}{d}},
\ee 
for some constant $C_{d}$ that depends only on $d$. By Lemma \ref{LemmaCompLogSob}, this implies
\be 
\alpha_{n,k} \geq C n^{-2 - \frac{2}{d}}
\ee 
for some constant $C = C(c,d,k)$ that doesn't depend on $n$. Applying inequality 3.3 of \cite{DiSa96c} yields the conclusion. 
\end{proof}

\section{Drift Condition for $V_t$} \label{SecDriftCond}

Recall the process $V_{t} = \sum_{v \in \LL} X_{t}[v]$ from formula \eqref{EqDefNumVerts}. The graph $G_{t}$  is the subgraph of $G=\LL$ induced by the vertices $\{v \in \LL \, : \, X_{t}[v] = 1 \}$, with vertices $V(G_{t})$ and edges $E(G_{t})$. Let $\mathcal{F}_{t}$ denote the $\sigma$-algebra generated by the random variables $\{X_{s} \}_{s \leq t}$.
The key result in this section is the following drift condition on $\{ V_{s} \}_{s \geq t}$: 
\begin{theorem} \label{LemmaContractionEstimate}
There exists some constant $0 < \epsilon_{0} = \epsilon_{0}(c,d)$ independent of $n$ so that for all $0 < \epsilon < \epsilon_{0}$, there exist constants $C_{G} = C(\epsilon,c,d) < \infty$, $\alpha = \alpha(\epsilon,c,d)$ and $N = N(\epsilon,c,d)$ so that, for all $k \in \mathbb{N}$ and all $n > N$,
\be \label{eqn:conddrift}
\E[V_{t + k \epsilon n^{3}} | \mathcal{F}_{t}] \leq (1-\alpha)^{k} V_{t} + C_{G}. 
\ee 
\end{theorem}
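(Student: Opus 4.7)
The plan is to reduce to a one-step drift condition and then carry out a comparison of the KCIP to a coalescing random walk on $\LL$ over time windows of length $\epsilon n^{3}$. By the Markov property applied at times $t + j\epsilon n^{3}$, it suffices to establish the one-step bound $\E[V_{t + \epsilon n^{3}} \mid \mathcal{F}_t] \leq (1 - \alpha) V_t + C_G$; equation (\ref{eqn:conddrift}) then follows by induction on $k$. Moreover, the inequality is trivial when $V_t \leq C_G$, so we may assume $V_t$ is large compared to absolute constants.

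The core of the argument is to give each connected component of $G_t$ a distinct \emph{color}, and to propagate colors forward in time: when the KCIP creates a particle adjacent to one or more existing colored vertices, the new particle inherits a color from some colored neighbor; when two previously separated components become adjacent, their colors are merged. The number of surviving colors at time $s \geq t$ is nonincreasing and dominates the component count $Y_s$. Because the KCIP only creates particles adjacent to existing ones, colors are never spontaneously created, so this coloring is well-defined. The heuristics in Section~\ref{SecRoadmap} suggest that intra-component dynamics occur at rate $\Theta(n^{-1})$ per step and equilibrate quickly, while component-level displacement occurs at rate $\Theta(n^{-2})$ per step; on the $\epsilon n^{3}$ window, each color accumulates $\Theta(\epsilon n)$ displacement steps, while components remain typically of size $O(1)$.

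Next, I would couple the color-center-of-mass trajectories to a system of coalescing simple random walks on $\LL$ run for $\Theta(\epsilon n)$ steps. For $d \geq 3$, two simple random walks on $\mathbb{Z}_L^d$ meet within $O(n)$ steps with probability bounded below by a constant depending only on $d$ — this is precisely the regime where coalescence on the torus is fast, as developed in \cite{Cox89}. It follows that starting from $m$ colors, the expected number of surviving colors after $\epsilon n^{3}$ KCIP steps is at most $(1-\alpha_0) m + C$ for some $\alpha_0 = \alpha_0(c,d) > 0$. To pass from colors back to $V$, I would use that the internal KCIP dynamics within a single component equilibrate on the faster $O(n)$ time scale, so each surviving color contributes on average $O(1)$ particles (the expected component size is bounded in terms of $c, d$). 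Combining these bounds, $\E[V_{t + \epsilon n^{3}} \mid \mathcal{F}_t] \leq C_1 (1 - \alpha_0) V_t + C_G$, and choosing $\epsilon$ (and the absorbing constant $C_G$) appropriately yields $\alpha = 1 - C_1(1-\alpha_0) > 0$.

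The main obstacle is making this coupling rigorous. Three delicate points stand out: (i) components in the KCIP can \emph{split} when an interior particle is deleted, so colored particles may become disconnected and the color dynamics do not literally form a pure coalescing process — this has to be handled by observing that splits produce nearby fragments that either recoalesce quickly or introduce at most $O(V_t)$ extra color classes with small amplitude; (ii) a component's center-of-mass does not perform an exact random walk because its drift depends on the transient shape of the component, so one must exploit the separation of time scales between internal ($\Theta(n^{-1})$) and translational ($\Theta(n^{-2})$) dynamics to average over internal fluctuations and recover an approximate lazy simple random walk at the component level; (iii) the coupling and the size bound on individual components must be uniform in $V_t$ and in the initial configuration $X_t$, so the coalescence estimate from \cite{Cox89} must be applied with care to ensure that the constants do not degrade with the number of initial components. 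With these pieces assembled, the coalescence estimate delivers the geometric drift.
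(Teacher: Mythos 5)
Your high-level plan --- reduce to a one-step drift, track components by color, couple to a coalescing walk on $\LL$, and then convert a bound on the number of surviving colors back into a bound on $V_t$ via the observation that components stay of size $O(1)$ --- matches the paper's strategy in broad outline. The last step, that each color holds $O(1)$ particles on average, is essentially Lemma~\ref{LemmCompPartNumComp}: the excess $\delta_s = V_s - Y_s$ contracts at rate $\Theta(1/n)$, much faster than the $n^3$ window. So that part of your intuition is sound, though you would need to actually exhibit the drift on $\delta_s$.

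The genuine gap is your treatment of component count as (approximately) monotone under the merging rule. You define colors to ``merge'' when two components become adjacent, and then reason as though this reduces the surviving-color count permanently. But in the KCIP, two components can touch, a particle can be deleted, and the two components can separate again without any net change. Equally, a single component can split when an interior particle is removed. So the number of components $Y_s$ is emphatically not a supermartingale-up-to-birth-rate: collisions can be undone, and components can spontaneously bifurcate. You flag this as issue (i) but only hand-wave that splits ``recoalesce quickly or introduce $O(V_t)$ extra color classes with small amplitude''; that is precisely the assertion that needs proof, and it is where most of the work in the paper's proof of this theorem sits. The paper's solution is the corrected component count $\tilde{Y}_s = \sum_{H \in \mathrm{Comp}_s} \cN_H$ (Definition~\ref{DefCorrCompCount}), chosen specifically because $\E[(\tilde{Y}_{t+1}-\tilde{Y}_t)\mathbf{1}_{p_t > c/n}\mid\mathcal{F}_t]=0$, so splits and removals contribute zero in expectation; plus Lemma~\ref{LemmaEL1}, which shows a constant fraction of collisions are between size-1 components and hence strictly decrease $\tilde{Y}$ (each such collision drops $\tilde{Y}$ by $2/3$ in expectation, via $\cN_{H(2)} = 4/3$ vs. $1+1 = 2$). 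Without something like the $\tilde{Y}$ observable and the typical-collision-size analysis, the coalescence coupling alone does not produce a drift on any quantity comparable to $V_t$.

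A secondary, independent difficulty is your proposal to couple the center of mass of each color class to a lazy simple random walk. A component's center of mass does not execute a random walk, even approximately, and averaging over ``internal fluctuations'' on the $O(n)$ time scale is not a cleanly separable operation, because the internal dynamics (adding/removing adjacent particles) and the translational dynamics (which only occur when the component has exactly one particle) are interleaved with no literal time-scale separation in the trajectory. The paper avoids this entirely: in the colored KCIP (Definition~\ref{DefColConIs}), it tracks a single designated particle $Q_s[i]$ per color, chosen at each step to be the longest-lived particle in its component (formula~\eqref{EqDefFunnyQProc}), waits for the first time $\tS{i}$ when no two color-$i$ particles are adjacent, and shows that this single tracked particle --- not a center of mass --- performs exactly a $(1-\frac{cd}{n^2})$-lazy simple random walk until a decoupling time $\tDe{i}$. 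That single-particle tracking is what makes the coupling to the coalescence process $\{R_s\}$ of \cite{Cox89} exact on the good event. Your center-of-mass picture would require an entirely different and harder argument, and it is not clear it can be made to work.
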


Besides a small number of definitions that are explicitly recalled, Theorem \ref{LemmaContractionEstimate} is the only part of Section \ref{SecDriftCond} used in the remaining sections. As the proof of Theorem \ref{LemmaContractionEstimate} is somewhat long, we give an outline:
\begin{enumerate}
\item We show that the number of particles $V_{t}$ is generally close to the number of connected components in $G_{t}$, and so it is enough to bound the latter quantity (see Lemma \ref{LemmCompPartNumComp}). 
\item By embedding a coalescence process into the KCIP, we show that the number of `collisions' between components of the KCIP over the time interval $\{t, t+1,\ldots, t + \epsilon n^{3} \}$ is on the order of the number of connected components in $G_{t}$ (see Lemma  \ref{LemmLbNumCol}). 
\item By direct computation, we show that collisions involving components of size 1 will decrease a certain biased count of the number of connected components $G_{t}$ (see Definition \ref{DefCorrCompCount} and Lemma \ref{LemCalc1}) while other collisions will not increase this observable by much (see Lemma \ref{LemmaCalc3}). 
\item By an argument based on bounding the influence of faraway points, we show that a substantial fraction of collisions occur between components of size 1 (see Lemma \ref{LemmaEL1}).
\item Steps (2)-(4) above will yield that a positive fraction of connected components of $G_{t}$ will be involved in a `good' collision over a reasonable time scale, and this leads to a contraction estimate on a biased count of the number of components (see Lemma \ref{LemmaCompNumCompsNumCols}). By the observation made in (1) above, this leads to our final contraction estimate on $V_{t}$.
\end{enumerate}

Recall from Section \ref{sec:lowerbd} that for a vertex $u$, $\mathrm{Comp}_{t}(u)$ is the collection of vertices contained in the same connected component as $u$ in $G_{t}$. Recall also that $Y_{t}$ denotes the number of connected components in $G_{t}$. Define the \textit{number of excess particles}
\be \label{eqn:deltavy}
\delta_{s} = V_{s} - Y_{s}.
\ee 
The next lemma compares the number of particles to the number of components.
\begin{lemma} \label{LemmCompPartNumComp}
For $n > 2c$ and for all $s,t \geq 0$ 
\be \label{IneqSimpCompNPartNComp}
\E[\delta_{t+s} | \mathcal{F}_{t} ] \leq \big(1 - \frac{1}{2n} \big)^{s} \delta_{t} + 4cd.
\ee 

\end{lemma}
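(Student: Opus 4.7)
\medskip
\noindent\textbf{Proof proposal for Lemma \ref{LemmCompPartNumComp}.}
The plan is to establish a one-step drift inequality of the form
\[
\E[\delta_{t+1} - \delta_t \mid \mathcal{F}_t] \;\leq\; -\tfrac{1}{2n}\,\delta_t + \tfrac{2cd}{n},
\]
and then iterate it by the Markov property to obtain \eqref{IneqSimpCompNPartNComp}. The natural first move is to decompose $\delta_t = V_t - Y_t = \sum_{C \in \mathrm{Comp}_t}(|C|-1)$, so that only components of size $\geq 2$ contribute to $\delta_t$. Then I would do a short case analysis of a single KCIP step at vertex $v_t$: (i) if $v_t \in \partial G_t$ (state $0$ with at least one state-$1$ neighbor) and $p_t \leq p$, a particle is added and the $k(v_t)$ components adjacent to $v_t$ merge into one, so $\delta_{t+1} - \delta_t = +k(v_t)$; (ii) if $v_t$ is a state-$1$ vertex with at least one state-$1$ neighbor and $p_t > p$, a particle is removed and its component breaks into $j(v_t) \geq 1$ pieces, so $\delta_{t+1} - \delta_t = -j(v_t)$; and (iii) otherwise $\delta_{t+1} = \delta_t$.

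From this case analysis,
\[
\E[\delta_{t+1} - \delta_t \mid \mathcal{F}_t] = \tfrac{p}{n}\sum_{v \in \partial G_t}k(v) \;-\; \tfrac{1-p}{n}\sum_{v \in I_t}j(v),
\]
where $I_t$ denotes the set of state-$1$ vertices with at least one state-$1$ neighbor. The two key elementary inequalities are: first, since every vertex in a component of size $\geq 2$ lies in $I_t$, we have $|I_t| = \sum_{|C|\geq 2}|C| \geq \sum_{|C|\geq 2}(|C|-1) = \delta_t$, and $j(v)\geq 1$ gives $\sum_{v\in I_t}j(v)\geq |I_t|\geq \delta_t$; second, $k(v)$ is bounded by the number of neighbors of $v$ in $G_t$, so $\sum_{v\in\partial G_t}k(v)$ is at most the number of edges between $G_t$ and its complement, which is at most $2d\,V_t \leq 2dn$. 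Inserting $p = c/n$ and using $n > 2c$ (so $1-p \geq 1/2$) yields the one-step drift displayed above.

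Finally, I would iterate: applying the one-step bound, conditioning successively, and using the geometric sum,
\[
\E[\delta_{t+s}\mid\mathcal{F}_t] \leq \bigl(1-\tfrac{1}{2n}\bigr)^s\delta_t + \tfrac{2cd}{n}\sum_{k=0}^{s-1}\bigl(1-\tfrac{1}{2n}\bigr)^k \leq \bigl(1-\tfrac{1}{2n}\bigr)^s\delta_t + \tfrac{2cd}{n}\cdot 2n = \bigl(1-\tfrac{1}{2n}\bigr)^s\delta_t + 4cd.
\]
I do not expect any serious obstacle here: the only subtle point is the bookkeeping for $k(v)$ and $j(v)$ when a creation or removal merges or splits components, and the fact that $\delta_t$ (not $V_t$) is the correct quantity to gain contraction, which is precisely captured by the observation $|I_t|\geq \delta_t$ together with the trivial bound $V_t \leq n$ on the gain term.
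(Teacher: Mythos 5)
Your proposal is correct and takes essentially the same approach as the paper: the paper's sets $A_s, B_s, D_s$ and the quantity $N^{\mathrm{adj}}_s(v)$ correspond to your $I_t$, $\partial G_t$, the remainder, and $k(v)$, and both proofs establish the same one-step drift $\E[\delta_{t+1}-\delta_t\mid\mathcal{F}_t]\leq -\tfrac{1-p}{n}\delta_t + \tfrac{2cdV_t}{n^2}$ and then iterate. The only cosmetic difference is that you invoke $V_t\leq n$ immediately, whereas the paper first proves the iterated bound on the event $\sup_i V_i\leq k$ and then sets $k=n$.
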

\begin{proof}
We first show that
\be \label{IneqSimpCompNPartNComp0}
\E[\delta_{t+s} \mathbf{1}_{\sup_{t \leq i \leq t+s} V_{i} \leq k} | \mathcal{F}_{t}] \leq \Big(1 - \frac{1}{n} \big(1 - \frac{c}{n}\big) \Big)^{s} \delta_{t} + \frac{2cdk}{n} \frac{1}{1 - \frac{c}{n}}.
\ee 
Indeed, for $v \in \LL$, let $N^\mathrm{adj}_{s}(v)$ be the number of components of $G_{s}$ that are adjacent to $v$. Define
\be
A_{s} &= \{v \in \LL \, : \, X_{s}[v] = 1, \, |\mathrm{Comp}_{s}(v) | > 1 \}, \\
B_{s} &= \{v \in \LL \, : \, X_{s}[v] = 0, \, N^\mathrm{adj}_{s}(v)  > 0 \}, \\
D_{s} &= \LL \backslash (A_{s} \cup B_{s}).
\ee
Then we have
\be \label{IneqDiffDec}
\E[\delta_{s+1} | \mathcal{F}_{s}] &= \E[\delta_{s+1} | \mathcal{F}_{s}, v_{s} \in A_{s}] \P[v_{s} \in A_{s} | \mathcal{F}_{s}] \\
&\hspace{2cm}+ \E[\delta_{s+1} | \mathcal{F}_{s}, v_{s} \in B_{s}] \P[v_{s} \in B_{s} | \mathcal{F}_{s}] + \E[\delta_{s+1} | \mathcal{F}_{s}, v_{s} \in D_{s}] \P[v_{s} \in D_{s} | \mathcal{F}_{s}] \\
&\leq (\delta_{s} -(1- \frac{c}{n}))\P[v_{s} \in A_{s} | \mathcal{F}_{s}] + (\delta_{s} +\frac{c}{n})\P[v_{s} \in B_{s} | \mathcal{F}_{s}] + \delta_{s} \P[v_{s} \in D_{s} | \mathcal{F}_{s}] \\
&= \delta_{s} \P[v_{s} \in A_{s} \cup B_{s} \cup D_{s} | \mathcal{F}_{s}] - (1- \frac{c}{n})\P[v_{s} \in A_{s} | \mathcal{F}_{s}] + \frac{c}{n} \P[v_{s} \in B_{s} | \mathcal{F}_{s}] \\
&\leq \delta_{s} - \frac{\delta_{s}}{n}\big(1 - \frac{c}{n}\big) + \frac{2 c d V_{s}}{n^{2}}. 
\ee
This can be iterated to give:
\be 
\E[(\delta_{t+s}) \textbf{1}_{\sup_{t \leq i \leq t+s} V_{i} \leq k} | \mathcal{F}_{t}] &= \E[ \E[(\delta_{t+s}) \textbf{1}_{\sup_{t \leq i \leq t+s} V_{i} \leq k} | \mathcal{F}_{t+s-1}] | \mathcal{F}_{t}] \\
&\leq \E \Big[ \Big( \big(1 - \frac{1}{n} \big(1 - \frac{c}{n}\big)\big) \delta_{t+s-1} + \frac{ 2cd V_{t+s-1}}{n^{2}} \Big) \textbf{1}_{\sup_{t \leq i \leq t+s} V_{i} \leq k} \big| \mathcal{F}_{t} \Big]  \\
&\leq \E\Big[ \Big( \big(1 - \frac{1}{n}\big(1 - \frac{c}{n}\big)\big) \delta_{t+s-1} + \frac{ 2cd k}{n^{2}} \Big) \textbf{1}_{\sup_{t \leq i \leq t+s} V_{i} \leq k}  \big| \mathcal{F}_{t} \Big] \\
&\leq \ldots \\
&\leq \big(1 - \frac{1}{n} \big(1 - \frac{c}{n}\big)\big)^{s} \delta_{t} + \frac{2cdk}{n} \frac{1}{1 - \frac{c}{n}},   
\ee 
which is inequality \eqref{IneqSimpCompNPartNComp0}. Since $V_{s} \leq n$, this implies inequality \eqref{IneqSimpCompNPartNComp} for $n > 2c$ and the proof is  finished.
\end{proof}

\subsection{Corrected {number of} components}
Our next goal is to obtain a bound on $\E[Y_{t+s}|\mathcal{F}_t]$ in terms of $Y_t$ over certain time intervals. To this end, we digress briefly and introduce a new object called the ``corrected {number of} components" of a {graph}.
\begin{defn} \label{DefCorrCompCount}
For a graph $H$, we define a Markov chain $\{ H_{i} \}_{i \geq 0}$ with absorbing states as follows. Set $H_{0} = H$. For $i \geq 0$, if all components of $H_{i}$ are size 1, set $H_{i+1} = H_{i}$. Otherwise, select uniformly at random a vertex $v_{i}$ in $H_{i}$ that also has at least one neighbour in $H_{i}$ and set $H_{i+1} = H_{i} \backslash \{ v_{i} \}$.  We then define $\cN_{H} = \E[\lim_{i \rightarrow \infty} \vert H_{i} \vert]$. Since  $H_{i+1} = H_{i}$ for all $i > | H |$, this (random) limit always exists. See Figure \ref{PicEvolutionOfH} for an illustration of the evolution of $H_{t}$ for an initial graph $H_{0}$ with five vertices.
\end{defn}
\begin{defn}
 For $k \in \mathbb{N}$, set
\be
\cN_{k} = \sup \{\cN_{H} \, : \, H \subset \LL, \vert H \vert \leq k \}.
\ee
\end{defn}

\begin{figure} 
\begin{tikzpicture}[scale=0.35]
\draw (1,0) -- (3,0);
\draw (3.5, 0) -- (8,0);
\draw (9, 0) -- (14,0);
\draw (14.5,0) -- (22,0);
\draw  (23,0) -- (30,0);
\draw (0,4) -- (8.5,4);
\draw (10,4) -- (21,4);
\draw (10,8) -- (21,8);
\draw  (22,4) -- (32,4);
\draw (13,8) -- (19,8);
\draw [fill] (2,0) circle [radius=.25];
\draw [fill] (4.0,0) circle [radius=.25];
\draw [fill] (7.0,0) circle [radius=.25];
\draw [fill] (1,4) circle [radius=.25];
\draw [fill] (3,4) circle [radius=.25];
\draw [fill] (5,4) circle [radius=.25];
\draw [fill] (7,4) circle [radius=.25];
\draw [fill] (9.5,0) circle [radius=.25];
\draw [fill] (13,0) circle [radius=.25];
\draw [fill] (15,0) circle [radius=.25];
\draw [fill] (18,0) circle [radius=.25];
\draw [fill] (21.0,0) circle [radius=.25];
\draw [fill] (11,4) circle [radius=.25];
\draw [fill] (13,4) circle [radius=.25];
\draw [fill] (15,4) circle [radius=.25];
\draw [fill] (19.0,4) circle [radius=.25];
\draw [fill] (11,8) circle [radius=.25];
\draw [fill] (13,8) circle [radius=.25];
\draw [fill] (15,8) circle [radius=.25];
\draw [fill] (17,8) circle [radius=.25];
\draw [fill] (19.,8) circle [radius=.25];
\draw [fill] (23,4) circle [radius=.25];
\draw [fill] (25,4) circle [radius=.25];
\draw [fill] (29,4) circle [radius=.25];
\draw [fill] (31,4) circle [radius=.25];
\draw [fill] (25,0) circle [radius=.25];
\draw [fill] (29,0) circle [radius=.25];
\draw [->] (15,7.5) -- (15,4.5);
\draw (15.5,6) node{$\frac{2}{5}$};
\draw [->] (14.2,7.5) -- (4.0,4.5);
\draw (8,7) node{$\frac{2}{5}$};
\draw [->] (15.8,7.5) -- (26.0,4.5);
\draw (21,7) node{$\frac{1}{5}$};
\draw [->] (3,3.5) -- (2.0,0.5);
\draw (3,2) node{$\frac{1}{3}$};
\draw [->] (5,3.5) -- (6.0,0.5);
\draw (6.2,2) node{$\frac{2}{3}$};
\draw [->] (14,3.5) -- (12.0,0.5);
\draw (14.0,2) node{$\frac{1}{3}$};
\draw [->] (15.5,3.5) -- (17.0,0.5);
\draw (17.2,2) node{$\frac{2}{3}$};
\draw [->] (27,3.5) -- (27.0,0.5);
\draw (27.5,2) node{$1$};
 \end{tikzpicture}
\caption{Evolution of the component sizes of $H_{t}$.}
\label{PicEvolutionOfH}
\end{figure}
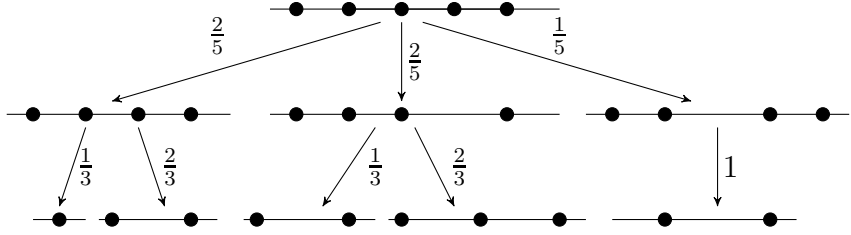

We make some initial observations. By simple case checking, we can show that
$\cN_{1} = {\cN}_{2} = 1, \, {\cN}_{3} = \frac{4}{3},
{\cN}_{4} = \frac{7}{4}.$ We clearly also have
\be  \label{EqTrivialStarCal}
{\cN}_{k}  < k, \quad \, k \geq 2.
\ee
Next, we show that $\cN_{H}$ can never be too small:
\begin{lemma} \label{LemCalc0Eff}
For any subgraph $H \subset \LL$, we have
\be 
\cN_{H} \geq \frac{|H|}{2d+1}.
\ee 
\end{lemma}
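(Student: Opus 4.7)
The plan is to prove $\cN_H \ge |H|/(2d+1)$ by linearity of expectation applied to the survival probabilities of individual vertices, after coupling the random removal process to an equivalent order-statistics process driven by i.i.d.\ labels. Writing $\cN_H = \sum_{v \in H} \P[v \in \lim_{i\to\infty} H_i]$, it will suffice to show that each vertex survives with probability at least $1/(2d+1)$.

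First I would reformulate the process: attach to each $w \in H$ an independent $\mathrm{Unif}[0,1]$ label $U_w$, and at each step remove the vertex carrying the smallest label among the non-isolated vertices of $H_i$, rather than a uniformly chosen one. By exchangeability of the labels conditionally on the history through step $i$, the argmin in the current non-isolated set is uniform on that set, so this alternate process has the same distribution of trajectories as the process defining $\cN_H$, and in particular the same expected limiting size.

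The heart of the argument is the following local survival criterion: if $U_v > U_u$ for every $H$-neighbour $u$ of $v$, then $v$ is never removed. Suppose for contradiction that $v$ were removed at some step $i^{*}$. Then $v \in H_{i^{*}}$ would be non-isolated, so some $H$-neighbour $u^{*}$ of $v$ would still lie in $H_{i^{*}}$; but then $u^{*}$ itself is non-isolated in $H_{i^{*}}$ (having $v$ as a neighbour), and by hypothesis $U_{u^{*}} < U_v$, so the smallest label among non-isolated vertices of $H_{i^{*}}$ is strictly less than $U_v$, contradicting the choice of $v$ at step $i^{*}$. This short contradiction is the only substantive step in the proof.

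Finally, since $\LL$ is $2d$-regular the closed $H$-neighbourhood $\{v\} \cup N_H(v)$ has at most $2d+1$ elements, so by symmetry of the i.i.d.\ labels,
\[
\P\bigl[U_v > U_u \text{ for all } u \in N_H(v)\bigr] = \frac{1}{|\{v\} \cup N_H(v)|} \ge \frac{1}{2d+1}.
\]
Summing this survival bound over $v \in H$ then yields the desired inequality $\cN_H \ge |H|/(2d+1)$. I do not anticipate any serious obstacle beyond spotting the $U$-labelling reformulation; once that is in place the survival criterion is a one-line contradiction and the conclusion is immediate from linearity.
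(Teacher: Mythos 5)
Your proof is correct and follows essentially the same approach as the paper: linearity of expectation applied to per-vertex survival probabilities, combined with the observation that $v$ survives whenever it would be ``last'' within its closed neighbourhood, which occurs with probability at least $1/(2d+1)$ since $\LL$ is $2d$-regular. The paper states the survival criterion informally (``$v\in H_\infty$ as long as it is the last vertex among its neighbours to be selected''), whereas you make it rigorous by coupling to i.i.d.\ $\mathrm{Unif}[0,1]$ priority labels and giving the short contradiction argument; this is a clean way to fill in the detail the paper glosses over.
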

\begin{proof}
Let $\{ H_{i} \}_{i \geq 0}$ be the Markov process given in Definition \ref{DefCorrCompCount}, and let $H_{\infty}$ be its limit. We have
\be \label{EqCalc0EffTriv}
\cN_{H} &= \E \big[ \sum_{v \in H} \textbf{1}_{v \in H_{\infty}} \big] = \sum_{v \in H} \P[v \in H_{\infty}].
\ee 
However, for any particular vertex $v \in H$, $v \in H_{\infty}$ as long as it is the last vertex among its neighbours to be selected as an update vertex $v_{i}$. Since $v$ has at most $2d$ neighbours in $H$,
\be 
\P[v \in H_{\infty}] \geq \frac{1}{2d+1}.
\ee 
Combining this with formula \eqref{EqCalc0EffTriv},
\be 
\tilde{C}_{H} \geq \frac{|H|}{2d+1},
\ee 
completing the proof.
\end{proof}

For $1 \leq k \leq 2d$, let $H(k)$ be the \textit{star graph} with $k$ leaves (see Figure \ref{PicStarGraph} for a star graph with $d=2$, $k=4$). 

\begin{figure}
\begin{tikzpicture}[scale=0.35]
   \draw (1,0) -- (1,1) -- (1,2);
   \draw (0,1) -- (1,1) -- (2,1);
   \draw [fill] (1,1) circle [radius=.25];
   \draw [fill] (0,1) circle [radius=.25];
   \draw [fill] (2,1) circle [radius=.25];
   \draw [fill] (1,0) circle [radius=.25];
 \draw [fill] (1,2) circle [radius=.25];
\end{tikzpicture}
\caption{Star graph} 
\label{PicStarGraph}
\end{figure}
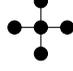

For the purposes of the following lemma, we denote the vertex set of this graph by $V(H(k)) = \{1,2,\ldots, k+1 \}$ and the edge set by $E(V(k)) = \{ (i,k+1) \, : \, 1 \leq i \leq k \}$.
We give basic bounds on how $\cN_{H(k)}$ depends on small changes to the subgraph $H$:

\begin{lemma} [Shrinking Star Graphs] \label{LemCalc1}
 For $\cN_{H}$ as in Definition \ref{DefCorrCompCount},
\be \label{EqCorrCountStar}
\cN_{H(k)} = \frac{k}{2} + \frac{1}{k+1}.
\ee

\end{lemma}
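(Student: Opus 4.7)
The plan is to establish a one-step recurrence for $\cN_{H(k)}$ by conditioning on the identity of the first removed vertex $v_0$, and then to solve the recurrence by induction on $k$.

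At time $0$, every vertex of $H(k)$ (both the center $k+1$ and each leaf $1,\ldots,k$) has at least one neighbour, so $v_0$ is chosen uniformly from all $k+1$ vertices. I would split into two cases: (i) with probability $\tfrac{1}{k+1}$ the center is chosen, after which the graph $H_1$ consists of $k$ isolated leaves, no further removals occur, and $|H_\infty|=k$; (ii) with probability $\tfrac{k}{k+1}$ a leaf is chosen, after which $H_1$ is isomorphic to $H(k-1)$ (the center together with the remaining $k-1$ leaves), and by the Markov property the remaining contribution is $\cN_{H(k-1)}$. This yields
\be
\cN_{H(k)} \;=\; \frac{k}{k+1} \;+\; \frac{k}{k+1}\,\cN_{H(k-1)}.
\ee

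For the base case $k=1$, $H(1)$ is a single edge; the chosen vertex is removed, leaving one isolated vertex, so $\cN_{H(1)} = 1$, which matches $\tfrac{1}{2} + \tfrac{1}{2}$. I then proceed by induction: assuming $\cN_{H(k-1)} = \tfrac{k-1}{2} + \tfrac{1}{k}$, substitute into the recurrence to get
\be
\cN_{H(k)} \;=\; \frac{k}{k+1} + \frac{k}{k+1}\Bigl(\frac{k-1}{2} + \frac{1}{k}\Bigr) \;=\; \frac{k+1}{k+1} + \frac{k(k-1)}{2(k+1)} \;=\; \frac{k^2+k+2}{2(k+1)} \;=\; \frac{k}{2} + \frac{1}{k+1},
\ee
which is the desired identity.

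There is no real obstacle here: the star graph is simple enough that the one-step transition is fully transparent, and the only thing to verify carefully is that the graph obtained after removing a leaf is exactly $H(k-1)$ (so that the recursion closes) and that removing the center terminates the process (so no further vertex has a neighbour). Both are immediate from the definition of $H(k)$.
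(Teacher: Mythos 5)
Your proof is correct and follows essentially the same route as the paper's: you derive the same one-step recurrence $\cN_{H(k)} = \tfrac{k}{k+1} + \tfrac{k}{k+1}\cN_{H(k-1)}$ by conditioning on whether $v_0$ is the center or a leaf, and then close it by induction from $k=1$ (the paper iterates the recurrence down to $\cN_{H(2)}=\tfrac43$ instead, but this is an equivalent bookkeeping choice). The only substantive check — that removing a leaf yields $H(k-1)$ and removing the center terminates the process — is present and correct.
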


\begin{proof}
It is easy to check that $\cN_{H(1)} = 1$ and $\cN_{H(2)} = \cN_{3} = \frac{4}{3}$. Let $\{ H_{i} \}_{i \in \mathbb{N}}$ be the Markov chain described in Definition \eqref{DefCorrCompCount}, and let $\{v_{i} \}_{i \in \mathbb{N}}$ be the associated sequence of vertices. The quantity $\cN_{H(k)}$ satisfies the recurrence:
\be \label{EqShrinkStarRec}
\cN_{H(k)} &= k \P[v_{0} = k+1]   + \P[v_{0} \neq k+1] \cN_{H(k-1)} \\
&= \frac{k}{k+1} + \frac{k}{k+1} \cN_{H(k-1)}.
\ee
Iterating, for all $0 \leq q \leq k-3$ we have:
\be 
\cN_{H(k)} = \frac{1}{k+1}(k + (k-1) + \ldots + (k-q)) + \frac{k-q}{k+1} \cN_{H(k-q-1)}.
\ee 
Setting $q = k-3$ gives formula \eqref{EqCorrCountStar}. 
\end{proof}

More generally,

\begin{lemma} \label{LemCalc2}
For any graph $G$ and all subgraphs $H \subset G$ and all vertices $v \in G$ (adjacent to $H$ or not),
\be 
\cN_{H} \leq \cN_{H \cup \{ v \} } \leq \cN_{H} + 1.
\ee 
\end{lemma}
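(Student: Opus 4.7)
The plan is to couple the Markov chains $\{H_i\}$ and $\{H_i'\}$ started from $H$ and $H \cup \{v\}$ respectively so that the gap $|H_t'| - |H_t|$ always lies in $\{0,1\}$. Since both processes absorb after at most $|V(H)|+1$ steps, this bound descends to $|H_\infty'| - |H_\infty|$ and, on taking expectations, yields both inequalities simultaneously.

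The trivial case is when $v$ has no neighbour in $H$: then $v$ stays isolated in $H_t' = H_t \cup \{v\}$ for every $t$, is never eligible for removal, and the restriction of the chain on $H \cup \{v\}$ to vertex set $V(H)$ is an exact copy of the chain on $H$; so $|H_\infty'| = |H_\infty| + 1$ almost surely and the upper bound is tight. For the main case in which $v$ has at least one neighbour in $H$, I would drive both chains from a single uniformly random vertex $u$ drawn at each step from the eligible set $E_t'$ of $H_t'$, and split into three sub-cases: (i) $u \in E_t$ (remove $u$ from both chains; gap preserved); (ii) $u = v$ (remove $v$ only from $H_t'$; gap drops from $1$ to $0$ and the two chains coincide thereafter); (iii) $u \in E_t' \setminus (E_t \cup \{v\})$, corresponding to a vertex of $H_t$ whose only eligible neighbour is $v$. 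One also needs the straightforward check that, conditional on the chosen $u$ lying in $E_t$, it is uniform on $E_t$, so that the induced $H$-chain has the correct transition law.

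The main obstacle is sub-case (iii): removing $u$ from $H_t'$ alone would break the inclusion $H_t \subseteq H_t'$, while removing it from both sides is not a legal move of the original $H$-chain. The observation that should resolve the difficulty is that any such $u$ is isolated in $H_t$ (otherwise $u \in E_t$), so it can never be selected by the $H$-chain at any later step and is therefore guaranteed to lie in $H_\infty$; stripping $u$ from $H_t$ as a coupling artefact thus costs nothing in the absorbed value $|H_\infty|$ yet preserves the sandwich $H_t \subseteq H_t' \subseteq H_t \cup \{v\}$. Verifying that this bookkeeping genuinely preserves the marginal law of the $H$-chain is the delicate technical step; as a back-up, I would turn to strong induction on $|V(H)|$ using the one-step recursion $\cN_{H'} = |E(H')|^{-1} \sum_{u \in E(H')} \cN_{H' \setminus \{u\}}$, splitting $\cN_{H \cup \{v\}}$ according to whether the first removed vertex is $v$, a neighbour of $v$, or an eligible vertex with no relation to $v$, and invoking the inductive hypothesis on the strictly smaller graphs $(H \cup \{v\}) \setminus \{u\}$ that appear.
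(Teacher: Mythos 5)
Your coupling cleanly gives the upper bound: maintaining $H_t^{\mathrm{ghost}} \subseteq H_t' \subseteq H_t^{\mathrm{ghost}} \cup \{v\}$ yields $|H_\infty'| \leq |H_\infty^{\mathrm{ghost}}| + 1$ pathwise, and since each sub-case-(iii) strip only lowers the ghost's absorbed count we get $|H_\infty^{\mathrm{ghost}}| \leq |H_\infty|$, hence $\cN_{H\cup\{v\}} \leq \cN_H + 1$. The lower bound does \emph{not} follow from this coupling. If $K$ is the (random) number of strips, then $|H_\infty^{\mathrm{ghost}}| = |H_\infty| - K$ exactly (each stripped vertex is isolated in the true $H$-chain at the strip time and would have survived to $H_\infty$), so the sandwich gives only $|H_\infty'| \geq |H_\infty| - K$. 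The strips cannot be ``added back for free'': the sandwich controls the ghost chain, not the true $H$-chain, and nothing places the stripped vertices into $H_\infty'$.

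This obstruction is fatal rather than technical, because the lower bound $\cN_H \leq \cN_{H\cup\{v\}}$ is in fact false in the generality stated. Let $H$ consist of two isolated vertices $u,w$ and let $v$ be adjacent to both. Then $\cN_H = 2$ (every component is a singleton, so the chain never moves), while $H\cup\{v\}$ is the star $H(2)$ of Lemma \ref{LemCalc1}, with $\cN_{H(2)}=\frac{4}{3}<2$. This example lives exactly in your sub-case (iii): choosing $u$ first removes a vertex eligible in $H'$ but isolated in $H$. Your back-up plan, strong induction on $|H|$ via the one-step recursion, is precisely the paper's route, and it does establish the upper bound. But the paper's lower-bound step uses the identity $N(H\cup\{v\}) = N(H)\cup\{v\}$, which fails when $v$ makes a previously isolated vertex of $H$ eligible, which is again this configuration; so the inductive route meets the same wall. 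A correct version must either restrict $H$ (note that $\cN_H$ is only ever applied to connected components in \eqref{eqn:ys}) or weaken the lower inequality to something like $\cN_{H\cup\{v\}} \geq \cN_H - 1$, which is closer to what the downstream argument in Lemma \ref{LemmaCompNumCompsNumCols} actually invokes.
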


\begin{proof}
We begin by showing $\cN_{H \cup \{ v \} } \leq \cN_{H} + 1$ by induction on $m = | H |$. For $\vert H \vert \in \{ 1, 2\}$ this is clear by direct computation. Assume that  $\cN_{H \cup \{ v \} } \leq \cN_{H} + 1$ holds for all $|H| \leq m$. Fix $H$ with $|H| = m+1$, define $N(H)$ to be the vertices of $H$ with at least one neighbour in $H$ and, by the same argument as in recurrence \eqref{EqShrinkStarRec}, 
\be 
\cN_{H \cup \{ v \} } &=  \frac{1}{| N(H \cup \{v \})|} \sum_{u \in N(H \cup \{ v \})} \cN_{H \cup \{v \} \backslash \{ u \} }  \\
&\leq \frac{1}{| N(H \cup \{v \}) |} \cN_{H} \textbf{1}_{v \in N(H \cup \{v \})} + \frac{1}{| N(H \cup \{v \}) |} \sum_{u \in N(H \cup \{v \} ) \backslash \{ v \}} \big( \cN_{H \backslash \{u \}} + 1 \big) \\
&\leq \cN_{H} + 1,
\ee 
where the induction hypothesis is used in the second line. \\
We also prove $\cN_{H} \leq \cN_{H \cup \{ v \} }$ by induction on $m = | H|$. Again, the case that $\vert H \vert \in \{ 1, 2\}$ is clear. Assume that  $\cN_{H} \leq \cN_{H \cup \{ v \} }$ holds for all $|H| \leq m$. Fix $H$ with $|H| = m+1$. We consider two cases: $v$ is adjacent to $H$ or $v$ is not adjacent to $H$. If $v$ is not adjacent to $H$, we clearly have $\cN_{H \cup \{v\}} = \cN_{H} + 1$. If $v$ is adjacent to $H$, then $N(H \cup \{ v \} ) = N(H) \cup \{ v \}$. In this case, 
\be 
\cN_{H \cup \{ v \} } &= \frac{1}{\vert N(H) \vert + 1} \cN_{H} + \frac{1}{\vert N(H) \vert +1} \sum_{u \in N(H)} \cN_{H \cup \{v \} \backslash \{ u \} }  \\ \\
&\geq \frac{1}{\vert N(H) \vert + 1} \cN_{H} + \frac{1}{\vert N(H) \vert + 1} \sum_{u \in N(H)} \cN_{H  \backslash \{ u \} } \\
&= \cN_{H},
\ee 
where the induction hypothesis is used in the second line. This completes the proof.
\end{proof}
\subsection{{A corrected version of} of $Y_t$}
 In order to bound $Y_{s} - Y_{t}$, we introduce the process 
  \be \label{eqn:ys}
  \tilde{Y}_{s} = \sum_{H \in \mathrm{Comp}_{s}} \cN_{H}.
 \ee 
We think of $\{ \tilde{Y}_{s} \}_{s \geq t}$ as a `corrected' version of $\{ Y_{s} \}_{s \geq t}$. In general, $\{\tilde{Y}_{s} \}_{s \geq t}$ is easier to work with than $\{ Y_{s} \}_{s \geq t}$. One reason is the martingale-like property
 \be
 \E[(\tilde{Y}_{t+1} - \tilde{Y}_{t})\textbf{1}_{p_{t} > \frac{c}{n}}|\mathcal{F}_t] = 0,
 \ee
which does not hold for $\{Y_{s}\}_{s \geq t}$ (here $p_t$ is as defined in \eqref{EqCiRep}). In addition, $\{\tilde{Y}_{s} \}_{s \geq t}$ is much better-behaved than $\{Y_{s} \}_{s \geq t}$ over short time intervals, especially when the KCIP is far from equilibrium. Using \eqref{EqTrivialStarCal}, it can be verified that
\be \label{IneqRev1Star1DeltaBd}
Y_{s} \leq \tilde{Y}_{s} \leq Y_{s} + \delta_{s}.
\ee
Since $\delta_{s}$ is often small (see Lemma \ref{LemmCompPartNumComp}), $\tilde{Y}_{s}$ is a good proxy for the quantity $Y_{s}$.

Next, we show that the corrected number of components $\{ \tilde{Y}_{s} \}_{s \geq t}$ does not grow too quickly over a moderate time period:
\begin{lemma}  \label{LemmaCalc3}
Fix $0 < \epsilon \leq \frac{1}{96 c^{2} (d+1)^{3}}$. Then for all $n > N(c,d)$  sufficiently large,
and $n \log(n) \leq s \leq \frac{n^{3}}{96 c^{2} d^{3}}$,
\be
\E[\tilde{Y}_{t + s} \vert \mathcal{F}_{t}] \leq \tilde{Y}_{t} (1 + O(\epsilon)) + O(1).
\ee
\end{lemma}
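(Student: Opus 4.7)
The plan is to decompose the per-step change of $\tilde{Y}_s$ into creation and deletion contributions, exploit a martingale cancellation in the deletion piece, and control the creation contribution using the estimates on $\cN_H$ established earlier in Section \ref{SecDriftCond}.

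First, conditional on a step with $p_t > c/n$, the vertex $v_t$ producing an actual update is uniform over the removable vertices $N(G_t) = \bigcup_{H \in \mathrm{Comp}_t} N(H)$. The defining recurrence of $\cN_H$ used in the inductive proof of Lemma \ref{LemCalc2}, namely $\cN_H = \tfrac{1}{|N(H)|}\sum_{u \in N(H)}\cN_{H \setminus \{u\}}$, then implies $\E[(\tilde{Y}_{t+1} - \tilde{Y}_t)\mathbf{1}_{p_t > c/n} \mid \mathcal{F}_t] = 0$. Thus only creation steps ($p_t \leq c/n$ and $v_t \in \partial G_t$) contribute to the drift of $\tilde{Y}$, and
\begin{equation*}
\E[\tilde{Y}_{t+1} - \tilde{Y}_t \mid \mathcal{F}_t] = \frac{c}{n^2}\sum_{v \in \partial G_t}\Delta_+(v),
\end{equation*}
where $\Delta_+(v)$ is the change in $\tilde{Y}$ produced by a creation at $v$.

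Next I would control $\Delta_+(v)$. Writing $H_1(v),\ldots,H_{k(v)}(v)$ for the components of $G_t$ adjacent to $v$, iterated application of Lemma \ref{LemCalc2} gives $\Delta_+(v) \leq 1 + \sum_{i=2}^{k(v)} |H_i(v)|$. A direct calculation shows $\Delta_+(v) \leq 0$ whenever every $H_i(v)$ is a singleton: adding a vertex to one singleton preserves $\cN$ (since $\cN_{\{u\}} = \cN_{\{u,v\}} = 1$), and Lemma \ref{LemCalc1} shows that merging $k$ singletons into a star strictly decreases $\sum_H \cN_H$. Hence positive contributions come only from $v$ adjacent to at least one non-singleton component, and the count of such $v$ is at most $2d(V_t - Y_t^{(1)}) \leq 4d\,\delta_t$, where $\delta_t = V_t - Y_t$ as in \eqref{eqn:deltavy} and $Y_t^{(1)}$ denotes the number of singleton components. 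Combining with $V_t \leq (2d+1)\tilde{Y}_t$ from Lemma \ref{LemCalc0Eff}, this yields a per-step estimate
\begin{equation*}
\E[\tilde{Y}_{t+1} - \tilde{Y}_t \mid \mathcal{F}_t] \leq \frac{C(c,d)}{n^2}\,\delta_t\,(\tilde{Y}_t + 1).
\end{equation*}

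Summing this over $r = 0,\ldots,s-1$, using the contraction $\sum_{r=0}^{s-1}\E[\delta_{t+r}\mid \mathcal{F}_t] \leq 2n\,\delta_t + 4cd\,s$ from Lemma \ref{LemmCompPartNumComp}, and applying a Gr\"onwall-style bootstrap on $\tilde{Y}_{t+r}$, I expect to obtain
\begin{equation*}
\E[\tilde{Y}_{t+s} - \tilde{Y}_t \mid \mathcal{F}_t] \leq \frac{C(c,d)}{n^2}\bigl(2n\,\delta_t + 4cd\,s\bigr)(\tilde{Y}_t + 1),
\end{equation*}
and for $n\log n \leq s \leq n^3/(96 c^2 d^3)$ together with $\epsilon \leq 1/(96 c^2 (d+1)^3)$ the right-hand side reduces to $O(\epsilon)\tilde{Y}_t + O(1)$. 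The main obstacle is exactly the place where the factor $\delta_t$ must appear: a naive bound using $\Delta_+(v) \leq 1$ and $|\partial G_t| = O(\tilde{Y}_t)$ produces per-step growth $O(\tilde{Y}_t/n^2)$, which iterated over $s = \Theta(n^3)$ steps gives an exponential blow-up $\exp(\Theta(n))$; extracting the prefactor $\delta_t$ via the case analysis for singletons in the second step above is therefore essential. The requirement $s \geq n\log n$ is needed to ensure that the transient contribution $\delta_t\,(1-1/(2n))^{s}$ in Lemma \ref{LemmCompPartNumComp} has decayed below the equilibrium level $4cd$.
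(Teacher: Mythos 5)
Your first two steps are right and match the paper: the martingale cancellation noted after formula \eqref{eqn:ys} kills the removal steps, and the case analysis showing that $\tilde{Y}$ can only increase at a creation step whose vertex borders a size-$\geq 2$ component (with at most $O(d\,\delta_t)$ such boundary vertices) is exactly the paper's observation. However, two problems break the final step. First, the factor $(\tilde{Y}_t+1)$ in your per-step bound is spurious. Lemma \ref{LemCalc2} together with additivity of $\cN$ over disjoint components gives $\Delta_+(v)\leq 1$ outright: if $v$ joins components $H_1,\ldots,H_k$, then $\cN_{H_1\cup\cdots\cup H_k\cup\{v\}}\leq \cN_{H_1\cup\cdots\cup H_k}+1 = \sum_i\cN_{H_i}+1$. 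You do not need the looser vertex-by-vertex bound $\Delta_+(v)\leq 1+\sum_{i\geq 2}|H_i(v)|$. The correct per-step drift is simply $\leq\frac{4cd}{n^2}\delta_t$, with no $\tilde{Y}_t$ factor.

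Second, and decisively, you cannot obtain the lemma by plugging the summed bound $\sum_{r<s}\E[\delta_{t+r}\mid\mathcal{F}_t]\leq 2n\,\delta_t+4cd\,s$ from Lemma \ref{LemmCompPartNumComp} into the per-step estimate. The additive constant $4cd$ there comes from the worst-case bound $V_u\leq n$ (see the passage from \eqref{IneqSimpCompNPartNComp0} to \eqref{IneqSimpCompNPartNComp}), which is far too crude at this scale: for $s$ of order $n^3$, the term $\frac{4cd}{n^2}\cdot 4cd\,s$ is of order $n$, not $O(1)$, and a Gr\"onwall bootstrap on $\tilde{Y}$ cannot fix this because that offending term does not depend on $\tilde{Y}$ at all. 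The paper avoids this by not invoking Lemma \ref{LemmCompPartNumComp} as a black box; it instead keeps the one-step inequality $\E[\delta_u\mid\mathcal{F}_{u-1}]\leq(1-\frac{1}{4n})\delta_{u-1}+\frac{2cd}{n^2}V_{u-1}$ from \eqref{IneqDiffDec}, replaces $V_{u-1}\leq(2d+1)\tilde{Y}_{u-1}$ via Lemma \ref{LemCalc0Eff}, and then substitutes the partial bound $\E[\tilde{Y}_u\mid\mathcal{F}_0]\leq\tilde{Y}_0+\frac{3cd}{n^2}\sum_{w<u}\E[\delta_w\mid\mathcal{F}_0]$ back in. This yields a genuinely coupled recursion for $f(s)=\frac{3cd}{n^2}\sum_{u\leq s}\E[\delta_u\mid\mathcal{F}_0]$ that, when unrolled, gives the sharp estimate \eqref{eqn:sunifbdyt} with the crucial feature that the multiplier of $\tilde{Y}_0$ is $1+O(s/n^3)$, not a constant. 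That coupled $\tilde{Y}$--$\delta$ recursion is the heart of the proof and is what your proposal is missing.
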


\begin{proof}
We will show that for $0 \leq s \leq \epsilon n^3$,
\be \label{eqn:sunifbdyt}
\E[\tilde{Y}_{t + s} \vert \mathcal{F}_{t}] \leq  \tilde{Y}_{t} \big( 1 + \frac{96 c^{2} (d+1)^{3}}{n^{3}} \big( 8n \big(1 - \frac{1}{8n} \big)^{s} + s \big) \big) +  \frac{24 cd}{n} \delta_{t} + \big(1 - \frac{1}{8n} \big)^{s} \delta_{t}.
\ee 
Since $\delta_t \leq n$, Lemma \ref{LemmaCalc3} immediately follows from 
\eqref{eqn:sunifbdyt}. \par
Assume without loss of generality that $t = 0$ and define
\be
f(x) = \frac{3cd}{n^{2}} \sum_{u=0}^{x} \E[\delta_{u} \vert \mathcal{F}_{0}].
\ee
By Lemma \ref{LemCalc2}, $\E[\tilde{Y}_{s+1} | \mathcal{F}_{s}] \leq \tilde{Y}_{s} + 1$ for any time $s$. Inequality \eqref{EqTrivialStarCal} provides further necessary (but not sufficient) conditions for $\tilde{Y}_{s+1} > \tilde{Y}_{s}$ to hold when the update variable $p_{s}$ in representation \eqref{EqCiRep} satisfies $p_{s} < \frac{c}{n}$: the vertex $v_{s}$ in representation \eqref{EqCiRep} must be adjacent to a component of size at least two in $G_{s}$, and furthermore we must have $X_{s}[v_{s}] = 0$. At time $s$, there are at most  $3 d \delta_{s}$ vertices satisfying this necessary condition, and so for $0 \leq s \leq \epsilon n^{3}$,

\be \label{IneqBasicYTilde}
\E[\tilde{Y}_{s} \vert \mathcal{F}_{0}] &= \E[ \E[\tilde{Y}_{s} \vert \mathcal{F}_{s-1}] | \mathcal{F}_{0}] \\
&= \E \Big[ \E \big[\tilde{Y}_{s} \vert \mathcal{F}_{s-1}, \big\{p_{s-1} \leq \frac{c}{n} \big\}\big] \P \big[p_{s-1} \leq \frac{c}{n} | \mathcal{F}_{s-1}\big] \\
&+ \E\big[\tilde{Y}_{s} \vert \mathcal{F}_{s-1}, \big\{p_{s-1} > \frac{c}{n} \big\}\big] \P\big[p_{s-1} > \frac{c}{n} | \mathcal{F}_{s-1}\big] \big| \mathcal{F}_{0}\Big] \\
&\leq \E \big[ \big( \tilde{Y}_{s-1} + \frac{3 d \delta_{s-1}}{n} \big) \P \big[p_{s-1} \leq \frac{c}{n} | \mathcal{F}_{s-1}\big] + \tilde{Y}_{s-1} \P \big[p_{s-1} > \frac{c}{n} | \mathcal{F}_{s-1}\big]  \big| \mathcal{F}_{0} \big]  \\
&\leq \E[\tilde{Y}_{s-1} \vert \mathcal{F}_{0}] + \frac{3 cd}{n^{2}} \E[\delta_{s-1} \vert \mathcal{F}_{0} ] \\
&\leq \ldots \\
&\leq \tilde{Y}_{0} +  \frac{3cd}{n^{2}} \sum_{u=0}^{s-1} \E[\delta_{u} \vert \mathcal{F}_{0}]. \\
\ee 
Lemma \ref{LemCalc0Eff} implies 
\be \label{IneqSimpCorrCompCountVsPart} 
\tilde{Y}_{s} \geq \frac{V_{s}}{2d+1}.
\ee 
Using this fact, and then inequality \eqref{IneqDiffDec} followed by inequality \eqref{IneqBasicYTilde}, we find that for all $n$ sufficiently large,
\be \label{LongCalcOfCorrComp}
f(s) &\equiv \frac{3cd}{n^{2}} \sum_{u=0}^{s} \E[\delta_{u} \vert \mathcal{F}_{0}] \\
&= \frac{3cd}{n^{2}} \big( \delta_{0} +  \sum_{u=1}^{s} \E[\E [\delta_{u} | \mathcal{F}_{u-1}] | \mathcal{F}_{0}] \big) \\
&\leq \frac{3cd}{n^{2}} \big( \delta_{0} + \sum_{u=1}^{s} \E[ \big( 1- \frac{1}{4n} \big) \delta_{u-1} + \frac{2cd V_{u-1}}{n^{2}}  | \mathcal{F}_{0}] \big) \\
&\leq  \frac{3cd}{n^{2}} \big( \delta_{0} +  \big( 1- \frac{1}{4n} \big) \sum_{u=0}^{s-1} \E[ \delta_{u} | \mathcal{F}_{0}] + \frac{4c(d+1)^{2}}{n^{2}} \sum_{u=0}^{s-1} \E[ \tilde{Y}_{u} | \mathcal{F}_{0}] \big) \\
&\leq \frac{3cd}{n^{2}} \big( \delta_{0} +  \big( 1- \frac{1}{4n} \big) \sum_{u=0}^{s-1} \E[ \delta_{u} | \mathcal{F}_{0}] + \frac{4c(d+1)^{2}}{n^{2}} (s-1) \tilde{Y}_{0} + \frac{12 c^{2} (d+1)^{3}}{n^{4}} \sum_{u=0}^{s-2} u \E[\delta_{u} | \mathcal{F}_{0}] \big) \\
&\leq \frac{3cd}{n^{2}} \delta_{0} + \frac{12c^{2}(d+1)^{3}}{n^{4}} (s-1) \tilde{Y}_{0} + \frac{3cd}{n^{2}} \big( 1 - \frac{1}{8n} \big) \sum_{u=0}^{s-1} \E[\delta_{u} | \mathcal{F}_{0}] \\
&= \frac{3cd}{n^{2}} \delta_{0} + \frac{12c^{2}(d+1)^{3}}{n^{4}} (s-1) \tilde{Y}_{0} + \big( 1 - \frac{1}{8n} \big) f(s-1).
\ee
Inequality \eqref{IneqBasicYTilde} is used in the fifth step of this bound and we use the hypothesis that $\epsilon \leq \frac{1}{96 c^{2} (d+1)^{3}}$ in the sixth step. Using this calculation with the last line of inequality \eqref{IneqBasicYTilde}, we have

\be \label{EqLastLineLemmaCalc3}
\E[\tilde{Y}_{s} \vert \mathcal{F}_{0}] &\leq \tilde{Y}_{0} + f(s-1) \\
&\leq \tilde{Y}_{0} \big( 1 + \frac{12 c^{2} (d+1)^{3}}{n^{4}} (s-1) \big) + \frac{3cd}{n^{2}} \delta_{0} + \big(1 - \frac{1}{8n} \big) f(s-2) \\
&\leq \ldots \\
&\leq \tilde{Y}_{0} \big( 1 + \sum_{x=0}^{s-q} \frac{12 c^{2} (d+1)^{3}}{n^{4}} x \big(1 - \frac{1}{8n} \big)^{s-x} \big) + \frac{3cd}{n^{2}} \delta_{0} \sum_{x=0}^{s-q} \big( 1 - \frac{1}{8n} \big)^{x} + \big(1 - \frac{1}{8n} \big)^{s-q} f(q) \\
&\leq \ldots \\
&\leq \tilde{Y}_{0} \big( 1 + \sum_{x=0}^{s}\frac{12 c^{2} (d+1)^{3}}{n^{4}} x \big(1 - \frac{1}{8n} \big)^{s-x} \big) + \frac{3cd}{n^{2}} \delta_{0} \sum_{x=0}^{s} \big( 1 - \frac{1}{8n} \big)^{x} + \big(1 - \frac{1}{8n} \big)^{s} f(0) \\
&\leq \tilde{Y}_{0} \big( 1 + \frac{96 c^{2} (d+1)^{3}}{n^{3}} \big( 8n \big(1 - \frac{1}{8n} \big)^{s} + s \big) \big) +  \frac{24 cd}{n} \delta_{0} + \big(1 - \frac{1}{8n} \big)^{s} \delta_{0}.
\ee 
This completes the proof of inequality \eqref{eqn:sunifbdyt}; by the comment immediately following inequality \eqref{eqn:sunifbdyt}, this completes the proof of the Lemma.
\end{proof}

\subsection{Typical component size involved in collisions}
In this subsection, we study the behaviour of a `typical' collision in the KCIP by looking at the behaviour of the KCIP only for vertices and times close to the collision. This approach is illustrated in Figure \ref{PicBlowUp}.

\begin{figure}
\begin{tikzpicture}[scale=0.5]
\begin{scope}
    \draw (0, 0) grid (10, 10);
    \draw  [fill] (1,3) circle [radius=.25];
    \draw  [fill] (1,7) circle [radius=.25];
    \draw  [fill] (2,5) circle [radius=.25];
    \draw  [fill] (2,9) circle [radius=.25];
    \draw  [fill] (3,1) circle [radius=.25];
    \draw  [fill] (4,6) circle [radius=.25];
     \draw [fill] (6,1) circle [radius=.25];
     \draw [fill] (7,4) circle [radius=.25];
     \draw [fill] (8,2) circle [radius=.25];
     \draw [fill] (8,9) circle [radius=.25];
      \draw [fill] (9,3) circle [radius=.25]; 
      \draw [fill] (9,5) circle [radius=.25];

      \draw [fill] (3,2) circle [radius=.25];
      \draw [fill] (5,6) circle [radius=.25];
      \draw [fill] (5,7) circle [radius=.25];
      \draw [fill] (7,5) circle [radius=.25];
      \draw [fill] (8,8) circle [radius=.25];
      \draw [red] (4.5, 6.5) circle [radius = 1.5];
      \end{scope}
     \begin{scope}
      \draw (17,10) -- (17, 14) -- (20,14) -- (20,10) -- cycle;
      \draw (17,11) -- (18, 11) -- (19,11) -- (20,11);
      \draw (17,12) -- (18, 12) -- (19,12) -- (20,12);
      \draw (17,13) -- (18, 13) -- (19,13) -- (20,13);
      \draw (18,10) -- (18,11) -- (18,12) -- (18,13) -- (18,14);
      \draw (19,10) -- (19,11) -- (19,12) -- (19,13) -- (19,14) ;
       \draw [fill] (18,11) circle [radius=.25];
       \draw [fill] (19,11) circle [radius=.25];
        \draw [fill] (19,12) circle [radius=.25];
      \end{scope}
       
       \begin{scope}
      \draw (12,7) -- (12, 11) -- (15,11) -- (15,7) -- cycle;
      \draw (12,8) -- (13, 8) -- (14,8) -- (15,8);
      \draw (12,9) -- (13, 9) -- (14,9) -- (15,9);
      \draw (12,10) -- (13, 10) -- (14,10) -- (15,10);
      \draw (13,7) -- (13,8) -- (13,9) -- (13,9) -- (13,10)--(13,11);
      \draw (14,7) -- (14,8) -- (14,9) -- (14,10) -- (14,11) ;
      \draw [fill] (13,8) circle [radius=.25];
       \draw [fill] (14,8) circle [radius=.25];
       \end{scope}
      
      \begin{scope}
       \draw (12,0) -- (12, 4) -- (15,4) -- (15,0) -- cycle;
        \draw (12,1) -- (13, 1) -- (14,1) -- (15,1);
      \draw (12,2) -- (13, 2) -- (14,2) -- (15,2);
      \draw (12,3) -- (13, 3) -- (14,3) -- (15,3);
      \draw (13,0) -- (13,1) -- (13,2) -- (13,2) -- (13,3)--(13,4);
      \draw (14,0) -- (14,1) -- (14,2) -- (14,3) -- (14,4) ;
      \draw [fill] (13,1) circle [radius=.25];
      \end{scope}
      \begin{scope}
      \draw (22,7) -- (22, 11) -- (25,11) -- (25,7) -- cycle;
      \draw (22,8) -- (23, 8) -- (24,8) -- (25,8);
      \draw (22,9) -- (23, 9) -- (24,9) -- (25,9);
      \draw (22,10) -- (23, 10) -- (24,10) -- (25,10);
      \draw (23,7) -- (23,8) -- (23,9) -- (23,9) -- (23,10)--(23,11);
      \draw (24,7) -- (24,8) -- (24,9) -- (24,10) -- (24,11) ;
      \draw [fill] (24,8) circle [radius=.25];
       \draw [fill] (24,9) circle [radius=.25];
      \end{scope}
      \begin{scope}
       \draw (22,0) -- (22, 4) -- (25,4) -- (25,0) -- cycle;
       \draw (22,1) -- (23, 1) -- (24,1) -- (25,1);
      \draw (22,2) -- (23, 2) -- (24,2) -- (25,2);
      \draw (22,3) -- (23, 3) -- (24,3) -- (25,3);
      \draw (23,0) -- (23,1) -- (23,2) -- (23,2) -- (23,3)--(23,4);
      \draw (24,0) -- (24,1) -- (24,2) -- (24,3) -- (24,4) ;
      \draw [fill] (24,2) circle [radius=.25];
      \end{scope}
      \begin{scope}
      \draw [red] (4.5,5) -- (20,10);
      \draw [red] (4.5,8) -- (17,14);
      \end{scope}
      
      \begin{scope}
      \draw (17,0) -- (17, 4) -- (20,4) -- (20,0) -- cycle;
      \draw (17,1) -- (18, 1) -- (19,1) -- (20,1);
      \draw (17,2) -- (18, 2) -- (19,2) -- (20,2);
      \draw (17,3) -- (18, 3) -- (19,3) -- (20,3);
      \draw (18,0) -- (18,1) -- (18,2) -- (18,3) -- (18,4);
      \draw (19,0) -- (19,1) -- (19,2) -- (19,3) -- (19,4) ;
       \draw [fill] (18,1) circle [radius=.25];
       \draw [fill] (19,2) circle [radius=.25];
      \end{scope}
      
      \begin{scope}
      \draw [->] (16.8, 12) .. controls (15, 11.9) .. (13.5,11.2)
        node[pos = 0.3, above]{$\frac{1}{3}$};
      \draw [->] (20.2, 12) .. controls (22, 11.9) .. (23.5,11.2)
       node[pos = 0.3, above]{$\frac{1}{3}$};
      \draw [->] (18.5,9.3) -- (18.5,4.5);
      \draw [->]  (13.5,6.5) -- (13.5,4.5);
      \draw [->]  (23.5,6.5) -- (23.5,4.5);
      \draw (14.0, 5.5) node {1};
      \draw (24.0, 5.5) node {1};
      \draw (19.0, 7.0) node{$\frac{1}{3}$};
      \end{scope}
      
\end{tikzpicture}

\caption{Neighborhood of a Collision}
\label{PicBlowUp}
\end{figure}
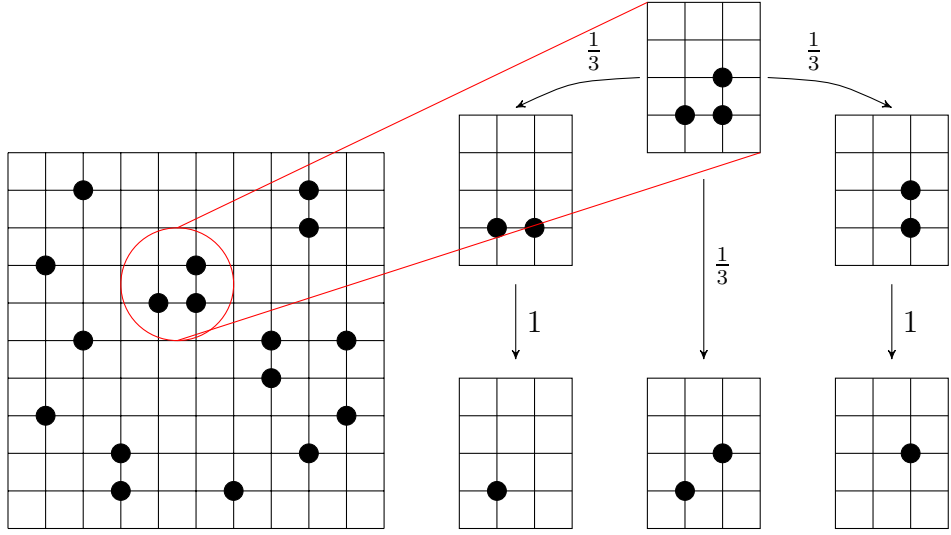 

Fix $t > 0$. Define  the \textit{number of collisions} of the KCIP to be
\be \label{EqDefCCs}
\cC_{s} = \vert \{ t \leq u < s \, : \, Y_{u+1} < Y_{u} \} |,
\ee 
and define the \textit{set of collision times} 
\be 
\colT{s} = \{ t \leq u \leq t + s \, : \, Y_{u+1} < Y_{u} \}.
\ee 
Recall the definition of the update variable $v_{s}$ used in formula \eqref{EqCiRep}. For $u \in \colT{s}$, we define
\be
\colM{u} = \sup \{ \vert \mathrm{Comp}_{u}(w) \vert \, : \, (v_{u}, w) \in E \}
\ee
to be the largest component involved in a collision; for $u \notin \colT{s}$ we set $\colM{u} = 0$. We first show that, given $u \in \colT{s}$, $\colM{u}$ is often equal to 1: 
\begin{lemma} [Typical Component Size] \label{LemmaEL1}
There exists $\delta = \delta(c,d) > 0$ independent of $n$ so that for any $\epsilon >0$
\be \label{IneqTypCompSizeMainIneq}
\E \big[ \vert \big\{ t + n \log(n)^{4} &\leq u \leq t + \epsilon n^{3} \, : \, \colM{u} = 1 \big\} \vert \, | \, \mathcal{F}_{t} \big] \\
& \geq (\delta - o(n^{-1})) \, \E \big[ \vert \big\{ t + n \log(n)^{4} \leq u \leq t + \epsilon n^{3} \, : \, \colM{u} \neq 0 \big\} \vert \, | \, \mathcal{F}_{t} \big] 
\ee
uniformly in the initial configuration $X_{t}$.
\end{lemma}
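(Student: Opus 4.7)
The plan is to split the counts by component size at collisions: writing $N_1 = |\{u : \colM{u}=1\}|$ and $N_{\geq 2} = |\{u : \colM{u} \geq 2\}|$ over the window $[t+n\log(n)^4, t+\epsilon n^3]$, the target inequality reduces to $\E[N_1 | \mathcal{F}_t] \geq C_0(c,d)\, \E[N_{\geq 2} | \mathcal{F}_t]$ for some constant $C_0 > 0$, up to the $o(n^{-1})$ slack. I would first establish an upper bound on $\E[N_{\geq 2}|\mathcal{F}_t]$, which is fairly direct, and then tackle a matching lower bound on $\E[N_1|\mathcal{F}_t]$, which is the main obstacle.

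For the upper bound: a collision with $\colM{u}\geq 2$ requires $v_u$ to be adjacent to a component of size $\geq 2$ in $G_u$. Any such component of size $k$ contributes at least $k/2$ to $\delta_u = V_u - Y_u$, so the total number of particles in size-$\geq 2$ components is at most $2\delta_u$, and the number of vertices adjacent to them is at most $4d\delta_u$. The per-step probability that $v_u$ is one of these vertices with $p_u \leq c/n$ is at most $4cd\delta_u/n^2$. Summing and applying Lemma \ref{LemmCompPartNumComp}---which, combined with $\delta_t \leq n$ and the super-polynomial smallness of $(1-\frac{1}{2n})^{n\log(n)^4}$, gives $\E[\delta_u|\mathcal{F}_t] \leq 4cd + o(n^{-10})$ uniformly over $X_t$ and $u - t \geq n\log(n)^4$---yields $\E[N_{\geq 2}|\mathcal{F}_t] \leq C_1(c,d)\,\epsilon n$.

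For the lower bound I would use a local counting argument. For each vertex $v \in \LL$, let $N_v$ count the times $u$ in the window with $v_u=v$, $p_u < c/n$, and $v$ having at least two singleton-particle neighbors in $G_u$; then $N_1 = \sum_v N_v$. To estimate $\E[N_v|\mathcal{F}_t]$ from below, I would condition on the KCIP state outside a ball $\mathcal{B}_R(v)$ of constant radius $R = R(c,d)$ and compare the dynamics inside to an isolated few-particle process analyzable via a variant of the transition-matrix calculation \eqref{EqSmallMat} of Lemma \ref{LemmaNateshLowerBoundOrig}. Transience of simple random walk on $\mathbb{Z}^d$ for $d \geq 3$ (equivalently, finiteness of the Green's function) provides a uniform-in-$n$ positive probability that two singletons inside the ball eventually pass through a common neighbor of $v$, triggering a size-1 collision there.

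The main obstacle, and the essence of the phrase ``bounding the influence of faraway points,'' is ensuring that this local lower bound survives the fact that particles outside $\mathcal{B}_R(v)$ can enter through boundary updates and perturb the local picture. Without appealing to the (as-yet-unproved) drift Theorem \ref{LemmaContractionEstimate}, I would control such intrusions using Lemma \ref{LemmCompPartNumComp}, the bound $\E[\delta_u|\mathcal{F}_t]=O(1)$, and the observation that it suffices to identify many disjoint time sub-windows of length $\Theta(n^2)$ during which no particle crosses the boundary of $\mathcal{B}_R(v)$---an event of probability bounded away from $0$ uniformly in $n$, by a union bound over boundary updates. Aggregating $\E[N_v|\mathcal{F}_t]$ across $v \in \LL$ then yields $\E[N_1|\mathcal{F}_t] \geq C_2(c,d)\,\epsilon n + o(1)$; in the degenerate regime $\E[N_1 + N_{\geq 2}|\mathcal{F}_t] = o(n^{-1})$ the inequality is trivial, and otherwise the claimed ratio $\delta = C_2/(C_1+C_2) > 0$ follows by combining the two bounds.
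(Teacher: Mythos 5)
Your overall strategy is genuinely different from the paper's, and it has a gap that I believe is fatal. The paper does not try to lower-bound $\E[N_1|\mathcal{F}_t]$ and upper-bound $\E[N_{\geq 2}|\mathcal{F}_t]$ separately; instead it writes the left side as a sum over $(v,r)$ of $\P[\colM{r}=1 \mid \mathcal{F}_t, \EvCo{v,r}]\,\P[\EvCo{v,r} \mid \mathcal{F}_t]$ and proves the \emph{conditional} bound $\P[\colM{r}=1 \mid \mathcal{F}_t, \EvCo{v,r}] \geq \delta - o(n^{-1})$ uniformly in $v$, $r$, and $X_t$, which immediately gives the claimed ratio after summing against $\P[\EvCo{v,r}\mid\mathcal{F}_t]$. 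That structure is what makes the argument survive arbitrary starting configurations.

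The gap in your version is that the lower bound $\E[N_1|\mathcal{F}_t] \geq C_2(c,d)\,\epsilon n + o(1)$ cannot hold uniformly in $X_t$, which the lemma requires. Take $X_t \in \Omega_2$ with the two particles at graph distance $\Theta(n^{1/d})$. Over the window $[t+n\log(n)^4,\, t+\epsilon n^3]$ the number of collisions (of any type) has expectation $O(\epsilon)$ --- two slowed-down random walkers need $\Theta(n^3)$ steps to meet --- so $\E[N_1|\mathcal{F}_t] \leq \E[N_1+N_{\geq 2}|\mathcal{F}_t] = O(\epsilon)$, which contradicts your claimed $\Omega(\epsilon n)$. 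Your escape hatch covers only $\E[N_1 + N_{\geq 2}|\mathcal{F}_t] = o(n^{-1})$, but the regime that actually matters for the rest of the paper (a constant number of particles, hence $\E[N_1+N_{\geq 2}|\mathcal{F}_t]=\Theta(1)$) falls between the two cases you analyze, and there your chain of bounds produces a false intermediate statement. Your upper bound on $\E[N_{\geq 2}|\mathcal{F}_t]$ via $\delta_u$ and Lemma \ref{LemmCompPartNumComp} is sound, but because the lower bound has the wrong scale in terms of the unknown total collision count, the ratio never closes; you would have to make the lower bound track $\E[N_1+N_{\geq 2}|\mathcal{F}_t]$ itself, which is exactly the conditioning-on-a-collision idea the paper uses. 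A secondary concern: even in a genuinely dense regime your ``condition outside $\mathcal{B}_R(v)$'' step needs local control on the particle density near $v$ at typical times, and the global bound $\E[\delta_u|\mathcal{F}_t]=O(1)$ does not by itself give that; the paper handles this via the sparsity/regularity/gap events $\EvSp{v,r,100}$, $\EvReg{v,r,k}$, $\EvGap{v,r}$, all measured \emph{relative to the conditional law given} $\EvCo{v,r}$, which is the device you would need to import.
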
 
\begin{remark}
We briefly discuss why the conclusion obtained in Lemma \ref{LemmaEL1} is plausible. Fix a time $t$ and consider any vertex $v$ and any neighborhood $\mathcal{B}(v)$ whose size does not grow with $n$. It is easy to check that over any time interval of length $T$, with  $n \ll T \ll n^{2}$, it is very likely that all vertices in $\mathcal{B}(v)$ have been updated and also that no particles have been added to $\mathcal{B}(v)$. On that very likely event, all components of $G_{t+T} \cap \mathcal{B}(v)$ are of size 1. In other words, after a burn-in time of length $O(n)$, most vertices are not close to any components of size greater than 1. The proof largely consists of checking that conditioning on a collision occurring at vertex $v$ at times in the interval $(t+T, t+ \epsilon n^3)$ does not affect this conclusion too much. 
\end{remark}
\begin{proof}[Proof of  Lemma \ref{LemmaEL1}]
Set $T = n \log(n)^{4}$. Let $\{ p_{s} \}_{s \in \mathbb{N}}$, $\{ v_{s} \}_{s \in \mathbb{N}}$ {be as given in formula \eqref{EqCiRep} to define the dynamics of the KCIP}. For fixed $v \in \LL$ and $t + T \leq r \leq t + \epsilon n^{3}$, define the event 
\be
\EvCo{v,r} = \{r \in \colT{\epsilon n^{3}}, v_{r} = v \}.
\ee
Thus $\EvCo{v,r}$ denotes the event that a collision occurs at time $r$ and vertex $v$.
We will show that, for any $v \in \LL$, uniformly in $t + T \leq r \leq t + \epsilon n^{3}$,
\be \label{eqn:colMrcond}
\P[\colM{r} = 1 | \mathcal{F}_{t}, \EvCo{v,r}] \geq (\delta - o(n^{-1}))
\ee
for some $\delta > 0$.
Inequality \eqref{eqn:colMrcond} will immediately yield \eqref{IneqTypCompSizeMainIneq}. To see this, \eqref{eqn:colMrcond} implies that
\be 
\E \big[ \vert \big\{ t + T &\leq u \leq t + \epsilon n^{3} \, : \, \colM{u} = 1 \big\} \vert \, | \, \mathcal{F}_{t} \big] \\
&= \sum_{v \in \LL} \sum_{r = t + T}^{t + \epsilon n^{3}} \P[\colM{r} = 1 | \mathcal{F}_{t}, \EvCo{v,r}] \P[\EvCo{v,r} | \mathcal{F}_{t}] \\
&\geq (\delta - o(n^{-1})) \sum_{v \in \LL} \sum_{r = t + T}^{t + \epsilon n^{3}}  \P[\EvCo{v,r} | \mathcal{F}_{t}] \\
&= (\delta - o(n^{-1})) \E \big[ \vert \big\{ t + T \leq u \leq t + \epsilon n^{3} \, : \, \colM{u} \neq 0 \big\} \vert \, | \, \mathcal{F}_{t} \big],
\ee 
yielding inequality \eqref{IneqTypCompSizeMainIneq}. Thus it suffices to show \eqref{eqn:colMrcond}.\par
  We begin with a simple bound on the probability that the number of particles in a small region ever goes up by more than a small number over any small time interval.  For $n \in \mathbb{N}$ and $0 \leq q \leq 1$, denote by $\mathrm{Binomial(n,q)}$ a binomial random variable with $n$ trials and success probability $q$. For any fixed $t \leq r \leq t + \epsilon n^{3}$, $v \in \LL$ and $\ell, k \in \mathbb{N}$, we have for $n > \sqrt{c (2d)^{\ell + 1}}$,
\be 
\P[ \sum_{s=r+1}^{r+T} \textbf{1}_{v_{s} \in \mathcal{B}_{\ell}(v)} \textbf{1}_{p_{s} < \frac{c}{n}} \geq k] &\leq \P[\mathrm{Binomial}(T, \frac{c (2d)^{\ell + 1}}{n^{2}}) \geq k] \\
&\leq \P[\mathrm{Binomial}(T, \frac{c (2d)^{\ell + 1}}{n^{2}}) \geq 1]^{k} \\
&\leq \big( \frac{c (2d)^{\ell +1} \log(n)^{4}}{n} \big)^{k},\label{IneqSparseSimple}
\ee 
where the first inequality uses the simple bound $| \mathcal{B}_{\ell}(v) | \leq (2d)^{\ell + 1}$. \par
For $v \in \LL$, $\ell \in \mathbb{N}$, and $t + T \leq r \leq t + \epsilon n^{3}$, define the event
\be
\EvSp{v,r, \ell} = \{ \sum_{s=r-T}^{r-1} \textbf{1}_{v_{s} \in \mathcal{B}_{\ell}(v)} \textbf{1}_{p_{s} < \frac{c}{n}} < 8 \}
\ee
 and let  $\mathcal{A}^{(\mathrm{sparse})} = \cup_{t + T \leq r \leq t + \epsilon n^{3}, v \in \LL} \EvSp{v,r,100}$. By inequality \eqref{IneqSparseSimple} and a union bound, 
 \be \label{IneqEasySparsityBound}
\P [ \mathcal{A}^{(\mathrm{sparse})}|\mathcal{F}_t] &\geq 1 - O(n^{4} \frac{\log(n)^{32}}{n^{8}}) = 1- o(n^{-3}).
\ee 
From \eqref{IneqEasySparsityBound}, we thus have 
\be
\P [\colM{r} = 1 | \mathcal{F}_{t}, \EvCo{v,r}] &\geq \P [\colM{r} = 1 | \mathcal{F}_{t}, \EvCo{v,r}, \EvSp{v,r, 100}] \P[\EvSp{v,r, 100}|\mathcal{F}_t] \\
&\geq \P [\colM{r} = 1 | \mathcal{F}_{t}, \EvCo{v,r}, \EvSp{v,r, 100}] (1-o(1)). \label{eqn:Mcolsparsebd}
\ee
In light of \eqref{eqn:Mcolsparsebd}, to show \eqref{eqn:colMrcond}, it is enough to show
\be \label{eqn:colllemrealbd}
\P [\colM{r} = 1 | \mathcal{F}_{t}, \EvCo{v,r}, \EvSp{v,r, 100}] \geq  (\delta - o(n^{-1})).
\ee
To this end, we proceed by defining two events of interest. For $v \in \LL$, $t +T \leq r \leq t + \epsilon n^{3}$, and some constant $B_{1}$ to be determined later, define the event 
\be
\EvGap{v,r} = \{ \sum_{u = r - B_{1}n}^{r} \textbf{1}_{p_{u} < \frac{c}{n}} \textbf{1}_{v_{u} \in \mathcal{B}_{100}(v)} = 0 \}
\ee that no vertices are added near $v$ in the (short) time interval of length $B_{1} n$ immediately before the collision.  Finally, for $k \in \mathbb{N}$ and $t + T \leq r \leq t + \epsilon n^{3}$, define the event 
\be
\EvReg{v,r,k} = \{ \sum_{u = r-T}^{{r}} \textbf{1}_{v_{u} \in \mathcal{B}_{100}(v)} \leq k \}
\ee
that at most $k$ vertices are updated in a ball around $v$ in the interval of length $T$ before the collision. \par

Fix the constant $C_{1} = 25 (2d)^{101}$. For any $t + T \leq r \leq t + \epsilon n^{3}$, we have
\be 
\P &[\colM{r} = 1 | \mathcal{F}_{t}, \EvCo{v,r}, \EvSp{v,r, 100}] \\
&\geq \P[\colM{r} = 1 | \mathcal{F}_{t}, \EvCo{v,r}, \EvSp{v,r, 100}, \EvGap{v,r} ] \times\P[\EvGap{v,r} | \mathcal{F}_{t}, \EvCo{v,r}, \EvSp{v,r, 100}] \\
&\geq \P[\colM{r} = 1 | \mathcal{F}_{t}, \EvCo{v,r}, \EvSp{v,r, 100}, \EvGap{v,r} ] \times\P[\EvGap{v,r} | \mathcal{F}_{t}, \EvCo{v,r}, \EvSp{v,r, 100}, \EvReg{v,r, C_{1} \log(n)^{4}} ] 
\\
&\hspace{1cm} \times \P[\EvReg{v,r, C_{1} \log(n)^{4}} |  \mathcal{F}_{t}, \EvCo{v,r}, \EvSp{v,r, 100}]\\
&\equiv \T_1 \times \T_2 \times \T_3. \label{IneqLemmaEL1MainDecomp}
\ee 
{The remainder of the proof consists of obtaining a lower bound for}  each of the three factors $\T_1, \T_2$ and $\T_3$ in inequality \eqref{IneqLemmaEL1MainDecomp}. We begin by bounding $\T_3$. For a subgraph $H \subset \LL$ and a configuration $X \in \{ 0, 1 \}^{\LL}$, denote by $X|_{H}$ the restriction of $X$ to $H$; that is, $X|_{H} \in \{0,1\}^{H}$ and satisfies $X|_{H}[w] = X[w]$ for all $w \in H$. Let 
\be
\psi_{H} = \{ \psi_{H}(0), \psi_{H}(1), \ldots \} \equiv \{ s \geq t \, : \, X_{s}|_{H} \neq X_{s+1}|_{H} \}\ee
be the ordered sequence of times at which the restriction of $\{ X_{s} \}_{s \in \mathbb{N}}$ to $H$ changes. Finally, let  $\mathcal{G}_{v,r}$ be the $\sigma$-algebra generated by the random variables $\{ X_{s}|_{\mathcal{B}_{100}(v)}\}_{ s \in {\psi_{\mathcal{B}_{100}(v)}} \cap \{r - T, \ldots, r-1 \}}$ and $\psi_{\mathcal{B}_{100}(v)}$. For notational convenience, let $\S_t$ be shorthand for $\mathcal{F}_{t}, \EvCo{v,r} ,\EvSp{v,r, 100}$ and denote by $\psi^c_{\mathcal{B}_{100}(v)}$ the complement of the set $\psi_{\mathcal{B}_{100}(v)}$. We have
\be 
1 &- \T_3 = 1 - \E[ \P[\EvReg{v,r, C_{1} \log(n)^{4}} | \S_t, \mathcal{G}_{v,r}] |  \S_t] \\
&\leq \frac{1}{C_{1} \log(n)^{4}} \E \bigg[{\E[ \sum_{s = r - T}^{r-1} \textbf{1}_{v_{s} \in \mathcal{B}_{100}(v)} \big| \S_t, \mathcal{G}_{v,r}]} |  \S_t \bigg] \label{IneqCollSizeBdType3Set0} \\
&\leq  \frac{1}{C_{1} \log(n)^{4}}\E \bigg[   {(2d)^{101} + 16 + \E[ \sum_{s \in  \{r - T \leq u \leq r-1 \, : \, v_{u} \in \mathcal{B}_{100}(v) \} \cap \psi^c_{\mathcal{B}_{100}(v)}} \textbf{1}_{v_{s} \in \mathcal{B}_{100}(v)} |  \S_t, \mathcal{G}_{v,r}]} \big|  \S_t \bigg].\\\label{IneqCollSizeBdType3Set1}
\ee 
Inequality \eqref{IneqCollSizeBdType3Set0} is simply an application of Markov's inequality. To obtain \eqref{IneqCollSizeBdType3Set1}, we split $\{ s \, : \, v_{s} \in \mathcal{B}_{100}(v) \} \cap \{r-T, r-T + 1,\ldots, r-1\}$ into two sets: $\{ r-T, \ldots, r-1 \} \cap \psi_{\mathcal{B}_{100}(v)}$ and everything else. Inequality \eqref{IneqCollSizeBdType3Set1} then follows from noting that the number $| \{ r-T, \ldots, r-1 \} \cap \psi_{\mathcal{B}_{100}(v)} |$ of times that a particle is added to or removed from $\mathcal{B}_{100}(v)$ between times $r-T$ and $r-1$ is, at most, the number of particles in that set at time $r-T$ plus twice the number that have been added between times $r-T$ and $r-1$. Thus, conditional on $\mathcal{A}^{\mathrm{sparse}}$, we have $| \{ r-T, \ldots, r-1 \} \cap \psi_{\mathcal{B}_{100}(v)} | \leq (2d)^{101} + 16$.  \\

Fix $s \in  \{r - T, \ldots, r-1 \} \cap \psi_{\mathcal{B}_{100}(v)}$. We claim that if $w_{1}, w_{2} \in \LL \backslash \mathcal{B}_{108}(v)$, then 
\be \label{IneqRationIrrelevantUpdates}
\frac{\P[v_{s} = w_{1} | \S_t, \mathcal{G}_{v,r}]}{\P[v_{s} = w_{2} |  \S_t, \mathcal{G}_{v,r}]} =1,
\ee 
since updates to these two vertices cannot influence anything in $\mathcal{B}_{100}(v)$ before time $r$. More formally, for any update sequence $\{(v_{u}, p_{u}) \}_{u = r -T}^{r-1}$ with $v_{u} = w_{1}$, define the update sequence $\{(v_{u}', p_{u}) \}_{u = r -T}^{r-1}$ by $v_{u}' = v_{u} $ for $u \neq s$ and $v_{s}' = w_{2}$. This map is a bijection between the update sequences allowed by the conditions in the numerator of 
Equation \eqref{IneqRationIrrelevantUpdates} and the update sequences allowed by the conditions in the denominator of Equation \eqref{IneqRationIrrelevantUpdates}; the existence of this bijection proves Equation \eqref{IneqRationIrrelevantUpdates}. We similarly observe that if $w_{1} \in \mathcal{B}_{100}(v)$ and  $w_{2} \in G \backslash \mathcal{B}_{108}(v)$, then 
\be  \label{IneqRationSlightlyRelevantUpdates}
\frac{\P[v_{s} = w_{1} | \S_t, \mathcal{G}_{v,r}]}{\P[v_{s} = w_{2} |  \S_t, \mathcal{G}_{v,r}]} \leq 1,
\ee 
since certain updates within $\mathcal{B}_{100}(v)$ may be forbidden by the conditioning $ \mathcal{G}_{v,r}$. This can be made formal in essentially the same way as the argument for Equation  \eqref{IneqRationIrrelevantUpdates}. Inequalities \eqref{IneqRationIrrelevantUpdates} and \eqref{IneqRationSlightlyRelevantUpdates} imply 
\be 
\P[v_{s} \in \mathcal{B}_{100}(v) |  \S_t, \mathcal{G}_{v,r}] &\leq \frac{| \mathcal{B}_{100}(v)|}{n - | \mathcal{B}_{108}(v) | }   \\
&\leq \frac{(2d)^{101}}{n} + o(n^{-1}),
\ee 
where the second line follows from noting that $| \mathcal{B}_{\ell}(v)| \leq (2d)^{\ell + 1}$. Combining this with inequality \eqref{IneqCollSizeBdType3Set1}, 
\be 
\T_3 = \P[\EvReg{v,r, C_{1} \log(n)^{4}} |  \S_t] \geq 1 - \frac{ (2d)^{101} + (2d)^{101} \log(n)^{4}}{C_{1} \log(n)^{4}} + o(n^{-1}).
\ee 
Since $C_1 = 25(2d)^{101}$, for $n$ sufficiently large,
\be \label{IneqCollSizeBdType3SetFin}
\T_3 \geq {23 \over 25}. 
\ee 
 Next we bound $\T_2$.  For $H \subset \LL$, let $\phi_{H} = \{ \phi_{H}(0), \phi_{H}(1), \ldots \} = \{ s \geq t \, : \, v_{s} \in H \}$ be the ordered list of times at which the update vertex $v_{s}$ falls in $H$. Then let  $\mathcal{H}_{v,r}$ be the $\sigma$-algebra generated by the random variables $\{ v_{s} \, : \, s \in \phi_{\mathcal{B}_{100}(v)} \cap \{r -T, \ldots, r-1 \} \}$. Unlike $\mathcal{G}_{v,r}$, the update times $\phi_{\mathcal{B}_{100}(v)}$ are not included in this $\sigma$-algebra, only the update locations $v_{s}$. \par

For notational convenience, 
let $\S'_t =  \{  \mathcal{F}_{t}, \EvCo{v,r}, \EvSp{v,r, 100},\EvReg{v,r, C_{1} \log(n)^{4}} \}$. The gap condition $\EvGap{v,r}$ is fulfilled if there are no updates to the region $\mathcal{B}_{100}(v)$ in the time interval $\{r-B_{1}n, \ldots, r-1\}$, and so 
\be \label{IneqSimpleTBound}
\T_2 &= \P[\EvGap{v,r}  | \S'_t] \\
&\geq \E[ \P[ \psi_{\mathcal{B}_{100}(v,r)} \cap \{r-B_{1} n, \ldots, r-1 \} = \emptyset | \S'_t, \mathcal{H}_{v,r} ]  | \S'_t].
\ee 
The indices $\phi_{\mathcal{B}_{100}(v)} \cap \{ r- T, \ldots, r - 1 \}$ are, conditional on $\mathcal{H}_{v,r}$, a uniformly-generated size-$|\phi_{\mathcal{B}_{100}(v)} \cap \{ r- T, \ldots, r - 1 \}|$ subset of $\{ r-T, \ldots, r-1\}$. Since $|\phi_{\mathcal{B}_{100}(v)} \cap \{ r- T, \ldots, r - 1 \}| \leq C_{1} \log(n)^{4}$ by the condition $ \EvReg{v,r, C_{1} \log(n)^{4}}$, we have for $n$ sufficiently large
\be 
\P[ &\psi_{\mathcal{B}_{100}(v,r)} \cap \{r-B_{1}n, \ldots, r-1 \} = \emptyset | \S'_t, \mathcal{H}_{v,r} ]\\
&=  \sum_{0 \leq k \leq C_{1} \log(n)^{4} } \frac{ { T- B_{1}n \choose k } }{ { T \choose k } }  \P[ |\phi_{\mathcal{B}_{100}(v)} \cap \{ r- T, \ldots, r - 1 \}| = k | \S'_t, \mathcal{H}_{v,r} ] \\
&= \sum_{0 \leq k \leq C_{1} \log(n)^{4} } \prod_{i=0}^{B_{1}n-1} \frac{T-k-i}{T-i} \P[ |\phi_{\mathcal{B}_{100}(v)} \cap \{ r- T, \ldots, r - 1 \}| = k | \S'_t, \mathcal{H}_{v,r} ] \\
&\geq \sum_{0 \leq k \leq C_{1} \log(n)^{4} } \prod_{i=0}^{B_{1}n-1} \big( 1 - \frac{2k}{T} \big) \P[ |\phi_{\mathcal{B}_{100}(v)} \cap \{ r- T, \ldots, r - 1 \}| = k |\S'_t, \mathcal{H}_{v,r} ] \\
&\geq \prod_{i=0}^{B_{1}n-1} \big( 1 - \frac{2 C_{1} \log(n)^{4}}{n \log(n)^{4}} \big) \geq e^{-4 B_{1} C_{1}}.
\ee 
Combining this with inequality \eqref{IneqSimpleTBound} gives 
\be \label{IneqCollSizeBdType2SetFin}
\T_2 \geq  e^{-4 B_{1} C_{1}}.
\ee 
Finally, we bound the term $\T_1$. Let  
\be
\EvCov{v,r} = \Big \{ \mathcal{B}_{100}(v) \subset \bigcup_{r- B_{1}n \leq s \leq r-1} \{ v_{s} \}  \Big \}
\ee be the event that every element of $\mathcal{B}_{100}(v)$ is updated during the time interval $\{ r - B_{1} n, \ldots, r - 1 \}$. Roughly speaking, for any fixed configuration $X \in \{0,1\}^{\LL}$, we will denote by $\mathcal{E}_{v,r,X}^{(\mathrm{coll-up})}$ all of the updates `allowed' by $\EvSp{v,r, 100}, \EvGap{v,r}$ and $\EvCo{v,r}$. More precisely,  $\mathcal{E}_{v,r,X}^{(\mathrm{coll-up})}$ is the set of updates $\{ (v_{s}, p_{s}) \}_{r- B_{1} n \leq s \leq r-1}$ that have the properties:
\begin{itemize}
\item If $p_{s} \leq \frac{c}{n}$, then $v_{s} \notin \mathcal{B}_{100}(v)$. 
\item If $X_{r-B_{1}n} = X$ and this KCIP process is updated using the dynamics \eqref{EqCiRep} with update variables $\{ (v_{s}, p_{s}) \}_{r-B_{1}n \leq s \leq r-1}$, then $X_{r-1}[v] = 0$ and also there exist two neighbours $w_{1}, w_{2}$ of $v$ that are in distinct components of $G_{r-1}$ and satisfy $X_{r-1}[w_{1}] = X_{r-1}[w_{2}] = 1$.
\end{itemize}
Similarly, we will denote by $\mathcal{E}_{v,r,X}^{(\mathrm{cov-up})}$ the set of updates `allowed' by $\EvSp{v,r, 100}, \EvGap{v,r}$ and $\EvCov{v,r}$. More precisely, this is the set of updates $\{ (v_{s}, p_{s}) \}_{r-B_{1}n \leq s \leq r-1}$ that have the properties:
\begin{itemize}
\item If $p_{s} \leq \frac{c}{n}$, then $v_{s} \notin \mathcal{B}_{100}(v)$. 
\item $\mathcal{B}_{100}(v) \subset \cup_{s=r-B_{1}n}^{r-1} \{ v_{s} \}$.
\end{itemize}
We claim that if $v,r,X$ are such that $|\mathcal{E}_{v,r,X}^{(\mathrm{coll-up})} \cap \mathcal{E}_{v,r,X}^{(\mathrm{cov-up})}| \geq 1$, then in fact 
\be \label{IneqToBeExplainedInWords}
\frac{|\mathcal{E}_{v,r,X}^{(\mathrm{coll-up})} \cap \mathcal{E}_{v,r,X}^{(\mathrm{cov-up})}|}{| \mathcal{E}_{v,r,X}^{(\mathrm{cov-up})}|} \geq (2d+1)^{-2d}.
\ee 
To see this, denote by $\{ w_{i} \}_{i=1}^{2d}$ the neighbours of $v$ and by $\{ w_{i,j} \}_{j=1}^{2d}$ the neighbours of $w_{i}$.  Call an element of $\mathcal{E}_{v,r,X}^{(\mathrm{cov-up})}$ \textit{good}, if for all $1 \leq i \leq 2d$,  
\be 
\inf \{ s \, : \, s \geq r - B_{1}n, v_{s} = w_{i} \} \geq \inf_{1 \leq j \leq 2d} \inf \{ s \, : \, s \geq r - B_{1}n, v_{s} = w_{i,j} \}.
\ee
In other words, a sequence is \textit{good} if vertex $w_{i}$ is not updated until after all of the vertices $\{ w_{i,j} \}_{j=1}^{2d}$ have been updated at least once. A good sequence will not ever remove a particle from a neighbour of $v$, and the configuration $X_{r-1}|_{\mathcal{B}_{100}(v)}$ resulting from a good sequence will have only singletons. Thus, any good sequence will also be in $\mathcal{E}_{v,r,X}^{(\mathrm{coll-up})}$ if $\mathcal{E}_{v,r,X}^{(\mathrm{coll-up})}$ is not empty. Since at least one out of every $(2d+1)^{-2d}$ elements of $\mathcal{E}_{v,r,X}^{(\mathrm{cov-up})}$ is good, this observation implies inequality \eqref{IneqToBeExplainedInWords}. \par
Inequality \eqref{IneqToBeExplainedInWords} can be expressed as
\be 
\P[\EvCo{v,r} |\mathcal{F}_{t},  \EvSp{v,r, 100}, \EvGap{v,r}, \EvCov{v,r}] \geq (2d+1)^{-(2d+1)} \P[\EvCo{v,r} |\mathcal{F}_{t},  \EvSp{v,r, 100}, \EvGap{v,r}].
\ee 
We thus have 
\be \label{IneqCollSizeBdType1SetFin}
\T_1 &= \P[ \colM{r} = 1 | \mathcal{F}_{t}, \EvCo{v,r}, \EvSp{v,r, 100}, \EvGap{v,r} ] \\
&\geq \P[\colM{r} = 1 | \mathcal{F}_{t}, \EvCo{v,r}, \EvSp{v,r, 100}, \EvGap{v,r}, \EvCov{v,r} ] \P[ \EvCov{v,r} | \mathcal{F}_{t}, \EvCo{v,r}, \EvSp{v,r, 100}, \EvGap{v,r}] \\
&= 1 \times \P[ \EvCov{v,r} | \mathcal{F}_{t}, \EvCo{v,r}, \EvSp{v,r, 100}, \EvGap{v,r}] \\
&= {\P[ \EvCo{v,r} | \mathcal{F}_{t}, \EvSp{v,r, 100}, \EvGap{v,r}, \EvCov{v,r} ]}
\frac{\P[ \EvCov{v,r} | \mathcal{F}_{t}, \EvSp{v,r, 100}, \EvGap{v,r}]}{\P[\EvCo{v,r} |\mathcal{F}_{t}, \EvSp{v,r, 100}, \EvGap{v,r}]} \\
&\geq (2d+1)^{-2d} {\P[ \EvCo{v,r} | \mathcal{F}_{t}, \EvSp{v,r, 100}, \EvGap{v,r} ]}
\frac{\P[ \EvCov{v,r} | \mathcal{F}_{t}, \EvSp{v,r, 100}, \EvGap{v,r}]}{\P[\EvCo{v,r} |\mathcal{F}_{t}, \EvSp{v,r, 100}, \EvGap{v,r}]} \\
&= (2d+1)^{-2d} \P[ \EvCov{v,r} | \mathcal{F}_{t}, \EvSp{v,r, 100}, \EvGap{v,r}].
\ee 
By a monotonicity argument essentially identical to that used to prove Equation \eqref{IneqRationIrrelevantUpdates} followed by the standard `coupon-collector' bound, there exists some $B > 0$ so that
\be \label{IneqCollSizeBdType1SetFinAux}
\P[ \EvCov{v,r} | \mathcal{F}_{t}, \EvSp{v,r, 100}, \EvGap{v,r}] \geq \frac{1}{2} 
\ee
for all $B_{1} > B$ and all $n > N(B_{1})$ sufficiently large. Choosing $B_{1} = B+1$, and combining equality \eqref{IneqLemmaEL1MainDecomp} with inequalities \eqref{IneqCollSizeBdType3SetFin}, \eqref{IneqCollSizeBdType2SetFin}, \eqref{IneqCollSizeBdType1SetFin} and \eqref{IneqCollSizeBdType1SetFinAux} gives the bound
\be 
\P[\colM{r} = 1 | \mathcal{F}_{t}, \EvCo{v,r}, \EvSp{v,r, 100}] \geq \frac{23}{50} e^{-4(B+1)C_{1}} (2d+1)^{-2d} + o(n^{-1}) .
\ee 
Combining this with inequality \eqref{eqn:Mcolsparsebd}, we have 
\be 
\P[\colM{r} = 1 | \mathcal{F}_{t}, \EvCo{v,r}] \geq \delta  + o(n^{-1})
\ee 
with $\delta = \frac{23}{50} e^{-4(B+1)C_{1}} (2d+1)^{-2d}$, verifying \eqref{eqn:colllemrealbd}. By the observations made in \eqref{eqn:colMrcond}
and \eqref{eqn:Mcolsparsebd}, the conclusion in \eqref{IneqTypCompSizeMainIneq} follows immediately and the proof is finished.
\end{proof}

Recall the definition of the number of collisions $\cC_{s}$ from formula \eqref{EqDefCCs}. We will use Lemma \ref{LemmaEL1} to bound the expected change $\tilde{Y}_{t + \epsilon n^{3}} - \tilde{Y}_{t}$ in terms of  $\cC_{s}$: 
\begin{lemma} [Comparison of Number of Components to Number of Collisions] \label{LemmaCompNumCompsNumCols}
Fix $\delta = \delta(c,d) > 0$ as given by Lemma \ref{LemmaEL1}. Then for all $0 < \epsilon < \epsilon(c,d)$ sufficiently small, 
\be
 \E[\tilde{Y}_{t + \epsilon n^{3}} - \tilde{Y}_{t} \vert \mathcal{F}_{t}] &\leq  - \frac{2\delta}{3} \, \E \big[ \vert \big\{ t \leq u \leq t + \epsilon n^{3} \, : \, \colM{u} \neq 0 \big\} \vert \, \vert \mathcal{F}_{t} \big] \\
 &\hspace{2cm}+ \tilde{Y}_{t}  \frac{128 c^{2} (d+1)^{3} \epsilon}{3}(1 + o(1))  + O(1),
\ee
where the implied constants do not depend on $\epsilon$ or $n$.
\end{lemma}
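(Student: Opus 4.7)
The plan is to decompose $\tilde Y_{t+\epsilon n^3}-\tilde Y_t = \sum_s\Delta_s$ with $\Delta_s := \tilde Y_{s+1}-\tilde Y_s$, isolate the strictly negative contribution of good collisions, bound the remaining positive contributions via the growth estimate already carried out in the proof of Lemma \ref{LemmaCalc3}, and finally apply Lemma \ref{LemmaEL1} to convert the count of good collisions into a count of all collisions at the cost of a factor $\delta$.

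The first ingredient is a per-step sign analysis. On a removal update ($p_s>c/n$) a vertex $v\in V(H)$ of some component $H$ with $|H|\geq 2$ is deleted, and applying Lemma \ref{LemCalc2} to $H=(H\setminus\{v\})\cup\{v\}$ gives $\cN_{H\setminus\{v\}}\leq \cN_H$; summed over components this yields $\Delta_s\leq 0$, so $(\Delta_s)_+=0$. On a good-collision update ($p_s<c/n$, $\colM{s}=1$) the vertex $v_s$ merges $m\in\{2,\ldots,2d\}$ singletons into the star $H(m)$ (triangle-freeness of $\LL$ guarantees these singletons are in distinct components), and Lemma \ref{LemCalc1} gives $\Delta_s=\cN_{H(m)}-m=-m/2+1/(m+1)\leq -2/3$; thus $(\Delta_s)_+=0$ on these steps as well. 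On every other step only the trivial bound $\Delta_s\leq 1$ (from Lemma \ref{LemCalc2}) is used, and the event $\{(\Delta_s)_+>0\}$ forces $p_s<c/n$ together with $v_s$ unlabeled and adjacent to a component of size $\geq 2$ in $G_s$.

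Combining these observations produces the pointwise inequality
\begin{equation*}
\Delta_s \;\leq\; (\Delta_s)_+ \;-\; \tfrac{2}{3}\,\mathbf{1}_{\colM{s}=1},
\end{equation*}
and taking conditional expectation gives
\begin{equation*}
\E\big[\tilde Y_{t+\epsilon n^3}-\tilde Y_t\,\big|\,\mathcal{F}_t\big] \;\leq\; \E\Big[\sum_{s=t}^{t+\epsilon n^3-1}(\Delta_s)_+\,\Big|\,\mathcal{F}_t\Big] \;-\; \tfrac{2}{3}\,\E[N_1\mid\mathcal{F}_t],
\end{equation*}
where $N_1=|\{s:\colM{s}=1\}|$. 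Since the event $\{(\Delta_s)_+>0\}$ forces precisely the conditions already identified in the proof of Lemma \ref{LemmaCalc3}, the single-step estimate $\E[(\Delta_s)_+\mid\mathcal{F}_s]\leq 3cd\delta_s/n^2$ holds, and re-running the iteration \eqref{LongCalcOfCorrComp}--\eqref{EqLastLineLemmaCalc3} with Lemma \ref{LemmCompPartNumComp} and inequality \eqref{IneqSimpCorrCompCountVsPart} produces $\E[\sum_s(\Delta_s)_+\mid\mathcal{F}_t]\leq \tilde Y_t\cdot\tfrac{128c^2(d+1)^3\epsilon}{3}(1+o(1))+O(1)$.

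Finally, let $N_{\mathrm{col}}=|\{t\leq s\leq t+\epsilon n^3:\colM{s}\neq 0\}|$, and write $N^{\mathrm{bi}}$ for its restriction to the burn-in window $\{t,\ldots,t+n\log(n)^4\}$. Lemma \ref{LemmaEL1} gives $\E[N_1\mid\mathcal{F}_t]\geq (\delta-o(n^{-1}))\,\E[N_{\mathrm{col}}-N^{\mathrm{bi}}\mid\mathcal{F}_t]$, and a direct per-step bound together with Lemma \ref{LemmaCalc3} shows that both $\tfrac{2\delta}{3}\E[N^{\mathrm{bi}}\mid\mathcal{F}_t]$ and $\tfrac{2}{3}o(n^{-1})\E[N_{\mathrm{col}}\mid\mathcal{F}_t]$ are $o(\tilde Y_t\epsilon)+o(1)$, and are therefore absorbed into the $(1+o(1))$ and $O(1)$ terms. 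Assembling the three bounds yields the claimed inequality. The main technical obstacle is the third paragraph: faithfully re-running the iteration of Lemma \ref{LemmaCalc3} with $\sum(\Delta_s)_+$ in place of $\tilde Y_{t+s}$, preserving the explicit constant $128c^2(d+1)^3/3$, while cleanly tracking the burn-in and $o(n^{-1})$ corrections.
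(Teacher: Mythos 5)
Your plan is close to the paper's, and the final assembly via Lemma~\ref{LemmaEL1} is right, but there is a genuine gap in the per-step sign analysis that breaks the crucial estimate on $\E\big[\sum_s(\Delta_s)_+\mid\mathcal{F}_t\big]$.

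You assert that on a removal update ($p_s>c/n$, vertex $v$ deleted from a component $H$ with $|H|\geq 2$) one has $\cN_{H\setminus\{v\}}\leq\cN_H$ by Lemma~\ref{LemCalc2}, hence $\Delta_s\leq 0$. This is false. Take $H=H(2)$, the $3$-path $a$--$v$--$b$, and remove the center $v$: then $\cN_{H(2)}=4/3$ but $\cN_{\{a,b\}}=2$, so $\Delta_s=2/3>0$. More generally, removing the center of a star $H(m)$ raises $\cN$ from $m/2+1/(m+1)$ to $m$, an increase of order $m/2$, so $(\Delta_s)_+$ can be large on removal steps. (The lower-bound direction $\cN_{H'}\leq\cN_{H'\cup\{v\}}$ in Lemma~\ref{LemCalc2} is not valid for disconnected $H'$; the displayed inductive argument there implicitly uses $N(H'\cup\{v\})=N(H')\cup\{v\}$, which fails precisely when $v$ merges components. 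The paper never actually needs that direction.) Consequently the single-step bound $\E[(\Delta_s)_+\mid\mathcal{F}_s]\leq 3cd\delta_s/n^2$ is wrong: it omits the removal contribution, which occurs at probability rate of order $V_s/n$, far larger than $\delta_s/n^2$, and the iteration of Lemma~\ref{LemmaCalc3} cannot be re-run with $\sum(\Delta_s)_+$ in place of $\tilde Y_{t+s}$.

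The fix is the one the paper uses: replace the pointwise sign claim on removal steps by the conditional-expectation identity (stated just before Lemma~\ref{LemmaCalc3})
\begin{equation*}
\E\big[(\tilde Y_{s+1}-\tilde Y_s)\,\mathbf{1}_{p_s>c/n}\,\big|\,\mathcal{F}_s\big]=0,
\end{equation*}
which follows from the recursion $\cN_H=\tfrac{1}{|H|}\sum_{v\in H}\cN_{H\setminus\{v\}}$ for a connected component $H$ with $|H|\geq 2$. Removals can increase $\tilde Y_s$ on individual realizations, but they are a martingale increment in expectation, so they drop out after taking $\E[\cdot\mid\mathcal{F}_t]$. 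Concretely, write $\Delta_s=\Delta_s\mathbf{1}_{p_s>c/n}+\Delta_s\mathbf{1}_{p_s<c/n}$, note $\colM{s}=1$ forces $p_s<c/n$, keep your inequality only on the addition part, namely $\Delta_s\mathbf{1}_{p_s<c/n}\leq(\Delta_s)_+\mathbf{1}_{p_s<c/n}-\tfrac{2}{3}\mathbf{1}_{\colM{s}=1}$, and bound $\E[(\Delta_s)_+\mathbf{1}_{p_s<c/n}\mid\mathcal{F}_s]\leq\tfrac{4cd}{n^2}\delta_s$ (the event $\{(\Delta_s)_+>0,\,p_s<c/n\}$ forces $v_s$ to be unoccupied and adjacent to a component of size $\geq 2$, and $(\Delta_s)_+\leq 1$ by the correct, upper-bound half of Lemma~\ref{LemCalc2}). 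With that correction the rest of your outline goes through and matches the paper's decomposition into $\T_1$ and $\T_2$.
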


\begin{proof}
Throughout the proof of this lemma, we use `update variables' $\{ p_{s} \}_{s \in \mathbb{N}}$, $\{ v_{s} \}_{s \in \mathbb{N}}$ from formula \eqref{EqCiRep}. We have
\be \label{IneqCompNumCompMainDec}
\E \big[\tilde{Y}_{t + \epsilon n^{3}} - \tilde{Y}_{t} \vert \mathcal{F}_{t}\big] &=  \E\big[\sum_{t \leq u \leq t + \epsilon n^{3} \, :  \, \colM{u} = 1} ( \tilde{Y}_{u+1} - \tilde{Y}_{u})\vert \mathcal{F}_{t}\big] \\
&\hspace{2cm}+ \E\big[\sum_{t \leq u \leq t + \epsilon n^{3} \, : \, \colM{u} \neq 1} ( \tilde{Y}_{u+1} - \tilde{Y}_{u})\vert \mathcal{F}_{t}\big]\\
&\equiv \T_1 + \T_2.
\ee
We first estimate the term $\T_2$. From Definition \ref{DefCorrCompCount}, we have that $\E[(\tilde{Y}_{u+1} - \tilde{Y}_{u}) \vert p_{u} > \frac{c}{n}, \mathcal{F}_{u}] = 0$, and so
\be \label{IneqMessySumEasyTerms}
\T_2 = \E[\sum_{t \leq u \leq t + \epsilon n^{3} \, : \, \colM{u} \neq 1}  ( \tilde{Y}_{u+1} - \tilde{Y}_{u}) \textbf{1}_{\colM{u} \neq 1} \textbf{1}_{p_{u} < \frac{c}{n}} \vert \mathcal{F}_{t}].
\ee 
By Lemma \ref{LemCalc2}, the corrected component count $\tilde{Y}_{u}$ cannot change by more than one when vertices are added and no collisions occur:
\be 
\vert (\tilde{Y}_{u+1} - \tilde{Y}_{u})  \textbf{1}_{\colM{u} \neq 1} \textbf{1}_{p_{u} < \frac{c}{n}} \vert \leq 1.
\ee 
Finally, if $\vert H \vert \leq 2$, then $\cN_{H} = 1$. In particular, adding a vertex to $X_{u}$ can only increase $\tilde{Y}_{u}$ if it connects two components or if it is added to a component that already has at least two vertices. Since there are at most $4 d \delta_{u}$ vertices adjacent to such a component,
\be 
\E[ (\tilde{Y}_{u+1} - \tilde{Y}_{u})\textbf{1}_{\colM{u} \neq 1} \textbf{1}_{p_{u} < \frac{c}{n}} \vert \mathcal{F}_{t} ] \leq \frac{4cd}{n^{2}} \E[\delta_{u} \vert \mathcal{F}_{t}].
\ee 
Combining this bound with inequality \eqref{IneqMessySumEasyTerms},
\be \label{IneqMessySumEasyTermsAppl2}
\T_2 \leq \frac{4cd}{n^{2}} \E\big[\sum_{u=t}^{t+\epsilon n^{3}} \delta_{u} \vert \mathcal{F}_{t}\big]. 
\ee 
By inequality \eqref{LongCalcOfCorrComp} and the calculation in inequality \eqref{EqLastLineLemmaCalc3}, we have   
\be 
\frac{4cd}{n^{2}} \E\big[\sum_{u=t}^{t + \epsilon n^{3}} \delta_{u} \vert \mathcal{F}_{t}\big] \leq  \tilde{Y}_{t}  \frac{128 c^{2} (d+1)^{3}}{n^{3}} \big( 8n \big(1 - \frac{1}{8n} \big)^{\epsilon n^{3}} + s \big)  +  \frac{32 cd}{n} \delta_{t} + \frac{4}{3} \big(1 - \frac{1}{8n} \big)^{\epsilon n^{3}} \delta_{t}.
\ee 
Combining this with inequality \eqref{IneqMessySumEasyTermsAppl2}, this implies
\be 
\T_2 &\leq  \tilde{Y}_{t}  \frac{128 c^{2} (d+1)^{3}}{3 n^{3}} \big( 8n \big(1 - \frac{1}{8n} \big)^{\epsilon n^{3}} + \epsilon n^{3} \big) 
+ \frac{32 cd}{n} \delta_{t} + \big(1 - \frac{1}{8n}\big)^{\epsilon n^{3}} \delta_{t}\\
&\leq   \tilde{Y}_{t}  \frac{128 c^{2} (d+1)^{3} \epsilon}{3}(1 + o(1)) + O(1). \label{IneqCombDescTermOne}
\ee 

Next, we turn to bounding $\T_1$. By Lemma \ref{LemCalc1}, it follows that
\be 
\T_1 &= \E\big[\sum_{t \leq u \leq t + \epsilon n^{3} \, : \, \colM{u} = 1} ( \tilde{Y}_{u+1} - \tilde{Y}_{u}) \vert \mathcal{F}_{t}\big] \\
&\leq -\frac{2}{3} \E \big[ \vert \big\{ t  \leq u \leq t + \epsilon n^{3} \, : \, \colM{u} = 1 \big\} \vert \, | \, \mathcal{F}_{t} \big] + 0.
\ee 
By Lemma \ref{LemmaEL1}, this implies  
\be \label{IneqCombDescTermTwoP1}
\T_1 \leq - \frac{2\delta}{3} \, \E \big[ \vert \big\{ t + n \log(n)^{4} \leq u \leq t + \epsilon n^{3} \, : \, \colM{u} \neq 0 \big\} \vert \, | \mathcal{F}_{t} \big] + o(n^{-1}).
\ee
Thus
\be 
\E[\vert  & \colT{\epsilon n^{3}}  \cap \{ t, t+1, \ldots, t + n \log(n)^{4} \} \vert \, \vert \mathcal{F}_{t}] \\
&\leq \E[\vert \{ t \leq u \leq t + n \log(n)^{4} \, : \, V_{u+1} > V_{u} \} \vert \, \vert \mathcal{F}_{t}] \\
&\leq \frac{2cd}{n^{2}} \E[\sum_{u=t}^{t+ n \log(n)^{4}} V_{u} \vert \mathcal{F}_{t}] \\
&\leq \frac{4cd(d+1)}{n^{2}} \E[\sum_{u=t}^{t+ n \log(n)^{4}} \tilde{Y}_{u} \vert \mathcal{F}_{t}] \\
&\leq \frac{4cd(d+1)}{n^{2}} \sum_{u=t}^{t + n \log(n)^{4}} \Big( \tilde{Y}_{t} \big( 1 + \frac{96 c^{2} (d+1)^{3}}{n^{3}} \big( 8n \big(1 - \frac{1}{8n} \big)^{u} + u \big) \big) +  \frac{24 cd}{n} \delta_{t} + \big(1 - \frac{1}{8n} \big)^{u} \delta_{t} \Big) \\
&\leq \tilde{Y}_{t}(1 + o(1)) \frac{\log(n)^{4}}{n} + O(1) 
\label{IneqCombDescTermTwoP2}
\ee 
where the third inequality is due to inequality \eqref{IneqSimpCorrCompCountVsPart} and the fourth inequality is due to Lemma \ref{LemmaCalc3}. Combining inequalities \eqref{IneqCombDescTermTwoP1} and \eqref{IneqCombDescTermTwoP2}, we have  
\be 
\T_1  &\leq  -\frac{2\delta}{3} \, \E \big[ \vert \big\{ t \leq u \leq t + \epsilon n^{3} \, : \, \colM{u} \neq 0 \big\} \vert \, | \, \mathcal{F}_{t}  \big] \\
&\hspace{2cm}+ \frac{2 \delta}{3} \tilde{Y}_{t}(1 + o(1)) \frac{\log(n)^{4}}{n} + O(1).
\ee 
Combining this inequality with inequalities \eqref{IneqCombDescTermOne} and \eqref{IneqCompNumCompMainDec}, we have 
\be 
\E[\tilde{Y}_{t + \epsilon n^{3}} - \tilde{Y}_{t} \vert \mathcal{F}_{t}] &\leq - \frac{2\delta}{3} \, \E \big[ \vert \big\{ t \leq u \leq t + \epsilon n^{3} \, : \, \colM{u} \neq 0 \big\} \vert \, \vert \mathcal{F}_{t} \big] \\
&\hspace{2cm}+ \frac{2 \delta}{3} \tilde{Y}_{t}(1 + o(1)) \frac{\log(n)^{4}}{n}  +  \tilde{Y}_{t}  \frac{128 c^{2} (d+1)^{3} \epsilon}{3}(1 + o(1)) + O(1)  \\
&=  - \frac{2\delta}{3} \, \E \big[ \vert \big\{ t \leq u \leq t + \epsilon n^{3} \, : \, \colM{u} \neq 0 \big\} \vert \, \vert \mathcal{F}_{t} \big] \\
&\hspace{2cm}+ \tilde{Y}_{t}  \frac{128 c^{2} (d+1)^{3} \epsilon}{3}(1 + o(1))  + O(1),
\ee 
and the proof is finished.
\end{proof}
\subsection{Color constrained Ising process}
Our next goal is to prove a lower bound on the expected number of collisions. To this end,  we define a `colored' version of the KCIP on a general finite graph $G$, which allows us to make rigorous the notion of a single particle moving and branching over time.

\begin{defn} [Color constrained Ising process]\label{DefColConIs}
Fix $t \in \mathbb{N}$, $1 \leq k \leq n$ and $x \in \{0,1\}^{G}$ so that the subgraph of $G$ induced by the vertices $\{v \in G \, : \, x[v] = 1 \}$ has exactly $k$ connected components. We define a Markov chain $\{\widehat{X}_{s}\}_{s \geq t}$ on the state space $\{0,1,\ldots,k\}^{n}$ that is closely coupled to the KCIP $\{ X_{s} \}_{s \geq t}$ started at $X_{t} = x$; in fact, we will have $X_{s}[v] = \textbf{1}_{\widehat{X}_{s}[v] \neq 0}$ for all $v \in G$ and $s \geq t$. We begin by setting the initial condition $\widehat{X}_{t}$. For a fixed $v \in G$, if $X_{t}[v] = 0$, we also set $\widehat{X}_{t}[v] = 0$. Fix an ordering of the $k$ connected components $c_{t}[1], \ldots, c_{t}[k]$ of $G_{t}$, and set $\widehat{X}_{t}[v] = i$ for all $v \in c_{t}[i]$ and $\widehat{X}_{t}[v] = 0$ for all $v \notin \cup_{i=1}^{k} c_{t}[i]$. Note that this arbitrary ordering and labelling of the components is done once, at time $t$. We do not reorder components at times $s > t$, and we will always have $\widehat{X}_{s}[v]$ in the set of labels $\{0,1,\ldots, k\}$, even if the number of components at time $s$ is not equal to $k$. Indeed, it will turn out that with probability one there exists a (random) index $i \in \{1,2,\ldots, k\}$ and time $S \geq t$ so that $\widehat{X}_{s}[v] \in \{0,i\}$ for all $s > S$.\par
 To evolve $\widehat{X}_{s}$, recall that $\{ X_{s} \}_{s \geq t}$ evolves by selecting at every time $s$ a vertex to update, and sometimes changing the label of that vertex. Whenever the labelling of a vertex $v$ is changed from 1 to 0 in $X_{s}$, the labelling of $v$ should also be changed to 0 in $\widehat{X}_{s}$. Whenever the labelling of a vertex $v$ is changed from 0 to 1 at time $s$ in $X_{s}$, choose a vertex $u_{s} \sim \mathrm{Unif} \{ w \, : \, (w,v) \in E,  \, X_{s}[w] = 1\}$ uniformly at random from the neighbours of $v$ that have a non-zero label in $\widehat{X}_{s}$, set $\widehat{X}_{s+1}[v] = \widehat{X}_{s}[u_{s}]$, and then set $\widehat{X}_{s+1}[w] = \widehat{X}_{s}[u_{s}]$ for all $w \in \mathrm{Comp}_{s}(u_{s})$. All other labels of $\widehat{X}_{s+1}$ should be the same as that of $\widehat{X}_s$. Since entire components can `flip' colours, the process $\{ \widehat{X}_{s} \}_{s \geq t}$ will sometimes have several labels change at once. 
\end{defn}
 
We now give some definitions related to the colored KCIP.  For all $s \geq t$ define $\mathrm{Comp}_{s}^{(i)} $ to be the connected components of $G_{s}$ containing only vertices $u$ satisfying $\widehat{X}_{s}[u] = i$. Define the \textit{number of vertices with color $i$} by
\be 
V_{s}^{(i)} = \sum_{u \in \LL} \textbf{1}_{\widehat{X}_{s}[u] = i}
\ee 
and the associated \textit{interference time} by
\be
\zint^{(i)} = \inf \left \{ s > t \, : \, \{ | V_{s}^{(i)} - V_{s-1}^{(i)} | > 1\} \cup \{ \big| \, | \mathrm{Comp}_{s}^{(i)} | - | \mathrm{Comp}_{s-1}^{(i)} | \, \big| > 1  \}  \right \}.
\ee
We also provide a generalization of the definition of the \textit{triple} time in formula \eqref{EqTripleTime}:
\be \label{EqTripleTimeGen} 
\ztr^{(i)} &= \inf \{ s > t \, : \, V_{s}^{(i)} \geq 3 \}.
\ee  

This allows us to state the following corollary of Lemma \ref{LemmaNateshLowerBoundOrig}: 

\begin{cor}  \label{LemmaNateshLowerBound}
Fix $i, t \in \mathbb{N}$ and $\epsilon > 0$ and assume that $V_{t}^{(i)} = 1$. Then
\be \label{IneqCompGrowthLowerBound2}
\P\big[\ztr^{(i)} - t < \min \big( \epsilon \frac{n^{3}}{6 c^{2} d (2d-1)}, \zint^{(i)} \big) \big] = O \big(\epsilon \big). 
\ee 
\end{cor}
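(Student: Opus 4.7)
The strategy is to reduce Corollary \ref{LemmaNateshLowerBound} to Lemma \ref{LemmaNateshLowerBoundOrig} via a stochastic-domination coupling. The crucial observation is that, prior to the interference time $\zint^{(i)}$, the process $\{V_{s}^{(i)}\}_{s \geq t}$ changes by at most $1$ per step, and its upward transition probabilities are bounded above by the corresponding entries of the matrix $K$ from \eqref{EqSmallMat} (with $m = 2d$). Consequently, the problem reduces to estimating the hitting time of state $3$ for a chain whose dynamics are exactly those of $K$, and this estimate is already supplied by the proof of Lemma \ref{LemmaNateshLowerBoundOrig}.

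Concretely, I would introduce an auxiliary Markov chain $\{W_s\}_{s \geq t}$ on $\{0,1,2,3\}$ with $W_t = V_t^{(i)} = 1$ and transition structure given by $K$ (augmented by an absorbing state at $0$; this modification is harmless since it does not affect the hitting time of state $3$). Via the standard quantile coupling of the one-step transitions, I would arrange that $W_s \geq V_s^{(i)}$ for all $s < \zint^{(i)}$. The verification rests on the upper bound
\be
\P[V_{s+1}^{(i)} = a + 1 \mid \widehat{X}_s,\, V_s^{(i)} = a] \leq K[a, a+1], \qquad a \in \{1, 2\},
\ee
valid on the event $\{s < \zint^{(i)}\}$. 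For $a = 1$: the unique color-$i$ vertex has $m = 2d$ neighbours, and an increase requires selecting one of them as $v_s$ and obtaining $p_s \leq c/n$; the probability that the color assignment of the new vertex equals $i$ contributes a factor of at most $1$, giving the bound $\tfrac{cm}{n^2} = K[1,2]$. For $a = 2$: starting from $V_t^{(i)} = 1$, the two color-$i$ vertices must be adjacent (the first jump from $1$ to $2$ forces adjacency, and an adjacent pair cannot become non-adjacent in one KCIP step), and by triangle-freeness of $\LL$ they share no common neighbour; the same enumeration as in Lemma \ref{LemmaNateshLowerBoundOrig} then yields the bound $\tfrac{c(2m-1)}{n^2} = K[2,3]$.

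With the coupling in hand, set $\tau_W = \inf\{s \geq t : W_s \geq 3\}$. On the event $\{\ztr^{(i)} < \zint^{(i)}\}$, the coupling gives $W_{\ztr^{(i)}} \geq V_{\ztr^{(i)}}^{(i)} \geq 3$, so $\tau_W \leq \ztr^{(i)}$. Since $\{W_s\}$ has exactly the dynamics analyzed in Lemma \ref{LemmaNateshLowerBoundOrig} with $m = 2d$, that lemma (with $3c^2 m(m-1) = 6 c^2 d (2d-1)$) yields $\P[\tau_W - t < \epsilon n^3 / (6 c^2 d (2d-1))] = O(\epsilon)$, and therefore
\be
\P\Big[\ztr^{(i)} - t < \min\Big(\tfrac{\epsilon n^3}{6 c^2 d (2d-1)},\, \zint^{(i)}\Big)\Big] \leq \P\Big[\tau_W - t < \tfrac{\epsilon n^3}{6 c^2 d (2d-1)}\Big] = O(\epsilon),
\ee
which is the desired bound.

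The main technical obstacle is the $a = 2$ transition estimate: one must confirm that, notwithstanding the presence of other color classes and the subtle component-recoloring mechanism in the colored KCIP dynamics (which in principle allows color-$i$ vertices to have their color overwritten when a neighbouring component is re-colored), the probability that $V_{s+1}^{(i)} = 3$ is indeed bounded by $K[2, 3]$. The key point is that before $\zint^{(i)}$, any event that would create more than one new color-$i$ vertex in a single step is excluded by definition, so the only routes to $V_{s+1}^{(i)} = 3$ are exactly those counted in the uncolored case.
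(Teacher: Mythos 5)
Your proof reaches the conclusion by the same essential reduction as the paper's, namely appealing to the computation for the $K$-chain in Lemma \ref{LemmaNateshLowerBoundOrig}, but it packages the coupling differently. The paper couples the colored process to a fresh KCIP $X'$ started from the single color-$i$ vertex; since $X'$'s particle count evolves exactly according to $K$ until its triple time, the paper then reads off the hitting-time estimate from Lemma \ref{LemmaNateshLowerBoundOrig}. You instead couple the scalar process $V^{(i)}_s$ directly to a chain $W_s$ with kernel $K$ via stochastic domination. These are mathematically equivalent, and your version has the virtue of avoiding the need to track the full configuration of $X'$; it also makes the exact quantity requiring a bound (the one-step transition probabilities of $V^{(i)}$ prior to $\zint^{(i)}$) completely explicit, whereas the paper's implication "if $\ztr^{(i)} < \min(C,\zint^{(i)})$ for $X$ then it holds for $X'$" is asserted rather tersely.

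There is one step you leave unaddressed. To maintain $W_s \geq V^{(i)}_s$ under a monotone (quantile) coupling, it is not enough to verify the upward-rate comparison $\P[V^{(i)}_{s+1} = a+1 \mid \cdot] \leq K[a,a+1]$; one also needs the downward-rate comparison $\P[V^{(i)}_{s+1} = a-1 \mid \cdot] \geq K[a,a-1]$ at matching levels, and the analogous cross-level checks when $W_s = V^{(i)}_s + 1$. These hold: at $a=2$, the two color-$i$ vertices are adjacent, so each has a state-$1$ neighbour and is removed at rate exactly $\frac{1}{n}(1-\frac{c}{n})$, giving total downward rate $\frac{2}{n}(1-\frac{c}{n}) = K[2,1]$; at $a=1$ your absorbing-state convention sets $K'[1,0]=0$, so the bound is vacuous; and the cross-level inequalities hold because all the rates involved are $O(1/n)$, so the relevant complementary events cannot both be forced. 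None of this is hard, but it is part of the verification and should be stated. With that filled in, the argument is complete and gives the same bound as the paper's.
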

\begin{proof}
We reduce this to the case of Lemma \ref{LemmaNateshLowerBoundOrig}. Let $v \in \LL$ be the unique vertex with $\widehat{X}_{t}[v] = i$. 
If $\sum_{u \in \LL} X_{t}[u] > 1$, we can couple $\{ X_{s} \}_{s \geq t}$ to a second KCIP $\{ X_{s}' \}_{s \geq t}$, with initial condition $X_{t}'[u] = \textbf{1}_{u = v}$ in the following way. Let $p_{s}, v_{s}$ be as in representation \eqref{EqCiRep}. It is possible to obtain a third vertex with label $i$ only by either growing the single component of color $i$ to size 3 without having a collision with any other component, or by having a collision with another component. Thus, for any constant $0 < C < \infty$, if $\ztr^{(i)} < \min \big( C, \zint^{(i)} \big)$ holds for the process $\{ X_{s} \}_{s \geq t}$, it must also hold for the process $\{ X_{s}' \}_{s \geq t}$. Thus, for the purposes of proving inequality \eqref{IneqCompGrowthLowerBound2}, it is enough to prove it in the case that $X_{t}[u] = \textbf{1}_{u=v}$; but in that case, the conclusion follows by Lemma \ref{LemmaNateshLowerBoundOrig}. 
\end{proof}
We next recall the Coalescence process (\cite{ClSu73, HoLi75}): 
\begin{defn}[Coalescence Process] \label{defCoalProcSimp}
Fix a graph $G$, $k \in \mathbb{N}$ and $q \in[0, \frac{1}{k}]$. A \textit{coalescence process on graph $G$ with $k$ initial particles and moving rate $q$} is a Markov chain $\{ Z_{s} \}_{s \in \mathbb{N}}$ on $G^{k}$. Let $O_{s} = \{ v \in G \, : \, \exists \, \, 1 \leq i \leq k \,\, \text{such that} \, Z_{s}[i] = v\}$ be the \textit{occupied sites} of $Z_{s}$. To evolve $Z_{s}$, we first choose $u_{s} \sim \mathrm{Unif}([0,1])$, $v_{s} \sim \mathrm{Unif}([O_{s}])$ and $u_{w} \sim \mathrm{Unif}([\mathcal{B}_{1}(v_{s}) \backslash \{v_{s}\}])$ and set $I_{s} = \{i \, : \, Z_{s}[i] = v_{s} \}$. If $u_{s} \leq q | O_{s} |$, then set $Z_{s+1}[i] = u_{w}$ for all $i \in I_{s}$ and set $Z_{s+1}[j] = Z_{s}[j]$ for all $j \notin I_{s}$; otherwise, set $Z_{s+1}[j] = Z_{s}[j]$ for all $j$. 
\end{defn}
\begin{remark}The coalescence process has many other names and descriptions. The construction of the coalescence process as the  `dual' to the voter process is well known (see \cite{HoLi75}). We can also view the coalescence process with $k$ initial particles as $k$ random walkers that take turns making independent simple random walk steps until a collision occurs, at which point the colliding particles `merge' into a single particle. After this collision, the coalescence process resumes with $k-1$ particles.
\end{remark}
\subsection{Coupling KCIP with coalescence process} Next we couple the KCIP with the coalescence process to obtain a lower
bound on the number of collisions $\cC_s$ (see formula \eqref{EqDefCCs}). 
\begin{lemma}\label{LemmLbNumCol}
With notation as above, there exist constants $\kappa = \kappa(\epsilon, c,d) > 0$ and $C = C(\epsilon, c, d) < \infty$ that do not depend on $n$ so that
\be 
\E[\cC_{t + \epsilon n^{3}} \vert \mathcal{F}_{t}] \geq \kappa Y_{t} - C. 
\ee 
\end{lemma}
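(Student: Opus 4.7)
The plan is to couple the KCIP to a coalescence process on $\Lambda(L,d)$ with $Y_{t}$ initial particles, using the colored KCIP $\{\widehat{X}_{s}\}_{s \geq t}$ of Definition \ref{DefColConIs}, so that mergers in the coalescence process produce collisions in the KCIP. I will then lower bound the number of mergers via a classical calculation exploiting the fast meeting time of two random walkers on $\Lambda(L,d)$ for $d \geq 3$.

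First, I would initialize the colored KCIP so that each of the $Y_{t}$ connected components of $G_{t}$ receives its own color, and I would track each color separately. The heuristics in Section \ref{SecRoadmap} indicate that, while color $i$ consists of a single vertex and has not yet interacted with any other color, its location performs a slowed simple random walk on $\Lambda(L,d)$ whose effective moving rate is of order $1/n^{2}$. Corollary \ref{LemmaNateshLowerBound} then implies that, for any sufficiently small $\tilde{\epsilon} > 0$, the probability that any given color grows to three particles before either time $\tilde{\epsilon} n^{3}$ or its interference time $\zint^{(i)}$ is $O(\tilde{\epsilon})$. Consequently, up to time $\epsilon n^{3}$ each color either remains a single particle performing a random walk, or else experiences an interference event that is itself usually caused by a collision with another color.

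Next, I would build a coupling between $\{\widehat{X}_{s}\}$ and a coalescence process $\{Z_{s}\}$ (as in Definition \ref{defCoalProcSimp}) with $k = Y_{t}$ particles on $\Lambda(L,d)$ and moving rate $q = \kappa_{0}/n^{2}$, where $\kappa_{0}$ is chosen small enough that the KCIP always has sufficient randomness to realize each $Z$-move. Under the coupling, the $i$-th particle of $Z_{s}$ is placed at a representative vertex of the color-$i$ component, it moves only when the corresponding color performs a clean single-particle random walk step, and any merger of two $Z$-particles induces a simultaneous KCIP collision. On this coupling, the classical estimate that two independent simple random walkers on $\Lambda(L,d)$ with $d \geq 3$ meet in expected $O(n)$ steps, combined with the coalescence sum $\sum_{j=2}^{k} n/j^{2} = O(n)$, gives that a $k$-particle coalescence process on $\Lambda(L,d)$ run for $\epsilon n$ random walk steps (i.e.\ $\epsilon n^{3}/\kappa_{0}$ KCIP steps) produces at least $\kappa k - C_{0}$ mergers in expectation for constants $\kappa = \kappa(\epsilon,c,d) > 0$ and $C_{0} = C_{0}(\epsilon,c,d) < \infty$.

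The main obstacle will be making the coupling degrade gracefully at interference times: once $s \geq \zint^{(i)}$, color $i$ is no longer a single particle and the associated $Z$-particle has no natural location. I would handle this by freezing the $i$-th $Z$-particle at its last valid location past its interference time, and by arguing that each such interference event either occurs late enough that $\Omega(Y_{t})$ $Z$-mergers have already been realized, or is itself produced by a collision with another color and so directly contributes to $\cC_{t+\epsilon n^{3}}$. A union bound based on Corollary \ref{LemmaNateshLowerBound} confines the loss from "bad" colors (those hitting $\ztr^{(i)}$ before either $\zint^{(i)}$ or $\tilde{\epsilon} n^{3}$) into the additive $-C$ term of the final bound, yielding the claimed inequality $\E[\cC_{t+\epsilon n^{3}} \mid \mathcal{F}_{t}] \geq \kappa Y_{t} - C$.
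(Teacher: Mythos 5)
Your overall plan---embed a coalescence process into the colored KCIP, invoke Corollary \ref{LemmaNateshLowerBound} to control triple-formation, and use the $d \geq 3$ meeting-time bound to count mergers---is the same high-level strategy the paper uses. However, the central step of your coupling does not hold, and the paper needs an intermediate object (``near-collisions'') plus a separate positive-probability argument precisely to avoid the claim you assert.

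The problem is the sentence ``any merger of two $Z$-particles induces a simultaneous KCIP collision.'' In the coalescence process of Definition \ref{defCoalProcSimp}, a merger occurs when a walker moves onto a vertex already occupied by another walker, so at the moment of merger both walkers are at the \emph{same} vertex. But in the KCIP, two color components occupy disjoint vertex sets at all times; they can never be at the same vertex. A collision (a decrease in $Y$) occurs when a $0$-labeled vertex $v$ that bridges two components is flipped to $1$, which is a distinct spatial and temporal event from two walkers meeting, and whose rate differs from the coalescence merger rate. Consequently there is no coupling in which $Z_s[i]$ faithfully tracks the color-$i$ location \emph{and} $Z$-mergers coincide in time with KCIP collisions: the positions of the two ``particles'' at the moment of a KCIP collision are at graph distance $2$ (with the bridge vertex between them), not coincident. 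The paper resolves this by coupling $R_s[i]$ to $Q_s[i]$ only while the particles are well-separated (the coupled index set $\mathcal{D}$ is explicitly cut off at $\tau_{\mathrm{near}}^{(i)}$, a distance-$2$ event), letting $R_s$ evolve independently after decoupling so that it remains a genuine coalescence process for Cox's theorem, and then converting near-collisions into actual collisions with a uniformly positive conditional probability in Proposition \ref{propNearColImpCol}. That Proposition carries real content---it is why a separate factor $\kappa(c,d)>0$ appears---and cannot be folded into the coupling the way your proposal suggests.

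A secondary issue: you propose freezing a $Z$-particle at its last valid location once color $i$ experiences interference. A coalescence process with frozen particles is no longer of the type treated in \cite{Cox89}, so you would lose the right to cite that result for the expected merger count. The paper instead continues to evolve $R_s[i]$ independently after decoupling (conditionally on $R_s$ and the still-coupled particles), and then argues combinatorially (the events $\mathcal{A}_1,\ldots,\mathcal{A}_4$) about how many colors can have coalesced in $R$ without the KCIP witnessing a corresponding near-collision. You would need a replacement for both of these steps: a version of Cox's bound tolerant to your frozen dynamics, and a rigorous version of the claim that an interference event ``either occurs late enough that $\Omega(Y_t)$ mergers have already been realized, or is itself produced by a collision''---the latter is roughly the content of the paper's bound on $\mathcal{A}_4^{(i)}$ via ``unrecorded collisions,'' but it requires the careful bookkeeping there.
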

\begin{proof}
Roughly speaking, our strategy is to show that the KCIP has `almost as many' collisions as an associated coalescence process on $\LL$. Recall the `color constrained' KCIP $\widehat{X}_{s}$ from Definition \ref{DefColConIs}. We define $\{ Q_{s}[i] \}_{s \geq t + n^{2.5}}\, 1 \leq i \leq Y_{t}$ to be a collection of $(1-\frac{cd}{n^{2}})$-lazy simple random walks on $\LL$, with starting points $\widehat{X}_{t+ n^{2.5}}[Q_{t + n^{2.5}}[i]] = i$. These walks will not be independent; we will later define a coupling of these walks to $\widehat{X}_{s}$. We will use the properties of this coupling to show that the following all hold:
\begin{enumerate}
\item For $1 \leq i \leq Y_{t}$, let $T(i)$ be the largest integer so that $Q_{s}[i] \in \{ v \, : \, \widehat{X}_{s}[v] = i \}$ holds for all $t + n^{2.5} \leq s \leq T(i)$. We show, with high probability, that $T(i)$ is `quite large' for `many' values of $i$.
\item The expected number of `near-collisions' between processes $Q_{s}[i], Q_{s}[j]$, $i \neq j$ is at least some fixed fraction of the expected number of collisions in a coalescence process $\{R_s\}_{s \geq t + n^{2.5}}$ with initial particles at  $(Q_{t +n^{2.5} }[1], \ldots, Q_{t + n^{2.5} }[Y_{t}] )$.  See Figure \ref{PicEmbeddedCoal}, showing the particles of the colored KCIP $\widehat{X}_{t}$ as well as `crossed-out' vertices representing $R_{t}$.
\item Theorem 5 of \cite{Cox89} implies that this last number is `almost' as large as $Y_{t}$. 
\item The expected number of collisions in the KCIP is at least some fixed fraction of the expected number of near-collisions.
\end{enumerate}
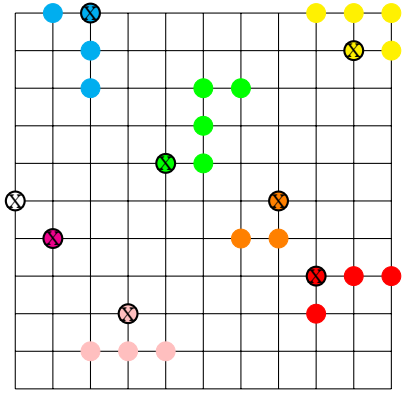
\begin{figure} 
\begin{tikzpicture}[scale=0.5]
    \draw (0, 0) grid (10, 10);
    
    \draw [thick] [fill = white] (0,5) circle [radius=.25];
    \draw (0,5) node{x};
    \draw [thick] [fill = magenta] (1,4) circle [radius=.25];
    \draw (1,4) node{x};
   \draw [cyan] [fill = cyan] (1,10) circle [radius=.25];
     \draw [thick] [fill=cyan] (2,10) circle [radius=.25];
     \draw (2,10) node{x};
\draw [cyan] [fill = cyan] (2,9) circle [radius=.25];
     \draw [cyan] [fill=cyan] (2,8) circle [radius=.25];

    \draw [pink] [fill=pink] (2,1) circle [radius=.25];
    \draw [pink] [fill=pink] (4,1) circle [radius=.25];
    \draw [pink] [fill = pink] (3,1) circle [radius=.25];
     \draw [thick] [fill=pink] (3,2) circle [radius=.25];
      \draw (3,2) node{x};
    \draw [thick][fill = green] (4,6) circle [radius=.25];
     \draw (4,6) node{x};
    \draw [green] [fill=green] (5,6) circle [radius=.25];
     \draw[green] [fill=green] (5,7) circle [radius=.25];
     \draw[green] [fill=green] (5,8) circle [radius=.25];
    \draw [green] [fill =green] (6,8) circle [radius=.25];
     \draw [orange][fill=orange] (6,4) circle [radius=.25];
     \draw [orange] [fill = orange] (7,4) circle [radius=.25];
      \draw [thick][fill=orange] (7,5) circle [radius=.25];
       \draw (7,5) node{x};
        \draw [red] [fill = red] (8,2) circle [radius=.25];
     \draw [thick] [fill = red] (8,3) circle [radius=.25];
           \draw (8,3) node{x};
      \draw [red] [fill = red] (9,3) circle [radius=.25]; 
      \draw [red] [fill = red] (10,3) circle [radius=.25];
      
      \draw [yellow] [fill = yellow] (10,10) circle [radius=.25]; 
      \draw [yellow] [fill = yellow] (9,10) circle [radius=.25];
      \draw [yellow] [fill = yellow] (8,10) circle [radius=.25]; 
      \draw [yellow] [fill = yellow] (10,9) circle [radius=.25];
      \draw [thick] [fill = yellow] (9,9) circle [radius=.25];
      \draw (9,9) node{x};
\end{tikzpicture}
\caption{Embedded Coalesecence Process}
\label{PicEmbeddedCoal}
\end{figure}

We now give the formal argument. Assume $n > c$ and fix an index $1 \leq i \leq Y_{t}$. Let 
\be
\tS{i} = \inf \{ s \geq t \, : \, \forall \, w_{1}, w_{2} \in \LL, \,  \{ \widehat{X}_{s}[w_{1}] = \widehat{X}_{s}[w_{2}] = i \} \implies \{ (w_{1}, w_{2}) \notin E \} \}
\ee
 be the first time that no two particles colored $i$ are adjacent. If the set $\{ v \, : \, \widehat{X}_{\tS{i}}[v] = i \}$ is non-empty, choose a vertex $v$ from that set and let $Q_{\tS{i}}[i] = v$. Although we have not yet described the evolution of $\{ Q_{s}[i] \}_{s \geq \tS{i}}$, we define the associated random time 
\be 
\tTr{i} = \inf \{ s > \tS{i} \, : \, \text{the component of } G_{s} \text{ containing } Q_{s}[i] \text{ has at least three elements.} \}.
\ee

Define the \textit{lifetime} $\ell_{s}[v]$ of a particle at $v$ with $\widehat{X}_{s}[v]=i$ to be 
\be
\ell_{s}[v] = \sup \{ T \geq s \, : \, \forall \, s < u \leq T, \, \, \widehat{X}_{u}[v] = i \, \, \mathrm{and} \, \, v_{u} \neq v  \}
\ee
when that set is non-empty, and $\ell_{s}[v] = 0$ otherwise. 
 Again, without having defined the evolution of $\{ Q_{s}[i] \}_{s \geq \tS{i}}$, we define the decoupling time of colour $i$ to be the minimum of the first time that the particle at $Q_{s}[i]$ does not have colour $i$ and the first time that there is a size-three component of colour $i$: 
 \be \label{EqDecoupDefMain}
 \tDe{i} = \min \big( \tTr{i}, \inf \{ s \geq \tS{i} \, : \, \widehat{X}_{s}[Q_{s}[i]] \neq i \}  ).
 \ee We now describe our coupling of the processes $\{ Q_{s}[i] \}$ to $\{ \widehat{X}_{s} \}$. For $\tS{i}  \leq s < \tDe{i} $, we evolve $Q_{s}[i]$ by always setting \be \label{EqDefFunnyQProc}
Q_{s+1}[i] = \mathrm{argmax}_{ v \in \mathrm{Comp}_{s}(Q_{s}[i])} \ell_{s}[v].
\ee 

When $s \geq \tDe{i}$, we are not interested in $Q_{s+1}$ and do not make use of its evolution beyond this time. Finally, define the `near-collision' time 
\be 
\tau_{\mathrm{near}}^{(i)} = \inf \{ s \geq \tS{i} \, : \, \text{there exists an element of } X_{s} \text{ at distance 2 from } Q_{s}[i] \}. 
\ee 
First, we claim that $\tDe{i} \geq \min(\tTr{i},\tau_{\mathrm{near}}^{(i)})$.  This is because, for all of the color-$i$ vertices near $Q_{s}[i]$ to be removed, there would have to be a collision involving color-$i$ vertices near $Q_{s}[i]$, and thus also a near-collision. Observe that, conditioned on $\tS{i}$, the distribution of the path 
\be 
\{ Q_{s}[i] \}_{s = \tS{i}}^{\min(\tTr{i} , \tau_{\mathrm{near}}^{(i)})-1}
\ee is equal to that of a simple random walk on $G$ with holding probability $1 - \frac{cd}{n^{2}}$.  \par
We now couple a coalescence process $\{R_{s} \}_{s \geq t + n^{2.5}}$ with $Y_{t}$ initial particles and moving rate $\frac{cd}{n^{2}}$ to $\{Q_{s} \}_{s \geq t + n^{2.5}}$. We begin by setting the initial conditions $R_{t + n^{2.5}}$. If $\tS{i} \leq n^{2.5}$, we set $R_{t+n^{2.5}}[i] = Q_{t+n^{2.5}}[i]$. Otherwise, choose $R_{t+n^{2.5}}[i]$ uniformly at random from among all vertices not already containing a particle. Let $\mathcal{D}$ denote the pairs of indices $(i,s)$ that satisfy 
\be \label{IneqGoodIndicesCoalProc}
\tau_{\mathrm{start}}^{(i)} \leq n^{2.5} \leq s \leq \min(\tau_{\mathrm{triple}}^{(i)}, \tau_{\mathrm{near}}^{(i)}).
\ee 
For $(i,s) \in \mathcal{D}$, we set $R_{s}[i] = Q_{s}[i]$. For $(i,s) \notin \mathcal{D}$, we choose $R_{s+1}[i]$ conditional on $R_{s}$ and $\{ R_{s+1}[j] \}_{ (j,s) \in \mathcal{D}}$ and independently of all other random variables being discussed. We now argue that such an extension from the indices $(i,s) \in \mathcal{D}$ to all indices $1 \leq i \leq Y_{t}$, $t + n^{2.5} \leq s \leq t + \epsilon n^{3}$ is possible. To see this, it is sufficient to check that at each $(i,s) \in \mathcal{D}$, we have:
\begin{itemize}
\item $Q_{s}[i]$ evolves as a simple random walk with holding probability $1 - \frac{cd}{n^{2}}$ (\textit{i.e.}, each particle moves as a simple random walk). 
\item If $(j,s) \in \mathcal{D}$ for some $j \neq i$, at least one of $Q_{s+1}[i] = Q_{s}[i]$ or $Q_{s+1}[j] = Q_{s}[j]$ must hold (\textit{i.e.}, two particles can't move at once).
\item If $(j,s) \in \mathcal{D}$ for some $j \neq i$, $Q_{s}[i] = Q_{s}[j]$ cannot hold (\textit{i.e.}, no two particles that are being conditioned on have collided). 
\end{itemize}

These three conditions, taken together, characterize the one-step dynamics of a subset of particles evolving as a coalescence process.

 This completes the definition of our coupling of $\{ R_{s} \}_{t + n^{2.5} \leq s \leq \epsilon n^{3}}$ to  $\{ \widehat{X}_{s} \}_{t + n^{2.5} \leq s \leq \epsilon n^{3}}$. It has the critical property that, for $t + n^{2.5} \leq s < \tau_{\mathrm{decoupling}}^{(i)}$, we have  $\widehat{X}_{s}[R_{s}[i]] = i$. \par
Say that color $1 \leq i \leq Y_{t}$ has a \textit{near-collision} between times $t_{1}$ and $t_{2}$ if there exists a time $t_{1} \leq s \leq t_{2}$ and a pair of vertices $u, v$ so that $\widehat{X}_{s}[u] = i$, $X_{s}[v] = 1$, $u,v$ are not in the same component of $G_{s}$ and $|u-v| = 2$. Say that color $i$ has \textit{coalesced by time $t_{1}$} if $R_{s}[i] = R_{s}[j]$ for some $j \neq i$ and $s \leq t_{1}$. The color $i$ has a near-collision between times $t$ and $t + \epsilon n^{3}$ unless at least one of the following four events occurs:
\begin{enumerate}
\item $\mathcal{A}_{1,n^{2.5}}^{(i)}$, where for $0 \leq u \leq n^{2.5}$ we define $\mathcal{A}_{1,u}^{(i)}$ to be the event that $\tS{i} > t + u$ and $i$ has no near-collisions between time $t$ and $t + u$. 
\item $\mathcal{A}_{2}^{(i)}$: $\tS{i} \leq t + n^{2.5}$ but color $i$ has not coalesced by time $t + \epsilon n^{3}$. 
\item $\mathcal{A}_{3}^{(i)}$: $\tTr{i} < \tau_{\mathrm{near}}^{(i)} < \epsilon n^{3}$.
\item $\mathcal{A}_{4}^{(i)}$: $\tS{i} \leq t + n^{2.5}$ and color $i$ has coalesced, but for all pairs $s,j$ so that $t \leq s \leq \epsilon n^{3}$ and $R_{s}^{(i)} = R_{s}^{(j)}$, we also have $\tau_{\mathrm{decoupling}}^{(j)} < \tau_{\mathrm{near}}^{(i)}$.
\end{enumerate}
We now bound the probabilities of these four events. Noting that color $i$ has a near-collision if there are ever two components of color $i$ (since the two components must be at distance exactly 2 when they are first separated), we have by essentially the same calculation as inequality \eqref{IneqDiffDec}:
\be 
\E[V_{t+s+1}^{(i)} \textbf{1}_{\mathcal{A}_{1,s+1}^{(i)}} | \mathcal{F}_{t+s} ] &\leq V_{t+s+1}^{(i)} - \frac{V_{t+s}^{(i)}}{n} (1 - \frac{c}{n}) + \frac{2d V_{t+s}^{(i)}}{n} \frac{c}{n} \\
& \leq V_{t+s}^{(i)} (1 - \frac{1}{n}(1 - \frac{c(2d+1)}{n}) ).
\ee 
Thus, for $n > 2c(2d+1)$, we iterate and find
\be 
\E[V_{t +  n^{2.5}}^{(i)} \textbf{1}_{\mathcal{A}_{1,n^{2.5}}^{(i)}} | \mathcal{F}_{t}] \leq n e^{-n}.
\ee 
Since $V_{t+s}^{(i)}$ is at least 1 for all $t+s \leq \tau_{\mathrm{near}}^{(i)}$, applying Markov's inequality gives
\be \label{IneqManyCollFinal1}
\P[\mathcal{A}_{1,n^{2.5}}^{(i)} | \mathcal{F}_{t}] \leq \E[V_{t + n^{2.5}}^{(i)} \textbf{1}_{\mathcal{A}_{1,n^{2.5}}^{(i)}} | \mathcal{F}_{t}] \leq  n e^{-n}.
\ee 
Next, by Theorem 5 of \cite{Cox89}, there exists a constant $C = C(\epsilon, c,d)$ that does not depend on $n$ so that
\be \label{IneqManyCollFinal2}
\E[\sum_{i} \textbf{1}_{\mathcal{A}_{2}^{(i)}}] \leq C
\ee 
for all $n$ sufficiently large. By Corollary \ref{LemmaNateshLowerBound},
\be \label{IneqManyCollFinal3}
\E[\sum_{i} \textbf{1}_{\mathcal{A}_{3}^{(i)}}] = O(\epsilon) Y_{t}.
\ee 
To bound $\mathcal{A}_{4}^{(i)}$, we consider the collection of colors $j$ that are given in the definition of $\mathcal{A}_{4}^{(i)}$. We say that these colors are involved in an `unrecorded collision,' as the collisions described in event $\mathcal{A}_{4}^{(i)}$ do not contribute to our count of the total number of near-collisions of the KCIP. Observe that any particle $R_{s}[j]$ that is involved in such an unrecorded collision at time $s$ necessarily coalesces with another particle $R_{s}[j']$ during the course of the collision, and also must have $\tDe{j} \leq s$. Thus, each decoupled particle can be involved in only one unrecorded collision before being merged with another particle, and so 
\be \label{IneqManyCollFinal4}
\sum_{i} \textbf{1}_{\mathcal{A}_{4}^{(i)}} \leq 2(\sum_{i} \textbf{1}_{\mathcal{A}_{1, n^{2.5}}^{(i)}}  + \sum_{i} \textbf{1}_{\mathcal{A}_{2}^{(i)}}  + \sum_{i} \textbf{1}_{\mathcal{A}_{3}^{(i)}}).
\ee 
 Denote by $\near_{s}$ the total number of near-collisions between times $t$ and $s$. Combining inequalities \eqref{IneqManyCollFinal1}, \eqref{IneqManyCollFinal2}, \eqref{IneqManyCollFinal3} and \eqref{IneqManyCollFinal4}, we have that
\be \label{IneqLbNumColPrettyMuchDone}
\E[\near_{t + \epsilon n^{3}}] \geq Y_{t}(1 - O(\epsilon)) - O(1).
\ee 
Finally, we must relate the number of near-collisions $\near_{t + \epsilon n^{3}}$ to the number of collisions $\cC_{t + \epsilon n^{3}}$: 

\begin{prop}  \label{propNearColImpCol}
Fix $\epsilon > 0$. There exists $\kappa = \kappa(c,d) > 0$ so that for all $n$ sufficiently large,
\be 
\E[\cC_{t + \epsilon n^{3} + 4 n^{2.5}}] \geq \kappa \E[\near_{t + \epsilon n^{3}}].
\ee 
\end{prop}
\begin{proof}[Proof of Proposition \ref{propNearColImpCol}]
This amounts to checking that, once two components $C_{1}, C_{2}$ of $G_{s}$ have vertices $w_{1} \in C_{1}, \,w_{2} \in C_{2}$ at distance 1, there is a positive probability that all three of the following (purely local) events occur:
\begin{itemize}
\item The particles at $w_{i}$ survive longer than any of their neighbours.
\item A particle is added to the common neighbour $v$ of $w_{1}, w_{2}$ before any other particle is added to the set $\mathcal{B}_{1}(w_{1}) \cup \mathcal{B}_{1}(w_{2})$.
\item Some particle is added to the set $\mathcal{B}_{1}(w_{1}) \cup \mathcal{B}_{1}(w_{2})$ before time $s + 4 n^{2.5}$.
\end{itemize}
The first two events are purely local, and the last occurs with probability at least 
\newline ${1 - (1 - \frac{c}{n^{2}})^{n^{2.5}} \approx 1 - e^{-c \sqrt{n}}}$. Thus all of these events have probability bounded away from zero, and the proof is finished.
\end{proof}

Combining Proposition \ref{propNearColImpCol} with inequality \eqref{IneqLbNumColPrettyMuchDone}, and noting that $n^{2.5} = o(n^{3})$, completes the proof.
\end{proof}
Now, we are finally ready to give the proof of Theorem \ref{LemmaContractionEstimate},
establishing a drift condition for $V_t$. 
\begin{proof}[Proof of Theorem \ref{LemmaContractionEstimate}]
Recall the definitions of $\delta > 0$ from Lemma \ref{LemmaCompNumCompsNumCols} and $\kappa > 0$ from Lemma \ref{LemmLbNumCol}. Then
\be 
\E[V_{t + \epsilon n^{3}} | \mathcal{F}_{t}] &= \E[V_{t+ \epsilon n^{3}} - {Y}_{t+\epsilon n^{3}} | \mathcal{F}_{t}] + \E[{Y}_{t+\epsilon n^{3}} - \tilde{Y}_{t + \epsilon n^{3}}] + \E[\tilde{Y}_{t + \epsilon n^{3}} - \tilde{Y}_{t} | \mathcal{F}_{t}] +  \tilde{Y}_{t}\\
&\leq 4cd(1 + o(1)) + \E[{Y}_{t+\epsilon n^{3}} - \tilde{Y}_{t + \epsilon n^{3}} | \mathcal{F}_{t}] + \E[\tilde{Y}_{t + \epsilon n^{3}} - \tilde{Y}_{t} | \mathcal{F}_{t}] +  \tilde{Y}_{t}\\ 
&\leq 8cd(1 + o(1)) + \E[\tilde{Y}_{t + \epsilon n^{3}} - \tilde{Y}_{t} | \mathcal{F}_{t}] +  \tilde{Y}_{t}\\ 
&\leq -\frac{2 \delta}{3} \E[\cC_{t + \epsilon n^{3}}  \vert \mathcal{F}_{t}] +  O(\epsilon \tilde{Y}_{t}) + O(1) + Y_{t} \\
&\leq \big( 1 - \frac{2 \delta \kappa}{3} \big) Y_{t} + O( \epsilon \tilde{Y}_{t}) + O(1) \\
&\leq  \big( 1 - \frac{2 \delta \kappa}{3} + O(\epsilon) \big) V_{t} + O(1),
\ee 
where the first inequality comes from Lemma \ref{LemmCompPartNumComp}, the second inequality comes from inequality \eqref{IneqRev1Star1DeltaBd} and a second application of Lemma \ref{LemmCompPartNumComp},  the third inequality comes from Lemma \ref{LemmaCompNumCompsNumCols}, the bound in the fourth inequality comes from 
Lemma \ref{LemmLbNumCol}, and the final inequality comes from the  bound $Y_{s} \leq \tilde{Y}_s \leq V_{s}$ (see \eqref{IneqRev1Star1DeltaBd}).  \par 
Fixing $\epsilon > 0$ sufficiently small, then, there exists some constant $C > 0$ so that 
\be 
\E[V_{t + \epsilon n^{3}} \vert \mathcal{F}_{t}] \leq  \big( 1 - \frac{ \delta \kappa}{2} \big) V_{t} + C.
\ee 
Write $\alpha = \frac{\delta \kappa}{2}$. Iterating, we have
\be 
\E[V_{t+ k \epsilon n^{3}}] &\leq \E[\E[\ldots \E[ V_{t+ k \epsilon n^{3}} \vert \mathcal{F}_{t + (k-1) \epsilon n^{3}}] \ldots \vert \mathcal{F}_{t + \epsilon n^{3}} ] \vert \mathcal{F}_{t}] \\
&\leq \E[\E[\ldots \E[  \big( 1 - \alpha \big) V_{t+ (k-1) \epsilon n^{3}}  + C \vert {t + (k-2) \epsilon n^{3}} ] \ldots \vert {t +  \epsilon n^{3}} ] \vert \mathcal{F}_{t}] \\
&\leq  \big( 1 - \alpha \big)^{k} V_t + C_G
\ee 
for some constant $C_G$ and the proof is finished.
\end{proof}
\section{Excursion Lengths of KCIP} \label{SecExcLength}
Fix $\epsilon_0$ small enough so that Theorem \ref{LemmaContractionEstimate}
applies and set 
\be \label{eqn:kmax}
k_{\mathrm{max}} = 4 {C_G \over \alpha}
\ee
where $C_G,\alpha$ are as defined in inequality \ref{eqn:conddrift} of Theorem \ref{LemmaContractionEstimate}.
 In this section, the drift condition for $V_t$ obtained in Theorem \ref{LemmaContractionEstimate} will be used to show:
\begin{enumerate}
\item The distribution of the first hitting time of $\cup_{1 \leq k \leq k_\mathrm{max}}\Omega_{k}$ is $O(n^{3} \log(n))$, uniformly in the starting point $X_{0}$ (see inequality \eqref{IneqLemmaTechLem2Start}).
\item  $T \gg n^{3} \log(n)$ implies that 
\be
\sum_{t \leq T} \textbf{1}_{X_{t} \in \cup_{1 \leq k \leq k_\mathrm{max}} \Omega_{k}} \gg n^3 \log(n)
\ee
with high probability, uniformly in the starting point $X_{0}$ (see Corollary \ref{CorMainDriftResult}).
\end{enumerate}
Items (1) and (2) above thus provide strong bounds for the occupation times of 
KCIP on $\Omega_k$, uniformly in $k\leq k_{\max}$.
We start with the following elementary technical lemma.
\begin{lemma} \label{LemmaTechLem1}
Fix $0 < \beta < 1$ and $0 < \gamma < \infty$. Consider a stochastic process $\{J_{t} \}_{t \in \mathbb{N}}$ on $\mathbb{N}$ with associated filtration $\mathcal{J}_{t}$ that satisfies the drift condition 
\be
\E[J_{s+1} \vert \mathcal{J}_{s}] \leq (1 - \beta) J_{s} + \gamma  
\ee
for all $s \in \mathbb{N}$. Let $Z_{1}, Z_{2}, \ldots$ be an i.i.d. sequence of random variables with geometric distribution and mean $\frac{2}{\beta}$. If $J_{0} \leq \frac{4 \gamma}{\beta}$, for all $T \in \mathbb{N}$ we have
\be 
\P \big[\sum_{s=0}^{T} \mathbf{1}_{J_{s} < \frac{4 \gamma}{\beta}} < C \big] \leq \P \big[\sum_{i=1}^{C} Z_{i} > T \big].
\ee 
\end{lemma}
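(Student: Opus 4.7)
The plan is to reduce the statement to a stochastic domination of the inter-visit times of $\{J_s\}$ to the sublevel set $A = \{j \in \mathbb{N} : j < 4\gamma/\beta\}$. Since $J_0 \in A$ by hypothesis, set $\tau_1 = 0$ and recursively $\tau_{i+1} = \inf\{s > \tau_i : J_s \in A\}$, and let $\xi_i = \tau_i - \tau_{i-1}$ for $i \geq 2$. The event $\{\sum_{s=0}^T \mathbf{1}_{J_s < 4\gamma/\beta} < C\}$ coincides with $\{\tau_C > T\} = \{\sum_{i=2}^C \xi_i > T\}$, so it is enough to stochastically dominate $\sum_{i=2}^C \xi_i$ by $\sum_{i=1}^C Z_i$.

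The core estimate I would establish is the conditional geometric tail
\[
\P\big[\xi_i > m \,\big|\, \mathcal{J}_{\tau_{i-1}}\big] \leq \big(1 - \tfrac{3\beta}{4}\big)^{m}, \qquad m \geq 0.
\]
The key observation is that on $\{J_s \geq 4\gamma/\beta\}$ the inequality $\gamma \leq \beta J_s / 4$ sharpens the hypothesis to the purely multiplicative drift $\E[J_{s+1}\mid \mathcal{J}_s] \leq (1-\tfrac{3\beta}{4}) J_s$. Consequently, the process $M_s = J_{\tau_{i-1}+s}(1-\tfrac{3\beta}{4})^{-(s-1)}$ is a nonnegative supermartingale in the regime $1 \leq s \leq \xi_i$. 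Applying optional stopping at $\xi_i \wedge m$ and using $J_{\tau_{i-1}+m} \geq 4\gamma/\beta$ on $\{\xi_i > m\}$ together with Markov's inequality, while controlling the initial step from $s=0$ to $s=1$ with the raw additive bound $\E[J_{\tau_{i-1}+1}\mid \mathcal{J}_{\tau_{i-1}}] \leq (1-\beta)(4\gamma/\beta)+\gamma = (4\gamma/\beta)(1-\tfrac{3\beta}{4})$, would yield the claimed tail.

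From the conditional bound, a standard pathwise coupling via conditional CDF inversion produces an i.i.d.\ sequence $\{Z_i'\}$ of geometric random variables with success parameter $3\beta/4$ such that $\xi_i \leq Z_i'$ almost surely. Since the mean $4/(3\beta)$ of $Z_i'$ is at most the mean $2/\beta$ of $Z_i$, a second coupling gives $Z_i' \leq Z_i$ for the i.i.d.\ geometric variables $Z_i$ of the lemma. Composing the two couplings yields $\sum_{i=2}^C \xi_i \leq \sum_{i=1}^{C-1} Z_i \leq \sum_{i=1}^C Z_i$ almost surely, which implies the stated inequality on probabilities.

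The main point requiring care is the boundary step $s = 0$: the tight multiplicative contraction fails when $J_{\tau_{i-1}} \in A$, so the supermartingale must be initialized at $s=1$ and the first transition handled separately by the original additive drift condition evaluated at $J_{\tau_{i-1}} \leq 4\gamma/\beta$. Once this boundary bookkeeping is done and the geometric tail for a single $\xi_i$ is in hand, the remaining passage to stochastic domination of the sum is routine.
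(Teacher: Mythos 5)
Your proof is correct and follows essentially the same route as the paper's: decompose the trajectory into successive returns to the sublevel set $\{J < 4\gamma/\beta\}$, use the multiplicative drift that holds above the threshold (where $\gamma \leq \beta J_s/4$) to obtain a geometric tail for each inter-visit time, and stochastically dominate the sum of increments by a sum of i.i.d.\ geometrics via sequential coupling. The only difference is presentational: the paper iterates the drift inequality directly to obtain the (weaker) tail $(1-\beta/2)^s$ for the return time, while you package the same contraction as an optional-stopping bound on a nonnegative supermartingale, extract the sharper rate $3\beta/4$, and then relax it back to mean $2/\beta$ through a second coupling step.
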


\begin{proof}
Assume $J_{0} < \frac{4 \gamma}{\beta}$ and let $\tRe = \inf \{ s > 0 \, : \, J_{s} < \frac{4 \gamma}{\beta} \}$. We have
\be 
\E[J_{s} \mathbf{1}_{\tRe \geq s}] \leq  \big(1 - \frac{\beta}{2} \big)^{s} \frac{4 \gamma}{\beta}, \\
\ee 
and so by Markov's inequality
\be  \label{IneqExpDom0}
\P[\tRe > s ] &\leq \P[J_{s} \textbf{1}_{\tRe \geq s} > \frac{4 \gamma}{\beta}]  
\leq \big(1 - \frac{\beta}{2} \big)^{s}.
\ee
Define $t_{0} = 0$ and $t_{i+1} = \inf \{ s > t_{i} \, : \, J_{s} \leq \frac{4 \gamma}{\beta} \}$. By inequality \eqref{IneqExpDom0},
\be \label{IneqExpDom}
\P[t_{i+1} - t_{i} > s | \{ t_{j} \}_{j \leq i} ] \leq  \big(1 - \frac{\beta}{2} \big)^{s}.
\ee  
We have
\be 
\P[\sum_{s=0}^{T} \textbf{1}_{J_{s} < \frac{4 \gamma}{\beta}} < C] &\leq \P[t_{C} > T] 
= \P[\sum_{i=1}^{C} (t_{i} - t_{i-1}) > T]. 
\ee 
Inequality \eqref{IneqExpDom} implies that the distribution of $(t_{i+1} - t_{i})$ is (conditionally on $\{ t_{j} \}_{j \leq i }$) stochastically dominated by a geometric distribution with mean $\frac{2}{\beta}$; this completes the proof.
\end{proof}

Define the set  
\be \label{eqn:CalG}
\ck = \big \{x \in \{0,1\}^{\LL} \, : \, \sum_{v \in \LL} x[v] \leq k_\mathrm{max} \big \}.
\ee
We apply Lemma \ref{LemmaTechLem1} to the KCIP on $\LL$, with $J_s = V_s$ and $\mathcal{J}_t = \mathcal{F}_t$ to obtain the following corollary.
\begin{cor} \label{CorMainDriftResult}
Fix $\epsilon$ sufficiently small so that Theorem \ref{LemmaContractionEstimate} applies, and let $\alpha,C_G$ be as in Theorem \ref{LemmaContractionEstimate}. For fixed $C_{2}$ and $C_{1} > \frac{16}{\alpha} C_{2}$ sufficiently large, all $n > N(c,d)$ sufficiently large, and any starting point $X_{0} \in \Omega$,
\be 
\P[\sum_{t=0}^{ C_{1} n^{3} \log(n)} \mathbf{1}_{X_{t} \in \ck} < C_{2} n^{3} \log(n)] = O(n^{-5}).
\ee 
\end{cor}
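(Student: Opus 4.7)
My plan is to apply Lemma~\ref{LemmaTechLem1} to the sampled chain $J_s := V_{s \epsilon n^3}$. By Theorem~\ref{LemmaContractionEstimate}, $J_s$ satisfies the one-step drift $\E[J_{s+1} \mid \mathcal{J}_s] \leq (1-\alpha) J_s + C_G$, so Lemma~\ref{LemmaTechLem1} applies with $\beta = \alpha$ and $\gamma = C_G$. Crucially, by the definition of $k_\mathrm{max}$ in \eqref{eqn:kmax}, the lemma's threshold $4\gamma/\beta$ equals $k_\mathrm{max}$, matching the cutoff in the definition of $\ck$.

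Because $V_0$ may be as large as $n$, I would first bound the hitting time $\tau_0 := \inf\{s : J_s \leq k_\mathrm{max}\}$. Adapting the computation at the start of the proof of Lemma~\ref{LemmaTechLem1} to a general starting value $J_0$, the inequality $\E[J_{s+1} \textbf{1}_{\tau_0 > s+1}] \leq ((1-\alpha) + C_G/k_\mathrm{max}) \E[J_s \textbf{1}_{\tau_0 > s}] = (1 - 3\alpha/4) \E[J_s \textbf{1}_{\tau_0 > s}]$ yields $\P[\tau_0 > s] \leq (n/k_\mathrm{max})(1 - 3\alpha/4)^s$. Choosing $s_0 := A \log n / \alpha$ with $A$ sufficiently large makes this $O(n^{-5})$, so $\tau_0 \leq s_0$, i.e., real-time hitting by $s_0 \epsilon n^3 = O(n^3 \log n)$, holds with probability $1 - O(n^{-5})$.

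Next, on the event $\{\tau_0 \leq s_0\}$, I would apply Lemma~\ref{LemmaTechLem1} to the restarted chain $\{J_{\tau_0+s}\}$ with horizon $N := \lfloor C_1 \log n / \epsilon \rfloor - s_0 \geq (C_1/(2\epsilon)) \log n$ and target $C^\ast := \lfloor N \alpha /8 \rfloor$, obtaining
\[
\P\Bigl[ \sum_{s=0}^{N} \textbf{1}_{J_{\tau_0+s} \leq k_\mathrm{max}} < C^\ast \Bigr] \;\leq\; \P\Bigl[ \sum_{i=1}^{C^\ast} Z_i > N \Bigr]
\]
with $Z_i$ i.i.d.\ geometric of mean $2/\alpha$. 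Since $\E[\sum_i Z_i] = N/4$, a standard Chernoff bound for sums of i.i.d.\ geometric random variables bounds the right-hand side by $\exp(-c N \alpha)$ for some $c > 0$, which is $O(n^{-5})$ once $C_1$ is sufficiently large.

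The main obstacle is the final step: converting this sampled-time occupation lower bound into a real-time occupation bound. A sampled visit at $s \epsilon n^3$ directly contributes only one real time step to $\sum_t \textbf{1}_{X_t \in \ck}$, but the corollary requires an average of $\Omega(\epsilon n^3)$ real time per visit to reach $C_2 n^3 \log n$. I would prove an auxiliary claim that, conditional on $V_{s\epsilon n^3} \leq k_\mathrm{max}$, the expected real-time occupation of $\ck$ over $[s\epsilon n^3, (s+1)\epsilon n^3)$ is at least $c_0 \epsilon n^3$ for some $c_0 > 0$ independent of $n$. The idea is to apply Theorem~\ref{LemmaContractionEstimate} at a finer sub-scale to uniformly bound $\E[V_t]$ within the interval, so that Markov's inequality forces $\P[V_t > k_\mathrm{max}]$ below some constant $< 1$ uniformly in $t$; a Chernoff-type concentration step exploiting the Markov property across sampled visits then promotes the expectation bound to the required tail bound. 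The slack factor of $16$ in the hypothesis $C_1 > 16 C_2/\alpha$ is chosen precisely to absorb $c_0$ and the transient contributions, yielding $C^\ast \cdot c_0 \epsilon n^3 \geq C_2 n^3 \log n$ with probability $1 - O(n^{-5})$.
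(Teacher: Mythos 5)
Your instinct is right that Lemma~\ref{LemmaTechLem1} must be applied to the $\epsilon n^3$-sampled chain $J_s=V_{s\epsilon n^3}$, and you have correctly identified the resulting sampled-to-real-time conversion as the key missing step. Indeed, the paper's own writeup, which declares $J_s=V_s$ just before the corollary and takes the geometric variables $Z_i$ to have mean $2/\alpha$ in real-time units, cannot be taken at face value: a one-step drift $\E[V_{s+1}\mid\mathcal{F}_s]\leq(1-\alpha)V_s+C_G$ with $\alpha$ bounded away from zero is impossible, since $|V_{s+1}-V_s|\leq 1$ deterministically while $-\alpha V_s+C_G$ can be of order $-n$. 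With the sampled chain, Lemma~\ref{LemmaTechLem1} only controls the number of sampled visits to $\ck$, which is at most $\Theta(\log n/\epsilon)$ over the given horizon; each such visit directly contributes one real step. So the corollary's claimed $C_2 n^3\log n$ real-time occupation does indeed hinge on showing that a sampled visit corresponds to $\Theta(\epsilon n^3)$ real-time occupation, exactly as you say.

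However, the auxiliary claim you sketch to supply this factor does not follow from a finer-scale application of Theorem~\ref{LemmaContractionEstimate}. In the boundary case $V_{s\epsilon n^3}=k_{\mathrm{max}}$, the drift bound at scale $\epsilon'$ yields only $\E[V_{s\epsilon n^3+j\epsilon' n^3}\mid\mathcal{F}_{s\epsilon n^3}]\leq(1-\alpha')^j k_{\mathrm{max}}+C_G'$, which need not fall below $k_{\mathrm{max}}$, so Markov's inequality does not give $\P[V_t>k_{\mathrm{max}}]\leq 1-c$ as your argument needs; and the constants $\alpha(\epsilon')$, $C_G(\epsilon')$ are scale-dependent (the $O(1)$ constant in Lemma~\ref{LemmLbNumCol} comes from Theorem~5 of \cite{Cox89} and grows as $\epsilon'\downarrow 0$), so one cannot sweep $\epsilon'$ to cover every intermediate time $t$. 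The conversion actually needed is of the type furnished by Lemma~\ref{LemmaLowerBoundOnExcLength} (from $x\in\Omega_k$, the process spends $\geq\alpha n^3$ real steps in $\Omega_k$ over the next $\epsilon n^3$ steps with probability bounded below) combined with the coupon-collector step inside the proof of Lemma~\ref{LemmaOccMeasureBound} (from any $x\in\ck$ the chain lands in some $\Omega_k$ within $n\log(n)^2$ steps with high probability). Note that those results appear after the corollary in the paper's ordering, so using them here would require reordering the argument; in any case, neither your proposal nor the paper's stated proof, as written, closes this gap.
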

\begin{proof}

Let $\tau_{\mathrm{start}} = \inf \{ t > 0 \, : \, X_{t} \in \ck \}$ and fix $k \in \mathbb{N}$. By Theorem \ref{LemmaContractionEstimate},
\be 
\E[V_{k \epsilon n^{3}} \textbf{1}_{\tau_{\mathrm{start}} > k \epsilon n^{3}} ] \leq \big(1 - \frac{1}{2} \alpha \big)^{k} V_{0},
\ee 
and so by Markov's inequality and the trivial bound that $V_{t} \leq n$ for all $t$,
\be \label{IneqLemmaTechLem2Start}
\P[\tau_{\mathrm{start}} > k \epsilon n^{3}] &\leq \P[V_{k \epsilon n^{3}} \textbf{1}_{\tau_{\mathrm{start}} > k \epsilon n^{3}} > 1 ]\\
&\leq n \big(1 - \frac{1}{2} \alpha \big)^{k}. 
\ee 
Fix $T \in \mathbb{N}$ and let  $\{ Z_{i}\}_{i \in \mathbb{N}}$ be an i.i.d. sequence of random variables with geometric distribution and mean $\frac{2}{\alpha}$ . By inequality \eqref{IneqLemmaTechLem2Start}, the Markov property and Lemma \ref{LemmaTechLem1},
\be 
\P[ & \sum_{t=0}^{ C_{1} n^{3} \log(n)} \textbf{1}_{X_{t} \in \ck} > C_{2} n^{3} \log(n)] \geq \P[\sum_{t=0}^{ C_{1} n^{3} \log(n)} \textbf{1}_{X_{t} \in \ck} > C_{2}  n^{3} \log(n) | \tau_{\mathrm{start}} < T] \P[\tau_{\mathrm{start}} < T] \\
& = \P[\tau_{\mathrm{start}} < T]  \sum_{t=0}^{T} \P[\sum_{t=0}^{ C_{1} n^{3} \log(n)} \textbf{1}_{X_{t} \in \ck} > C_{2} n^{3} \log(n) | \tau_{\mathrm{start}} = t] \P[\tau_{\mathrm{start}} = t | \tau_{\mathrm{start}} \leq T]\\
&\geq \big(1 - n \big(1 - \frac{1}{2} \alpha \big)^{\lfloor\frac{T}{\epsilon n^{3} } \rfloor} \big)  \sum_{t=0}^{T} \P \big[\sum_{i=1}^{C_{2} n^{3} \log(n)} Z_{i} \leq C_{1} n^{3} \log(n) - t \big] \P[\tau_{\mathrm{start}} = t | \tau_{\mathrm{start}} \leq T] \\
&\geq \big(1 - n \big(1 - \frac{1}{2} \alpha \big)^{\lfloor\frac{T}{\epsilon n^{3}} \rfloor} \big)   \P \big[\sum_{i=1}^{C_{2} n^{3} \log(n)} Z_{i} \leq C_{1} n^{3} \log(n) - T \big].
\ee 
Choosing $T = \lfloor \frac{C_{1}}{2} n^{3} \log(n) \rfloor$, we have for $C_{1}$ sufficiently large (\textit{e.g.}, larger than $200 \alpha$ suffices) that
\be 
\P[\sum_{t=0}^{ C_{1} n^{3} \log(n)} \textbf{1}_{X_{t} \in \ck} > C_{2} n^{3} \log(n)] \geq \big(1 - n^{-5}  \big) \big( 1 - e^{- \frac{C_{1} \alpha}{16} n^{3} \log(n)} \big),
\ee  
where the second part of the above inequality follows from a standard concentration inequality for geometric random variables. This completes the proof. 
\end{proof}
\subsection{Bounds for the collision times of coalescent process}
Recall $k_{\mathrm{max}}$ from Equation \eqref{eqn:kmax} and the set $\Omega_{k}$ from formula \eqref{EqDefOmegaK}. In this section we obtain estimates for the collision times for a coalescence process (see Definition \ref{defCoalProcSimp}) started in $\Omega_k$. All of our bounds are based on soft arguments and are immediate consequences of results from \cite{Cox89}. These bounds will be used for obtaining estimates of occupation times of KCIP in $\Omega_k$ in the next section. \par
Let $\{ Z_{t} \}_{t \in \mathbb{N}}$ be a coalesence process on $\Lambda(L,d)$ with $k_{\max}$ particles and moving rate $q = {cd \over n^2}$. Define the process $W_t$ on $\{0,1\}^{\LL}$:
\be \label{EqDefAltCoalProcRep}
W_{t}[v] = \textbf{1}_{\exists i: Z_{t}[i] = v}.
\ee
$W_{t}$ will often be referred to as `the' coalescence process, as it is a Markov chain and $\{ Z_{t} \}_{t \in \mathbb{N}}$ can be reconstructed (up to permutation of labels) from $\{ W_{t} \}_{t \in \mathbb{N}}$.  We give some notation related to the `skeletons' of our processes of interest. Define the sequence of times $\phi_{0} = 0$ and 
\be \label{EqDefCoalProcMoveTimes}
\phi_{i+1} = \inf \{ t > \phi_{i} \, : \, W_{t} \neq W_{\phi_{i}} \}. 
\ee 
These are the times that $\{ W_{t} \}_{t \in \mathbb{N}}$ changes. \begin{remark} \label{RmkFastCoalProc}
Let $\{ \phi_{i}'\}_{i \geq 0 }$ be a sequence of i.i.d. geometric random variables with mean 4 and define $\{W_{t}'\}_{t \geq 0}$ by
\be 
W_{t}' = W_{\phi_{i}}
\ee 
for $t$ satisfying $\sum_{j=0}^{i} \phi_{j}' \leq t < \sum_{j=0}^{i+1} \phi_{j}'$. The process $\{ W_{t}' \}_{t \geq 0}$ 
is still a coalescence process in the sense of \cite{Cox89}. Furthermore, 
\be \label{EqFastCoalRel}
\{ W_{\phi_{i}} \}_{i \in \mathbb{N}} \stackrel{D}{=} \{ W_{\phi_{i}'}' \}_{i \in \mathbb{N}}
\ee 
and 
\be 
\E[\phi_{1}] = \frac{n^{2}}{4 k_{\mathrm{max}} cd} \E[\phi_{1}'].
\ee 
\end{remark}
Define the first collision time as 
\be \label{eqn:colltime}
\tCo = \inf \{ t \, : \, \vert W_{t} \vert < |W_0|\}.
\ee
For the remainder of this section, define for $\zeta > 0$:
\be \label{DefNiceStarts}
\mathcal{G}_{\zeta}^{(n)} = \{ w \in \{0,1\}^{\Lambda(L,d)} \, : \, \inf_{u,v \, : \, w[u] = w[v] = 1} | u - v| > \zeta \}.
\ee  

We show that, for $\zeta$ sufficiently large, collision times are not `too small' when started from $\mathcal{G}_{\zeta}^{(n)} \cap \Omega_k$ : 
\begin{lemma} \label{LemSoftCoalBound1}
Let $\{Z_{t} \}_{t \in \mathbb{N}}$ be a coalescence process with $1 < k \leq k_{\mathrm{max}}$ initial particles on graph $G = \Lambda(L,d)$ and let $\{ W_{t} \}_{t \in \mathbb{N}}$ be defined as in formula \eqref{EqDefAltCoalProcRep}. Then, for all $0 < \delta < 1$, there exist $\epsilon = \epsilon(c,d, k_{\mathrm{max}}, \delta) > 0$ and $C = C(c,d,k_{\mathrm{max}}, \delta)$ so that, for all $n > N(c,d,k_{\mathrm{max}}, \delta)$,
\be [IneqSoftCoalBound1Res]
\P[\tCo < \epsilon n^{3} \vert W_{0} = w  ] &< 1 - \delta, \\
\P[\tCo < \phi_{\epsilon n} | W_{0} = w] &< 1 - \delta
\ee 
uniformly in $1 < k \leq k_{\mathrm{max}}$ and $w \in \mathcal{G}_{C}^{(n)} \cap \Omega_{k}$.
\end{lemma}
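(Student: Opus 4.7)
The plan is to reduce both estimates in \eqref{IneqSoftCoalBound1Res} to a single Green's-function bound for simple random walk on $\LL$. By Remark \ref{RmkFastCoalProc}, the skeleton $\{W_{\phi_i}\}$ has the law of the first $i$ steps of the ``fast'' coalescence process $W'$ in which every step is a move; hence $\{\tCo < \phi_{\lfloor\epsilon n\rfloor}\}$ is equivalent to a collision occurring among the first $\lfloor\epsilon n\rfloor$ steps of $W'$. For the real-time bound, I would note that the number of moves of the slow process during $[0,\epsilon n^3]$ is a sum of independent Bernoullis each with success probability at most $q k_{\max} = c d k_{\max}/n^2$, so by a Chernoff estimate the total number of moves is at most $2 c d k_{\max} \epsilon n$ with probability $1 - o(1)$. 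Thus, up to replacing $\epsilon$ by $\epsilon/(3 c d k_{\max})$, the first inequality follows from the second.

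The heart of the proof is then a union bound over the at most $\binom{k_{\max}}{2}$ pairs of walkers. Fix indices $i \ne j$ and consider the trajectories $Z'_t[i], Z'_t[j]$ up to the first time that particles $i$ or $j$ collide with any other particle. On this (random) time interval, each step of $W'$ selects one of the $k$ occupied sites uniformly and moves a particle to a uniform neighbor, so the process $(Z'_t[i], Z'_t[j])$ performs steps in which one coordinate is chosen (with probability $2/k$) and then updated by a uniform neighbor. The difference $D_t := Z'_t[i] - Z'_t[j]$ is therefore a lazy simple random walk on $\LL$ (with holding probability $\ge 1 - 2/k_{\max}$) started at $|D_0| > C$, and the event that $i,j$ collide is exactly $\{D_t = 0 \text{ for some } t \le \epsilon n\}$.

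For $d \ge 3$, the Green's function $G_L(x) := \sum_{t \ge 0}(p_t(0, x) - 1/n)$ of the discrete Laplacian on $\LL$ is bounded by $C_d |x|^{-(d-2)}$ uniformly in $L$, as a consequence of the transience of simple random walk on $\mathbb{Z}^d$ (see, e.g., Chapter 4 of Lawler's \emph{Intersections of Random Walks}). Consequently, the expected number of visits of $D_t$ to $0$ in time $\epsilon n$ is at most
\be
\sum_{t=0}^{\epsilon n} p_t(D_0, 0) \;\le\; \epsilon \cdot n \cdot \tfrac{1}{n} \;+\; G_L(D_0) \;\le\; \epsilon + C_d/C^{d-2}.
\ee
By Markov's inequality, the probability of any such visit is bounded by the same quantity. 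A union bound over pairs yields $\P[\tCo < \phi_{\lfloor\epsilon n\rfloor}] \le \binom{k_{\max}}{2}(\epsilon + C_d/C^{d-2})$. Choosing $C$ so large that $\binom{k_{\max}}{2} C_d/C^{d-2} < (1-\delta)/2$ and then $\epsilon$ small enough that $\binom{k_{\max}}{2}\epsilon < (1-\delta)/2$ finishes the proof.

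The main obstacle I anticipate is making the reduction to an autonomous walk $D_t$ rigorous, since \emph{a priori} the selection rule in Definition \ref{defCoalProcSimp} depends on all occupied sites, and these may fluctuate due to other coalescence events before the $(i,j)$-pair meets. I would handle this by a self-consistent bootstrapping: run the union bound on the event $\mathcal{E}$ that no pair has collided yet, so that on $\mathcal{E}$ there are exactly $k$ particles and the selection is truly uniform among $\{1,\ldots,k\}$. Since the failure of $\mathcal{E}$ before time $\phi_{\epsilon n}$ is itself bounded by the quantity we are trying to estimate, this causes at most a constant-factor inflation, absorbed into the choice of $\epsilon$ and $C$.
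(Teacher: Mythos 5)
Your approach is genuinely different from the paper's. The paper proves Lemma~\ref{LemSoftCoalBound1} by invoking Theorem~5 of \cite{Cox89} (which gives the limiting distribution of $\tCo$ directly, as a function $f(\epsilon,c,d,k)$) and then handles the required uniformity over starting configurations $w \in \mathcal{G}_C^{(n)}$ by a proof-by-contradiction and diagonalization over a sequence of radii $C_j \to \infty$. You instead do a hands-on Green's-function computation: union bound over the $\binom{k}{2}$ pairs of walkers, observe that each pairwise difference is a lazy simple random walk on $\LL$, and bound the hitting probability of $0$ by the expected number of visits, using the Green's-function estimate $G_L(x) \lesssim_d |x|^{-(d-2)}$ (which is precisely where $d \geq 3$ enters). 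This is more elementary and more quantitative — it produces explicit constants and completely avoids the diagonal argument — though it is, in effect, re-deriving the piece of Cox's result that the paper cites. Both arguments use the same skeleton reduction to go between real time and number-of-moves time: the paper via Remark~\ref{RmkFastCoalProc}, you via a Chernoff bound on the binomial number of moves in $[0,\epsilon n^3]$; these are interchangeable.

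One step needs tightening: your ``self-consistent bootstrapping'' at the end is, as written, circular. If $p := \P[\tCo < \phi_{\epsilon n}]$ and $\mathcal{E}$ is the event that no collision has yet occurred, then the failure of $\mathcal{E}$ before $\phi_{\epsilon n}$ \emph{is} the event whose probability is $p$, so ``absorbing the off-$\mathcal{E}$ contribution'' gives only $p \le (\text{Green's-function bound}) + p$, which is vacuous. The good news is that no bootstrap is needed. By definition of the \emph{first} collision, $\{\tCo \le \phi_{\epsilon n}\}$ is the union over pairs $(i,j)$ of the event that $(i,j)$ meets at some skeleton step $m \le \epsilon n$ with no prior collision of any pair; up to that (random) time the dynamics of $D^{(ij)}_m$ are exactly those of a lazy SRW. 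So for each pair you can construct an auxiliary lazy SRW $\hat{D}^{(ij)}$ coupled to agree with $D^{(ij)}$ until the first-collision skeleton time and extended arbitrarily afterwards. Then $\{\tCo \le \phi_{\epsilon n}\} \subset \bigcup_{i<j}\{\hat{D}^{(ij)}_m = 0 \text{ for some } m \le \epsilon n\}$ with no conditioning needed, and your union bound applies cleanly. A further small point worth flagging (but standard): the partial sums $\sum_{t\le T}(p_t(x,0)-1/n)$ are not monotone, so the bound should really be stated via $\sum_t|p_t(x,0)-1/n| \le C_d|x|^{-(d-2)}$, which holds uniformly in $L$ for $d\ge 3$. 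With these adjustments, your proof is correct.
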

\begin{proof}
We begin by proving the first half of inequality \eqref{IneqSoftCoalBound1Res}.
By Theorem 5 of \cite{Cox89}, for any fixed $\epsilon > 0$, any sequence $\alpha_{n}$ satisfying $\lim_{n \rightarrow \infty} \alpha_{n} = \infty$, and any sequence $w^{(n)} \in \mathcal{G}_{\alpha_{n}}^{(n)}$, we have 
\be  
\lim_{n \rightarrow \infty} \P[\tCo < \epsilon n^{3} \vert W_{0} = w^{(n)} ] = f(\epsilon, c, d, k) < 1
\ee 
for some explicit function $f$ that satisfies $\lim_{\epsilon \rightarrow 0 } f(\epsilon, c, d, k) = 0$ for $d \geq 3$ and all $c,k$. {Defining $f(\epsilon,c,d) = \max_{2 \leq k \leq k_{\max}} f(\epsilon,c,d,k)$, this implies }
\be  \label{IneqCoxSimpBound1}
\lim_{n \rightarrow \infty} \P[\tCo < \epsilon n^{3} \vert W_{0} = w^{(n)} ] \leq f(\epsilon, c, d) < 1
\ee 
{and that $\lim_{\epsilon \rightarrow 0 } f(\epsilon, c, d) = 0$ for all  $d \geq 3$ and all $c$.}
\par
The remainder of the argument is a proof by contradiction.  Fix $\delta > 0$ and choose $\epsilon > 0$ so that $f(\epsilon,c,d) \leq \frac{1-\delta}{2}$. Assume that inequality \eqref{IneqSoftCoalBound1Res} is false. Then for all $C> 0$, there exists a strictly increasing sequence of integers $\{ n_{i} = n_{i}(C) \}_{i \in \mathbb{N}}$  so that
\be
\sup_{w^{(n_{i})} \in \mathcal{G}_{C}^{(n_{i})} } \P[\tCo < \epsilon n_{i}^{3} \vert W_{0} = w^{(n_{i})}] \geq 1 - \delta.
\ee
Let $\{ C_{j} \}_{j \in \mathbb{N}}$ be a sequence satisfying $\lim_{j \rightarrow \infty} C_{j} = \infty$, and for each $j$ let the increasing sequence of integers $\{ n_{i,j} = n_{i}(C_{j}) \}_{i \in \mathbb{N}}$ satisfy
\be
\sup_{w^{(n_{i,j})} \in \mathcal{G}_{C_{j}}^{(n_{i,j})}} \P[\tCo < \epsilon n_{i,j}^{3} \vert W_{0} = w^{(n_{i,j})}] \geq 1 - \delta.
\ee
Then the diagonal sequence $n_{i,i}$ satisfies
\be 
\mathrm{liminf}_{i \rightarrow \infty} \sup_{w^{(n_{i,i})} \in \mathcal{G}_{C_{i}}^{(n_{i,i})}} \P[\tCo < \epsilon n_{i,i}^{3} \vert W_{0} = w^{(n_{i,i})}] \geq 1 - \delta.
\ee 
Since {$f(\epsilon,c,d) \leq \frac{1-\delta}{2}$} by assumption, this contradicts equality \eqref{IneqCoxSimpBound1}, completing the proof of the first half of inequality \eqref{IneqSoftCoalBound1Res}. The proof of the second half of inequality \eqref{IneqSoftCoalBound1Res} is essentially identical; simply follow the same steps for the sped-up process $\{W_{t}'\}_{t \in \mathbb{N}}$ described in Remark \ref{RmkFastCoalProc} and use formula \eqref{EqFastCoalRel} to relate this back to $\{W_{t}\}_{t \in \mathbb{N}}$. 
\end{proof}
Lemma \ref{LemSoftCoalBound1} has the following strengthening as an immediate corollary: 

\begin{cor} \label{CorSoftCoalBound2}
There exist $\epsilon = \epsilon(c,d, k_{\mathrm{max}}) > 0$ and $\delta = \delta(c,d, k_{\mathrm{max}}) > 0$  so that, for all $n$ sufficiently large,
\be [IneqSoftCoalBound2MainIneq]
\P[\tCo < \epsilon n^{3} \vert W_{0} = w] \leq 1 - \delta,\\
\P[\tCo < \phi_{\epsilon n}\vert W_{0} = w] \leq 1 - \delta\\
\ee 
uniformly in $1 < k \leq k_{\mathrm{max}}$ and $w \in \Omega_{k}$.
\end{cor}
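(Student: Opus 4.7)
The corollary extends Lemma~\ref{LemSoftCoalBound1} by removing the well-separation hypothesis on the starting configuration. The plan is to show that, from any starting configuration $w \in \Omega_k$ with $k \leq k_{\max}$, the coalescence process reaches $\mathcal{G}_C^{(n)}$ within a bounded (in $n$) number of skeleton steps without any coalescence, with probability bounded below uniformly in $n$. Once this is established, the corollary follows from Lemma~\ref{LemSoftCoalBound1} by the strong Markov property applied at that reaching time.

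Concretely, let $\delta_0 > 0$ be the conclusion constant from Lemma~\ref{LemSoftCoalBound1}, and let $C = C(c,d,k_{\max},\delta_0)$ be the accompanying separation distance. Fix an integer $M = M(C, k_{\max}, d)$ of order $C k_{\max}^2$. The first step is to argue that, uniformly over $w \in \Omega_k$ with $1 < k \leq k_{\max}$ and for all $n$ sufficiently large,
\[
\P\bigl[\tCo > \phi_M, \ W_{\phi_M} \in \mathcal{G}_C^{(n)} \,\big|\, W_0 = w\bigr] \geq \eta
\]
for some $\eta = \eta(C, k_{\max}, d) > 0$. This can be verified by exhibiting, for each starting configuration $w$, an explicit deterministic sequence of at most $M$ skeleton moves that takes the process from $w$ into $\mathcal{G}_C^{(n)}$ while keeping all particles at pairwise distinct sites. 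Such a sequence exists because the graph $\Lambda(L,d)$ is large compared to $k_{\max}$ and each particle has $2d$ neighbouring vertices, so particles can always be moved greedily outward from each pairwise center of mass without collisions. The probability that the skeleton chain realises this exact prescribed sequence is at least $(2d\, k_{\max})^{-M} > 0$, uniformly in $n$.

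For the second step, observe that each waiting time $\phi_{i+1}-\phi_i$ is geometric with mean $O(n^2/k_{\max})$, so $\E[\phi_M] = O(n^2)$ and hence by Markov $\phi_M \leq n^{2.5}$ with probability $1-o(1)$. Applying the strong Markov property at the (random) time $\phi_M$, intersecting with the event of Step~1 and with $\{\phi_M \leq n^{2.5}\}$, and using the fact that $W_{\phi_M} \in \mathcal{G}_C^{(n)} \cap \Omega_k$ on that event, we get
\[
\P[\tCo \geq \epsilon n^3 \mid W_0 = w] \geq (\eta - o(1)) \cdot \inf_{w' \in \mathcal{G}_C^{(n)} \cap \Omega_{k}} \P\bigl[\tCo \geq \epsilon n^3 - n^{2.5} \,\big|\, W_0 = w'\bigr].
\]
Choosing Lemma~\ref{LemSoftCoalBound1} with the parameter $\epsilon/2$ (and the corresponding $C, \delta_0$), the right-hand side is bounded below by $\tfrac{\eta}{2} \delta_0$ for $n$ sufficiently large, which yields the first inequality of \eqref{IneqSoftCoalBound2MainIneq} with $\delta := \eta\delta_0/2$. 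The second inequality, for $\tCo < \phi_{\epsilon n}$, follows by the identical argument carried out on the skeleton scale: Step~1 provides the same uniform lower bound on reaching $\mathcal{G}_C^{(n)}$ within $M$ skeleton steps, and the second part of \eqref{IneqSoftCoalBound1Res} in Lemma~\ref{LemSoftCoalBound1} is invoked with the $\epsilon n - M \geq \epsilon n / 2$ remaining skeleton steps.

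The bulk of the analytic work has already been done in Lemma~\ref{LemSoftCoalBound1}; the only new ingredient is the elementary uniform lower bound in Step~1, and the main (minor) obstacle is simply checking that a deterministic spreading sequence of length $O(k_{\max}^2 C)$ exists from every starting configuration, which is a purely local/combinatorial fact made possible by the uniform boundedness of $k_{\max}$ and the $2d$-regularity of the torus.
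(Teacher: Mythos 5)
Your argument takes essentially the same route as the paper's proof: move the coalescence process along a deterministic, bounded-length skeleton path from $w$ into $\mathcal{G}_C^{(n)}\cap\Omega_k$ (which has probability at least $(2dk_{\mathrm{max}})^{-M}$ uniformly in $n$), then invoke Lemma~\ref{LemSoftCoalBound1} from the reached configuration. The one slip is a sign in your displayed inequality: the time $\phi_M$ spent reaching $w'$ only \emph{adds} to $\tCo$, so the correct bound is $\P[\tCo\ge\epsilon n^3\mid W_0=w]\ge (\eta-o(1))\inf_{w'}\P[\tCo\ge\epsilon n^3\mid W_0=w']$ with no $n^{2.5}$ subtracted (your version asserts something strictly stronger than what the strong Markov decomposition yields); once written this way, neither the $\phi_M\le n^{2.5}$ estimate nor the $\epsilon/2$ rescaling is needed, and the conclusion follows directly.
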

\begin{proof}
Define the directed graph $\mathcal{X}$ to have vertex and directed edge sets
\be \label{EqDefGraphForCoalProc}
V(\mathcal{X}) &=  \ck, \\
E(\mathcal{X}) &= \{ (x,y) \, : \, x,y \in \ck, \, \P[W_{1} = y | W_{0} = x] > 0 \}.
\ee 
Our key observation here is that it is possible to change any initial configuration $w \in \ck$ into a configuration $w' \in \mathcal{G}_{C}^{(n)}$ by making some number of moves along the edges of $\mathcal{X}$, where the number of moves required is uniformly bounded in $n$ (but may depend on $C,d$ or $k_{\max}$). Since each move takes $O(n^{2})$ steps in the coalescence process, and each move has probability bounded from below, this allows us to apply Lemma \ref{LemSoftCoalBound1} to any initial configuration.  \par 

More formally, we begin by proving the first half of inequality \eqref{IneqSoftCoalBound2MainIneq}. Fix $w \in \Omega_k$, any $0 < \gamma < 1$ and let $\epsilon = \epsilon(c,d,k_{\max},\gamma),  C = C(c,d,k_{\max},\gamma)$ be constants given by Lemma \ref{LemSoftCoalBound1}. 
Overloading notation slightly, if $x,y \in \mathcal{X}$, we denote by $|x-y|$ the length of the shortest path from $x$ to $y$ in $\mathcal{X}$; see Figure \ref{PicShortPath} for an example of such a path when $d=2$ and $C = 2$.

\begin{figure} 
\begin{tikzpicture}[scale=0.35]
    \draw (0, 0) grid (8, 8);
   \draw (16,0) grid (24,8);
   \draw [fill] (3,3) circle [radius=.25];
   \draw [fill] (4,3) circle [radius=.25];
   \draw [fill] (4,4) circle [radius=.25];
   \draw [blue] [->] (3,3) .. controls (2,2).. (1,3); 
    \draw [blue] [->] (4,4) .. controls (5,5).. (4,6); 
    \draw[->] (9,4) -- (15,4);
    \draw [fill] (17,3) circle [radius=.25];
   \draw [fill] (20,3) circle [radius=.25];
   \draw [fill] (20,6) circle [radius=.25];
\end{tikzpicture}
\caption{Short Path}
\label{PicShortPath}
\end{figure}
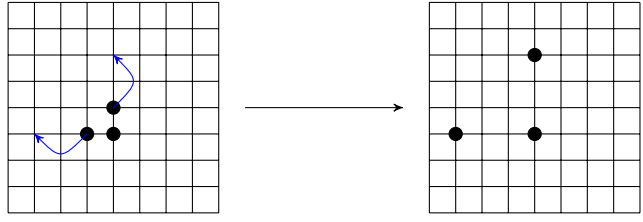

Set
\be
w' = \mathrm{argmin} \{ |w-u| \, : \, u \in \mathcal{G}_{C}^{(n)} {\cap \Omega_k }\}.
\ee 
Let $\Gamma = (w_0 = w, w_{1}, \ldots, w_{|w - w'|} = w')$ be a path from $w$ to $w'$ satisfying $| w_{i} - w_{i+1} | = 1$ for all $i$.  We have
\be \label{IneqSimpleGammaCalc}
\P[(W_{\phi_{0}}, W_{\phi_{1}}, \ldots, W_{\phi_{|w-w'|}}) = \Gamma \, | W_{0} = w] &\geq (2d k_{\mathrm{max}})^{-| w - w'|}\\
 &\geq (2d k_{\mathrm{max}})^{- k_{\mathrm{max}}^{2} (C+1)}.
\ee
 Thus, applying Lemma \ref{LemSoftCoalBound1},
\be 
\P\big[\tCo > \epsilon n^{3} \vert W_{0} = w \big] &\geq \P \big[\tCo > \epsilon n^{3} \vert W_{0} = w' \in \mathcal{G}_{C}^{(n)} \big]   (2d k_{\mathrm{max}})^{- k_{\mathrm{max}}^{2} (C+1)} \\
&\geq {\gamma}  (2d k_{\mathrm{max}})^{- k_{\mathrm{max}}^{2} (C+1)}.
\ee 
This completes the proof of the first half of inequality \eqref{IneqSoftCoalBound2MainIneq}, {with $\delta = \gamma (2d k_{\mathrm{max}})^{- k_{\mathrm{max}}^{2} (C+1)}$}. As with Lemma \ref{LemSoftCoalBound1}, the proof of the second half is essentially identical. 
\end{proof}

We further strengthen this to bound the probability that particles are close to colliding after a small number of steps. Let 
\be
\tNe{i} = \inf \{ t \, : \, \inf_{u,v \, : \, W_{t}[u] = W_{t}[v] = 1} |u-v| \leq i \}
\ee
be the first time that two particles in the coalescence process are within distance $i$.
\begin{lemma} \label{LemmaSoftCoalBound3}
For all $i \in \mathbb{N}$ and all $0 < \gamma < 1$, there exist $\epsilon = \epsilon(c,d, k_{\mathrm{max}}, \gamma) > 0$ and $C = C(c,d,k_{\mathrm{max}}, \gamma) < \infty$  so that, for all $n > N(c,d,k_{\mathrm{max}}, \gamma)$ sufficiently large, 
\be [ineqSoftCoalMainBound3]
\P[\tNe{i} < \epsilon n^{3} \vert W_{0} = w ] \leq 1 - \gamma, \\
\P[\tNe{i} < \phi_{\epsilon n} \vert W_{0} = w ] \leq 1 - \gamma 
\ee 
uniformly in $1 < k \leq k_{\mathrm{max}}$ and $w \in \mathcal{G}_{C}^{(n)}$.
\end{lemma}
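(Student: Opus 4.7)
The plan is to adapt the proof of Lemma \ref{LemSoftCoalBound1}, replacing the collision time $\tCo$ with the near-collision time $\tNe{i}$. Since the statement only concerns well-separated initial configurations $w \in \mathcal{G}_C^{(n)}$, the path-based reduction used in Corollary \ref{CorSoftCoalBound2} is not needed here; the entire burden is to extend the consequence of Theorem 5 of \cite{Cox89} from $\tCo$ to $\tNe{i}$.

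The key observation is that, at the macroscopic scale $n^3$, near-collisions and true collisions are comparable events. I plan to show that $\tCo - \tNe{i} = o(n^3)$ with high probability, using the transience of the simple random walk on $\mathbb{Z}^d$ for $d \geq 3$. Concretely, once two particles come within distance $i$ of each other (which happens at time $T = \tNe{i}$), one of two things happens over the subsequent $O(n^2)$ discrete time steps: either these two particles coalesce (contributing to $\tCo$), or they separate to distance greater than $i$. By transience of the random walk in $d \geq 3$, the first case occurs with probability at least some $p_i = p_i(c,d,i) > 0$ independent of $n$, essentially by the standard Green's function estimate for $\mathbb{Z}^d$. Iterating via the strong Markov property and a union bound over the at most $\binom{k_{\mathrm{max}}}{2}$ possible near-colliding pairs, the number of near-collisions preceding the first true collision is dominated by a geometric random variable; combined with an upper bound of $O(n^{1+2/d})$ on the time between successive near-collisions, obtained by applying Cox's theorem after the separating particles have re-mixed, this yields $\tCo - \tNe{i} = O_p(n^{1+2/d}) = o(n^3)$.

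Granting this comparison, \eqref{ineqSoftCoalMainBound3} follows immediately. Given $\gamma > 0$, choose $\epsilon > 0$ small enough that Lemma \ref{LemSoftCoalBound1} applied with parameter $\delta = 1 - \gamma/2$ yields $\P[\tCo < 2 \epsilon n^3 \mid W_0 = w] < 1 - \gamma/2$ uniformly over $w \in \mathcal{G}_C^{(n)}$ (enlarging $C$ if needed). Then for $n$ sufficiently large,
\begin{equation*}
\P[\tNe{i} < \epsilon n^3 \mid W_0 = w] \leq \P[\tCo < 2\epsilon n^3 \mid W_0 = w] + \P[\tCo - \tNe{i} > \epsilon n^3 \mid W_0 = w] < 1 - \gamma.
\end{equation*}
The second half of \eqref{ineqSoftCoalMainBound3}, concerning the discrete-move time $\phi_{\epsilon n}$, follows in the same way by working with the sped-up coalescence process from Remark \ref{RmkFastCoalProc} and using the relation \eqref{EqFastCoalRel}.

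The main obstacle will be the careful verification of $\tCo - \tNe{i} = o(n^3)$, particularly the treatment of dependencies: after a near-colliding pair separates, either that same pair can cause another near-collision, or a different pair could; moreover, a particle that has just been involved in a near-collision is not at a stationary position on $\LL$, so direct application of Cox's theorem to the post-separation dynamics requires an intermediate re-mixing step of length $O(n^{1+2/d})$. These complications are handled by conditioning on $|W_t|$ and applying Cox's theorem to each of the finitely many sub-processes indexed by particle count, combined with elementary mixing estimates for the simple random walk on $\LL$.
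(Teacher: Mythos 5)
Your reduction to Lemma \ref{LemSoftCoalBound1} is the right idea, but the central quantitative claim is false: $\tCo - \tNe{i}$ is \emph{not} $o(n^3)$ with high probability. With probability bounded away from zero (roughly $1 - p_i$, where $p_i$ is the collision probability you invoke), the near-colliding pair separates without coalescing and, by transience, escapes to macroscopic distance; from there the time until the next (near-)collision is $\Theta(n^3)$ by Theorem 5 of \cite{Cox89}, not $O(n^{1+2/d})$. You appear to be conflating the number of moves of the coalescence process ($\Theta(n)$) with the wall-clock time (each move takes $\Theta(n^2)$ steps, giving $\Theta(n^3)$). Consequently $\P[\tCo - \tNe{i} > \epsilon n^3]$ is bounded below by a constant for all small $\epsilon$, and your decomposition $\P[\tNe{i} < \epsilon n^3] \leq \P[\tCo < 2\epsilon n^3] + \P[\tCo - \tNe{i} > \epsilon n^3]$ cannot yield $1 - \gamma$ for $\gamma$ near $1$ (and the intermediate arithmetic --- Lemma \ref{LemSoftCoalBound1} with $\delta = 1 - \gamma/2$ gives $\P[\tCo < \cdot] < \gamma/2$, not $< 1 - \gamma/2$ --- would need fixing even if the gap claim were true).

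The repair is to run the implication in the \emph{opposite} direction, as the paper does: rather than arguing that a near-collision is almost immediately followed by a collision, one argues that it is followed by a collision within $n^2 \log n$ steps with \emph{uniformly positive} (not near-$1$) probability. A combinatorial path bound, as in inequality \eqref{IneqSimpleGammaCalc}, gives probability $\geq (2d k_{\mathrm{max}})^{-i}$ without invoking transience or Green's function estimates. The strong Markov property then yields
\begin{equation*}
\P[\tCo < \epsilon n^3 \mid W_0 = w] \geq (1 - o(1))\,(2d k_{\mathrm{max}})^{-i}\, \P\big[\tNe{i} < \tfrac{\epsilon}{2} n^3 \mid W_0 = w\big],
\end{equation*}
so that an upper bound on $\P[\tCo < \epsilon n^3]$ from Lemma \ref{LemSoftCoalBound1} (applied with $\delta = 1 - \gamma (4d k_{\mathrm{max}})^{-i}$) propagates directly to an upper bound on $\P[\tNe{i} < \tfrac{\epsilon}{2} n^3]$. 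This avoids any statement about the size of $\tCo - \tNe{i}$ and works for the full range of $\gamma$, whereas even a corrected version of your argument (replacing the $o(n^3)$ gap claim by $\P[\tCo \le \tNe{i} + n^2\log n] \ge p_i(1-o(1))$) would only cover $\gamma$ below a threshold depending on $p_i$.
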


\begin{proof}
We begin by proving the first half of inequality \eqref{ineqSoftCoalMainBound3}. Fix $w \in \mathcal{G}_{C}^{(n)}$, fix $0 < \gamma < 1$, let $\delta = 1 - \frac{\gamma}{(4d k_{\mathrm{max}})^{i}}$ and let $C,\epsilon$ be the constants associated with $\delta$ as given by Lemma \ref{LemSoftCoalBound1}. By the definition of $\tNe{i}$, there is a sequence $\Gamma = (w_0 \equiv W_{\tNe{i}}, w_{1}, \ldots, w_{i})$ so that $|w_{j+1} - w_{j}| = 1$ and so that a collision occurs during the transition from $w_{i-1}$ to $w_{i}$. Let $I \in \mathbb{N}$ be such that $\phi_{I} = \tNe{i}$. As in inequality \eqref{IneqSimpleGammaCalc},
\be 
\P[(W_{\phi_{I}}, W_{\phi_{I+1}}, \ldots, W_{\phi_{I+i}}) = \Gamma \, | W_{\phi_{I}} = w_{0}] &\geq (2d k_{\mathrm{max}})^{-i}.
\ee
Since 
\be 
\P[\phi_{I + i} - \phi_{I} \geq n^{2} \log(n)] = o(1), 
\ee 
this implies
\be 
\P[\tCo < \tNe{i} + n^{2} \log(n)] \geq \frac{1}{(2d k_{\mathrm{max}})^{i}} (1 - o(1))
\ee 
uniformly in starting position $W_{0}$. Thus, for all $\epsilon > 0$ and $w \in \mathcal{G}_{C}^{(n)}$,

\be 
 \frac{\gamma}{(4d k_{\mathrm{max}})^{i}} &\geq \P[\tCo < \epsilon n^{3} | W_{0} = w] \\
&\geq \P[\tCo < \epsilon n^{3}, \tNe{i} < \frac{\epsilon}{2} n^{3} | W_{0} = w] \\
&\geq (1 - o(1)) (2dk_{\mathrm{max}})^{-i}  \P[\tNe{i} < \frac{\epsilon}{2} n^{3} | W_{0} = w].
\ee 
We conclude that 
\be 
\P[\tNe{i} < \frac{\epsilon}{2} n^{3} | W_{0} = w] \leq \frac{\gamma}{2} (1 + o(1)),
\ee 
completing the proof of the first half of inequality \eqref{ineqSoftCoalMainBound3}. As with Lemma \ref{LemSoftCoalBound1}, the proof of the second half is essentially identical.
\end{proof}

We then have the following Corollary to Lemma \ref{LemmaSoftCoalBound3}.

\begin{cor}   \label{CorSoftCoalBound4}
Fix $i \in \mathbb{N}$. There exist $\epsilon = \epsilon(c,d, k_{\mathrm{max}},i) > 0$ and $\delta = \delta(c,d, k_{\mathrm{max}},i) > 0$  so that, for all $n$ sufficiently large,
\be 
\P[\tNe{i} < \epsilon n^{3} \vert W_{0} = w] \leq 1 - \delta, \\
\P[\tNe{i} < \phi_{\epsilon n} \vert W_{0} = w] \leq 1 - \delta
\ee 
uniformly in $1 < k \leq k_{\mathrm{max}}$ and $w \in \Omega_{k} \cap \mathcal{G}_{i}^{(n)}$.
\end{cor}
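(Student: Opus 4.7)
The plan is to follow the same template used to derive Corollary \ref{CorSoftCoalBound2} from Lemma \ref{LemSoftCoalBound1}, but with two refinements needed because we must avoid near-collisions rather than only genuine collisions.

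First, fix $i$ and choose $\gamma = \frac{1}{2}$ in Lemma \ref{LemmaSoftCoalBound3} to obtain constants $\epsilon_0 = \epsilon_0(c,d,k_{\max},i) > 0$ and $C = C(c,d,k_{\max},i) < \infty$ such that for all $n$ sufficiently large and all $w' \in \mathcal{G}_C^{(n)}$ (with $C \geq i$, which we may assume), both $\P[\tNe{i} < \epsilon_0 n^3 | W_0 = w'] \leq \frac{1}{2}$ and $\P[\tNe{i} < \phi_{\epsilon_0 n} | W_0 = w'] \leq \frac{1}{2}$ hold uniformly in $1 < k \leq k_{\max}$.

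Next, for an arbitrary starting configuration $w \in \Omega_k \cap \mathcal{G}_i^{(n)}$, I will construct a path in the transition graph $\mathcal{X}$ defined in \eqref{EqDefGraphForCoalProc} from $w$ to some $w' \in \mathcal{G}_C^{(n)}$ of length at most $M = M(c,d,k_{\max},i,C)$, uniformly in $n$, with the key additional property that every intermediate configuration on the path also lies in $\mathcal{G}_i^{(n)}$. This is the only nontrivial geometric step: since the starting particles are already pairwise at distance $> i$, we move them one at a time along geodesics towards a uniformly well-spread target configuration, and since $n$ is large while $k_{\max}, C, i$ are fixed, there is enough room in $\LL$ to reroute each such motion so that no two particles ever come within distance $i$ of one another during the traversal. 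The main obstacle is exactly this geometric construction; the key point is that for $n$ large, the torus $\LL$ with fewer than $k_{\max}$ marked particles has plenty of room to separate particles without collisions or near-collisions.

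Given such a path $\Gamma = (w_0 = w, w_1, \ldots, w_m = w')$ with $m \leq M$, the same computation as in inequality \eqref{IneqSimpleGammaCalc} shows that
\be
\P[(W_{\phi_0}, W_{\phi_1}, \ldots, W_{\phi_m}) = \Gamma \mid W_0 = w] \geq (2d k_{\max})^{-M},
\ee
and on this event we have $\tNe{i} > \phi_m$, i.e., no near-collision has occurred while traversing $\Gamma$. By the strong Markov property applied at time $\phi_m$, together with the bound from the first paragraph,
\be
\P[\tNe{i} > \phi_m + \epsilon_0 n^3 \mid W_0 = w] \geq (2d k_{\max})^{-M} \cdot \tfrac{1}{2},
\ee
and similarly for the skeleton version with $\phi_{m + \epsilon_0 n}$ in place of $\phi_m + \epsilon_0 n^3$.

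Finally, since $m \leq M$ is bounded uniformly in $n$, we have $\phi_m = o(n^3)$ with probability $1 - o(1)$ (each inter-jump time has mean $O(n^2)$), so setting $\epsilon = \epsilon_0/2$ and $\delta = \tfrac{1}{3}(2d k_{\max})^{-M}$ yields
\be
\P[\tNe{i} < \epsilon n^3 \mid W_0 = w] \leq 1 - \delta
\ee
uniformly in $w \in \Omega_k \cap \mathcal{G}_i^{(n)}$, and analogously with $\phi_{\epsilon n}$ replacing $\epsilon n^3$, for all $n$ sufficiently large. This completes the proof.
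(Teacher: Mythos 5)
Your argument is correct and is exactly what the paper's one-line proof ("This follows from Lemma \ref{LemmaSoftCoalBound3} in essentially the same way that Corollary \ref{CorSoftCoalBound2} followed from Lemma \ref{LemSoftCoalBound1}") is implicitly invoking. You correctly identify the one genuine refinement needed: when avoiding near-collisions rather than true collisions, the finite rerouting path $\Gamma$ in the graph $\mathcal{X}$ from $w$ to the well-spread target $w'$ must itself stay inside $\mathcal{G}_i^{(n)}$, a constraint that is vacuous in Corollary \ref{CorSoftCoalBound2} but essential here; the paper leaves this implicit. You also correctly note that $C$ from Lemma \ref{LemmaSoftCoalBound3} may be taken $\geq i$ without loss (enlarging $C$ only shrinks $\mathcal{G}_C^{(n)}$), and you apply the strong Markov property at $\phi_m$ correctly for both the wall-clock and skeleton versions. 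One minor remark: the appeal to $\phi_m = o(n^3)$ in your final paragraph is unnecessary --- since $\phi_m \geq 0$, the inclusion $\{\tNe{i} > \phi_m + \epsilon_0 n^3\} \subset \{\tNe{i} > \epsilon_0 n^3\}$ is deterministic, so you may take $\epsilon = \epsilon_0$ and $\delta = \tfrac{1}{2}(2dk_{\max})^{-M}$ directly --- but this does not affect the validity of the argument.
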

\begin{proof}
This follows from Lemma \ref{LemmaSoftCoalBound3} in essentially the same way that Corollary \ref{CorSoftCoalBound2} followed from Lemma \ref{LemSoftCoalBound1}.
\end{proof}
\subsection{Comparing KCIP with the Coalescence process}
Recall $k_{\mathrm{max}}$ from formula \eqref{eqn:kmax}, the set $\Omega_{k}$ from formula \eqref{EqDefOmegaK} and the sets $\mathcal{G}_{\zeta}^{(n)}$ from formula \eqref{DefNiceStarts}. In this section, we obtain bounds on the occupation measure of $X_{t}$ in $\Omega_{k}$, {uniformly in} $k {\leq} k_{\mathrm{max}}$. 
Recall the definition of the exit times $L_{k}(x)$ from Equation \eqref{EqExitTimeDef}. Our intermediate steps are to show, uniformly in $1 \leq k \leq k_{\mathrm{max}}$, 
\begin{enumerate}
\item Uniformly in $x \in \Omega_{k}$, $\P[L_{k}(x) < \epsilon n^{3}] < 1 - \delta < 1$ for some $\epsilon =\epsilon(c,d,k_{\max})$ and  $\delta =\delta(c,d,k_{\max})$.
\item Uniformly in $x \in \Omega_{k}$, $\P[L_{k}(x) > \epsilon^{-1} n^{3}] < 1 - \delta$.
\item Uniformly in $x \in \Omega_{k}$, $\P[\sum_{t=0}^{\epsilon n^{3}} \textbf{1}_{X_{t} \in \Omega_{{k}}} < \frac{1}{2} \epsilon n^{3}] < 1 - \delta$.
\item  Uniformly in $x \in \Omega_{k}$, $\P[X_{L_{k}(X_{0})} \in \Omega_{k-1} | X_{0}=x] > \delta, \P[X_{L_{k}(X_{0})} \in \Omega_{k+1} | X_{0}=x] > \delta.$
\end{enumerate}

 We begin by proving item (4):
\begin{lemma} \label{LemmaLowerBoundOnExcLength}
There exist $\epsilon = \epsilon(c,d, k_{\mathrm{max}}) > 0$, $\delta = \delta(c,d, k_{\mathrm{max}}) > 0$ and $\alpha = \alpha(c,d,k_{\mathrm{max}}) > 0$  so that, uniformly in $1 \leq k \leq k_{\mathrm{max}}$ and $x \in \Omega_{k}$, we have
\be 
\P[\sum_{t=0}^{\epsilon n^{3}} \mathbf{1}_{X_t \in \Omega_{k}} \geq \alpha n^{3} \vert X_{0} = x ] \geq  \delta.
\ee 
\end{lemma}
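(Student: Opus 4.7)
The strategy is to couple the KCIP with a coalescence process as in the proof of Lemma~\ref{LemmLbNumCol} and to exploit the no-near-collision event provided by Corollary~\ref{CorSoftCoalBound4} to show that on an event of positive probability, the KCIP is forced to spend most of its time in $\Omega_k$. The heuristic picture is that, from a typical well-separated configuration in $\Omega_k$, each ``particle move'' of the corresponding coalescence process is realised by a brief $O(n)$-step excursion of the KCIP into $\Omega \setminus \Omega_k$ (a neighbour is spawned and the extra particle is quickly deleted), and these excursions are separated by $O(n^2)$-step sojourns in $\Omega_k$. Over an interval of length $\epsilon n^3$ one therefore expects $O(n)$ excursions of total duration $O(n^2) = o(n^3)$.

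\textbf{Step 1: reduction to well-separated configurations.} Fix a large constant $C = C(c,d,k_{\mathrm{max}})$ to be chosen later. By the argument used in the proof of Corollary~\ref{CorSoftCoalBound2}, for any $x \in \Omega_k$ there is a sequence of ``particle moves'' of length at most $L = L(C,d,k_{\mathrm{max}})$ (uniformly in $n$) that takes $x$ to some $x' \in \Omega_k \cap \mathcal{G}_C^{(n)}$. In the KCIP this sequence is realised by $L$ consecutive excursions taking a total of $O(n^2) = o(n^3)$ steps, with probability bounded below by a constant $\delta_1 > 0$ independent of $n$. By the strong Markov property, it suffices to prove the Lemma for starting states $x \in \Omega_k \cap \mathcal{G}_C^{(n)}$, at the cost of shrinking $\epsilon,\alpha$ slightly and replacing $\delta$ by $\delta_1 \delta$.

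\textbf{Step 2: coupling and the no-near-collision event.} When $k=1$ there are no collisions and the claim is a direct computation: the KCIP spends all but an $O(n^{-1})$-fraction of its time in $\Omega_1$. So assume $k \geq 2$ and fix $x \in \Omega_k \cap \mathcal{G}_C^{(n)}$. Couple $\{X_t\}$ with a coalescence process $\{W_t\}$ started at $x$ via the construction in the proof of Lemma~\ref{LemmLbNumCol}. By Corollary~\ref{CorSoftCoalBound4} applied with $i=3$, there exist $\epsilon_1 > 0$ and $\delta_2 > 0$ so that with probability at least $\delta_2$ no two particles of $\{W_t\}_{t \leq \epsilon_1 n^3}$ come within graph distance $3$. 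On this event the coupling remains valid throughout $[0,\epsilon_1 n^3]$, so the KCIP has exactly $k$ connected components at every time, and distinct components of the KCIP stay at graph distance at least $3$ from one another.

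\textbf{Step 3: counting excursions, and the main obstacle.} On the no-near-collision event, the KCIP leaves $\Omega_k$ only through the local mechanism of spawning a neighbour of one of the $k$ particles, so the per-step exit probability from within $\Omega_k$ is bounded by $2dkc/n^2$. Thus the expected number of excursions from $\Omega_k$ during $\epsilon_1 n^3$ steps is $O(n)$. On the same event, each excursion must end by removing the newly spawned particle: at every step inside the excursion, the extra particle is removed with probability at least $(1-c/n)/n$, while the probability of adding further particles near the affected component is $O(k/n^2) = o(n^{-1})$, so the excursion length is stochastically dominated by a geometric random variable with mean $O(n)$. The total expected time spent outside $\Omega_k$ is therefore $O(n^2)$, and Markov's inequality gives that, with conditional probability at least $\tfrac{1}{2}$, the time spent outside $\Omega_k$ is at most $\tfrac{\epsilon_1}{2} n^3$. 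Combining Steps 1--3 yields the Lemma with $\alpha = \epsilon_1/2$, $\epsilon$ slightly larger than $\epsilon_1$, and $\delta = \tfrac{1}{2}\delta_1\delta_2$. The main obstacle is the last argument: one must rigorously control the excursion length conditional on the coupling being valid, using the distance-$\geq 3$ guarantee to ensure that an excursion at one component cannot be extended by interactions with a neighbouring component, and absorbing the (rare) event that an excursion degenerates into a size-three component into the error terms provided by the triple-time bound from Corollary~\ref{LemmaNateshLowerBound}.
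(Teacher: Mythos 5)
Your proposal follows essentially the same route as the paper's own proof: reduce to a well-separated starting configuration in $\mathcal{G}_C^{(n)}\cap\Omega_k$, couple to a coalescence process via the construction from Lemma~\ref{LemmLbNumCol}, invoke the soft coalescence bound (you use Corollary~\ref{CorSoftCoalBound4}; the paper invokes Lemma~\ref{LemmaSoftCoalBound3} directly, which amounts to the same thing after your Step~1 reduction) to rule out near-collisions on a positive-probability event, and then argue that on this event the excursions from $\Omega_k$ are short. Your Step~3 phrases the final estimate as a Markov-inequality bound on the total expected excursion time, whereas the paper compares the per-step exit rate $O(k/n^2)$ against the per-step return rate $\Omega(1/n)$; these are equivalent.

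One point where your exposition is slightly looser than the paper's: in Step~2 you assert that on the no-near-collision event of $W_t$ alone, ``the coupling remains valid throughout $[0,\epsilon_1 n^3]$.'' This is not quite true — the coupling can also fail because a single component grows to size three (a triple), which is not a near-collision of distinct coalescence particles. The paper handles this by conditioning simultaneously on $\mathcal{A}_{1,\epsilon n}^c$ (no near-collision of $W_t$) and $\mathcal{A}_{2,\epsilon n}^c$ (the KCIP never has $\delta_t>1$), the latter controlled by Corollary~\ref{LemmaNateshLowerBound}. You do bring in the triple-time bound at the end of Step~3 as an ``error term,'' so the needed ingredient is present; but the cleaner bookkeeping is to condition on both events up front, and then one also needs to check (as the paper does explicitly) that this conditioning only decreases the exit probability and increases the return probability, so the excursion-length domination you invoke actually survives the conditioning. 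With that caveat spelled out, your argument is correct and is the paper's argument.
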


\begin{proof}
Assume first that $x \in \Omega_{k} \cap \mathcal{G}_{2}^{(n)}$. We couple $\{ X_{t} \}_{t \in \mathbb{N}}$ to a coalesence process $\{ Z_{t} \}_{t \in \mathbb{N}}$ using the same coupling as in  Lemma \ref{LemmLbNumCol} and define $\{W_{t} \}_{t \in \mathbb{N}}$ as in formula \eqref{EqDefAltCoalProcRep}; since $X_{0} \in \Omega_{k}$, we have $\tS{i} = 0$ for all $1 \leq i \leq k$. Recalling the definition of $\{ \phi_{i} \}_{i \in \mathbb{N}}$ from formula \eqref{EqDefCoalProcMoveTimes}, we fix $T \in \mathbb{N}$ and consider the two events:
\begin{enumerate}
\item $\mathcal{A}_{1,T}$: The coalescence process has $\tNe{2} < \phi_{T}$. 
\item $\mathcal{A}_{2,T}$: The KCIP has $\inf \{ t \, : \, \delta_{t} > 1 \} < \phi_{T}$.
\end{enumerate}
By Lemma \ref{LemmaSoftCoalBound3}, there exists some $\epsilon_{1}, \gamma > 0$ so that 
\be \label{Ineq1LowerBoundOnExcLength}
\P[\mathcal{A}_{1,\epsilon n}] \leq 1 - \gamma + o(1)
\ee 
for $0 < \epsilon < \epsilon_{1}$. By Corollary \ref{LemmaNateshLowerBound}, there exists some $\epsilon_{2}$ so that for $0 < \epsilon < \epsilon_{2}$,
\be \label{Ineq2LowerBoundOnExcLength}
\P[\mathcal{A}_{2,\epsilon n}] \leq \frac{\gamma}{2} + o(1).
\ee 
Choose $\epsilon = \frac{1}{2} \min(\epsilon_{1}, \epsilon_{2})$ and fix $\alpha, \beta > 0$.
Set $\mathcal{A}_{\epsilon n} =  \mathcal{A}_{1,\epsilon n} \cup \mathcal{A}_{2,\epsilon n}$. By inequalities \eqref{Ineq1LowerBoundOnExcLength} and \eqref{Ineq2LowerBoundOnExcLength}, 
\be 
\P[\sum_{t=0}^{\beta n^{3}} \textbf{1}_{X_{t} \in \Omega_{k}} \geq \alpha n^{3} \vert X_{0} = x ] &\geq \P[\sum_{t=0}^{\beta n^{3}} \textbf{1}_{X_{t} \in \Omega_{k}} \geq \alpha n^{3} \vert X_{0} = x, \mathcal{A}_{\epsilon n}^{c} ]  \P[ \mathcal{A}_{\epsilon n}^{c} ] \\
&\geq \frac{\gamma}{2}\P[\sum_{t=0}^{\beta n^{3}} \textbf{1}_{X_{t} \in \Omega_{k}} \geq \alpha n^{3} \vert X_{0} = x, \mathcal{A}_{\epsilon n}^{c} ] + o(1).
\ee 
Conditionally on $W_{\phi_{i}}$, $\phi_{i+1} - \phi_{i}$ has geometric distribution with mean between $\frac{n^{2}}{cdk_{\mathrm{max}}}$ and $\frac{n^{2}}{cd}$. Thus, for all $\beta$ sufficiently large, 
\be \label{NearEndTrivIneq1Really}
\P[\beta^{-1} n^{3} \leq \phi_{\epsilon n} \leq \beta n^{3}] = 1 - o(1).
\ee
Since $\P[\mathcal{A}_{\epsilon n}^{c}] \geq \frac{\gamma}{4} + o(1)$ is bounded away from 0, this implies that, for $\beta$ sufficiently large,
\be \label{NearEndTrivIneq2Really}
\P[\sum_{t=0}^{\beta n^{3}} \textbf{1}_{X_{t} \in \Omega_{k}} \geq \alpha n^{3} \vert X_{0} = x ] &\geq \frac{\gamma}{4} \P[\sum_{t=0}^{\phi_{\epsilon n}} \textbf{1}_{X_{t} \in \Omega_{k}} \geq \alpha n^{3} \vert X_{0} = x,\mathcal{A}_{\epsilon n}^{c} ] + o(1).
\ee 
By checking allowed sequences of update variables $\{ v_{s}, p_{s} \}_{s \in \mathbb{N}}$, if $V_{t} \leq k_{\mathrm{max}}$ and $\delta_{t} = 0$, we have 
\be 
\P[\delta_{t+1} = 1 | \delta_{t} = 0, V_{t} \leq k_{\max}, t \leq \phi_{\epsilon n}, \mathcal{A}_{\epsilon n}^{c}] \leq \P[\delta_{t+1} = 1 | \delta_{t} = 0, V_{t} \leq k_{\max}] \leq   \frac{2cd k_{\max}}{n^{2}},
\ee 
while if $V_{t} \leq k_{\mathrm{max}} + 1$ and $\delta_{t} = 1$,
\be 
\P[\delta_{t+1} = 0 |  \delta_{t} = 1, &V_{t} \leq k_{\max}+1, t \leq \phi_{\epsilon n}, \mathcal{A}_{\epsilon n}^{c}] \\
&\geq \P[\delta_{t+1} = 0 |  \delta_{t} = 1, V_{t} \leq k_{\max}+1] \\
& \geq  \frac{1}{2n}(1 + o(1)).
\ee 
Since the latter rate is $\Theta(n)$ times larger than the former, we have
\be 
\P[\sum_{t=0}^{\phi_{\epsilon n}} \textbf{1}_{X_{t} \in \Omega_{k}} \geq \alpha \phi_{\epsilon n} \vert X_{0} = x, \mathcal{A}_{\epsilon n}^{c} ] = 1 + o(1)
\ee 
for all $\alpha^{-1}$ sufficiently large. Combining this with inequalities \eqref{NearEndTrivIneq1Really} and \eqref{NearEndTrivIneq2Really}, we have for $\beta, \alpha^{-1}$ sufficiently large that 
\be 
\P[\sum_{t=0}^{\beta n^{3}} \textbf{1}_{X_{t} \in \Omega_{k}} \geq \alpha \beta^{-1} n^{3} \vert X_{0} = x ] \geq \frac{\gamma}{8} + o(1).
\ee 
This completes the proof of the Lemma when  $x \in \Omega_{k} \cap \mathcal{G}_{2}^{(n)}$.\par
We give a short argument reducing the case $x \in \Omega_{k}$ to the case $x \in \Omega_{k} \cap \mathcal{G}_{2}^{(n)}$; it is essentially the same argument given in Corollary \ref{CorSoftCoalBound2}. Fix $x \in \Omega_{k} \backslash \mathcal{G}_{2}^{(n)}$. There exists $x' \in \Omega_{k} \cap \mathcal{G}_{2}^{(n)}$ and a sequence of configurations $\Gamma = (x = x_{0}, x_{1}, \ldots, x_{\ell} = x')$ so that $|x_{i+1} - x_{i} | \leq 1$ and $\ell \leq 16 k_{\mathrm{max}}^{2}$. The same argument as given in inequality \eqref{IneqSimpleGammaCalc} implies that there exists some $\gamma' = \gamma'(c,d,k_{\mathrm{max}}) > 0$ so that
\be 
\P[\inf\{t \, : \, X_{t} = x' \} < n^{2} \log(n) | X_{0} = x] > \gamma'.
\ee 
This bound reduces the case $x \in \Omega_{k}$ to the case  $x \in \Omega_{k} \cap \mathcal{G}_{2}^{(n)}$, at the cost only of replacing $\gamma$ with $\gamma \gamma'$ and replacing $\epsilon n^{3}$ with $\epsilon n^{3} + n^{2} \log(n) = (1 + o(1))\epsilon n^{3}$. This finishes the proof.
\end{proof}

Define 
\be \label{eqn:rk}
\rho_{k} = \inf \{ t \, : \, X_{t} \in \Omega_{k} \}.
\ee 
 \begin{lemma} \label{LemmaIneqTransProbKNearK}
For all $1 \leq k \leq k_{\mathrm{max}}$, there exists some $\delta, \epsilon > 0$ so that 
\be[IneqTransProbKNearK]
\P[\rho_{k-1} < \epsilon n^{3} | X_{0} = x] > \delta, \\
\P[\rho_{k+1} < \epsilon n^{3} | X_{0} = x] > \delta 
\ee 
holds uniformly in $x \in \Omega_{k}$ (where the first part of the inequality obviously requires $k \geq 2$).
\end{lemma}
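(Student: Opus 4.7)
The two bounds are handled by separate arguments; both rely on the coalescence-process coupling from Lemma \ref{LemmLbNumCol} together with Lemma \ref{LemmaLowerBoundOnExcLength}, which already guarantees that the chain spends at least $\alpha n^{3}$ steps in $\Omega_k$ within $\epsilon n^{3}$ steps with probability at least $\delta > 0$. The strategy is to use this ``long residence'' time to catch, with positive probability, a local event that forces a change in the number of components.

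For the first inequality of \eqref{IneqTransProbKNearK} (when $k \geq 2$), I will proceed via the coalescence process. Couple $\{X_t\}$ to a coalescence process $\{W_t\}$ on $\LL$ with initial condition $W_{0} = x \in \Omega_k$ in the same way as Lemma \ref{LemmLbNumCol}; by Corollary \ref{CorSoftCoalBound2}, there exist $\epsilon_1, \delta_1 > 0$ with $\P[\tau_{\mathrm{col}} < \epsilon_1 n^{3}\,|\,W_0 = x] \geq \delta_1$. On this event two particles of $W_t$ have come to the same vertex, which in the coupled KCIP means two components of $G_t$ have become adjacent. Conditional on such an adjacency being formed, a short local case analysis shows that with probability $\Omega(1)$ the very next KCIP update at either of the two adjacent particles kills one of them before a new particle is added anywhere in the local neighborhood, producing a configuration in $\Omega_{k-1}$. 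The asymmetry here is the familiar one: removing a particle from a pair with at least one neighbour has probability $\Theta(1/n)$ per step, while further local additions contribute only $\Theta(1/n^{2})$. Combining these gives $\P[\rho_{k-1} < \epsilon n^{3}\,|\,X_0 = x] \geq \delta_1 \cdot \Omega(1)$.

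For the second inequality, we need to create a new component. Since $k \leq k_{\mathrm{max}}$ is fixed and $x \in \Omega_k$, for $n$ large enough there is at least one particle $v$ with no other particles of $x$ within graph distance $200$ of $v$. The plan is to track only the dynamics inside $\mathcal{B}_{100}(v)$ and to obtain an \emph{upper} bound on the triple-formation time near $v$ analogous to, but in the opposite direction from, Lemma \ref{LemmaNateshLowerBoundOrig}. Concretely, the calculation \eqref{AsympHittingTime} gives $\E_{1}[\ztr] = \Theta(n^{3})$, and the mixture-of-geometrics representation \eqref{EqMiclRep1} together with a Markov-type argument (at least one of the two geometric means is $\Theta(n^{3})$, so $\P[\mathrm{Geom}(\theta) \leq 2\theta] \geq \frac{1}{2}$) yields
\be
\P[\ztr \leq C n^{3}] \geq \delta_{2} > 0
\ee
for some $C < \infty$ and $\delta_2 > 0$ independent of $n$. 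Once a triple $\{v, u, w\}$ with $u\sim v$, $w \sim u$, $v \not\sim w$ has been produced, an elementary local computation (the reverse of step 4 of the heuristic in Section \ref{SecRoadmap}) shows that with probability $\Omega(1)$ the next updates remove $u$ before a further particle is added in $\mathcal{B}_{1}(\{u,v,w\})$, leaving $v$ and $w$ as a non-adjacent pair and hence increasing the component count to $k+1$.

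The main technical obstacle is decoupling the local dynamics near $v$ from the influence of the other $k-1$ particles over the $\Theta(n^{3})$ time window needed for the triple to form. This will require a sparsity argument essentially identical to the one used in Lemma \ref{LemmaEL1}: define the event that no particle is added to $\mathcal{B}_{100}(v)$ from outside during the time window $[0, Cn^{3}]$. By the coarse bound \eqref{IneqSparseSimple} (applied with $\ell = 100$ and an appropriate threshold), the contribution from outside-driven additions in $\mathcal{B}_{100}(v)$ is $O(n \cdot (1/n)) = O(1)$ in expectation, but only $o(1)$ of those have a chance of interfering with the local triple-formation mechanism, so the favorable event has probability bounded below uniformly in $n$. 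Conditionally on this sparse event, the restriction of $\{X_t\}$ to $\mathcal{B}_{100}(v)$ is statistically identical to a single-particle KCIP started at $v$ on a graph locally isomorphic to $\LL$, and the upper bound on $\ztr$ above applies directly. Multiplying the resulting probabilities gives $\P[\rho_{k+1} < \epsilon n^{3}\,|\,X_0 = x] \geq \delta$ uniformly in $x \in \Omega_k$, completing the proof.
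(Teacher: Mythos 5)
Your proposal captures the right general spirit (couple to the coalescence process, use Corollary \ref{CorSoftCoalBound2}-type bounds, finish with a local argument), but there are two genuine gaps, one in each half.

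For the first inequality: the claim that a collision of $W_t$ ``in the coupled KCIP means two components of $G_t$ have become adjacent'' glosses over the decoupling problem. In the coupling of Lemma \ref{LemmLbNumCol}, the tracking particle $Q_s[i]$ is only guaranteed to lie on a colour-$i$ vertex of $\widehat{X}_s$ while $s < \tau_{\mathrm{decoupling}}^{(i)}$, and $\tau_{\mathrm{decoupling}}^{(i)}$ is the minimum of the triple time and the near-collision time for colour $i$. A collision of $W_t$ can occur \emph{after} decoupling, at which point it says nothing about the KCIP. To make your argument rigorous you would have to show the coupling survives until (near-)collision with positive probability, which is exactly the content of the paper's careful step: it conditions on the coalescence path $\Gamma$, uses an argument in the spirit of Corollary \ref{LemmaNateshLowerBound} to show that $\sup_{t \leq \tau_{\mathrm{near},4}} \delta_t \leq 1$ with positive probability (keeping the coupling valid), and works with the distance-$4$ near-collision time $\tau_{\mathrm{near},4}$ rather than $\tau_{\mathrm{col}}$, then finishes with Proposition \ref{propNearColImpCol}.

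For the second inequality, the sparsity argument does not go through. The bound \eqref{IneqSparseSimple} is proved for a window of length $T = n\log(n)^4$; it does not give control over a window of length $Cn^3$. Over $O(n^3)$ steps, each of the other $k-1$ particles makes $\Theta(n)$ effective random-walk moves and visits a positive fraction of $\Lambda(L,d)$, so the event that $\mathcal{B}_{100}(v)$ is never visited from outside is not close to certain, and worse, conditioning on such a future-avoidance event biases the dynamics inside $\mathcal{B}_{100}(v)$; the conditional law there is not that of an isolated single-particle KCIP, so the lower bound on $\ztr$ from Lemma \ref{LemmaNateshLowerBoundOrig} does not transfer directly. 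The paper avoids localization altogether: it conditions on coalescence paths $\Gamma$ with \emph{no} near-collisions for $\phi_m$ steps, observes that on this event every decoupling time exceeds $\min(\phi_m, \min_j \zeta_{\mathrm{triple}}^{(j)})$, shows via the geometric bound \eqref{EndLemmaObs2} that the KCIP has $\delta_t \geq 1$ for at least $\Theta(n^2)$ of those steps with high probability, and then deduces that a triple forms within $\phi_m$ with probability bounded below because each such step contributes rate $\Theta(n^{-2})$ to triple formation. That mechanism is quite different from what you propose and is what makes the second bound work.
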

\begin{proof}
As argued in Corollary \ref{CorSoftCoalBound2} (and again in the last paragraph of the proof of Lemma \ref{LemmaLowerBoundOnExcLength}), we can assume without loss of generality that $x \in \Omega_{k} \cap \mathcal{G}_{4}^{(n)}$, since for any fixed $\epsilon > 0$, the ratio 
\be 
\frac{\inf_{x \in \Omega_{k}} \P[\rho_{k-1} < \epsilon n^{3} | X_{0} = x] }{\inf_{x \in \Omega_{k} \cap \mathcal{G}_{4}^{(n)} } \P[\rho_{k-1} < \epsilon n^{3}(1 + o(1)) | X_{0} = x] }
\ee is bounded away from 0 uniformly in $n$. Thus, it is sufficient to consider only starting positions $X_{0} \in \Omega_{k} \cap \mathcal{G}_{4}^{(n)}$.  \par
 Fix $1 \leq k \leq k_{\mathrm{max}}$ and $X_0 = x \in \Omega_{k} \cap \mathcal{G}_{4}^{(n)}$. We now couple $\{ X_{t} \}_{t \in \mathbb{N}}$ to a coalesence process $\{ Z_{t} \}_{t \in \mathbb{N}}$ using the same coupling as in  Lemma \ref{LemmLbNumCol} and define $\{W_{t} \}_{t \in \mathbb{N}}$ according to formula \eqref{EqDefAltCoalProcRep}. Since $X_{0} \in \Omega_{k}$, we have $\tS{i} = 0$ for all $1 \leq i \leq k$. Define the sequence of movement times $\{ \phi_{j} \}_{j \geq 0}$ as in formula \eqref{EqDefCoalProcMoveTimes} and the graph $\mathcal{X}$ as in formula \eqref{EqDefGraphForCoalProc}. Fix $\epsilon > 0$; we will define $\Psi$ to be the collection of paths through the graph $\mathcal{X}$ for which the associated coalescence process has $\tNe{4} \leq \phi_{\epsilon n}$. More precisely, we say that a path $\Gamma = (w_{0}, w_{1}, \ldots, w_{m})$ through the graph $\mathcal{X}$ is in $\Psi$ if it satisfies:
\begin{enumerate}
\item $m \leq \epsilon n$. 
\item $| w_{i+1} - w_{i} | = 1$ for all $0 \leq i < m$, where distance is measured according to the graph distance on $\mathcal{X}$.
\item $w_{0} = x$. 
\item There exist $u,v \in G$ so that $|u-v| = 4$ and $w_{m}[u] = w_{m}[v] = 1$. 
\item For all $u,v \in G$ with $|u - v | = 4$ and all $0 \leq i < m$, we have $w_{i}[u] w_{i}[v] = 0$.
\end{enumerate}
For any path $\Gamma \in \Psi$ of length $m$ and any $A > 0$,   we have by essentially the same calculation as in Lemma \ref{LemmaNateshLowerBoundOrig} that 
\be 
\P[  \sup_{0 \leq t \leq \tNe{4}} \delta_{t} \leq 1 | \{\phi_{m} > A n^{3} \}, \{W_{\phi_{i}} \}_{i=0}^{m} = \Gamma ] \geq g(A) + o(1)
\ee 
for some function $g(A) > 0$. Combining this with inequality \eqref{NearEndTrivIneq1Really}, we have for all $A$ so that $\frac{A}{\epsilon}$ is sufficiently large that
\be 
\P[ \{ \sup_{0 \leq t \leq \tNe{4}} \delta_{t} \leq 1 \} \cap \{ \phi_{m} \leq A n^{3} \} |  \{W_{\phi_{i}} \}_{i=0}^{m} = \Gamma ] \geq g(A) + o(1).
\ee 
In particular, there exist $\epsilon, A, B >0$ so that 
\be \label{IneqTranProbNearKGoodPaths}
\P[ \{ \sup_{0 \leq t \leq \tNe{4}} \delta_{t} \leq 1 \} \cap \{ \phi_{m} \leq A n^{3} \} |  \{W_{\phi_{i}} \}_{i=0}^{m} = \Gamma ] \geq B + o(1).
\ee 
We choose such a triple $\epsilon, A,B$ for the remainder of the proof of the first half of inequality \eqref{IneqTransProbKNearK}.  Define $\rho_{k-1}' = \inf \{t \, : \,  Y_{t} = k-1 \}$. By the arguments in Proposition \ref{propNearColImpCol}, there exists $\gamma' > 0$ so that
\be 
\P[\rho_{k-1}' < 4 n^{2.5} | X_{0} = y] \geq \gamma' (1 + o(1))
\ee 
holds uniformly in $y \in \Omega_{k} \backslash \mathcal{G}_{2}^{(n)}$. By the same reduction argument used in Corollary \ref{CorSoftCoalBound2}, this implies that there exists some $\gamma > 0$ so that
\be 
\P[\rho_{k-1}' < 4 n^{2.5} | X_{0} = y] \geq \gamma (1 + o(1))
\ee 
holds uniformly in $y \in \Omega_{k} \backslash \mathcal{G}_{4}^{(n)}$. 
Recall the definition of the update variables $(p_{t}, v_{t})$ from Equation \eqref{EqCiRep}. Denote the event
 \be \label{EqDefChiCond}
\chi_n(t) =  \{ \forall \, s \, : \, t \leq s \leq  t+  n \log(n)^{2}, \, p_{s} > \frac{c}{n} \, \, \mathrm{ or } \sum_{(u,v_{s}) \in E(\LL) }X_{s}[u]=0 \}.
 \ee
We then have
\be \label{IneqToFix}
\P &[\rho_{k-1} < 4 n^{2.5} + n \log(n)^{2} | X_{0} = y] \\ 
&\geq \sum_{t = 0}^{4 n^{2.5}} \P[\rho_{k-1} < t + n \log(n)^{2} | \rho_{k-1}' = t] \P[\rho_{k-1}' = t] \\
&\geq \frac{1}{3} \sum_{t = 0}^{4 n^{2.5}} \P[ \{ \cup_{s=t}^{t+n \log(n)^{2}} \{ v_{s} \} = {\LL} \} \cap \chi_n(t) | \rho_{k-1}' = t] \P[\rho_{k-1}' = t] \\
&\geq \frac{1}{3}(1 + o(1)) \sum_{t = 0}^{4 n^{2.5}} \P[\rho_{k-1}' = t] \\
&\geq \frac{\gamma}{3}(1 + o(1)).
\ee 
Combining this with inequality \eqref{IneqTranProbNearKGoodPaths} and the observation that $w_{m} \in \Omega_{k} \backslash \mathcal{G}_{4}^{(n)}$ for all $\Gamma = (w_{0}, \ldots, w_{m}) \in \Psi$, we have 
\be 
\P[\rho_{k-1} < A n^{3} + n^{2.5} + n\log(n)^{2} |  \{W_{\phi_{i}} \}_{i=0}^{m} = \Gamma ] \geq \frac{ B\gamma}{3}  + o(1).
\ee 
This implies that 
\be 
\P[& \rho_{k-1} < A n^{3} + n^{2.5} + n \log(n)^{2} | X_{0} = x] \\
&\geq \sum_{\Gamma \in \Psi} \P[\rho_{k-1} < A n^{3} + n^{2.5} + n \log(n)^{2} |  \{W_{\phi_{i}} \}_{i=0}^{m} = \Gamma ] \P[ \{W_{\phi_{i}} \}_{i=0}^{m} = \Gamma | X_{0} = x ] \\
&\geq \frac{B \gamma}{3} \sum_{\Gamma \in \Psi} \P[ \{W_{\phi_{i}} \}_{i=0}^{m} = \Gamma | X_{0} = x ] +o(1)\\
&= \frac{B \gamma}{3} \P[\tNe{4} \leq \phi_{\epsilon n} | X_{0} = x ] +o(1)\\
&\geq \frac{B \gamma}{3} \P[\tCo \leq \phi_{\epsilon n} | W_{0} = x ] +o(1).
\ee 
The last quantity, $\P[\tCo \leq \phi_{\epsilon n} | W_{0} = x ]$, is bounded away from 0 by Theorem 5 of \cite{Cox89}. This completes the proof of the first half of inequality \eqref{IneqTransProbKNearK}. \par
 We now prove the second half of inequality \eqref{IneqTransProbKNearK}.  Fix $x \in \mathcal{G}_{4}^{(n)}$ and $\epsilon > 0$. We consider a new collection of paths $\Psi$ on $\mathcal{X}$. Roughly speaking, these will be the paths for which there are no near-collisions for many steps. More precisely, we say that $\Gamma = (x = w_{0}, w_{1},\ldots, w_{m})$ is in $\Psi$ if
\begin{itemize} 
\item $m \geq \frac{1}{2} \epsilon n$.
\item $| w_{i+1} - w_{i} | = 1$ for all $0 \leq i < m$. 
\item $w_{0} = x$. 
\item For all $u,v \in G$ with $|u - v | \leq 4$ and all $0 \leq i \leq m$, we have $w_{i}[u] w_{i}[v] = 0$.
\end{itemize}
Recall that $\tDe{j}$ is defined in formula \eqref{EqDecoupDefMain} and $\ztr^{(j)}$ is defined in formula \eqref{EqTripleTimeGen}. If $ \{W_{\phi_{i}} \}_{i=0}^{m} = \Gamma \in \Psi$, then no near-collisions of $W_{t}$ have occurred by time $\phi_{m}$, and so conditioned on this event we have
\be \label{EndLemmaObs1}
\min_{1 \leq j \leq k} \tDe{j} \geq \min( \phi_{m}, \min_{1 \leq j \leq k}\ztr^{(j)}).
\ee 
 Also, for any $t \in \mathbb{N}$ and any starting point $X_{0} = x \in \Omega_{k}$ with $\delta_{0} = 1$, we have 
\be \label{EndLemmaObs2}
\P[\min_{0 \leq s \leq t} \delta_{s} \geq 1 | X_{0} = x] \geq (1 - \frac{2}{n})^{t}. 
\ee 
In particular, the indicator function of the event $\{\min_{0 \leq s \leq t} \delta_{s} \geq 1\}$ is stochastically dominated by a geometric random variable with mean $\frac{n}{2}$. Combining inequalities \eqref{EndLemmaObs1} and \eqref{EndLemmaObs2}, we have 
\be \label{IneqEndLemmaNormalPath}
 \P[\{ \sum_{t = 0}^{\phi_{m}} \textbf{1}_{\delta_{t} \geq 1} \geq \frac{ \epsilon}{32} n^{2} \} \cup \{ \min_{1 \leq j \leq k}\ztr^{(j)} \leq \phi_{m} \}  | \{W_{\phi_{i}} \}_{i=0}^{m} = \Gamma] = 1 - o(1).
\ee 
Noting that 
\be 
\P[\min_{1 \leq j \leq k}\ztr^{(j)} = t + 1 | \{W_{\phi_{i}} \}_{i=0}^{m} = \Gamma, \delta_{t} = 1, \min_{1 \leq j \leq k}\ztr^{(j)} \geq t, t \leq \phi_{i}] \geq \frac{c}{n^{2}},
\ee
we obtain 
\be 
\P[\min_{1 \leq j \leq k}\ztr^{(j)} >  \phi_{m} |  \{W_{\phi_{i}} \}_{i=0}^{m} = \Gamma, \sum_{t = 0}^{\phi_{m}} \textbf{1}_{\delta_{t} \geq 1} \geq \frac{ \epsilon}{32} n^{2}] &\leq \big(1 - \frac{c}{n^{2}} \big)^{\frac{\epsilon}{32} n^{2}} \\
&\leq e^{-\frac{\epsilon c}{32}}.
\ee 
Combining this with inequality \eqref{IneqEndLemmaNormalPath} and rearranging terms, this implies
\be 
\P[\min_{1 \leq j \leq k}\ztr^{(j)} &\leq \phi_{m}   | \{W_{\phi_{i}} \}_{i=0}^{m} = \Gamma] \\
&\geq 1 - o(1) - \P[\min_{1 \leq j \leq k}\ztr^{(j)} >  \phi_{m} |  \{W_{\phi_{i}} \}_{i=0}^{m} = \Gamma, \sum_{t = 0}^{\phi_{m}} \textbf{1}_{\delta_{t} \geq 1} \geq \frac{ \epsilon}{32} n^{2}] \\
&\geq 1 - e^{-\frac{\epsilon c}{32}} - o(1).
\ee 
Summing over $\Gamma \in \mathcal{X}$,
\be 
\P[\min_{1 \leq j \leq k}\ztr^{(j)} \leq \min(\phi_{m}, \tNe{4})] &\geq \sum_{\Gamma \in \mathcal{X}} \P[\min_{1 \leq j \leq k}\ztr^{(j)} \leq \phi_{m}   | \{W_{\phi_{i}} \}_{i=0}^{m} = \Gamma]  \P[\{W_{\phi_{i}} \}_{i=0}^{m} = \Gamma] \\
&\geq (1 - e^{-\frac{\epsilon c}{32}} - o(1)) \sum_{\Gamma \in \mathcal{X}} \P[\{W_{\phi_{i}} \}_{i=0}^{m} = \Gamma] \\
&\geq (1 - e^{-\frac{\epsilon c}{32}} - o(1)) \P[ \tNe{4} \geq \phi_{\frac{\epsilon}{2}}].
\ee 
Write $\ztr^{(\min)} = \min_{1 \leq j \leq k} \ztr^{(j)}$. By Corollary \ref{CorSoftCoalBound4}, we conclude that there exists some $\gamma > 0$ so that 
\be 
\P[\ztr^{(\min)} \leq \min(\phi_{m}, \tNe{4})] \geq \gamma + o(1).
\ee 
Combining this with the calculation in inequality \eqref{IneqToFix},
\be 
\P[\rho_{k+1} &< \phi_{\frac{\epsilon n}{2}} + n \log(n)^{2} | X_{0} = y] \\
&\geq \P[\{ \cup_{s=\ztr^{(\min)} }^{\ztr^{(\min)} +n \log(n)^{2}} \{ v_{s} \} = {\LL} \} \cap \chi_n(\ztr^{(\min)}) | \ztr^{(\min)} \leq \min(\phi_{\frac{\epsilon}{2} n}, \tNe{4}), X_{0} = y] \\
&\times \P[\ztr^{(\min)} \leq \min(\phi_{\frac{\epsilon}{2} n}, \tNe{4}) | X_{0} = y] \\
&\geq (1 - o(1))(\gamma - o(1)).
\ee 
Combining this with inequality \eqref{NearEndTrivIneq1Really} completes the proof. 
\end{proof}

\section{Proof of Theorem \ref{ThmMainResult}} \label{SecProofThm}

In this section, we find bounds on the occupation times of $\Omega_{k}$ and use these bounds to finish the proof of Theorem \ref{ThmMainResult}. 
Recall from \eqref{eqn:kappak} that the quantity $\kappa_k$ applied to $X_t$ is 
\be 
\kappa_{k}(T) = \sum_{t=0}^{T} \textbf{1}_{X_{t} \in \Omega_{k}}.
\ee  
\begin{lemma} [Bound on Occupation Measures] \label{LemmaOccMeasureBound}
There exists $0 < \gamma < \infty$ so that, for all $A$ sufficiently large and all $B > 0$,
\be 
\P[\kappa_{k}(A n^{3} \log(n)) < B n^{3} \log(n)] \leq \gamma \frac{B}{A} + o(1)
\ee
uniformly in $X_{0} = x \in \Omega$ and $1 \leq k \leq k_{\mathrm{max}}$.
\end{lemma}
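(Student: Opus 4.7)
The plan is to reduce the lemma to the stronger uniform lower bound
\[
\P\bigl[\kappa_k(An^3\log n) \geq c_0 A n^3\log n\bigr] = 1 - o(1),
\]
for some $c_0 = c_0(c,d,k_{\mathrm{max}}) > 0$ and all $A$ sufficiently large. Once this is established, the lemma follows by taking $\gamma = 1/c_0$: for $B \leq c_0 A$ the event $\{\kappa_k(An^3\log n) < Bn^3\log n\}$ has probability $o(1) \leq \gamma B/A + o(1)$, while for $B > c_0 A$ the bound $\gamma B/A > 1$ makes the stated inequality trivial.

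To prove the stronger bound, set $T_0 = C_{\mathrm{sc}} n^3$ for a constant $C_{\mathrm{sc}} = C_{\mathrm{sc}}(c,d,k_{\mathrm{max}})$ to be chosen and partition $[0, An^3\log n]$ into $M = An^3\log n/T_0$ super-chunks $J_m = [(m-1)T_0, mT_0)$. I would establish the following super-chunk estimate: there is $\delta'' > 0$ so that, uniformly in $x \in \ck$ and in $m$,
\[
\P\Bigl[\,\sum_{t \in J_m} \mathbf{1}_{X_t \in \Omega_k} \geq \alpha n^3 \,\Bigm|\, X_{(m-1)T_0} = x\,\Bigr] \;\geq\; \delta'',
\]
where $\alpha$ is the constant from Lemma \ref{LemmaLowerBoundOnExcLength}. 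This is proved by a three-stage strong Markov argument: (i) for $x \in \ck \setminus \cup_j \Omega_j$, Lemma \ref{LemmCompPartNumComp} together with a geometric-tail bound on the number of adjacencies $\delta_t$ shows that the chain reaches $\cup_{1 \leq j \leq k_{\mathrm{max}}} \Omega_j$ within $O(n\log n) = o(T_0)$ steps with probability $1 - o(1)$; (ii) from any point of $\cup_j \Omega_j$, iterating Lemma \ref{LemmaIneqTransProbKNearK} at most $k_{\mathrm{max}}$ times gives probability at least $\delta^{k_{\mathrm{max}}}$ of hitting $\Omega_k$ within a further $k_{\mathrm{max}}\epsilon n^3$ steps; (iii) from $\Omega_k$, Lemma \ref{LemmaLowerBoundOnExcLength} accumulates $\geq \alpha n^3$ time in $\Omega_k$ in the next $\epsilon n^3$ steps with probability $\geq \delta$. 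Choosing $C_{\mathrm{sc}}$ large enough to absorb all three stages yields the super-chunk estimate with $\delta'' = \tfrac{1}{2}\delta^{k_{\mathrm{max}}+1}$, say.

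Finally, Corollary \ref{CorMainDriftResult} applied with $C_1 = A$ and a suitable $C_2$ (for $A$ sufficiently large) gives $\sum_{t=0}^{An^3\log n} \mathbf{1}_{X_t \in \ck} \geq \tfrac{1}{2} An^3\log n$ with probability $1 - O(n^{-5})$, so by pigeonhole at least $M/2$ super-chunks contain a point of $\ck$. For each such super-chunk, the strong Markov property applied at the first $\ck$-point within the super-chunk reduces to the super-chunk estimate above (after possibly enlarging $C_{\mathrm{sc}}$ by a factor of two to leave room after this stopping time), and a standard Azuma--Hoeffding bound applied to the coarsened filtration $(\mathcal{F}_{mT_0})_{m \geq 0}$ yields that at least $\tfrac{1}{4}\delta'' M$ super-chunks contribute $\geq \alpha n^3$ time in $\Omega_k$ with probability $1 - o(1)$. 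This gives $\kappa_k(An^3\log n) \geq c_0 A n^3\log n$ with $c_0 = \alpha\delta''/(4C_{\mathrm{sc}})$, as required. The main technical obstacle is this last concentration step: one must align the stopping times (the first $\ck$-point within each super-chunk) with the coarsened filtration $(\mathcal{F}_{mT_0})_{m\geq 0}$ and condition on the high-probability event of Corollary \ref{CorMainDriftResult}; the cleanest implementation is a bounded-differences inequality applied after such conditioning.
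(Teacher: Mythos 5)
Your proposal is correct, but it takes a genuinely different route from the paper. The paper works with expectations: starting from Corollary \ref{CorMainDriftResult} and Lemma \ref{LemmaLowerBoundOnExcLength}, it first shows that $\E[\kappa_{k'}(A n^3 \log n)] \geq c_3 A n^3 \log n$ for \emph{one} index $k'$, then propagates this to all $1 \leq j \leq k_{\max}$ via Lemma \ref{LemmaIneqTransProbKNearK} one index at a time (each step buys a multiplicative constant $c_4$), and finally invokes Markov's inequality. You instead prove the stronger, uniform, high-probability statement $\P[\kappa_k(An^3\log n) \geq c_0 A n^3\log n] = 1 - o(1)$ by super-chunking $[0,An^3\log n]$ at scale $C_{\mathrm{sc}} n^3$, so that the number of chunks $M \asymp \log n$ grows with $n$, and a per-chunk probability bound (from $\ck$ to $\cup_j\Omega_j$ to $\Omega_k$ via the same three lemmas) can be concentrated via an Azuma/bounded-differences argument on the filtration $(\mathcal{F}_{mT_0})_m$. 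This is a real improvement in clarity: the paper's closing Markov step, applied as written to $T - \kappa_k(T)$, gives a bound of the form $\frac{(1-c_5)A}{A-B}$, which does not tend to zero as $B/A \to 0$ and so does not obviously recover the stated $\gamma B/A + o(1)$; the intended argument must implicitly rest on exactly the kind of block-renewal/concentration reasoning you make explicit. Your formulation both resolves this and delivers a bound that is sharper than needed in the proof of Theorem \ref{ThmMainResult}.

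Two small remarks on your implementation. First, the iteration of Lemma \ref{LemmaIneqTransProbKNearK} from $\Omega_j$ to $\Omega_k$ is fine (each application with probability $\geq \delta$ advances one index in $\leq \epsilon n^3$ steps, so $\delta^{k_{\max}}$ is a valid per-chunk lower bound), but you should take the min of $\delta$ and $\epsilon$ over the finitely many $j \leq k_{\max}$ since the lemma allows them to depend on $j$. Second, the alignment of the stopping time $\tau_m$ (first $\ck$-point in chunk $m$) with the coarsened filtration, which you flag as the main obstacle, is handled as you suggest: define $J_m$ to be $\mathcal{F}_{mT_0}$-measurable (requiring $\tau_m$ within the first fraction, say $3/4$, of the chunk and then the desired occupation), observe $\E[J_m \mid \mathcal{F}_{(m-1)T_0}] \geq \delta'' \P[\tau_m < mT_0 - T_0/4 \mid \mathcal{F}_{(m-1)T_0}]$ by the strong Markov property at $\tau_m$, and lower-bound $\sum_m \P[\tau_m < mT_0 - T_0/4 \mid \mathcal{F}_{(m-1)T_0}]$ by applying Azuma once more to the indicators $\mathbf{1}_{\{\tau_m < mT_0 - T_0/4\}}$ on the event of Corollary \ref{CorMainDriftResult}. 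With these details spelled out, your argument closes cleanly.
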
 

\begin{proof}
Recall the set $\ck$ from \eqref{eqn:CalG} and $k_\mathrm{max}$ from \eqref{eqn:kmax}.
By Corollary \ref{CorMainDriftResult}, there exists $c_{1} > 0$ so that for all $A$ sufficiently large,
\be \label{IneqLemmBoundOcc1}
\P[\sum_{t=0}^{A n^{3} \log(n)} \textbf{1}_{X_{t} \in \ck} > c_{2} A n^{3} \log(n)|X_0 = x\in \Omega] = 1 + o(1).
\ee 
 Recall the definition of the update variables $(p_{t}, v_{t})$ from formula \eqref{EqCiRep} and let $\chi_n \equiv \chi_{n}(0)$ from formula \eqref{EqDefChiCond}. For any $x \in \ck$, we have that 
\be 
\P\Big[\inf \{t >0 \, : \, X_{t} \in \ck \cap \cup_{k} \Omega_{k} \} &< n \log(n)^{2} | X_{0} = {x\in \ck }\Big] \\
&\geq \P \Big[ \{ \cup_{s=0}^{n \log(n)^{2}} \{ v_{s} \} = {\LL} \} \cap \chi_n|X_0=x \in \ck \Big] \\
&= 1 + o(1). \label{IneqLemmBoundOcc2}
\ee 
Combining inequalities \eqref{IneqLemmBoundOcc1} and \eqref{IneqLemmBoundOcc2} with Lemma \ref{LemmaLowerBoundOnExcLength}, there exists some $c_{2} > 0$ and $1 \leq k' \leq k_{\max}$ so that, for all $A$  sufficiently large,
\be \label{IneqBaseCase}
\P[ \sum_{t=0}^{A n^{3} \log(n)} \textbf{1}_{X_{t} \in \Omega_{k'}} > c_{2} A n^{3} \log(n)|X_0 = x\in \Omega] = 1 + o(1).
\ee 
Inequality \eqref{IneqBaseCase}  implies the existence of some $c_{3} > 0$ so that, for any $X_0 = x \in \Omega$,
\be \label{IneqBaseCase2}
\E[\sum_{t=0}^{A n^{3} \log(n)} \textbf{1}_{X_{t} \in \Omega_{k'}}] \geq c_{3} A n^{3} \log(n)
\ee 
for all $A$ sufficiently large. We claim that inequality \eqref{IneqBaseCase2} also holds with $k'$ replaced by $k'+1$ and also (if $k' \geq 2$) with $k'-1$.  By Lemmas \ref{LemmaIneqTransProbKNearK} and \ref{LemmaLowerBoundOnExcLength}, there exists some $c_{4} > 0$ so that 
\be[IneqBoundOccFirstConc]
\E[\sum_{t=0}^{A n^{3} \log(n)} \textbf{1}_{X_{t} \in \Omega_{k'+1}}] &\geq c_{4} \E[\sum_{t=0}^{A n^{3} \log(n)} \textbf{1}_{X_{t} \in \Omega_{k'}}] \geq c_{4} c_{3} A n^{3} \log(n), \\
\E[\sum_{t=0}^{A n^{3} \log(n)} \textbf{1}_{X_{t} \in \Omega_{k'-1}}] &\geq c_{4} \E[\sum_{t=0}^{A n^{3} \log(n)} \textbf{1}_{X_{t} \in \Omega_{k'}}] \geq c_{4} c_{3} A n^{3} \log(n).
\ee 
This is exactly inequality \eqref{IneqBaseCase2} with $k'$ replaced by $k'+1$ and $k'-1$ respectively. Since the set $\{1,2,\ldots,k_{\mathrm{max}} \}$ is finite, the argument between inequalities \eqref{IneqBaseCase2} and \eqref{IneqBoundOccFirstConc} implies that there exists some $c_{5} > 0$ so that 
\be \label{IneqPrettyMuchLastInequality}
\E[\sum_{t=0}^{A n^{3} \log(n)} \textbf{1}_{X_{t} \in \Omega_{j}}] \geq c_{5} A n^{3} \log(n)
\ee 
for all $1 \leq j \leq k_{\mathrm{max}}$. Since $\kappa_{k}(T) = \sum_{t=0}^{T} \textbf{1}_{X_{t} \in \Omega_{k}}$,
the result now follows from an application of Markov's inequality to inequality \eqref{IneqPrettyMuchLastInequality}.\end{proof}

Finally, we prove our main result: 

\begin{proof}[Proof of Theorem \ref{ThmMainResult}]
The lower bound in Theorem \ref{ThmMainResult} is given in Theorem \ref{ThmLowerBoundOnMixingTime}. We now  show the proof of the upper bound in Theorem \ref{ThmMainResult} by applying the bounds in 
Lemma \ref{CorMixingTimeRestWalk} and Lemma \ref{LemmaOccMeasureBound} to Lemma \ref{LemmaBasicMixing}.\par
 Lemma \ref{LemmaOccMeasureBound} implies that there exist constants $0 < A,B < \infty$, and a function $N = N(A,B,c,d)$ so that for all $k \leq k_{\max}$ (see formula \eqref{eqn:kmax}) and all $n > N$,
\be \label{eqn:MT1}
\sup_{x \in \Omega} \P[\kappa_{k}(A n^{3} \log(n)) < B n^{3} \log(n) | X_{0} = x] \leq {1\over 16}.
\ee
Similarly,  Lemma \ref{CorMixingTimeRestWalk} implies that there exists a constant $0 < C_1 < \infty$ so that
\be \label{eqn:MT2}
\max_{1 \leq k \leq k_{\max}} \tau_{n,k} \leq C_1 n^{2 + \frac{2}{d}} \log(n). 
\ee

In the notation of  Lemma \ref{LemmaBasicMixing}, let $\Theta = \Omega$, let $\Theta_{k} = \Omega_{k}$ for $1 \leq k \leq {n \over 2}$ and let $\Theta_{{{n \over 2} +1}} = \Omega \backslash \cup_{k=1}^{{n\over 2}} \Omega_{k}$. Set $I = \{1,2,\ldots,k_{\max}\}$. By Theorem \ref{LemmaContractionEstimate}, if the KCIP is started at the stationary distribution $\pi$, there exists $0<\alpha,\epsilon<1$ so that
\be 
\E[V_{0}] = \E[V_{{\epsilon n^3}}] \leq (1-\alpha) \E(V_{0}) + C_{G}.
\ee 
Thus we get $\E(V_0) \leq {C_{G} \over \alpha}  = {k_{\max} \over 4}$. Markov's inequality yields that
\be
\P[V_{0} \leq k_{\max}] \geq \frac{3}{4}.
\ee
Since $V_0$ is distributed according to $\pi$, this immediately yields
\be
\pi(\cup_{k \in I} \Omega_{k}) \geq \frac{3}{4} - o(1). 
\ee
Thus, in the notation of Lemma \ref{LemmaBasicMixing}, we can fix for all $n> N = N(c,d,A,B)$ sufficiently large,
\be 
a &= \frac{3}{8},\, \quad
\beta = \frac{11}{16}, \, \quad
\gamma = \frac{1}{11}, \\
t &= 16 \,c_{\gamma}'\, C_1 n^{2 + \frac{2}{d}} \log(n) \leq Bn^{3} \log(n), \\
T &= \lceil A n^{3} \log(n) \rceil
\ee 
where $C_1$ is the constant from \eqref{eqn:MT2}.
By \eqref{eqn:MT2}, we have 
\be \label{IneqReallyEnd1}
\max_{1 \leq k \leq k_{\max}} \frac{\tau_{n,k} c_{\gamma}'}{t} \leq \frac{1}{16}.
\ee 
By \eqref{eqn:MT1}, we have that for ${1 \leq k \leq k_{\max}}$,
\be \label{IneqReallyEnd2}
\sup_{x \in \Omega} \P[\kappa_{k}(T) < t | X_{0} = x] \leq {1\over 16}.
\ee
Combining inequalities \eqref{IneqReallyEnd1} and \eqref{IneqReallyEnd2} immediately implies that, in the notation of Lemma \ref{LemmaBasicMixing}, $\mathcal{T} \leq T = O(n^{3} \log(n))$. Thus by Lemma \ref{LemmaBasicMixing} we have that $\tmix = O(n^{3} \log(n))$ and the proof of Theorem \ref{ThmMainResult} is complete.
\end{proof}
\section{Conclusion and future work} \label{SecConclusion}

We have resolved only some special cases in Aldous' conjecture. In particular, we do not have any results for: 
\begin{itemize}
\item Graphs other than the torus.
\item Density regimes other than $p = \frac{c}{n}$.
\end{itemize}

In this section, we give conjectures for other graphs and regimes, and mention cases for which our methods work well. Before doing so, there is the question as to whether the adjustment to Aldous' conjecture required by Theorem  \ref{ThmLowerBoundOnMixingTime} is essentially the only required correction. If so, this would suggest the conjecture:
\begin{conj} \label{ConjNotReal}
Fix $d \in \mathbb{N}$ and let $G$ be a $d$-regular graph. The mixing time $\tmix$ of the KCIP with parameter $p$ on graph $G$ is $O(p^{-1} d | G | \tmixR + \frac{|G|}{d^{3} p^{2}})$ as $|G|$ goes to infinity, where $\tmixR$ is the mixing time of the $\frac{1}{2}$-lazy simple random walk on the graph $G$.
\end{conj}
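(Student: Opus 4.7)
The plan is to mirror the overall strategy used for the torus in Theorem \ref{ThmMainResult}: decompose the state space $\Omega$ according to the number of particles, and apply Lemma \ref{LemmaBasicMixing} to the partition into $\Omega_{1}, \ldots, \Omega_{k_{\mathrm{max}}}$ and the complement. The two terms of the conjectured bound should emerge as the two inputs to Lemma \ref{LemmaBasicMixing}: $p^{-1} d |G| \tmixR$ should control $\sup_{k \leq k_{\mathrm{max}}} \tau_{n,k}$, while $|G|/(d^{3} p^{2})$ should control the occupation time quantity $\mathcal{O}_{k}$. The truncation threshold must be density-adaptive, taking $k_{\mathrm{max}}$ constant in the low-density regime $p = O(1/|G|)$ and growing like $p|G|$ otherwise so that $\pi\bigl(\cup_{k \leq k_{\mathrm{max}}} \Omega_{k}\bigr)$ remains bounded below; the analogue of Step 1 in the proof sketch of Section \ref{sec:proofsketch} still reduces the problem to these two inputs.

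For the trace chain on $\Omega_{k}$, I would extend the chain of comparisons from Section \ref{SecCompToExc}, going KCIP trace, less-lazy trace, Metropolis--Hastings SEP, and finally SEP with $k$ particles on $G$. The log-Sobolev comparison in Lemma \ref{LemmaCompLogSob} uses only that $G$ has bounded local structure and that $|\Omega_{n,k}|/|\Omega_{n,k}^{\mathrm{SE}}| = 1 - o(1)$, which holds whenever $k^{2} d / |G| \to 0$. Carrying this through should yield $\alpha_{n,k} \gtrsim (p/(d|G|)) \alpha_{n,k}^{\mathrm{SE}}$. Since $(\alpha^{\mathrm{SE}}_{n,k})^{-1}$ can be bounded above in terms of $\tmixR$ through standard comparison techniques for the exclusion process, this gives $\tau_{n,k} = O(p^{-1} d |G| \tmixR \log |G|)$, matching the first term of the conjecture up to a logarithmic factor.

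The contractive drift step is where the real work lies. I plan to couple the KCIP to an embedded coalescence process as in Lemma \ref{LemmLbNumCol} and count collisions between distinct components. A single isolated particle performs a random walk step at rate $\Theta(dp/|G|)$ per KCIP step, so $|G|/(dp)$ KCIP steps correspond to one effective random-walk step. If the meeting time of two independent lazy random walks is of order $\tmixR$ (a reasonable ballpark for $d$-regular transient graphs), two particular components collide in order $\tmixR \cdot |G|/(dp)$ KCIP steps, and with $\Theta((p|G|)^{2})$ active pairs the expected time to destroy a positive fraction of the $Y_{t}$ components should be of order $|G|/(d^{3} p^{2})$. Using this as the drift scale, one then combines singleton-collision estimates (the analogue of Lemma \ref{LemmaEL1}) and coalescence lower bounds (the analogue of Corollary \ref{LemmaNateshLowerBound}) to produce a Theorem \ref{LemmaContractionEstimate}-style drift inequality, after which Corollary \ref{CorMainDriftResult} and Lemma \ref{LemmaOccMeasureBound} give the required occupation bound.

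The main obstacle, and presumably the reason that the claim is stated only as a conjecture, lies in generalising Section \ref{SecDriftCond}. Lemma \ref{LemmaNateshLowerBoundOrig} exploits triangle-freeness and vertex transitivity to collapse $V_{t}$ into a three-state Markov chain whose hitting time can be computed exactly; Lemma \ref{LemmaEL1} relies on a locality argument whose constants degrade badly when the girth is small or the local geometry inhomogeneous; and the appeal to Theorem 5 of \cite{Cox89} for a coalescence meeting-time bound is currently cleanest only on vertex-transitive transient graphs. Pushing this through for a general $d$-regular graph appears to require (i) a near-collision estimate replacing Lemma \ref{LemmaEL1} that is robust to short cycles, (ii) a quantitative meeting-time bound for lazy random walks expressed purely in terms of $\tmixR$, $|G|$ and $d$, and (iii) a density-adaptive log-Sobolev input for SEP that remains sharp as $k$ grows with $|G|$. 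Removing the extra $\log |G|$ factor present in Theorem \ref{ThmMainResult}, which the conjecture does not allow, is an additional independent difficulty tied to the transient phase from a fully occupied configuration; I expect this to be the hardest single step, and a natural attack is to replace the crude bound $V_{s} \le n$ used in Corollary \ref{CorMainDriftResult} by a sub-exponential tail obtained from iterating the drift inequality more carefully.
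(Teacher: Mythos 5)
This statement is a conjecture, not a theorem; the paper offers no proof of it, so there is nothing to compare your sketch against. More importantly, the authors explicitly state immediately after the conjecture that they \emph{do not believe Conjecture \ref{ConjNotReal} is true in general}, and they propose Conjecture \ref{ConjMaybeBetter} as the corrected version, adding a third term $p^{-1} d |G| \tau_{\mathrm{Coal}}$ involving the coalescence time of the graph. Their reason is precisely the point your sketch glosses over: you assume ``the meeting time of two independent lazy random walks is of order $\tmixR$ (a reasonable ballpark for $d$-regular transient graphs),'' but this identification fails for recurrent or borderline-recurrent geometries. The paper's concrete example is the torus in dimension $d=2$, where \cite{Cox89} gives coalescence time of order $n^{3}\log n$ for the KCIP time scale, exceeding the $O(n^{3})$ that Conjecture \ref{ConjNotReal} would predict. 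Thus the scaling $|G|/(d^{3}p^{2})$ in the second term is an artifact of conflating meeting time with mixing time, and your drift argument would not close on such graphs no matter how carefully the coupling to the coalescence process is executed.

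Your comparison-of-Dirichlet-forms plan for the first term is reasonable and does track how Section \ref{SecCompToExc} is built (the comparison chain KCIP trace $\to$ less-lazy $\to$ MH--SEP $\to$ SEP genuinely only needs bounded local geometry and $k^{2}d/|G| \to 0$), and your observation that the real difficulty lives in the drift step is correct. But a proof strategy for Conjecture \ref{ConjNotReal} as literally stated cannot succeed if the authors' counterexample intuition is right; any serious attempt must either restrict the graph class so that the coalescence time is dominated by $\tmixR$, or aim instead at Conjecture \ref{ConjMaybeBetter}, whose third term is there precisely to absorb the contribution you are trying to bound by $|G|/(d^{3}p^{2})$.
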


We believe that this gives the correct answer for the torus in dimension $d \geq 3$ (that is, we conjecture that the correct mixing time for the process described in Theorem \ref{ThmMainResult} is $O \big( n^{3} \big)$). However, we do not believe that Conjecture \ref{ConjNotReal} is true in general. In particular, define the coalescence time $\tau_{\mathrm{Coal}}$ of a graph to be the expected time for the coalescence process on $G$ to go from $| G |$ particles to 1 particle. We suspect that the mixing time of the KCIP is bounded from below by the coalesence time of the associated coalesence process on the same graph. By \cite{Cox89}, this suggests that the mixing time for the torus in dimension $d = 2$ is at least $n^{3} \log(n)$, while Conjectures  \ref{ConjAldous} and \ref{ConjNotReal} both suggest the mixing time is $O( n^{3})$. In general, we conjecture that: 
\begin{conj} \label{ConjMaybeBetter}
Fix $d \in \mathbb{N}$ and let $G$ be a $d$-regular graph. The mixing time $\tmix$ of the KCIP with parameter $p$ on graph $G$ is $O(p^{-1} d | G | \tmixR + \frac{|G|}{d^{3} p^{2}} + p^{-1} d |G| \tau_{\mathrm{Coal}})$ as $|G|$ goes to infinity, where $\tmixR$ is the mixing time of the $\frac{1}{2}$-lazy simple random walk on the graph $G$ and $\tau_{\mathrm{Coal}}$ is the coalesence time of the graph $G$.
\end{conj}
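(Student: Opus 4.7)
The plan is to extend the five-step decomposition that produced Theorem \ref{ThmMainResult} to a general $d$-regular graph $G$, arguing that the three summands in the conjectured bound correspond to three distinct time scales in the dynamics. I would apply Lemma \ref{LemmaBasicMixing} to the partition $\Omega = \sqcup_k \Omega_k$ with index set $I = \{1, 2, \ldots, k_{\max}\}$, where $k_{\max}$ is chosen, as in \eqref{eqn:kmax}, based on an analog of the drift condition in Theorem \ref{LemmaContractionEstimate} but now on $G$.

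The first term $p^{-1} d |G| \tmixR$ should arise from bounding $\tmix^{(k)}$ on $\Omega_k$ via comparison to the simple exclusion process on $G$, mimicking Lemma \ref{CorMixingTimeRestWalk}; the Dirichlet-form comparisons of Section \ref{SecCompToExc} carry through on any graph, so one only needs a bound on the log-Sobolev constant of SE on $G$, and the heuristic $\alpha^{-1}_{\mathrm{SE}} = O(d |G| \tmixR)$ gives the right scaling. The third term $p^{-1} d |G| \tau_{\mathrm{Coal}}$ should arise from generalizing the drift condition, with $\tau_{\mathrm{Coal}}$ replacing the $n^3$ of the torus and reflecting the true timescale for components to merge. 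The middle term $|G|/(d^3 p^2)$ is the time for a single particle to spawn two non-adjacent descendants, directly generalizing Lemma \ref{LemmaNateshLowerBoundOrig}; it enters both as a lower bound and as the controlling occupation-time contribution when the chain sits in $\Omega_1$.

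The hard part will be generalizing Section \ref{SecDriftCond}. The torus proof of Theorem \ref{LemmaContractionEstimate} leans heavily on vertex-transitivity and polynomial ball-growth (used throughout Lemma \ref{LemmaEL1} to show that typical collisions involve only singletons), on triangle-freeness (which gives the exact form of the transition matrix \eqref{EqSmallMat} and underlies the near-collision analysis via $\tNe{i}$), and on the $d \geq 3$ condition (which lets Theorem 5 of \cite{Cox89} produce the required coalescence estimates in Lemma \ref{LemmLbNumCol}). For graphs with significant local clustering, a newborn particle at an adjacent vertex may immediately interfere with the original component, so the comparison to an independent coalescence process breaks and the conjecture itself may need qualification. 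Once an analog of Theorem \ref{LemmaContractionEstimate} is established on $G$, the remaining steps — bounding $\pi(\cup_{k \in I} \Omega_k)$ at stationarity via the drift condition, converting it into occupation-time tails through Lemma \ref{LemmaTechLem1}, and assembling the pieces via Lemma \ref{LemmaBasicMixing} — should carry through essentially unchanged and reproduce the parameter-tuning argument at the end of Section \ref{SecProofThm}.
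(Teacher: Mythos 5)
This statement is a conjecture, not a theorem: the paper offers no proof of Conjecture \ref{ConjMaybeBetter}, only a heuristic justification in Section \ref{SecConclusion}. What you have written is therefore not being measured against a proof in the paper; it is a research program, and the question is whether the program is sound as far as it goes.

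Your identification of the three terms with three time scales matches the paper's heuristics, and you correctly locate the hard part (generalizing the drift condition of Section \ref{SecDriftCond}) and the places the torus structure is used (vertex-transitivity and polynomial ball growth in Lemma \ref{LemmaEL1}, triangle-freeness for \eqref{EqSmallMat}, the dimension condition $d \geq 3$ for \cite{Cox89}). But there are genuine gaps beyond the ones you flag. First, the step ``$\alpha^{-1}_{\mathrm{SE}} = O(d |G| \tmixR)$'' is only a heuristic: the Aldous spectral-gap conjecture for interchange/exclusion (now a theorem) relates the \emph{spectral gap} of SE to that of the random walk, not the log-Sobolev constant, and on general $d$-regular graphs the log-Sobolev constant can be much smaller than the spectral gap by a factor of $\log$ of the state-space size. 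So Lemma \ref{CorMixingTimeRestWalk} does not transport for free; one would need either a log-Sobolev analog of the interchange result or a spectral-gap route to mixing with an extra logarithmic loss. Second, Lemma \ref{LemmLbNumCol} requires not just a coalescence-time bound but a quantitative statement (Theorem 5 of \cite{Cox89}) that, over time $\epsilon n^3$, a \emph{constant fraction} of a large collection of walkers coalesces; the analogous fact on general $d$-regular graphs with $\tau_{\mathrm{Coal}}$ replacing $n^3$ is not known in the required uniformity. Third, you note (correctly) that the middle term enters ``both as a lower bound and as the controlling occupation-time contribution when the chain sits in $\Omega_1$,'' but Lemma \ref{LemmaIneqTransProbKNearK}, which controls the transitions $\Omega_k \to \Omega_{k\pm 1}$, again leans on \cite{Cox89}-type near-collision bounds that have no off-the-shelf substitute outside the torus. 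In short, the sketch is a faithful outline of how the paper's machinery \emph{could} be extended, but the conjecture remains open precisely because each of the three main inputs (log-Sobolev comparison, drift via coalescence, occupation-time control via near-collisions) is torus-specific as proved.
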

Next, we discuss when our strategy outlined in Section \ref{sec:proofsketch} may be applicable for proving Conjecture \ref{ConjMaybeBetter}. When restricted to densities in the regime $p = \frac{c}{n}$, our proof strategies are likely to work well for many other sequences of bounded-degree graphs. In particular, for random triangle-free $d$-regular graphs, our argument goes through with only two major changes. The first change is to replace all bounds on the coalesence process from \cite{Cox89} with analogous bounds from \cite{Oliv12b} and \cite{Oliv13}. These bounds are substantially looser, but strong enough for our arguments to go through. Next, random $d$-regular graphs are expanders with high probability (see, \textit{e.g.}, \cite{HLW06}), and in particular have spectral gaps that are uniformly bounded below in $n$ and mixing times that grow like $\log(n)$. This allows us to make the second change, replacing the bound on the log-Sobolev constant from \cite{Yau97} with a bound on the spectral gap from \cite{HLW06}. Besides the invocation of \cite{Yau97}, all comparison arguments for the log-Sobolev constant of $\LL$ given in Section \ref{SecCompToExc} also apply as written to bounding the spectral gap of general $d$-regular graphs. These graphs have such large spectral gaps that the bounds obtained this way are sufficient. Beyond expanders, we generally expect our strategy to succeed for families of bounded-degree graphs with
\be \label{sec:conjrel}
\max(\tau_{\mathrm{Coal}}, \tmixR) = O(n^{3}).
\ee
In particular a similar approach works for the lattice in $d \geq 3$ dimensions. When \eqref{sec:conjrel} fails to hold, as with $\LL$ in dimension $d=2$, the arguments in Section \ref{SecDriftCond} must be substantially changed. \par 

For sequences of $m = m(n)$-regular graphs (with $m(n)$ very slowly growing with $n$), our strategy could be refined to give nontrivial bounds, but our results are not very useful as written. For graphs with very large degrees $d \approx n$, a straightforward comparison argument to the KCIP on the complete graph analogous to that given in \cite{DiSa93b} for the simple exclusion process gives useful bounds. \par 

Our strategies are unlikely to work well for $p = p_n \gg \frac{c}{n}$, as the arguments this paper rely quite strongly on the stationary measure of the constrained Ising process being concentrated on configurations with few particle. Despite technical difficulties, we believe that the$O(n^{3})$ bound will hold up to $p_n \approx n^{-\frac{1}{2} - \frac{1}{d}}$. Again, for $p$ sufficiently large, a straightforward comparison argument to the constrained Ising process on the complete graph analogous to that given in \cite{DiSa93b} for the simple exclusion process gives useful bounds. \par

\section*{Acknowledgements}
NSP is partially supported by an ONR grant. We thank Daniel Jerison and Anastasia Raymer for helpful conversations.
\bibliographystyle{alpha}
\bibliography{CIBib}

\begin{thebibliography}{CMRT09}

\bibitem[AD02]{AlDi02}
David Aldous and Persi Diaconis.
\newblock The asymmetric one-dimensional constrained {I}sing model: Rigorous
  results.
\newblock {\em Journal of Statistical Physics}, 107:945--975, 2002.

\bibitem[Ald]{Aldo12}
David Aldous.
\newblock Open problem list.
\newblock \url{http://www.stat.berkeley.edu/~aldous/Research/OP/}.
\newblock Accessed: 2014-12-18.

\bibitem[CFM14a]{CFM14b}
Paul Chleboun, Alessandra Faggionato, and Fabio Martinelli.
\newblock Relaxation to equilibrium of generalized {E}ast processes on
  $\mathbb{Z}^{d}$: Renormalization group analysis and energy-entropy
  competition.
\newblock {\em ar{X}iv preprint. ar{X}iv:1404.7257}, 2014.

\bibitem[CFM14b]{CFM14}
Paul Chleboun, Alessandra Faggionato, and Fabio Martinelli.
\newblock Time scale separation and dynamic heterogeneity in the low
  temperature {E}ast model.
\newblock {\em Communications in Mathematical Physics}, 328(3):955--993, 2014.

\bibitem[CFM15]{CFM15}
Paul Chleboun, Alessandra Faggionato, and Fabio Martinelli.
\newblock Mixing time and local exponential ergodicity of the {E}ast-like
  process in $\mathbb{Z}^{d}$.
\newblock {\em ar{X}iv preprint. ar{X}iv:1501.02240}, 2015.

\bibitem[CM13]{ChMa13}
Paul Chleboun and Fabio Martinelli.
\newblock Mixing time bounds for oriented kinetically constrained spin models.
\newblock {\em Electronic Journal of Probability}, 18:1--9, 2013.

\bibitem[CMRT09]{CMRT09}
Nicoletta Cancrini, Fabio Martinelli, Cyril Roberto, and Cristina Toninelli.
\newblock Kinetically constrained models.
\newblock {\em New Trends in Mathematical Physics}, pages 741--752, 2009.

\bibitem[Cox89]{Cox89}
J.~Theodore Cox.
\newblock Coalescing random walks and voter model consensus times on the torus
  in $\mathbb{Z}^{d}$.
\newblock {\em Annals of Probability}, 17(4):1333--1366, 1989.

\bibitem[CS73]{ClSu73}
Peter Clifford and Aida Sudbury.
\newblock A model for spatial conflict.
\newblock {\em Biometrika}, 60(3):581--588, 1973.

\bibitem[DGJM06]{DGJM06}
Martin Dyer, Leslie Goldberg, Mark Jerrum, and Russell Martin.
\newblock {M}arkov chain comparison.
\newblock {\em Probability Surveys}, 3:89--111, 2006.

\bibitem[DSC93]{DiSa93b}
Persi Diaconis and Laurent Saloff-Coste.
\newblock Comparison theorems for reversible {M}arkov chains.
\newblock {\em Annals of Applied Probability}, 3(3):696--730, 1993.

\bibitem[DSC96]{DiSa96c}
Persi Diaconis and Laurent Saloff-Coste.
\newblock Logarithmic {S}obolev inequalities for finite {M}arkov chains.
\newblock {\em Annals of Applied Probability}, 6:695--750, 1996.

\bibitem[FA84]{FrAn84}
G.~Fredrickson and H.~Andersen.
\newblock Kinetic {I}sing model of the glass transition.
\newblock {\em Physical Review Letters}, 53:1244--1247, 1984.

\bibitem[FA85]{FrAn85}
G.~Fredrickson and H.~Andersen.
\newblock Facilitated kinetic {I}sing models and the glass transition.
\newblock {\em Journal of Chemical Physics}, 83:5822--5831, 1985.

\bibitem[GLM]{GLM14}
Shirshendu Ganguly, Eyal Lubetzky, and Fabio Martinelli.
\newblock Cutoff for the {E}ast process.
\newblock {\em ar{X}iv preprint. ar{X}iv:1312.7863}.

\bibitem[GST11]{GST11}
Juan Garrahan, Peter Sollich, and Cristina Toninelli.
\newblock {\em Dynamical Heterogeneities in Glasses, Colloids and Granular
  Media}, chapter~10.
\newblock Oxford University Press, 2011.

\bibitem[HL75]{HoLi75}
Richard Holley and Thomas Liggett.
\newblock Ergodic theorems for weakly interacting infinite systems and the
  voter model.
\newblock {\em Annals of Probability}, 3(4):643--663, 1975.

\bibitem[HLW06]{HLW06}
Shlomo Hoory, Nati Linial, and Avi Wigderson.
\newblock Expander graphs and their applications.
\newblock {\em Bulletin of the AMS}, 43:439--561, 2006.

\bibitem[JSTV04]{JSTV04}
Mark Jerrum, Jung-Bae Son, Prasad Tetali, and Eric Vigoda.
\newblock Elementary bounds on {P}oincare and log-{S}obolev constants for
  decomposable {M}arkov chains.
\newblock {\em Annals of Applied Probability}, 14(4):1741--1765, 2004.

\bibitem[KL06]{KoLa06}
George Kordzakhia and Steven Lalley.
\newblock Ergodicity and mixing properties of the {N}ortheast model.
\newblock {\em Journal of Applied Probability}, 43(3):782--792, 2006.

\bibitem[KM59]{KaMc59}
S.~Karlin and J.~McGregor.
\newblock Coincidence properties of birth and death processes.
\newblock {\em Pacific Journal of Mathematics}, 9:1109--1140, 1959.

\bibitem[LPW09]{LPW09}
David Levin, Yuval Peres, and Elizabeth Wilmer.
\newblock {\em {M}arkov Chains and Mixing Times}.
\newblock American Mathematical Society, Providence, Rhode Island, 2009.

\bibitem[Mar14]{LectureFM14}
Fabio Martinelli.
\newblock {E}ast model: mixing time, cutoff and dynamical heterogeneities.
\newblock Lecture (Slides at
  \url{http://www2.warwick.ac.uk/fac/sci/maths/research/events/2013-2014/nonsymp/gscsd/programme/fabio-slides.pdf}
  ), 2014.

\bibitem[Mic10]{Micl10}
Laurent Miclo.
\newblock On absorption times and {D}irichlet eigenvalues.
\newblock {\em ESAIM: Probability and Statistics}, 14:117--150, 2010.

\bibitem[MT13]{MaTo13}
Fabio Martinelli and Cristina Toninelli.
\newblock Kinetically constrained spin models on trees.
\newblock {\em Annals of Applied Probability}, 23(5):1967--1987, 2013.

\bibitem[Oli12a]{Oliv12d}
Roberto Oliveira.
\newblock Mixing and hitting times for finite {M}arkov chains.
\newblock {\em Electronic Journal of Probability}, 17(70):1--12, 2012.

\bibitem[Oli12b]{Oliv12b}
Roberto Oliveira.
\newblock On the coalesence time of reversible random walks.
\newblock {\em Transactions of the AMS}, 364:2109--2128, 2012.

\bibitem[Oli13]{Oliv13}
Roberto Oliveira.
\newblock Mean field conditions for coalescing random walks.
\newblock {\em Annals of Probability}, 41(5):3420--3461, 2013.

\bibitem[PS]{PiSm15}
Natesh Pillai and Aaron Smith.
\newblock Mixing time bounds for decomposable {M}arkov chains via hitting
  times.
\newblock preprint.

\bibitem[PS13]{PeSo13}
Yuval Peres and Perla Sousi.
\newblock Mixing times are hitting times of large sets.
\newblock {\em Journal of Theoretical Probability}, 9:459--510, 2013.

\bibitem[Smi14]{Smit14a}
Aaron Smith.
\newblock Comparison theory for {M}arkov chains on different state spaces and
  application to random walk on derangements.
\newblock {\em Journal of Theoretical Probability}, 2014.

\bibitem[Val04]{Vali04}
Paul Valiant.
\newblock Linear bounds on the {N}orth-{E}ast model and higher-dimensional
  analogs.
\newblock {\em Advances in Applied Mathematics}, 33(1):40--50, 2004.

\bibitem[Yau97]{Yau97}
H.-T. Yau.
\newblock Logarithmic {S}obolev inequality for generalized simple exclusion
  processes.
\newblock {\em Probability Theory and Related Fields}, 109:507--538, 1997.

\end{thebibliography}
\end{document}